\documentclass[reqno]{amsart}
\usepackage[utf8]{inputenc}
\usepackage{amssymb}
\usepackage{mathtools}
\usepackage{color}
\usepackage{enumitem}
\usepackage{caption}
\usepackage{tikz}
\usepackage{xcolor}
\usepackage{upgreek}
\usepackage[bookmarksdepth=3]{hyperref} \usepackage{bbm}
\usepackage{mathrsfs}
\usepackage{leftindex} 
\usepackage{booktabs} 
\mathtoolsset{showonlyrefs}

\newtheorem{Satz}{Satz}[section]
\newtheorem{definition}[Satz]{Definition}
\newtheorem{theorem}[Satz]{Theorem}
\newtheorem{lemma}[Satz]{Lemma}
\newtheorem{proposition}[Satz]{Proposition}
\newtheorem{corollary}[Satz]{Corollary}
\newtheorem{remark}[Satz]{Remark}

\newtheorem{assumption}[Satz]{Assumption}

\newcommand{\N}{\mathbb{N}}
\newcommand{\R}{\mathbb{R}}

\newcommand{\E}{\mathbb{E}}
\newcommand{\F}{\mathscr{F}}
\renewcommand{\P}{\mathbb{P}}
\newcommand{\sP}{\mathscr{P}}
\newcommand{\sB}{\mathscr{B}}
\newcommand{\sG}{\mathscr{G}}
\newcommand{\sL}{\mathscr{L}}

\newcommand{\sO}{\mathscr{O}}

\newcommand{\ce}{\coloneqq}
\newcommand{\ec}{\eqqcolon}
\newcommand{\seq}{\subseteq}
\newcommand{\hra}{\hookrightarrow}
\newcommand{\1}{\mathbf{1}}

\newcommand{\grad}{\nabla}
\newcommand{\rmd}{\mathrm{d}}
\newcommand{\dt}{\,\mathrm{d}t}
\newcommand{\ds}{\,\mathrm{d}s}

\newcommand{\dx}{\,\mathrm{d}x}
\newcommand{\dWs}{\,\mathrm{d}W(s)}
 
\newcommand{\dWsn}{\,\mathrm{d}W^n(s)}

\newcommand{\intt}{\int_0^t}

\newcommand{\inttm}{\int_0^{t\wedge\sigma_m}}
\newcommand{\intsO}{\int_{\sO}}

\newcommand{\CBDG}{C_{\mathrm{BDG}}}
\newcommand{\loc}{\textup{loc}}

\newcommand{\nH}{\ensuremath{\,_\nu H}}

\newcommand{\bpsn}[1]{\ensuremath{\nH^{#1}(\sO)}}
\newcommand{\Bpsn}[1]{\ensuremath{\nH^{#1}(\sO;\ell^2)}}

\newcommand{\dd}{\ensuremath{\mathrm{d}}}

\newcommand{\yosi}{\lambda R_\lambda}

\newcommand{\nrm}[1]{\left\lVert #1 \right\rVert}

\usepackage{bib}       
\usepackage{frontmatter}
\usepackage[margin=2.5cm]{geometry} 

\title{The stochastic Cahn--Hilliard equation in critical spaces}

\author{Simon Bau}
\address[Simon Bau]{University of Konstanz, Konstanz, Germany}
\email{simon.bau@uni-konstanz.de}

\author{Gideon Chiusole}
\address[Gideon Chiusole \orcidlink{0009-0005-3930-2924}]{Technical University of Munich, Munich, Germany}
\email{g.chiusole@tum.de}

\author{Sarah Geiss}
\address[Sarah Geiss]{Technische Universität Berlin, Berlin, Germany}
\email{geiss@math.tu-berlin.de}

\author{Katharina Klioba}
\address[Katharina Klioba \orcidlink{0009-0002-7946-917X}]{Delft University of Technology, Delft, the Netherlands}
\email{k.klioba-1@tudelft.nl}

\author{Tobias Werner}
\address[Tobias Werner]{University of Kassel, Kassel, Germany}
\email{twerner@mathematik.uni-kassel.de}

\thanks{The second author gratefully acknowledges support by the Studienstiftung des deutschen Volkes through a doctoral scholarship and the Marianne-Plehn-Program, and by the TopMath doctoral program. 
The fourth author gratefully acknowledges support by the Alexander von Humboldt foundation through a Feodor Lynen Research Fellowship.}

\date{August 21, 2026}

\begin{document}

\begin{abstract}
    We study stochastic Cahn--Hilliard equations in bounded smooth domains with a double-well potential, transport-type noise, and natural Neumann boundary conditions in dimensions $d\le 4$. By employing stochastic maximal regularity techniques and deriving suitable energy estimates, we prove local and global well-posedness. The initial data considered here are allowed to belong to the critical trace space $B^{d/q-1}_{q,p}$, which is locally invariant under the natural scaling of the Cahn--Hilliard equation. In particular, for arbitrary $\varepsilon>1/3$ one can find $q$ sufficiently large such that uniqueness and global existence of a probabilistically strong solution hold for every initial datum $u_0\in H^{\varepsilon, q}(\sO)$. If $u_0\in H^{1}(\sO)$, then these global solutions have $L_t^2H_x^3 \cap C_tH_x^1$-regularity on finite time intervals.
\end{abstract}

\keywords{stochastic Cahn--Hilliard equation, global well-posedness,
critical spaces, stochastic maximal regularity, energy estimates, rough initial data,
variational methods, stochastic partial differential equations}

\subjclass[2020]{
    Primary 60H15; 
    Secondary 35K91, 
    35K35, 
    35B65} 

\maketitle

\tableofcontents

\section{Introduction}

This paper addresses the local and global well-posedness of stochastic Cahn--Hilliard equations of the form
\begin{equation}\tag{SCH}\label{eq:intro:SPDE}
    \left\{
    \begin{aligned}
        \dd u+ \Delta^2u \dt&= \Delta(u^3-u)\dt+ \sum_{n\ge 1} \psi_n(u,\nabla u)\,\dd W^n(t),&\text{in }& (0,\infty)\times\sO,\\
        \partial_\nu u&=\partial_\nu\Delta u=0, &\text{on }&(0,\infty)\times \partial \sO,\\
        u(0)&=u_0 &\text{in }&\sO,
    \end{aligned}
    \right.
\end{equation}
where $\sO\seq \R^d$, $d\in\{1,2,3,4\}$, is a bounded $C^\infty$ domain.
The noise terms under consideration admit the following transport-type structure, with an additional non-transport contribution:
\begin{equation}
    \psi_n(u,\nabla u)= \psi_n^0(u) + \psi_n^1\cdot\nabla u.
\end{equation}
Here, $(\psi^0_n)_{n\geq 1}$ satisfies suitable linear growth and Lipschitz assumptions, while $(\psi_n^1)_{n\ge 1}\in \ell^2(\R^d)$.
Our results allow initial data in the Besov space $B_{q,p}^{d/q-1}(\sO)$, which is the \emph{critical trace space} associated with the natural local scaling of the equation. Critical spaces are well-studied in the deterministic literature for a broad class of PDEs including the Navier--Stokes equations, the Allen--Cahn equation, and the Cahn--Hilliard equation; see~\cite{prussCriticalSpacesQuasilinear2018}. Our methods are based on stochastic maximal regularity techniques, in particular the framework of SPDEs in critical spaces developed in~\cite{agrestiNonlinearParabolicStochastic2022,agrestiNonlinearParabolicStochastic2022a, agrestiCriticalVariationalSetting2024a}, and energy methods adapted to fourth-order equations.

\subsection{Deterministic setting}
The Cahn--Hilliard equation, originally introduced by Cahn and Hilliard in the foundational papers~\cite{CahnHilliard1958, CahnHilliard1959, Cahn1961}, is a fundamental model for describing phase separation phenomena in binary mixtures.

Suppose we have $T \in (0,\infty)$, $m>0$ and consider a bounded domain $\sO \seq \mathbb{R}^d$ with sufficiently smooth boundary $\partial\sO$. The classical Cahn--Hilliard equation is given by
\begin{equation}
\left\{
\begin{aligned}
\partial_t u &= m \Delta  \mu && \text{in } \sO \times (0,T),\\
\mu &= - \Delta u+ \Phi'(u) && \text{in } \sO \times (0,T),
\end{aligned}
\right.
\label{CH}
\end{equation}
where the unknown function $u$ denotes the phase-field or order parameter with respect to the physical quantity of interest, e.g.~ volume or mass. 
Moreover, $\mu$ is the so-called chemical potential, and the function $\Phi$ is the free-energy density, commonly referred to as the bulk potential.
To be more precise, the phase field function $u$ describes the difference $u_1-u_2$ of the two local concentrations $u_1$ and $u_2$. After rescaling, one typically assumes $u_i\in [0,1]$ for $i=1,2$, $u_1+u_2=1$, and thus $u\in [-1,1]$. Then the regions $\{x\in \sO\colon u(x) = 1\}$  and $\{x\in \sO \colon u(x) = -1\}$ correspond to the pure phases of the two components in the underlying mixture and $\{x\in \sO \colon -1<u (x) < 1\}$ is the diffuse interface between them. The bulk potential $\Phi$ is generally of double-well form and may depend on the physical context. A typical choice is the
smooth double-well potential 
\begin{align*}
    \Phi(s)\coloneq \frac{1}{4}(1-s^2)^2,\qquad s\in\R,
\end{align*}
which is a polynomial approximation of the physically motivated singular logarithmic potential appearing in the original Cahn--Hilliard model~\cite{CahnHilliard1958}. The mobility $m$ is usually chosen as a positive constant, but can also be a concentration dependent non-negative function $m=m(u)$.
A treatment of \eqref{CH} with a degenerate mobility as well as a singular potential can be found in~\cite{Garcke1996}.
In case of a non-degenerate mobility coefficient, the Cahn--Hilliard equation is a fourth-order parabolic equation and thus has to be equipped with suitable boundary and initial conditions. A typical choice is given by the homogeneous Neumann conditions
\begin{alignat}{4}
    \partial_\nu \mu &= 0 \quad & \text{on } \partial\sO\times(0,T) \qquad \text{ and } \qquad
    \partial_\nu u&= 0 \quad & \text{on } \partial\sO\times(0,T),
\end{alignat}
along with the initial condition $u(0)=u_0$.
Here, $\nu$ denotes the outward unit normal vector to $\partial\sO$.
The first boundary condition is a no-flux boundary condition and is related to mass conservation of the system. Furthermore, adding the second boundary condition ensures that the Cahn--Hilliard free energy functional 
is non-increasing in time; cf.\ the review article~\cite{Wu2022}, which also discusses other possible boundary conditions. For a book solely devoted to the Cahn--Hilliard equation in the deterministic setting, we refer to~\cite{Miranville2019}.

\subsection{Stochastic setting}
\label{subsec:introStochasticLiterature}
Stochastic models of the Cahn--Hilliard equation, also known as the Cahn--Hilliard--Cook equation, were first discussed in~\cite{Cook1970}. From a physical perspective, the stochastic perturbation may account for thermal fluctuations arising during the phase separation process. The mathematical analysis of stochastic versions of the Cahn--Hilliard equation dates back to Elezovi\'c and Mikeli\'c~\cite{Elezovic91} for additive trace-class noise as well as Da Prato and Debussche~\cite{DaPratoDebusche1996} for additive space-time white noise. 
Subsequently, further regularity properties of solutions were investigated in~\cite{Cardon-Weber2001,Cardon-Weber2002}, while aspects of the qualitative behaviour of stochastic Cahn--Hilliard dynamics were studied in~\cite{Bloemker2001,Bloemker2008}.
Stochastic Cahn--Hilliard equations with reflection and singular logarithmic potentials were analysed in~\cite{Debusche2007,Goudenege2009,Debusche2011} by means of so-called reflection measures. For space-time white noise, existence of mild solutions for multiplicative bounded Lipschitz noise has been established in~\cite{AntonopoulouKarali}, which was later generalised to unbounded Lipschitz noise of sublinear growth in~\cite{AnatonopoulouKaraliMillet16}. The case of Lévy noise was studied in~\cite{Bo2006}. 
Global well-posedness of probabilistically strong solutions has been established by Scarpa~\cite{Scarpa2018} on smooth domains in $\R^d$, $d\in \{2,3\}$, with homogeneous Neumann boundary conditions and potentially singular potentials. Existence of martingale solutions was obtained for further generalisations to the case of degenerate mobility in~\cite{Scarpa2021b}. More recently, global well-posedness for transport noise and constant mobility as well as higher moment energy estimates for Fourier noise both in dimension $3$ were studied in~\cite{diPrimioPapiniScarpa26} and~\cite{Antonopoulou23}, respectively.
Global well-posedness results have also proved useful in studying nonlocal-to-local limits of stochastic Cahn--Hilliard equations~\cite{diprimioStochasticNonlocalCahn2026}.\\
From the perspective of stochastic maximal $L^p$-regularity as developed in the papers~\cite{agrestiNonlinearParabolicStochastic2022,agrestiNonlinearParabolicStochastic2022a} by Agresti and Veraar, it is more natural to formulate the stochastic version of the system~\eqref{CH} as a fourth-order parabolic SPDE
\begin{equation}
\label{eq:stochastic_CH}
    \rmd u = [-\Delta^2u+\Delta(u^3-u)
    ]\dt + \sum_{n \geq 1} \psi_n(u,\grad u)\,\rmd W^n,\quad u(0)=u_0.
\end{equation}
Here, the series on the right-hand side represents the noise term, which we will specify later. The linear dynamics are driven by the Bi-Laplacian endowed with the (formally equivalent) boundary conditions
\begin{alignat}{2}
    \partial_\nu \Delta u &= 0, \quad & \text{on } \partial\sO\times(0,T), \label{bc_mu_new} \\
    \partial_\nu u&= 0, \quad & \text{on } \partial\sO\times(0,T).\label{bc_phi_new}
\end{alignat}
    Local well-posedness results for \eqref{eq:stochastic_CH} by means of stochastic $L^p(L^q)$-theory were first established in~\cite[Section~7.3]{agrestiNonlinearParabolicStochastic2022}. The authors study the so-called almost very weak setting, which 
can be seen as a general weak solution concept in the spirit of the extrapolation techniques illustrated in~\cite[Chapter~V]{amannLinearQuasilinearParabolic1995}.
A treatment of \eqref{eq:stochastic_CH} in the critical variational setting is given in~\cite{agrestiCriticalVariationalSetting2024a}, where global well-posedness for $d=1,2$ is obtained in suitable Hilbert spaces.
The precise formulations of the obtained results are recalled in Subsection~\ref{weak_setting} for Hilbert spaces and in Section~\ref{section_rough_initial_data} for general Banach spaces.

\subsection{Scaling and criticality}
Before we state our main results, we provide intuition for the local scaling of the Cahn--Hilliard equation on $\R^d$, $d\ge 1$, motivating the notion of critical spaces. 

Let $u$ be a solution to the deterministic equation $\partial_t u +\Delta^2u = \Delta u^3$ with some initial value $u_0$, where we have omitted the lower-order term $-\Delta u$ because it is not relevant for the critical scaling.
For $\lambda>0$, write $J_\lambda u(t,x)\ce  u(\lambda t, \lambda^{1/4}x)$ as well as $J_\lambda u_{0}(x)\ce u_0(\lambda^{1/4}x)$, and  for some $\alpha\in\R$ let $u_\lambda\ce \lambda^{\alpha} J_\lambda u, $ $u_{0,\lambda}\ce \lambda^{\alpha}J_\lambda u_0$. 
The chain rule implies that 
\[\begin{aligned}
    (\partial_t +\Delta^2) u_\lambda &= \lambda^{\alpha+1} J_\lambda(\partial_t+\Delta^2)u\quad \text{and}\quad 
    \Delta(u_\lambda^3)= \lambda^{3\alpha+\frac{1}{2}}J_\lambda(6  u |\nabla u|^2 +3 u^2\Delta u).
\end{aligned}\]
Hence, both sides of the equation have the same scaling if we set $\alpha=1/4$. Moreover, 
\[\|u_\lambda(t)\|_{L^q(\R^d)}= \lambda^{\frac14-\frac{d}{4q}}\nrm{u(\lambda t)}_{L^q(\R^d)}\quad\text{and}\quad\nrm{u_\lambda(t)}_{\Dot{B}^{\sigma}_{q,p}(\R^d)}\eqsim\lambda^{\frac14+\frac{1}{4}(\sigma-\frac{d}{q})}\nrm{u(\lambda t)}_{\Dot{B}_{q,p}^\sigma(\R^d)}.\]
Note the scaling invariance of these Lebesgue and homogeneous Besov norms for $q=d$ and $\sigma = d/q-1$, respectively. With this choice, the \emph{Sobolev index} of both spaces is given by $-1$, independently of $p$, $q$, or $d$. The inhomogeneous space $B^{d/q-1}_{q,p}(\R^d)$ is often called \emph{critical} in the PDE literature, cf., e.g.~\cite{prussCriticalSpacesQuasilinear2018}.

\subsection{Main results}

The following theorem summarizes our main results concerning global well-posedness and a priori estimates for probabilistically strong solutions of the stochastic Cahn--Hilliard equation. More detailed statements under weaker assumptions, in particular allowing spatially and temporally dependent noise, are given in Theorem~\ref{thm:GWP:weak:setting} for local well-posedness, Theorems~\ref{thm:global_well_posedness_dim_3} and \ref{thm:global_well_posedness_dim_4-complete} for global well-posedness in dimensions three and four, respectively. Theorem~\ref{gwp:rough:initial:data} contains the transfer of the blow-up criterion and Corollary~\ref{cor:GWP:rough} combines these results to obtain global well-posedness for rough initial data. For a local well-posedness result in arbitrary dimensions $d\in\N$ under further weakened assumptions on the noise, see Theorem~\ref{lwp:rough:initial:data}.

Spaces with left subscript $\nu$ indicate suitable Neumann boundary conditions depending on the regularity of the space. Besov, Bessel potential, and anisotropic Hölder spaces are denoted by $B_{q,p}^s$, $H^{s,p}$ or $H^s=H^{s,2}$, and $C^{\alpha,\beta}$, respectively. We work with weighted Lebesgue spaces with the power-law weight $w_\kappa(t)\ce t^\kappa$ for $t>0$, which coincide with the unweighted Lebesgue spaces for $\kappa=0$. For the definitions of the spaces $C^{\alpha,\beta}_\loc$, $H^{\theta,p}_\loc([0,\infty),w_\kappa;X)$ and $\bpsn{s,q} $ and $_\nu B^{s}_{q,p}(\sO)$, we refer to Subsections~\ref{subsec:notation} and~\ref{subsec:function_spaces}, respectively.  

\begin{theorem}\label{thm:introMain}
    Let $d\in \{3,4\}$, and let $\sO\seq\R^d$ be a bounded smooth domain. 
Suppose that 
  \begin{equation*}
      \psi=(\psi_n)_{n\geq 1}:\R^{1+d}\to \ell^2,\quad \psi(y,z) \ce \psi^0(y)+\psi^1\cdot z,
  \end{equation*}
  with $\psi^0=(\psi^0_n)_{n\geq 1}\in C^1(\R;\ell^2)$ and $\psi^1\in \ell^2(\R^d)$.
 Moreover,  assume that 
 there exists a constant $L\geq0$ such that for all $y,y'\in\R$, 
    \begin{align*}
                \|\partial_y
            \psi^0(y)-\partial_y\psi^0(y')
        \|_{\ell^2}
        \leq L|y-y'| \quad\text{ and }\quad
        \|\psi^0(y)\|_{\ell^2}
        \leq L(1+|y|).
    \end{align*}
    Suppose that one of the following two cases holds.
    \begin{enumerate}[label=(\alph*)]
        \item \label{case:pq2} Let $p=q=2$ and $s=\kappa=0$.
        \item \label{case:pqnot2} Let $p\in (2,\infty)$, $ \kappa\in [0,p/2-1)$, $s\in [0,1)$, and  $q\in [2, \frac{2d}{2+s})$ satisfy
    \begin{equation}\label{eq:thm:introMain}
        \frac{1+\kappa}{p}\le 1-\frac{1}{4}\Big(s+\frac{d}{q}\Big).
    \end{equation}
    \end{enumerate}
     Then for all $u_0\in L^0_{\mathscr{F}_0}(\Omega;\,_\nu B^{3-s-4(1+\kappa)/p}_{q,p}(\sO))$ there exists a global solution $u$ to~\eqref{eq:intro:SPDE}, which is unique in the class $L^p_\loc([0,\infty),w_\kappa;\bpsn{3-s,q})$. In particular, in case \ref{case:pq2}, this holds for every $u_0\in L_{\F_0}^0(\Omega;H^1(\sO))$. In case \ref{case:pqnot2}, for any $\varepsilon>1/3$, one can choose suitable $p$ and $q$ such that this holds for all $u_0\in L_{\F_0}^0(\Omega; \nH^{\varepsilon, q}(\sO))$. Moreover, it holds that a.s.
    \begin{align*}
				u&\in H^{\theta,r}_{\loc}((0,\infty); \nH^{3-4\theta,\zeta}(\sO)),  \qquad \zeta\in [q,\infty),\, r\in (2,\infty),\, \theta\in [0,1/2),\\
                u&\in C_{\loc}^{\theta,6\theta}((0,\infty)\times \sO), \qquad\qquad\qquad\theta\in (0,1/2),
			\end{align*}
   and for each $0<t<T<\infty$,  whenever $u(t)\in L^4(\Omega;L^4(\sO))\cap L^2(\Omega;H^1(\sO))$, there is a constant $C_T\ge 0$ independent of $u(t)$ such that 
    \begin{equation}
    \label{eq:aprioriIntro}
        \E\nrm{u}^2_{C([t,T];H^1(\sO))}\le  C_{T} \big(1+\E\nrm{u(t)}_{L^4(\sO)}^4+\E\nrm{u(t)}_{H^1(\sO)}^2\big)<\infty.
    \end{equation}
    In case \ref{case:pq2}, it holds that $u\in L_\loc^2([0,\infty);\nH^3(\sO))\cap C_\loc([0,\infty);H^1(\sO))$ a.s.\ and \eqref{eq:aprioriIntro} also holds for $t=0$ provided that $u_0\in L^4(\Omega;L^4(\sO))\cap L^2(\Omega;H^1(\sO))$.
\end{theorem}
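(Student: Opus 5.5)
The plan is to treat Theorem~\ref{thm:introMain} as a summary and assemble it from three ingredients: local well-posedness with a blow-up criterion, an energy estimate that rules out blow-up, and a transfer to rough data via instantaneous regularization.

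First, local existence and uniqueness. I would write \eqref{eq:intro:SPDE} as an abstract stochastic evolution equation $\dd u + A u\dt = F(u)\dt + B(u)\dd W$. Here $A=\Delta^2$ carries the Neumann conditions and acts on the scale $\bpsn{s,q}$. The nonlinearity is $F(u)=\Delta(u^3-u)$ and the noise operator is $B(u)=(\psi_n(u,\nabla u))_n$. I would work in the almost very weak setting $X_0=\bpsn{-1-s,q}$, $X_1=\bpsn{3-s,q}$, with weight $w_\kappa$.

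\begin{itemize}
\item \emph{Linear part.} Stochastic maximal $L^p$-regularity of $A+\text{(transport part of }B)$ holds because $\psi^1$ is constant and $\ell^2$-small relative to the parabolicity. This follows from $H^\infty$-calculus of $\Delta^2$ plus a perturbation argument.
\item \emph{Nonlinear estimates.} $F$ and $B$ satisfy the critical-space estimates of Agresti--Veraar. For the cubic term this means
\[
\|\Delta(u^3-v^3)\|_{X_0}\lesssim (1+\|u\|_{X_\beta}^2+\|v\|_{X_\beta}^2)\|u-v\|_{X_\beta},
\]
with the interpolation index $\beta$ found by Sobolev embedding. Condition \eqref{eq:thm:introMain} is exactly the criticality restriction $\frac{1+\kappa}{p}\le \frac{3}{2}(1-\beta)$. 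The bound $q<2d/(2+s)$ ensures the negative-order space is well defined under the Neumann conditions.
\item \emph{Conclusion.} The abstract theory then gives a maximal local solution, unique in $L^p_\loc(w_\kappa;X_1)$, together with the blow-up criteria.
\end{itemize}

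Case \ref{case:pq2} is the critical variational setting $V=\nH^1$, $H=\nH^{-1}$, i.e.\ Theorem~\ref{thm:GWP:weak:setting}.

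Second, regularity. I would use the bootstrap results of Agresti--Veraar: instantaneous regularization in time weights, iterated in $\zeta$, $r$ and $\theta$. This gives $u\in H^{\theta,r}_\loc((0,\infty);\nH^{3-4\theta,\zeta})$ for all $\zeta<\infty$. The anisotropic Hölder regularity $C^{\theta,6\theta}$ then follows from Sobolev embedding, since $3-4\theta-d/\zeta\to 3-4\theta$ as $\zeta\to\infty$ and the time–space ratio is $1/4$ per derivative. In particular, for every $t>0$ we have $u(t)\in H^1\cap L^4$ a.s. on $\{t<\sigma\}$, where $\sigma$ is the lifetime.

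Third, global existence via the energy. I would apply Itô's formula to the Ginzburg--Landau energy
\[
E(u)=\tfrac12\|\nabla u\|_{L^2}^2+\int\Phi(u).
\]
In the $H^{-1}$ geometry the drift gives $-\|\nabla\mu\|_{L^2}^2$ with $\mu=-\Delta u+u^3-u$. The Itô correction contains the term $\tfrac12\sum_n\|\nabla\psi_n\|^2$ and the terms $\tfrac12\sum_n\int \Phi''(u)\psi_n^2$. The dangerous pieces are the highest-order transport contributions, namely $\sum_n|\psi^1_n\cdot\nabla\nabla u|^2$ from the gradient part, which must be compensated. I would handle them in one of two ways. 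Either apply Itô to $\|\nabla u\|^2$ and cancel against the transport part of the martingale, using that $\psi^1_n$ is constant so that $\psi^1_n\cdot\nabla$ is skew modulo boundary terms; or absorb them into $\|\nabla\mu\|^2\gtrsim\|u\|_{H^3}^2-\text{l.o.t.}$ via interpolation. The quartic term $3u^2|\psi^1_n\cdot\nabla u|^2$ is controlled by
\[
\|u\|_{L^6}^2\,\|\nabla u\|_{L^3}^2\lesssim \|u\|_{H^1}^{\dots}\,\|u\|_{H^3}^{\dots},
\]
and since $d\le4$ the exponents allow Young's inequality, absorbing the $H^3$ part into the dissipation.

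Stopping at the blow-up times, applying BDG to the martingale and then Gronwall gives \eqref{eq:aprioriIntro} with the $L^4$ and $H^1$ initial terms. This bounds $u$ in $L^2(H^3)\cap C(H^1)$, which excludes the blow-up criterion in the variational case. In case \ref{case:pqnot2}, the criterion is transferred by restarting at a small positive time (Theorem~\ref{gwp:rough:initial:data}), using the regularity from the second step.

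Finally, for $\varepsilon>1/3$, I would choose $q$ large and $p$, $\kappa$ so that $3-s-4(1+\kappa)/p$ is slightly below $\varepsilon$, and then use the embedding $H^{\varepsilon,q}\hookrightarrow B^{\cdot}_{q,p}$. The constraint $s<1$ together with \eqref{eq:thm:introMain} gives $3-s-4(1+\kappa)/p\ge -1+d/q$, which can approach $d/q-1+\dots$. I would verify this arithmetic carefully in the proof.

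I expect the main obstacle to be the energy estimate in $d=4$. There the cubic nonlinearity is exactly critical for $H^1$, so the Young absorption of the quartic Itô term is borderline. I expect to need a slightly different functional, for instance adding $\|u\|_{L^4}^4$ with moments, or a sharper Gagliardo--Nirenberg inequality relying on mass conservation of the mean.
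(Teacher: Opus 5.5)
Your architecture is the paper's: local theory with $X_0=\nH^{-1-s,q}(\sO)$, $X_1=\nH^{3-s,q}(\sO)$, instantaneous regularisation, restart at $t_0>0$, and energy bounds against the blow-up criterion. The gap is in the energy step, which carries the whole global statement and does not close as you set it up. You control the Itô corrections $\int_{\sO}u^2|\psi^1\cdot\grad u|^2\dx$ and $\frac12\sum_n\|\grad(\psi^1_n\cdot\grad u)\|_{L^2}^2\lesssim\|D^2u\|_{L^2}^2$ by interpolating against $\|u\|_{H^3}$ and absorbing with Young into a dissipation $\|\grad\mu\|_{L^2}^2\gtrsim\|u\|_{H^3}^2-(\text{lower order})$. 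By homogeneity this is never linear in the energy. A quartic term bounded by $\|u\|_{H^1}^{4-b}\|u\|_{H^3}^{b}$ with $b<2$ leaves after Young a remainder $\|u\|_{H^1}^{2(4-b)/(2-b)}$, whose exponent exceeds $2$; your $L^6$--$L^3$ splitting in $d=3$ gives $\|u\|_{H^1}^{14/3}\sim E^{7/3}$. Worse, the lower bound $\|\grad\mu\|^2\gtrsim\|u\|_{H^3}^2-\dots$ itself requires absorbing $\|u^2\grad u\|_{L^2}^2$. In $d=3$ this costs $\|u\|_{H^1}^{10}$; in $d=4$ it is of exactly critical size $\|u\|_{H^1}^4\|u\|_{H^3}^2$ and cannot be absorbed without smallness. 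A stochastic Gronwall inequality with superlinear growth does not exclude blow-up, and it can never produce \eqref{eq:aprioriIntro}, which is linear in $\E\|u(t)\|_{L^4}^4+\E\|u(t)\|_{H^1}^2$. Your cancellation alternative also fails. The correction $\frac12\sum_n\|\grad(\psi^1_n\cdot\grad u)\|^2$ is a nonnegative $\mathrm{d}t$-term, which no martingale can cancel. Moreover, $\psi^1_n\cdot\grad$ is not skew on $\sO$, since boundary integrals such as $\frac12\int_{\partial\sO}(\psi^1_n\cdot\nu)|\grad u|^2\,\mathrm{d}S$ survive.

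The missing ingredient is a separate, \emph{linear} $L^2$-energy estimate (Lemma~\ref{lem:L2energyEstimate}). Itô's formula for $\|u\|_{L^2}^2$ has drift $-2(\grad u,\grad\mu)_{L^2}=-2\|\Delta u\|^2-6\|u\grad u\|^2+2\|\grad u\|^2$. Its correction is only $C(1+\|u\|^2+\|\grad u\|^2)$, and $\|\grad u\|^2\le\varepsilon\|\Delta u\|^2+C_\varepsilon\|u\|^2$. A linear stochastic Gronwall argument then gives $\int_0^{T\wedge\sigma}(\|\Delta u\|^2+\|u\grad u\|^2)\ds<\infty$ a.s.\ and in expectation. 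After $|u|_{H^2}\lesssim\|\Delta u\|+\|u\|$, these are exactly the terms the Itô corrections produce in the $L^4$- and $H^1$-formulas. Hence $\frac14\|u\|_{L^4}^4+\frac12\|\grad u\|^2$ satisfies a linear Gronwall inequality with a.s.\ finite forcing, and no $H^3$-control is needed. Equivalently, $\|\Delta u\|^2+3\|u\grad u\|^2=(\grad u,\grad\mu)+\|\grad u\|^2\le\varepsilon\|\grad\mu\|^2+C_\varepsilon\|\grad u\|^2$.

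In $d=4$ this yields $\sup_t\|u\|_{H^1}$ and $\int\|\grad\mu\|^2$, but the critical blow-up criterion also needs $\int\|u\|_{H^3}^2$, which you leave open. The paper obtains it afterwards and pathwise (Lemma~\ref{lem:globalWP_d4_claims}). It combines $\|\Delta u\|^2\le\|\mu\|^2+2\|\grad u\|^2$ with the Gagliardo--Nirenberg bound $\|u^2\grad u\|^2\lesssim\|u\|_{H^1}^3(\|\Delta u\|^2+\|u\|_{H^1}^2)(\|\grad\Delta u\|+\|u\|_{H^1})$. It also uses $\|\mu\|^4\le\|\mu\|_{(H^1)^*}^2\|\mu\|_{H^1}^2\lesssim\|\grad\mu\|^2(1+\|u\|_{H^1}^6)+1+\|u\|_{H^1}^{12}$. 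Young then absorbs half of $\|\grad\Delta u\|^2$, and the high powers of $\sup_t\|u\|_{H^1}$ are harmless because they are already a.s.\ finite. Mass conservation is not available to help here, since $\int_{\sO}\psi_n\dx\neq0$ in general.

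You also need to justify Itô's formula for $\|u\|_{L^4}^4$. Standard results do not cover it at the regularity $C_tH^1\cap L^2_tH^3$; the paper uses a Yosida approximation (Proposition~\ref{prop:itoformulaL4}).

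Finally, case~(a) is the weakened setting $V=\bpsn{3}$, $H=H^1(\sO)$ of Theorem~\ref{thm:localWP_weakened}. It is not $V=\nH^1$, $H=\nH^{-1}$, which is supercritical for $d\ge3$. Nor is it Theorem~\ref{thm:GWP:weak:setting}, which is the $L^2$-setting for $d\le2$. The transport noise needs no smallness: it is of first order, so it is a Lipschitz lower-order part of $G$ mapping $X_{3/4}$ into $\gamma(\ell^2,X_{1/2})$.
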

One of the main assertions of Theorem~\ref{thm:introMain} is the existence of a unique global solution to~\eqref{eq:intro:SPDE} for all $u_0\in\,_\nu B_{q,p}^{3-s-4(1+\kappa)/p}(\sO)$. 
We first observe that this implies that, for every $\varepsilon>1/3$, one can find suitable $p$ and $q$ such that the same conclusion holds for all $u_0\in L_{\F_0}^0(\Omega;\, \nH^{\varepsilon, q}(\sO))$ as claimed. Indeed, let $\varepsilon \in (1/3, 1/2)$ and $\theta \in (1/3, \varepsilon)$. Now choose $s=2/3$, $\kappa = 0$, $q = d/(1+\theta)$ and assume equality in \eqref{eq:thm:introMain}, which yields $p=12/(7-3\theta)$. It can be checked that these choices are admissible and the embedding $\nH^{\varepsilon, q}(\sO) \hookrightarrow\,_\nu B^{d/q-1}_{q,p}(\sO)$ holds true, which implies the claim.

The admissible regularity of the initial condition is controlled by the choices of the parameters $p$, $\kappa$, $s$, and $q$, whose roles we discuss now more precisely. A larger choice of $\kappa$ corresponds to a stronger admissible degeneracy at $t\sim 0$ and allows for less regular initial data. This mechanism reaches its limit at
\begin{equation*}
\kappa=\kappa_c\ce \Big(
1-\frac{1}{4}\Big(s+\frac{d}{q}\Big)\Big)p-1,
\end{equation*}
where~\eqref{eq:thm:introMain} holds with equality. For $\kappa=\kappa_c$, the initial trace space is $_\nu B_{q,p}^{d/q-1}(\sO)$ -- the critical space suggested by the scaling of the deterministic Cahn--Hilliard equation. For $p=q=2$ and $s=\kappa=0$, the initial trace space in Theorem~\ref{thm:introMain} is $\bpsn{1}$. This space is \emph{subcritical} for $d=3$ and \emph{critical} for $d=4$.
The critical case $\kappa=\kappa_c$ illustrates how the freedom in selecting $q$ translates into flexibility in the smoothness index of the trace space $_\nu B_{q,p}^{d/q-1}(\sO)$. More precisely, the relations given by~\eqref{eq:thm:introMain}, $\kappa_c<p/2-1$, and $q\in [2,\frac{2d}{2+s})$ ensure $d/q-1 \in (1/3,d/2-1]$, which implies that, by choosing  suitable $p$, $s$, and $\kappa$ as well as sufficiently large $q$, the critical smoothness may be chosen  in $(1/3,1/2]$ for $d=3$ and in $(1/3,1]$ for $d=4$. As a consequence, we can upgrade the global well-posedness in $d=3$ for $u_0\in L_{\F_0}^0(\Omega;H^1(\sO))$ from Section \ref{sec:globalWP} to rougher initial data $u_0\in L_{\F_0}^0(\Omega;H^{1/2}(\sO))$.

Another main assertion of Theorem~\ref{thm:introMain} is instantaneous regularisation of the solution, meaning that the low regularity of the initial condition has no effect on the regularity of the solution for positive times. In particular, for any $t_0>0$, it holds that $u(t_0)\in H^1(\sO)$, and it is shown in Section~\ref{section_rough_initial_data} that $u|_{[t_0,\infty)}$ agrees with the unique solution obtained from variational techniques.

Regarding the proof of global well-posedness in case \ref{case:pq2}, we work in the variational setting with the Gelfand triple $(\bpsn{3},\bpsn{1},\bpsn{-1})$. The trace space $\bpsn{1}$ already being \emph{critical} in $d=4$ complicates the verification of the blow-up criterion in $d=4$, which then contains an additional term due to criticality. Deducing an energy estimate for the latter based on the subcritical energy estimates can be interpreted as verifying a \emph{nonlinear elliptic maximal regularity estimate} for solutions of the parabolic problem~\eqref{eq:intro:SPDE} explicitly. The proof strategy for global well-posedness in the weakened setting with $p=q=2$ via suitable energy estimates in dimensions $d\in \{3,4\}$ is outlined in Subsection~\ref{subsec:overviewglobalWP}.

\subsection{Main contributions}
\label{subsec:contributions}

We first comment on the novelty of our results obtained using only the critical variational setting, i.e.\ Hilbert-space techniques, in Section \ref{sec:globalWP}, before discussing further improvements via $L^p(L^q)$-theory. 

Regarding global well-posedness in dimension $3$, building on the existence and uniqueness results of~\cite{Scarpa2018}, $H^1$- and, under higher regularity assumptions, $H^2$-moment estimates were established for multiplicative Fourier noise in~\cite{Antonopoulou23}. Our results in $d=3$ via Hilbert-space techniques, cf.\ Theorem~\ref{thm:global_well_posedness_dim_3}, generalise the a priori estimates of~\cite[Thm.~2.2]{Antonopoulou23} with $\varepsilon=p=1$ to more general noise: in our setting the noise may depend on $\grad u$ and more freely on $u$, and less regular trace-class noise is covered. During the preparation of this manuscript, global well-posedness for transport noise and constant mobility was independently obtained in~\cite{diPrimioPapiniScarpa26}, including potentials with a singular part. For the regular Allen--Cahn-type potential considered here, we obtain energy estimates in stronger norms, since transport noise is covered by Theorem \ref{thm:introMain} with $\psi^0=0$.
    
Much less is known about $H^1$-global well-posedness of the stochastic Cahn--Hilliard equation with multiplicative noise in the critical dimension $4$. Global existence of martingale solutions on smooth domains in $d=4$ has been established in~\cite[Thm.~2.2]{Scarpa2021b} for deterministic initial values $u_0\in H^1$, but their regularity statements do not apply to the cubic nonlinearity considered here and pathwise uniqueness is not shown. As far as the authors are aware, the present manuscript is the first to establish global well-posedness for probabilistically strong solutions with initial data in $H^1$ for the stochastic Cahn--Hilliard equation in dimension four, both for gradient-independent noise of linear growth ($\psi^1=0$) and for transport noise; see Theorem~\ref{thm:global_well_posedness_dim_4-complete}. 

To the best of our knowledge,~\cite[Section 7.3]{agrestiNonlinearParabolicStochastic2022}
provides the only previous treatment of the stochastic Cahn--Hilliard equation within the framework of stochastic $L^p(L^q)$--theory. Therein, a local theory for the \emph{almost very weak setting} is established. Our results in Section~\ref{section_rough_initial_data} go beyond this treatment in several directions. 

Whereas~\cite[Section 7.3]{agrestiNonlinearParabolicStochastic2022} establishes local well-posedness for~\eqref{eq:intro:SPDE}, we obtain global well-posedness by combining the local theory with blow-up criteria from~\cite[Theorem 4.10]{agrestiNonlinearParabolicStochastic2022a} and energy estimates, obtained by means of variational techniques in~\cite{agrestiCriticalVariationalSetting2024a} for $d\in\{1,2\}$ and in Section~\ref{sec:globalWP} for $d\in\{3,4\}$ in the \textit{weakened setting}, which is stronger than the almost very weak setting. The transference principle formulated in Theorem~\ref{gwp:rough:initial:data} then yields global well-posedness of~\eqref{eq:intro:SPDE} for initial values in the \emph{critical} trace space $B_{q,p}^{d/q-1}(\sO)$ whenever $d\in \{1,\dots,4\}$; see Corollary~\ref{cor:GWP:rough}.

\subsection*{Overview}
The rest of this paper is organised as follows. In Section~\ref{sec:preliminaries}, we introduce the functional-analytic background for our analysis of~\eqref{eq:intro:SPDE} as a stochastic evolution equation. In Section~\ref{sec:variational:lwp}, we prove local well-posedness for~\eqref{eq:intro:SPDE} in variational settings, which we extend to global well-posedness in Section~\ref{sec:globalWP} by deriving suitable energy estimates to verify appropriate blow-up criteria. In Section~\ref{section_rough_initial_data}, we extend the local well-posedness theory to rougher settings with the help of $L^p(L^q)$-theory and connect this via regularisation techniques to the variational settings.

\subsection*{Acknowledgements}

The authors wish to thank Antonio Agresti and Mark Veraar for organising the Oberwolfach Seminar ``Stochastic Partial Differential Equations in Critical Spaces'' from which this project originated based on their proposal.  Further, the authors would like to thank Antonio Agresti, Max Sauerbrey, and Mark Veraar for helpful discussion and comments. 

\section{Preliminaries}\label{sec:preliminaries}
\subsection{Notation}\label{subsec:notation}

Throughout the paper, $\sO\seq\R^d$ denotes a bounded domain (i.e.\ an open, connected, and bounded subset) with $C^{\infty}$-boundary, $\nu$ the outward unit normal field on $\partial\sO$, and $\partial_\nu=\nu\cdot\grad$ the associated normal derivative.
For nonnegative quantities $a$ and $b$, we write $a\lesssim b$ if $a\le Cb$ for some constant $C>0$ whose dependence on the relevant parameters is indicated when needed, and $a\eqsim b$ if $a\lesssim b\lesssim a$.

Let $(S,\Sigma,\mu)$ be a measure space, let $1\leq p<\infty$, and let $X$ be a Banach space. Then $L^p(S,\mu;X)$ stands for the space of all (equivalence classes of) strongly $\mu$-measurable, $p$-inte\-grable functions $f\colon S\to X$. 
	Moreover, $L^\infty(S,\mu;X)$ stands for the space of all (equivalence classes of) strongly $\mu$-measu\-rable functions $f\colon S\to X$ that are $\mu$-essentially bounded.
	Throughout the paper, if $X=\R$, then $X$ is omitted from the function spaces.
	If $v\colon S\to[0,\infty]$ is measurable, then $v\mu$ stands for the measure with density $v$ 
	with respect to $\mu$; $\mu$ is omitted if it is the Lebesgue measure or a probability measure. 
For a sub-$\sigma$-algebra $\sG\subseteq\Sigma$ we write $L^0_\sG(S,\mu;X)$ for the set of strongly $\mu|_\sG$-measurable functions $f\colon S\to X$.

For an interval $I=(a,b)$, $0\le a<b\le \infty$, and $\kappa\in [0,\frac{p}{2}-1)\cup\{0\}$, we write $w_\kappa^a(t)\ce (t-a)^\kappa$, $t>a$, and $w_\kappa \ce w_\kappa^0$.
For $k\in\N$, $W^{k,p}(I,w_\kappa^a;X)$ denotes the weighted Sobolev space of all $f \in L^p(I,w_\kappa^a;X)$ such that $D^\ell f \in L^p(I,w_\kappa^a;X)$ for all $\ell\le k$. For $s\in(0,1)$, we define 
\begin{equation*}
    H^{s,p}(I,w_\kappa^a;X)\ce [L^p(I,w_\kappa^a;X), W^{1,p}(I,w_\kappa^a;X)]_s.
\end{equation*}
For an interval $J\subseteq \overline{I}$ and  $\mathcal{A}\in \{L^p, H^{s,p}, W^{k,p}\}$, we write $\mathcal{A}_{\textup{loc}}(J,w_\kappa^a;X)$ for the set of all (equivalence classes of) strongly measurable maps $f\colon J\to X$ such that $f\in \mathcal{A}(J',w_\kappa^a;X)$ for all bounded intervals $J'$ with $\overline{J'}\subseteq J$. 
If $G\subseteq \R^d$, then $L^p_\loc(G)$ denotes the space of all $f\colon G\to \R$ such that for all compact subsets $K\subseteq G$, we have $f|_K\in L^p(K)$. Analogous notation is used for other function spaces.  

For $\theta_1,\theta_2\in (0,1)$, $C_\loc^{\theta_1,\theta_2}((a,b)\times G)$ denotes the space of all functions $v\colon (a,b)\times G\to \R$ such that for all $a<c<c'<b$ we have 
$$|v(t,x)-v(t',x')|\lesssim_{c,c'} |t-t'|^{\theta_1}+|x-x'|^{\theta_2}, \quad t,t' \in [c,c'],\,  x,x'\in G.$$
This definition is extended to $\theta_1,\theta_2\ge 1$ by requiring that the partial derivatives $\partial^{\alpha,\beta} v$ (with $\alpha\in \N_0$, $\beta\in \N_0^d$) exist and belong to $C_\loc^{\theta_1-|\alpha|, \theta_2-|\beta|}((a,b)\times G)$ for all $\alpha\le \lfloor \theta_1\rfloor,$ and $\sum_{i=1}^d \beta_i \le \lfloor \theta_2\rfloor$. 

For Banach spaces $X,Y$ and Hilbert spaces $H, H_1, H_2$, we write $\sL(X,Y)$ for the space of bounded linear operators, $\sL_2(H_1,H_2)$ for the space of Hilbert--Schmidt operators, and $\gamma(H,X)$ for the space of $\gamma$-radonifying operators.

\subsection{Stochastic setup}\label{subsec:stochastic_setup}
We fix a filtered probability space $(\Omega,\F,(\F_t)_{t\geq 0}, \mathbb{P})$ satisfying the usual conditions and denote by $\sP$ the progressive $\sigma$-algebra on $[0, \infty)\times\Omega$. Let $(W^n)_{n\geq1}$ be independent standard real-valued $(\F_t)_{t\geq0}$-Brownian motions, and let $W$ denote the associated $\ell^2$-cylindrical Brownian motion constructed from the canonical basis $(e_n)_{n\geq1}$ of $\ell^2$, so that $W(t)e_n=W^n(t)$. For a UMD Banach space $X$, stochastic integrals of progressively measurable $\gamma(\ell^2,X)$-valued processes against $W$ are understood in the sense of~\cite{vanneervenStochasticIntegrationBanach2015}. If $X=H$ is a Hilbert space, then $\gamma(\ell^2,H)=\sL_2(\ell^2,H)$ isometrically~\cite[Ch.~9]{hytonenAnalysisBanachSpaces2017}, and this construction agrees with the classical Hilbert-space stochastic integral. 

\subsection{Stochastic evolution equations}
Let $X_0,X_1$ be UMD Banach spaces of type $2$ and  $X_1\hookrightarrow X_0$. For $\theta\in(0,1)$ and $p\in[1,\infty]$, set
$    X_\theta\ce [X_0,X_1]_\theta
$, $X_{\theta,p}\ce (X_0,X_1)_{\theta,p}$, and $X_{p,\kappa}^{\textup{Tr}}\ce X_{1-\frac{1+\kappa}{p},p}$ for $p\in (1,\infty)$, $\kappa\in (-1,p-1)$. Assume that $A\in \mathscr{L}(X_1,X_0)$.

\subsubsection{Linear stochastic evolution equations}
Throughout, we adopt the relevant notions of solution and stochastic maximal regularity established in~\cite{agrestiNonlinearParabolicStochastic2022,agrestiNonlinearParabolicStochastic2022a,agrestiNonlinearSPDEsMaximal2025}, which we briefly introduce here. 
Let $0\le a < T<\infty$, let $\tau\colon\Omega\to[a,T]$ be a stopping time, and
consider the stochastic evolution equation
\begin{equation}\label{eq:linear-stochastic-evolution}
\left\{
\begin{aligned}
    \dd  u(t)+Au(t)\,\dd t
        &= f(t)\,\dd  t
        +g(t)\,\dd W(t),\quad t\in[a,T],\\
    u(a)&=u_a.
\end{aligned}\right.
\end{equation}

\begin{definition}[Strong solution]\label{def:linear:see:solution:strong}
    Let $f\in L^1(a,\tau;X_0)$ a.s.\ and $g\in L^2(a,\tau;\gamma(\ell^2,X_{1/2}))$ a.s.\ be progressively measurable. A progressively measurable process $u\colon[a,\tau]\times\Omega\to X_0$ is called a \emph{strong solution} to~\eqref{eq:linear-stochastic-evolution} on $[a,\tau]$ if $u\in C([a,\tau];X_0)\cap L^2(a,\tau;X_1)$ a.s.\ and a.s.\ for all $t\in [a,\tau]$ 
    $$u(t)-u_a +\int_a^t Au(s)\, \dd s = \int_a^t f(s)\,\dd s+ \int_a^t g(s)\,\dd W(s).$$
\end{definition}
We recall the notion of stochastic maximal regularity from~\cite{agrestiNonlinearSPDEsMaximal2025}. For a given $s\ge0$, we consider the weight $w_\kappa^s(t)= (t-s)^\kappa$ for $t>s$.
\begin{definition}[Stochastic maximal regularity]
    Let $p\in [2,\infty), \kappa\in [0,p/2-1)\cup\{0\}$.
    \begin{enumerate}
        \item We write $A\in \mathcal{SMR}_{p,\kappa}$ if for all $T\in (0,\infty)$ there exists a constant $C_T\ge 0$ such that for all $a\in [0,T]$, every stopping time $\tau\colon\Omega\to [a,T]$ and every progressively measurable $f\in L^p(\Omega;L^p(a,\tau,w_\kappa^a;X_0))$ and $g\in L^p(\Omega;L^p(a,\tau,w_\kappa^a;\gamma(\ell^2,X_{1/2})))$ there exists a strong solution $u$ to~\eqref{eq:linear-stochastic-evolution} with $u_a=0$ that is unique in $L^p(\Omega;L^p(a,\tau,w_\kappa^a;X_1))$ and satisfies
        \begin{equation}
            \nrm{u}_{L^p(\Omega;L^p(a,\tau,w_\kappa^a;X_1))}\le C_T\nrm{f}_{L^p(\Omega;L^p(a,\tau,w_\kappa^a;X_0))}+C_T\nrm{g}_{L^p(\Omega;L^p(a,\tau,w_\kappa^a;\gamma(\ell^2,X_{1/2})))}.
        \end{equation}
        \item For $p\in(2,\infty)$ and $ \kappa\in [0,p/2-1)$, we write $A\in \mathcal{SMR}_{p,\kappa}^\bullet$ if $A\in \mathcal{SMR}_{p,\kappa}$ and for all $\theta\in(0,1/2)$ and $T\in (0,\infty)$ there exists a constant $C_{T,\theta}\ge 0$ such that for all $[a,b]\seq[0,T]$, for every progressively measurable $f\in L^p(\Omega;L^p(a,b,w_\kappa^a;X_0))$ and $g\in L^p(\Omega;L^p(a,b,w_\kappa^a;\gamma(\ell^2,X_{1/2})))$ the strong solution $u$ to~\eqref{eq:linear-stochastic-evolution} with $u_a=0$ on $[a,b]$ satisfies
        \begin{equation}
            \nrm{u}_{L^p(\Omega;H^{\theta,p}(a,b,w_\kappa^a;X_{1-\theta}))}\le C_{T,\theta}\nrm{f}_{L^p(\Omega;L^p(a,b,w_\kappa^a;X_0))}+C_{T,\theta}\nrm{g}_{L^p(\Omega;L^p(a,b,w_\kappa^a;\gamma(\ell^2,X_{1/2})))}.
        \end{equation}
        \item For $p=2$ and $\kappa=0$, we write $A\in \mathcal{SMR}_{2,0}^\bullet$ if $A\in \mathcal{SMR}_{2,0}$ and for all $T\in (0,\infty)$ there exists a constant $C_T\ge 0$ such that for all $[a,b]\seq [0,T]$, every progressively measurable $f\in L^2(\Omega\times(a,b);X_0)$ and $g\in L^2(\Omega\times(a,b);\gamma(\ell^2,X_{1/2}))$, the strong solution to~\eqref{eq:linear-stochastic-evolution} with $u_a=0$ on $[a,b]$ satisfies
        \begin{equation}
            \nrm{u}_{L^2(\Omega;C([a,b];X_{1/2}))}\le C_T\nrm{f}_{L^2(\Omega;L^2(a,b;X_0))}+C_T\nrm{g}_{L^2(\Omega;L^2(a,b;\gamma(\ell^2,X_{1/2})))}.
        \end{equation}
    \end{enumerate}
\end{definition}
If for $p\in [2,\infty), \kappa\in [0,p/2-1)\cup\{0\}$ an operator $A$ belongs to the class $ \mathcal{SMR}_{p,\kappa}$, it follows that we can consider, in addition, initial values $u_a \in L_{\F_a}^p(\Omega;X_{p,\kappa}^{\textup{Tr}})$ with corresponding regularity estimates, see~\cite[Proposition 3.9]{agrestiNonlinearParabolicStochastic2022a}. 
For more information about stochastic maximal regularity we refer to~\cite{agrestiNonlinearParabolicStochastic2022,agrestiNonlinearParabolicStochastic2022a,agrestiNonlinearSPDEsMaximal2025} and the references given therein. 
In particular, if $X_0$ is isomorphic to a closed subspace of an $L^q$-space and $A$ is viewed as an unbounded linear operator on $X_0$ with domain $\mathsf{D}(A)=X_1$, then a sufficient condition for $A\in \mathcal{SMR}_{p,\kappa}^\bullet$ is that  there exists some $\lambda>0$ such that $\lambda+A$ is sectorial and has a bounded $H^\infty$-calculus of angle $<\pi/2$. In addition, an almost-sure version holds true, where the initial data and the forcing terms are not required to be $L^p$-integrable in $\Omega$, see~\cite[Proposition 3.11]{agrestiNonlinearSPDEsMaximal2025}.

\subsubsection{Semilinear stochastic evolution equations}
For $0\le a<T\le \infty$, we consider semilinear stochastic evolution equations of the form
\begin{equation}\label{eq:semilinear-stochastic-evolution}
\left\{
\begin{aligned}
    \dd  u(t)+Au(t)\,\dd t
        &= F(t,u(t))\,\dd  t
        +G(t,u(t))\,\dd W(t), \quad t\in [a,T],\\
    u(a)&=u_a.
\end{aligned}\right.
\end{equation}
\begin{assumption}\label{Assumption:semilinear:evolution:equation}
    Let $0\le a<T \le\infty$, $p\in[2,\infty)$, and $\kappa\in [0,p/2-1)\cup\{0\}$. Suppose that the mappings $$F\colon \Omega\times[a,T]\times X_1 \to X_0\quad \text{and}\quad G\colon \Omega\times[a,T]\times X_1\to \gamma(\ell^2,X_{1/2})$$ are strongly progressively measurable and satisfy the following local Lipschitz condition: There exist $m\in \N$ and constants $\varphi_j\in (1-\frac{1+\kappa}{p},1)$, $\beta_j\in (1-\frac{1+\kappa}{p},\varphi_j]$, $\rho_j\ge 0$ for $j\in \{1,...,m\}$ such that  for each $n\ge 1$ there is $L_n\ge 0$ such that for all $u,v\in X_1$ with $\nrm{u}_{X_{1-(1+\kappa)/p,p}},\nrm{v}_{X_{1-(1+\kappa)/p,p}}\le n $, 
    \begin{equation}\label{eq:assumption:semilinear:estimate:local:lipschitz}
        \begin{aligned}
            \nrm{F(t,\omega,u)-F(t,\omega,v)}_{X_0} &+\nrm{G(t,\omega,u)-G(t,\omega,v)}_{\gamma(\ell^2,X_{1/2})}\\
            &\le L_n\sum_{j=1}^m \big(1+\nrm{u}_{X_{\varphi_j}}^{\rho_j}+\nrm{v}_{X_{\varphi_j}}^{\rho_j}\big)\nrm{u-v}_{X_{\beta_j}}
        \end{aligned}
    \end{equation}
    a.s.\ for all $t\in [a,T]$. 
    Moreover, we assume that 
    \begin{equation}\label{eq:subcriticality}
        \rho_j\Big(\varphi_j-1+\frac{1+\kappa}{p}\Big)+\beta_j\le 1 \quad \text{for all}\quad j\in\{1,...,m\}.
    \end{equation}
\end{assumption}
\begin{remark}\label{rem:assumption:semilinear:evolution:equation}
    Assumption~\ref{Assumption:semilinear:evolution:equation} is a special case of~\cite[Hypothesis $\mathrm{H}$]{agrestiNonlinearParabolicStochastic2022a}. Therein, the quasilinear case is also treated and the conditions on $F$ and $G$ are more flexible. In their notation, we have $F=F_c$ and $G=G_c$. 
    In addition, Assumption~\ref{Assumption:semilinear:evolution:equation} is slightly more flexible than~\cite[Assumption 4.1]{agrestiNonlinearSPDEsMaximal2025} in that it allows for $F$  and $ G$ to depend on $\omega$ and $t$. The subcriticality condition~\eqref{eq:subcriticality} here coincides with theirs, provided that $\varphi_j=\beta_j$ for $ j\in\{1,...,m\}$. In this case, for $\rho_j>0$, \eqref{eq:subcriticality} may be reformulated as     \[ \frac{1+\kappa}{p}\le \frac{\rho_j+1}{\rho_j}(1-\beta_j),\quad j\in\{1,...,m\}.\]
\end{remark}
\begin{remark}\label{rem:subcriticality:invariance}
The subcriticality condition~\eqref{eq:subcriticality} is invariant under interpolation with the trace space $X_{p,\kappa}^{\textup{Tr}}$.
     To be more precise, let $p$ and $\kappa$ be as in Assumption~\ref{Assumption:semilinear:evolution:equation}, let $(\varphi,\rho,\beta)$ be one of the triples appearing there, and let $\alpha\in (\varphi,1)$. Then 
    \[X_\varphi \hookleftarrow X_{\varphi,1}=(X_{p,\kappa}^{\textup{Tr}}, X_{\alpha,\infty})_{\rho_\alpha,1}\quad \text{with}\quad\rho_\alpha = \frac{\varphi -1+\frac{1+\kappa}{p}}{\alpha-1+\frac{1+\kappa}{p}}.\]
    Therefore, since $X_\alpha\hookrightarrow X_{\alpha,\infty}$,
    $\| u\|_{X_\varphi}^\rho \lesssim \|u\|_{X_{p,\kappa}^{\textup{Tr}}}^{(1-\rho_\alpha)\rho}\| u\|_{X_{\alpha}}^{\rho_\alpha \rho}.$ This allows us to factor the $X_{p,\kappa}^{\textup{Tr}}$ part into the constant $L_n$, and obtain~\eqref{eq:assumption:semilinear:estimate:local:lipschitz} with the triple $(\alpha, \rho_\alpha \rho,\beta)$ instead of $(\varphi, \rho,\beta)$. 
    However, note that
    \[\rho \rho_\alpha\Big(\alpha-1+\frac{1+\kappa}{p}\Big) = \rho\Big(\varphi-1+\frac{1+\kappa}{p}\Big).\]
    This showcases that it is possible to lower the polynomial growth at the cost of imposing more regularity on the  interpolation space $X_\varphi$.
\end{remark}
\begin{definition}[Strong solution]
    Suppose that Assumption~\ref{Assumption:semilinear:evolution:equation} is satisfied for some $p\in[2,\infty)$ and $\kappa\in [0,p/2-1)\cup\{0\}$. For $T<\infty$, a pair $(u,\sigma)$ is called an $L^p_\kappa$-strong solution of~\eqref{eq:semilinear-stochastic-evolution} if $\sigma\colon\Omega\to[a,T]$ is a stopping time and $u\colon [a,\sigma]\to X_0$  is strongly progressively measurable, 
    \begin{equation*} u\in L^p(a,\sigma, w_\kappa^a;X_1)\cap C\big([a,\sigma];X_{1-\frac{1+\kappa}{p},p}\big),
    \end{equation*}
    and a.s.\ for all $t\in [a,\sigma]$ it holds that 
    \begin{equation}\label{eq:def:semilinear:strong:solution}
        u(t)-u_a +\int_a^t Au(s)\,\dd s = \int_a^t F(s,u(s))\, \dd s +\int_a^t\1_{[a,\sigma]}(s) G(s,u(s))\, \dd W(s).
    \end{equation}
    For $T=\infty$, $(u,\sigma)$ is called an $L^p_\kappa$-strong solution of~\eqref{eq:semilinear-stochastic-evolution} if $\sigma\colon\Omega\to [a,\infty)$ is a stopping time and $u\colon[a,\sigma]\to X_0$ is strongly progressively measurable with $u\in L^p_{\loc}(a,\sigma,w_\kappa^a;X_1)\cap C([a,\sigma]\cap[a,\infty);X_{1-\frac{1+\kappa}{p},p})$ and~\eqref{eq:def:semilinear:strong:solution} holds a.s.\ for all $t\in [a,\sigma]$. 
\end{definition}
\begin{remark}
    Note that due to the choice of $\kappa$ and Hölder's inequality, the integrals in~\eqref{eq:def:semilinear:strong:solution} are well-defined. 
    For more details on the differences between $T<\infty$ and $T=\infty$ in this setting, we refer the reader to~\cite[Section 4.3]{agrestiNonlinearParabolicStochastic2022a}.
\end{remark}
\begin{definition}[Local solution]\label{def:solution:local}
     Suppose that Assumption~\ref{Assumption:semilinear:evolution:equation} is satisfied for some $p\in[2,\infty)$ and $\kappa\in[0,p/2-1)\cup\{0\}$. 
     \begin{enumerate}
         \item A pair $(u,\sigma)$ is called an $L^p_\kappa$-\emph{local solution to~\eqref{eq:semilinear-stochastic-evolution}} if $\sigma\colon\Omega\to[a,\infty]$ is a stopping time, $u\colon[a,\sigma)\to X_0$ is a strongly progressively measurable process, there exists an increasing sequence of stopping times $(\sigma_n)_{n\ge 1}$ such that $\lim_{n\to\infty} \sigma_n=\sigma$ a.s., and for all $n\ge 1$ it holds that $(u|_{[a,\sigma_n]},\sigma_n)$ is an $L^p_\kappa$-strong solution to~\eqref{eq:semilinear-stochastic-evolution}. The sequence $(\sigma_n)_{n\ge 1}$ is called \emph{a localising sequence} for $(u,\sigma)$.
         \item An $L^p_\kappa$-local solution $(u,\sigma)$ is called \emph{unique} if for every $L^p_\kappa$-local solution $(v,\tau)$ one has $u=v$ a.s.\ on $[a,\tau\wedge \sigma)$.
         \item An $L^p_\kappa$-local solution $(u,\sigma)$ is called \emph{$L^p_\kappa$-maximal} if for any other unique $L^p_\kappa$-local solution $(v,\tau)$ to~\eqref{eq:semilinear-stochastic-evolution} one has that a.s.\ $\tau\le \sigma$ and $u=v$ on $[a,\tau)$.
         \item An $L^p_\kappa$-local solution $(u,\sigma)$ to~\eqref{eq:semilinear-stochastic-evolution} is called \emph{global} if $\sigma=\infty$ a.s.
     \end{enumerate}
\end{definition}
Note that, by definition, any $L^p_\kappa$-maximal solution is unique. 

\subsection{Function spaces}\label{subsec:function_spaces}
In this section we introduce the function spaces and operators which pave the way for the $L^p$-$L^q$-theory we will develop in Section~\ref{section_rough_initial_data}. Suppose $u$ is a smooth function on $\overline{\sO}$. We write $\gamma_1$ for the boundary operator $\gamma_1 u=\gamma_0 \partial_\nu u$, where $\gamma_0 u$ denotes the trace of the function $u$ on the boundary $\partial \sO$.
We remark that $\gamma_1$ extends to a continuous linear mapping $ \gamma_1\colon H^{s,p}(\sO)\to B_{p,p}^{s-1-1/p}(\partial\sO)$ if and only if $s>1+1/p$.
Let $1<q<\infty$ and set 
\begin{align*}
     \mathsf{D}(A_q)\ce \{ u \in H^{4,q}(\sO)\colon \gamma_1 u = \gamma_1 \Delta u=0\}.
\end{align*}
The strong $L^q(\sO)$-realisation of the Bi-Laplacian with the boundary conditions \eqref{bc_mu_new}, \eqref{bc_phi_new} is defined by 
\begin{align*}
     A_q \colon \mathsf{D}( A_q) \seq L^q(\sO)\to L^q(\sO), \quad A_q u\ce \Delta^2 u.
\end{align*}
By~\cite[Theorem~2.3]{denkNewThoughtsOld2004} there exists a sufficiently large $\lambda>0$ such that $ \lambda+ A_q$ has a bounded $H^\infty$-calculus of angle less than $\pi/2$. Therefore, we can consider the interpolation-extrapolation scale, in the sense of~\cite[Chapter~V]{amannLinearQuasilinearParabolic1995}, generated by the operator $A_q$ which we denote by
\begin{align*}
    \big( _\nu H^{s, q}(\sO), {_\nu\Delta_{s,q}^2} \big)_{s \ge -4}.
\end{align*}
Readers familiar with the concept might notice that we index the scale by Sobolev regularity and not by orders of the operator. In our case this is more convenient because we also consider the Neumann Laplacian in the scale.
We further point out that the theory requires the underlying operator to be invertible, but this can easily be dealt with by a shift argument which is independent of the parameter $\lambda>0$. To be more precise, for $k\in \N$ we have $_\nu H^{4k, q}(\sO)=\mathsf{D}((\lambda + A_q)^k)$, $_\nu H^{0, q}(\sO)=L^q(\sO),$ and the extrapolation space $_\nu H^{-4, q}(\sO)$ can be defined as the dual space of $_\nu H^{4, q'}(\sO)$ with respect to the duality pairing $\langle \cdot,\cdot \rangle_{L^q\times L^{q'}}$ with
$1/q+1/q'=1$. The definition of the spaces in between is then given via complex interpolation, and since the family $(\nH^{s, q}(\sO))_{s\geq - 4}$ is closed under complex interpolation we obtain for $s_0<s_1$, $\theta\in (0,1)$ and $s=(1-\theta)s_0+\theta s_1$,
\begin{align*}
 _\nu H^{ s, q}(\sO)  = [_\nu H^{s_0, q}(\sO),_\nu H^{s_1, q}(\sO)]_{\theta}.
\end{align*}
Once these spaces are given, suitable representatives of the Bi-Laplacian within these can be defined by  
\begin{align}
  _\nu\Delta_{s,q}^2   \ce \begin{cases}
        \nH^{ s, q}(\sO)\text{-realisation of} \ A_q, & s\geq 0,\\
        \text{closure of} \ A_q \ \text{in} \ \nH^{s, q}(\sO), & -4 \leq s<0.
    \end{cases}
\end{align}
In particular, we recover $A_q={_\nu\Delta_{0,q}^2}$ and as $A_q$ and 
$ _\nu\Delta_{s,q}^2$ can be shown to be similar, the operator 
\begin{align*}
     _\nu\Delta_{s,q}^2:\mathsf{D}(_\nu\Delta_{s,q}^2)\seq { _\nu H^{ s, q}(\sO)}\to{ _\nu H^{ s, q}(\sO)} 
\end{align*}
with $\mathsf{D}({_\nu \Delta_{s,q}^2})={_\nu H^{s+4, q}(\sO)}$ 
 has a bounded $H^\infty$-calculus of angle less than $\pi/2$ again. Therefore, since $\nH^{ s, q}(\sO)$ is isomorphic to $L^q(\sO)$ we have $_\nu \Delta_{s,q}^2\in \mathcal{SMR}_{p,\kappa}^\bullet(T)$ for all $2\leq q <\infty$ as a consequence of~\cite[Theorem~3.14]{agrestiNonlinearSPDEsMaximal2025}.
Later on we will also need extrapolations of the Neumann Laplacian 
with respect to the family of function spaces introduced above. For $q\in(1,\infty)$ we set $\mathsf D({}_\nu\Delta_q)=\{u\in H^{2,q}(\sO):\gamma_1 u=0\}$; the strong Neumann Laplacian is then defined as the unbounded operator
\[
{}_\nu\Delta_q\colon \mathsf D({}_\nu\Delta_q)\seq L^q(\sO)\to L^q(\sO)
,\qquad {}_\nu\Delta_q u\ce\Delta u,
\]
and we further define
\begin{align}
  _\nu\Delta_{s,q}   \ce \begin{cases}
        \nH^{ s, q}(\sO)\text{-realisation of} \ {}_\nu\Delta_q, & s\geq 0,\\
        \text{closure of} \ {}_\nu \Delta_q \ \text{in} \ \nH^{s, q}(\sO), & -4 \leq s<0.
    \end{cases}
\end{align}
In particular, it can be verified that we have 
\begin{align*}
     _\nu\Delta_{s,q}:\mathsf{D}(_\nu\Delta_{s,q})\seq { _\nu H^{ s, q}(\sO)}\to{ _\nu H^{ s, q}(\sO)} 
\end{align*}
with $\mathsf{D}({_\nu \Delta_{s,q}})={_\nu H^{s+2, q}(\sO)}$ as well as $(_\nu\Delta_{s,q})^2={_\nu\Delta_{s,q}^2}$.
Moreover, the operators introduced in this section are consistent in the following sense, for all $1<q_0,q_1<\infty$ and $-4\leq s_0,s_1 \leq 4$ we have
\begin{align}
    {}_\nu\Delta_{s_0,q_0}^2 u &= {}_\nu\Delta_{s_1,q_1}^2 u \quad  \text{ on  } \nH^{s_0+4, q_0}(\sO)\cap \nH^{ s_1+4, q_1}(\sO), \\
    _\nu\Delta_{s_0,q_0} u &= {}_\nu\Delta_{s_1,q_1} u \quad  \text{ on  } _\nu H^{s_0+2, q_0}(\sO)\cap \nH^{s_1+2, q_1}(\sO).
\end{align}
Using results on interpolation with boundary conditions, see e.g.~\cite{Seeley1972,roodenburgComplexInterpolationPowerweighted2026}, and invoking the definition of the extrapolation space via duality we get the characterisation
\begin{equation}\label{Bilaplace-concrete}
\bpsn{s,q}=
\begin{cases}
(\bpsn{-s,q'})^\ast, & -4 \leq s\leq  0,\\
H^{s,q}(\sO), & 0\le s<1+\frac{1}{q},\\[2pt]
\{u\in H^{s,q}(\sO):\gamma_1 u=0\}, & 1+\frac{1}{q}<s<3+\frac{1}{q},\\[2pt]
\{u\in H^{s,q}(\sO):\gamma_1 u=\gamma_1\Delta u=0\}, & 3+\frac{1}{q}<s\leq 4;
\end{cases}
\end{equation}
the values $s\in\{1+\frac{1}{q},\,3+\frac{1}{q}\}$ being excluded as the description is more involved in this case.
Since the phase spaces in the (stochastic) maximal $L^p$-regularity framework arise as real interpolation spaces we further define
\begin{align*}
 {_\nu B^{ s}_{q,p}(\sO)}  \coloneq \left(\bpsn{s_0,q},\,_\nu H^{s_1, q}(\sO)\right)_{\theta,p}
\end{align*} 
for $1< p< \infty$, where $-4 \leq s_0<s_1 \leq 4$, $\theta\in (0,1)$ and $s=(1-\theta)s_0+\theta s_1$.
Note that this definition is independent of the chosen $s_0,s_1$ due to the reiteration property of the real interpolation functor. Again, these spaces admit a description as in \eqref{Bilaplace-concrete}; cf.~\cite{Guidetti1991}. For $1< p<\infty$
we have
\begin{equation}
{_\nu B^{ s}_{q,p}(\sO)}=
\begin{cases}
( {_\nu B^{ -s}_{q',p'}(\sO)})^\ast, & -4 < s\leq  0,\\
{ B^{ s }_{q,p}(\sO)}, & 0\le s<1+\frac{1}{q},\\[2pt]
\{u\in { B^{ s }_{q,p}(\sO)}:\gamma_1 u=0\}, & 1+\frac{1}{q}<s<3+\frac{1}{q},\\[2pt]
\{u\in { B^{ s }_{q,p}(\sO)}:\gamma_1 u=\gamma_1\Delta u=0\}, & 3+\frac{1}{q}<s< 4.
\end{cases}
\end{equation}
Throughout the paper we will rely on Sobolev embeddings for the scale of function spaces introduced above. We collect them in the following lemma and frequently use them without further reference.

\begin{lemma}
Let $1<q_0,q_1<\infty$, $1< p_0,p_1< \infty$ and $-4\leq s_0,s_1 \leq 4$.
\begin{enumerate}[label=(\roman*)]
    \item \label{item:lemSobEmb1} If $q_0\leq q_1$ and $s_0-\frac{d}{q_0}\geq s_1-\frac{d}{q_1}$ hold true, then 
\begin{align*}
    \bpsn{s_0,q_0} \hookrightarrow \bpsn{s_1,q_1}.
\end{align*}
\item \label{item:lemSobEmb2} Let $s_0,s_1 \neq \pm 4$. If $q_0\leq q_1$ , $s_0-\frac{d}{q_0}\geq s_1-\frac{d}{q_1}$, and $p_0\leq p_1$ hold true, then 
\begin{align*}
    {_\nu B^{ s_0 }_{q_0,p_0}(\sO)}\hookrightarrow {_\nu B^{ s_1 }_{q_1,p_1}(\sO)}.
\end{align*}
\end{enumerate}
\end{lemma}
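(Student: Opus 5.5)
The plan is to reduce both embeddings to the classical Sobolev and Besov embeddings on $\sO$. This works because the spaces $\bpsn{s,q}$ and ${}_\nu B^s_{q,p}(\sO)$ are, in each regularity range, either closed subspaces of $H^{s,q}(\sO)$ or $B^s_{q,p}(\sO)$ cut out by boundary conditions, or duals of such spaces. The boundary conditions themselves are harmless: they are the same operators $\gamma_1$, $\gamma_1\Delta$ in every space. They only appear once the smoothness exceeds the corresponding threshold.

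\textbf{Case $s_1\ge 0$ in \ref{item:lemSobEmb1}.} We have $s_0\ge s_1$, since $q_0\le q_1$ and $s_0-d/q_0\ge s_1-d/q_1$. The classical embedding $H^{s_0,q_0}(\sO)\hookrightarrow H^{s_1,q_1}(\sO)$ on bounded smooth domains applies, via extension operators. By \eqref{Bilaplace-concrete} it remains to check that an element of $\bpsn{s_0,q_0}$ satisfies the boundary conditions required in $\bpsn{s_1,q_1}$.
\begin{itemize}
\item If $s_1>1+1/q_1$, then $s_0>1+1/q_0$ as well, because $s_0-1/q_0\ge s_1-1/q_1$ follows from $d\ge 1$ and $q_0\le q_1$.
\item Hence $\gamma_1u=0$ is already imposed in $\bpsn{s_0,q_0}$, and $\gamma_1$ is continuous on both spaces and consistent on the intersection.
\item The condition $\gamma_1\Delta u=0$ is handled in the same way.
\end{itemize}
The excluded endpoint values $s\in\{1+1/q,3+1/q\}$ are reached by complex interpolation between neighbouring admissible indices: the interpolation scale is stable and embeddings interpolate.

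\textbf{Case $s_0\le 0$ in \ref{item:lemSobEmb1}.} Duality gives this case. The conditions transfer: $q_1'\le q_0'$ and $-s_1-d/q_1'\ge -s_0-d/q_0'$, since $d/q_0-d/q_1=d/q_1'-d/q_0'$. The positive case therefore yields $\bpsn{-s_1,q_1'}\hookrightarrow \bpsn{-s_0,q_0'}$, with dense range because smooth functions satisfying the boundary conditions are dense. Taking adjoints gives the claim.

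\textbf{Mixed case $s_0>0>s_1$.} Factor the embedding through $L^{r}(\sO)=\bpsn{0,r}$, with $r$ chosen so that $s_0-d/q_0\ge -d/r\ge s_1-d/q_1$ and $q_0\le r\le q_1$. If such an $r$ cannot be chosen in $[q_0,q_1]$, factor instead through $\bpsn{0,q_0}$ or $\bpsn{0,q_1}$, using the trivial embeddings of lower smoothness at fixed $q$.

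\textbf{Part \ref{item:lemSobEmb2}.} Real interpolation handles this. Write ${}_\nu B^{s_i}_{q_i,p_i}$ as $(\bpsn{s_i-\varepsilon,q_i},\bpsn{s_i+\varepsilon,q_i})_{1/2,p_i}$, which is possible since $s_i\ne\pm4$. Apply \ref{item:lemSobEmb1} to both endpoint pairs, then use that real interpolation is an exact functor together with $(\cdot,\cdot)_{1/2,p_0}\hookrightarrow(\cdot,\cdot)_{1/2,p_1}$ for $p_0\le p_1$.

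\textbf{Main obstacle.} The main difficulty is the bookkeeping near the excluded boundary-threshold indices and the identification of the negative spaces. For the negative spaces I would lean on the similarity of ${}_\nu\Delta^2_{s,q}$ across the scale, and on reiteration to avoid the thresholds.
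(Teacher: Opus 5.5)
Your proposal is correct and follows the paper's proof essentially step by step. The nonnegative case uses the classical embeddings together with compatibility of the boundary operators $\gamma_1$ and $\gamma_1\Delta$, with the threshold indices handled by complex interpolation; the nonpositive case goes by duality, the mixed case factors through $L^r(\sO)$, and part (ii) follows by real interpolation. The one point the paper treats more explicitly is this: before interpolating around a threshold index, it first moves off the endpoints $s_0=4$ and $s_1=0$, via $\bpsn{4,q_0}\hookrightarrow\bpsn{4-\delta,\tilde q_0}$ and $\bpsn{\delta,\tilde q_1}\hookrightarrow\bpsn{0,q_1}$, so that the shifted indices $s_j\pm\varepsilon$ stay in $[0,4]$ and away from the thresholds. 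Your phrase \emph{neighbouring admissible indices} should be read as including this step.
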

Note that in \ref{item:lemSobEmb2}, the last condition is only relevant if we have equality in the second one.

\begin{proof}
We begin with the proof of \ref{item:lemSobEmb1}. First, let $0\leq s_0,s_1\leq 4$ be non-critical in the sense of $s_0 \notin \{1+1/q_0, 3+1/q_0 \}$ and $s_1 \notin \{1+1/q_1, 3+1/q_1 \}$. Since by \eqref{Bilaplace-concrete},  $  \bpsn{s_1,q_1}$, $  \bpsn{s_0,q_0}$ are subspaces of the classical Bessel potential spaces the statement follows if we can verify that the boundary conditions are compatible. Let $s_1>1+1/q_1$, then we have
\begin{align*}
s_0
\geq
s_1+d\left(\frac{1}{q_0}-\frac{1}{q_1}\right)
>
1+\frac{1}{q_1}
+d\left(\frac{1}{q_0}-\frac{1}{q_1}\right)
\geq
1+\frac{1}{q_0},
\end{align*}
which proves the statement for $\gamma_1$; the assertion for $\gamma_1\Delta$ can be checked in the same way.\\
If $0\leq s_0,s_1\leq 4$ are allowed to be critical we can argue by means of interpolation. To do so we restrict ourselves to the case $s_0\neq 4$ and $s_1\neq 0$, as the cases $s_0=4$ or $s_1=0$ can always be handled by making use of the already established embeddings
\begin{align*}
    \bpsn{4,q_0} \hookrightarrow \bpsn{4-\delta,\tilde{q}_0}, \quad \bpsn{\delta,\tilde{q}_1} \hookrightarrow \bpsn{0,q_1},
\end{align*}
with $\delta>0$ being sufficiently small and $\tilde{q}_0,\tilde{q}_1$ chosen appropriately. Now, it is always possible to choose $\varepsilon>0$ such that for 
$s_j^- \coloneq s_j-\varepsilon$, $ 
s_j^+ \coloneq s_j+\varepsilon$ with $j=0,1$ we have
\begin{align*}
0\leq s_j-\varepsilon<s_j<s_j+\varepsilon\leq 4,
\qquad j=0,1,
\end{align*}
where $s_j^-$ and $s_j^+$ are non-critical. Hence
\begin{align*}
\bpsn{s_0^-,q_0}
\hookrightarrow
\bpsn{s_1^-,q_1},
\qquad
\bpsn{s_0^+,q_0}
\hookrightarrow
\bpsn{s_1^+,q_1},
\end{align*}
and, as we have $ 
s_j=\frac12 s_j^-+\frac12 s_j^+$ for $ j=0,1$, 
the case for $s_0,s_1$ follows by complex interpolation. Finally, by duality the case $-4\leq s_0,s_1\leq 0$ can be reduced to the case $0\leq s_0,s_1\leq 4$, and the mixed case $-4\leq s_1<0<s_0\leq 4$ can be taken care of by introducing a suitable $r\in (1,\infty)$ with 
\begin{align*}
    \bpsn{s_0,q_0}
\hookrightarrow
\bpsn{0,r}
\hookrightarrow
\bpsn{s_1,q_1}.
\end{align*}
This concludes the proof of \ref{item:lemSobEmb1}. The proof of \ref{item:lemSobEmb2} is then a consequence of \ref{item:lemSobEmb1} and the definition of the spaces   ${_\nu B^{ s}_{q,p}(\sO)}$  by real interpolation. 
\end{proof}
The remainder of this section is devoted to Lemma~\ref{handle_noise}. The result is not new, but to the best of our knowledge there is no reference in the literature which applies to our situation. Thus, for the reader's convenience, we include a proof. In the abstract stochastic maximal $L^p$-regularity setting, regularity of the noise is quantified by means of $\gamma$-radonifying operators having values in a complex interpolation space; cf.\ Definition~\ref{def:linear:see:solution:strong}.  
For $\gamma$-radonifying operators with values in $L^q$-based spaces there  commonly is a more approachable characterisation available, which is presented in Lemma~\ref{handle_noise}. As this requires the $\ell^2$-valued 
analogue of the function spaces introduced above, we define
\begin{align*}
    \Bpsn{2,q}
    &\ce \{v\in H^{2,q}(\sO;\ell^2) \colon \gamma_1 v=0 \}, ~~
    \Bpsn{0,q}
    \ce L^q(\sO;\ell^2),~~
    \Bpsn{-2,q}
    \ce \big(\Bpsn{2,q'}\big)^*,\\
    \Bpsn{s,q}
    &\ce [\Bpsn{0,q},\Bpsn{2,q}]_{s/2},\qquad s\in(0,2),\\
    \Bpsn{s,q}
    &\ce [\Bpsn{-2,q},\Bpsn{0,q}]_{(s+2)/2},
    \qquad s\in(-2,0).
\end{align*}
Note that this is exactly the construction employed in the scalar-valued case. In particular, we can identify the family $(\Bpsn{s,q})_{-2\leq s\leq 2}$ with the scale of the $\ell^2$-valued Neumann Laplacian; see the proof of Lemma~\ref{handle_noise}.  Hence, $\Bpsn{s,q}$ can be densely embedded into $\Bpsn{s',q}$ for $-2\leq s'< s<2$.
The following lemma seems not to be available in the literature, but the Hilbert-space structure of $\ell^2$ allows for an elementary proof. For statements of this type without boundary conditions see~\cite[Appendix Theorem~1.3.6]{amannLinearQuasilinearParabolic2019}.

\begin{lemma}\label{lemma_density}
Let $1<q<\infty$ and let $(e_k)_{k\geq 1}$ be an orthonormal basis of
$\ell^2$. Then the space
\begin{align}\label{elementaryFunction}
    \bigg\{\sum_{k=1}^N f_k\otimes e_k \colon N\ge 1,\;f_k\in \nH^{2,q}(\sO) \bigg\}
\end{align}
is dense in $\nH^{2,q}(\sO;\ell^2)$. Here, $(f_k\otimes e_k)(x)\ce f_k(x)e_k$ for $x\in \sO$.
\end{lemma}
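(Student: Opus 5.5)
The plan is to truncate in the $\ell^2$-variable with the coordinate projections $P_N v\ce \sum_{k=1}^N \langle v,e_k\rangle_{\ell^2} e_k$. For $v\in\nH^{2,q}(\sO;\ell^2)$ we have $P_N v=\sum_{k=1}^N f_k\otimes e_k$ with $f_k\ce\langle v(\cdot),e_k\rangle_{\ell^2}$. So two things must be checked: each $f_k$ lies in $\nH^{2,q}(\sO)$, and $P_N v\to v$ in $\nH^{2,q}(\sO;\ell^2)=\{v\in H^{2,q}(\sO;\ell^2)\colon \gamma_1 v=0\}$ as $N\to\infty$.

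First I check that the $f_k$ are admissible. The functional $\phi_k\ce\langle\cdot,e_k\rangle$ is bounded on $\ell^2$, so it commutes with weak derivatives. Hence $\partial^\alpha f_k=\phi_k(\partial^\alpha v)$ for $|\alpha|\le 2$, and $f_k\in H^{2,q}(\sO)$ with norm at most $\|v\|_{H^{2,q}(\sO;\ell^2)}$. The same reasoning applies to the boundary operator. The vector-valued trace $\gamma_1$ is a bounded linear map that commutes with bounded operators on the range space (check on smooth functions and extend by density/continuity). This gives $\gamma_1 f_k=\phi_k(\gamma_1 v)=0$, so $f_k\in\nH^{2,q}(\sO)$ by \eqref{Bilaplace-concrete} (here $2\in(1+1/q,3+1/q)$).

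Next I show convergence. Since $P_N$ is a bounded operator on $\ell^2$ with $\|P_N\|\le1$, it commutes with derivatives, so $\partial^\alpha(v-P_Nv)=(I-P_N)\partial^\alpha v$. Pointwise in $x$ we have $\|(I-P_N)\partial^\alpha v(x)\|_{\ell^2}\to0$ and the domination $\|(I-P_N)\partial^\alpha v(x)\|_{\ell^2}\le\|\partial^\alpha v(x)\|_{\ell^2}\in L^q(\sO)$. Dominated convergence then gives $\|\partial^\alpha(v-P_Nv)\|_{L^q(\sO;\ell^2)}\to0$ for every $|\alpha|\le2$, hence $P_Nv\to v$ in $H^{2,q}(\sO;\ell^2)$. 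Because $P_Nv$ also satisfies $\gamma_1P_Nv=P_N\gamma_1v=0$, the convergence takes place inside $\nH^{2,q}(\sO;\ell^2)$.

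The only step needing some care is the commutation $\gamma_1(Tv)=T\gamma_1 v$ for bounded $T$ on $\ell^2$. This requires the $\ell^2$-valued trace to be defined as the continuous extension from smooth $\ell^2$-valued functions, with $C^\infty(\overline{\sO};\ell^2)$ dense in $H^{2,q}(\sO;\ell^2)$. That density is standard via extension operators and mollification, which work verbatim for Hilbert-space-valued functions. Everything else is elementary, and it is exactly here that the Hilbert structure of $\ell^2$ enters, through the uniformly bounded projections $P_N$ converging strongly to the identity.
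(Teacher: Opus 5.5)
Your proposal is correct and follows the paper's proof essentially step by step: you truncate with the orthogonal projections $P_N$, use that $P_N$ and $\langle\cdot,e_k\rangle_{\ell^2}$ commute with weak derivatives and with $\gamma_1$, and conclude $\partial^\alpha(I-P_N)v\to 0$ in $L^q(\sO;\ell^2)$ by dominated convergence. Your extra justification of $\gamma_1(Tv)=T\gamma_1 v$ via density of smooth $\ell^2$-valued functions fills in a point the paper simply asserts.
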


\begin{proof}
We denote by $P_N$ the orthogonal projection of $\ell^2$ onto $\operatorname{span}\{e_1,\dots,e_N\}$, and set
\begin{align*}
    f_N\coloneqq P_N f=
    \sum_{k=1}^N
    \langle f,e_k\rangle_{\ell^2}\otimes e_k
\end{align*}
for $  f\in \nH^{2,q}(\sO;\ell^2)$.
Since $P_N$ is a bounded linear operator, it commutes with weak derivatives, and therefore we have $ 
    \partial^\alpha f_N
    =
    P_N \partial^\alpha f$ 
for all $\alpha \in \mathbb{N}_0^d$ with $|\alpha|\leq 2$. This proves $f_N\in H^{2,q}(\sO;\ell^2)$, as well as  $\langle f,e_k\rangle_{\ell^2}\otimes e_k=P_k f-P_{k-1}f\in H^{2,q}(\sO;\ell^2)$, with the obvious modification for $k=1$.
In particular, 
$\partial_\nu \langle f,e_k\rangle_{\ell^2}=\langle \partial_\nu f,e_k\rangle_{\ell^2}$ and as the trace commutes with the functional $\langle \cdot, e_k \rangle_{\ell^2}$, we have
$\langle f,e_k\rangle_{\ell^2}\in \nH^{2,q}(\sO)$.
Moreover, $P_Nx\to x$ in $\ell^2$ for every $x\in\ell^2$ and
$  \|(I-P_N)\partial^\alpha f(x)\|^q_{\ell^2}
    \leq
    \|\partial^\alpha f(x)\|^q_{\ell^2}$.
Consequently, dominated convergence yields
   $  \partial^\alpha f_N
    \to
    \partial^\alpha f$ in $ 
    L^q(\sO;\ell^2)$,
and we can infer $f_N \to f $ in $H^{2,q}(\sO;\ell^2)$, proving the claim.
\end{proof}

\begin{lemma}\label{handle_noise}
Let $1<q<\infty$ and $-2\leq s\leq 2$. For $f\in \Bpsn{s,q}$, the mapping
\begin{align}\label{linear_mapping}
    \bigl[h\mapsto \langle h,f(\cdot)\rangle_{\ell^2}\bigr] \in \sL(\ell^2,\bpsn{s,q})
\end{align}
induces a conjugate-linear isomorphism from $\Bpsn{s,q}$ onto  $\gamma(\ell^2, \bpsn{s,q}) $.
\end{lemma}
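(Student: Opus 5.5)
The plan is to use the $\gamma$-Fubini isomorphism for $L^q$-spaces together with the fact that the operator scale is generated by a single sectorial operator that acts "coordinatewise" in $\ell^2$.

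\emph{Step 1: the case $s=0$.} For $f\in L^q(\sO;\ell^2)$ the map $h\mapsto\langle h,f(\cdot)\rangle_{\ell^2}$ is the operator $T_f\in\sL(\ell^2,L^q(\sO))$. By the square function characterisation of $\gamma$-radonifying operators into $L^q$-spaces (e.g.~\cite[Ch.~9]{hytonenAnalysisBanachSpaces2017}), $f\mapsto T_f$ is a conjugate-linear isomorphism of $L^q(\sO;\ell^2)$ onto $\gamma(\ell^2,L^q(\sO))$. The equivalence of norms is
\[
\nrm{T_f}_{\gamma(\ell^2,L^q)}\eqsim_q \Big\|\Big(\sum_k |\langle e_k,f\rangle|^2\Big)^{1/2}\Big\|_{L^q}=\nrm{f}_{L^q(\sO;\ell^2)}.
\]

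\emph{Step 2: the cases $s=2$ and $s=-2$, by lifting with the resolvent.} Fix $\lambda>0$ large enough that $\lambda-{}_\nu\Delta_q$ is invertible. Let $\mathcal{L}=\lambda-\Delta_\nu$ be the $\ell^2$-valued Neumann operator on $L^q(\sO;\ell^2)$, with domain $\Bpsn{2,q}$.

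First show that $\mathcal{L}$ is an isomorphism $\Bpsn{2,q}\to L^q(\sO;\ell^2)$ satisfying $\mathcal{L}(g\otimes e_k)=(\lambda-{}_\nu\Delta_q)g\otimes e_k$. One way is to note that $(\lambda-{}_\nu\Delta_q)^{-1}$ is a positive operator (maximum principle), so it extends to $L^q(\sO;\ell^2)$ with the same norm. Alternatively, one uses Lemma~\ref{lemma_density} and the $\ell^2$-valued elliptic regularity estimate.

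Then $\langle h,\mathcal{L}f\rangle_{\ell^2}=(\lambda-{}_\nu\Delta_q)\langle h,f\rangle_{\ell^2}$, which can be checked on the dense set \eqref{elementaryFunction} and extended by continuity. In other words, $T_{\mathcal{L}f}=(\lambda-{}_\nu\Delta_q)T_f$. The ideal property of $\gamma(\ell^2,\cdot)$ under isomorphisms, combined with Step 1, then gives the isomorphism for $s=2$. This works because ${}_\nu\Delta_q$ generates the scale, so $(\lambda-{}_\nu\Delta_q)\colon\bpsn{2,q}\to L^q(\sO)$ is an isomorphism.

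For $s=-2$ there are two routes. One is duality: $\gamma(\ell^2,X^*)\cong\gamma(\ell^2,X)^*$ holds for reflexive $X$ with the $K$-convex structure of $L^q$, paired with $\Bpsn{-2,q}=(\Bpsn{2,q'})^*$. The other, which I prefer, is to extend $\mathcal{L}$ to an isomorphism $L^q(\sO;\ell^2)\to\Bpsn{-2,q}$ and apply the same intertwining identity.

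\emph{Step 3: intermediate $s$ by complex interpolation.} The spaces $\bpsn{s,q}$ and $\Bpsn{s,q}$ are defined by complex interpolation, and $\gamma$-spaces commute with complex interpolation: $[\gamma(\ell^2,X_0),\gamma(\ell^2,X_1)]_\theta=\gamma(\ell^2,[X_0,X_1]_\theta)$ (see \cite[Ch.~9]{hytonenAnalysisBanachSpaces2017}). Interpolating the isomorphisms from Steps 1–2 between $s\in\{-2,0\}$ and between $s\in\{0,2\}$ therefore gives the claim for all $s\in[-2,2]$. Conjugate-linearity can be bypassed by composing with the conjugation on the complexified spaces, so that interpolation applies to a linear map.

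The main obstacle is Step 2. One must rigorously identify the $\ell^2$-valued operator with domain $\Bpsn{2,q}$ as an isomorphism onto $L^q(\sO;\ell^2)$, which requires $\ell^2$-valued Neumann elliptic regularity. I would first try positivity, or the Hilbert-space-valued extension of the scalar resolvent, since $\ell^2$ is a Hilbert space and positive operators extend isometrically. After that I would verify its compatibility with $T_f$ on the dense set provided by Lemma~\ref{lemma_density}.
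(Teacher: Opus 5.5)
Your argument is correct, but it follows a different route from the paper.

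\textbf{The paper's route.} The paper treats all $s\in[-2,2]$ at once with fractional powers. From the bounded $H^\infty$-calculus of $B_q=\lambda_0-\Delta$ and of its $\ell^2$-valued realisation $\mathbf{B}_q$, it obtains isomorphisms $B_q^{s/2}\colon\bpsn{s,q}\to L^q(\sO)$ and $\mathbf{B}_q^{s/2}\colon\Bpsn{s,q}\to L^q(\sO;\ell^2)$. It then checks, via resolvent uniqueness and the Dunford calculus, that $\mathbf{B}_q^{s/2}(f_k\otimes e_k)=B_q^{s/2}f_k\otimes e_k$. It concludes with $\gamma$-Fubini, density (Lemma~\ref{lemma_density}) for the closed range, and a finite-rank approximation for surjectivity.

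\textbf{Your route.} You lift only at $s=\pm2$, by integer powers. There the coordinatewise action is trivial, since derivatives and traces commute with $\langle h,\cdot\rangle_{\ell^2}$. Surjectivity drops out of the case $s=0$ via $T=(\lambda-{}_\nu\Delta_q)^{-1}T_g=T_{\mathcal{L}^{-1}g}$. You then reach intermediate $s$ by complex interpolation of $\gamma$-spaces. This matches how $\Bpsn{s,q}$ is actually defined, and it avoids fractional powers of $\mathbf{B}_q$, which the paper only controls through the $H^\infty$-calculus of $\mathbf{B}_q$.

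\textbf{What your route costs.} You must import $[\gamma(\ell^2,X_0),\gamma(\ell^2,X_1)]_\theta=\gamma(\ell^2,[X_0,X_1]_\theta)$. You must also check that the endpoint maps and their inverses are compatible. For this, define $T_F$ for $F\in\Bpsn{-2,q}$ explicitly, e.g.\ by $\langle T_Fh,g\rangle\ce\langle F,g\otimes h\rangle$ for $g\in\bpsn{2,q'}$, with the appropriate conjugations in the complex case. This agrees with $T_f$ for $f\in L^q(\sO;\ell^2)$, and duality then gives the intertwining with $\lambda-{}_\nu\Delta_{-2,q}$.

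\textbf{The step you flag as the main obstacle.} Positivity of $(\lambda-{}_\nu\Delta_q)^{-1}$ only gives a bounded extension on $L^q(\sO;\ell^2)$. It does not show that this extension maps into $\Bpsn{2,q}$. What does the job is the Marcinkiewicz--Zygmund theorem: every bounded operator on $L^q(\sO)$ extends to $L^q(\sO;\ell^2)$ with the same norm.
\begin{enumerate}
\item Apply it to $\partial^\alpha(\lambda-{}_\nu\Delta_q)^{-1}$ for $|\alpha|\le2$.
\item Use that the Neumann condition is closed in $H^{2,q}(\sO;\ell^2)$. Together with step 1, this gives a bounded operator $\widetilde{R}\colon L^q(\sO;\ell^2)\to\Bpsn{2,q}$.
\item By density of finite sums, $\widetilde{R}$ is a right inverse of $\mathcal{L}$.
\item By Lemma~\ref{lemma_density}, $\widetilde{R}$ is also a left inverse.
\end{enumerate}
Alternatively, quote the vector-valued result of Denk et al.\ directly, as the paper does.
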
    

\begin{proof}
To give the proof we employ the interpolation-extrapolation scales of the Neumann Laplacian and its vector-valued realisation on $L^q(\sO;\ell^2)$. To be more precise, for some sufficiently large $\lambda_0>0$ we consider
\begin{alignat*}{3}
    B_q &\colon \nH^{2,q}(\sO) \to L^q(\sO),
    \qquad &B_q u &\coloneqq (\lambda_0-\Delta)u,
    \\
    \mathbf{B}_q &\colon \nH^{2,q}(\sO;\ell^2) \to L^q(\sO;\ell^2),
    \qquad &\mathbf{B}_q v &\coloneqq (\lambda_0-\Delta_{\ell^2})v,
\end{alignat*}
where $\Delta_{\ell^2}$ denotes the Laplacian acting on functions with values in $\ell^2$. According to~\cite[Theorem~2.3]{denkNewThoughtsOld2004} both operators have a bounded $H^\infty$-calculus and hence generate the corresponding interpolation-extrapolation scales.
Furthermore, the adjoint of $\mathbf B_q$ is given by the corresponding
realisation $\mathbf B_{q'}$ on $L^{q'}(\sO;\ell^2)$. We can therefore use~\cite[Theorem~1.5.4]{amannLinearQuasilinearParabolic1995} to obtain for every $-2\leq s \leq 2 $ the isomorphisms
\begin{align*}
    B^{s/2}_{q}
    &\colon \nH^{s,q}(\sO)
    \to L^q(\sO),
    \qquad
    \mathbf{B}_q^{s/2}
    \colon \nH^{s,q}(\sO;\ell^2)
    \to L^q(\sO;\ell^2).
\end{align*}
Note that the operators $B^{s/2}_q$ and $\mathbf{B}_q^{s/2}$ should be thought of as lifting operators and they agree on $\nH^{s,q}(\sO)\cap L^q(\sO)$ and $\nH^{s,q}(\sO;\ell^2)\cap L^q(\sO;\ell^2)$, respectively, with the classical fractional powers defined by Dunford calculus.
To continue, let $f= \sum_{k=1}^N f_k\otimes e_k$ be a function as given in \eqref{elementaryFunction}. Uniqueness of the corresponding
resolvent problem gives
 $(\lambda+\mathbf B_q)^{-1}(f_k\otimes e_k)
    =(\lambda+B_q)^{-1}f_k\otimes e_k$
for all sufficiently large $\lambda$ in a suitable sector, so that by construction of the Dunford calculus we get
\begin{align*}
    \mathbf{B}_q^{s/2} f= \sum_{k=1}^N B_q^{s/2} f_k\otimes e_k.
\end{align*}
Now, fix $-2\leq s \leq 2$ and define $T_f\in\sL(\ell^2,\bpsn{s,q})$ by
\begin{align*}
    T_fh
    \coloneqq
    \langle h,f(\cdot)\rangle_{\ell^2}
    =
    \sum_{k=1}^N
    f_k\langle h,e_k\rangle_{\ell^2}.
\end{align*}
Using the ideal property of $\gamma$-radonifying operators (see~\cite[Theorem~9.1.10]{hytonenAnalysisBanachSpaces2017}) in the first step we can estimate
\begin{align*}
    \|T_f\|_{\gamma(\ell^2,\nH^{s,q}(\sO))}
    &\eqsim
    \|B_q^{s/2}T_f\|_{\gamma(\ell^2,L^q(\sO))}
    =
    \bigg\|
        h\mapsto
        \sum_{k=1}^N
        B_q^{s/2}f_k
        \langle h,e_k\rangle_{\ell^2}
    \bigg\|_{\gamma(\ell^2,L^q(\sO))}
    \eqsim
    \bigg\|
        \sum_{k=1}^N
        B_q^{s/2}f_k\otimes e_k
    \bigg\|_{L^q(\sO;\ell^2)}
    \\
    &=
    \|\mathbf B_q^{s/2}f\|_{L^q(\sO;\ell^2)}
    \eqsim
    \|f\|_{\nH^{s,q}(\sO;\ell^2)}.
\end{align*}
Here, the third step is a consequence of the $\gamma$-Fubini
isomorphism as stated in~\cite[Theorem~9.4.8]{hytonenAnalysisBanachSpaces2017}.
Hence, by the preceding density statement in Lemma~\ref{lemma_density}, the mapping \eqref{linear_mapping} is well-defined and induces a conjugate-linear isomorphism from $\Bpsn{s,q}$ onto a closed subspace  of $\gamma(\ell^2, \bpsn{s,q}) $.

It remains to verify surjectivity. Let $T\in \gamma(\ell^2,\nH^{s,q}(\sO))$ and set $S\coloneqq B_q^{s/2}T$.
Due to the ideal property of $\gamma$-radonifying operators, we have $S\in\gamma(\ell^2,L^q(\sO))$, and we choose a sequence $(S_n)_{n\geq 1}$ of finite-rank operators given by
$ S_n h
    =
    \sum_{k=1}^{N_n}
    g_{k,n} \langle h, e_{k,n}\rangle_{\ell^2}$
where $g_{k,n}\in L^q(\sO)$ and
$e_{1,n},\dots,e_{N_n,n}\in\ell^2$ are orthonormal, such that $S_n\to S$ 
   in $\gamma(\ell^2,L^q(\sO))$.
Now, define $f_n\coloneq \mathbf{B}_q^{-s/2}
    \left(
        \sum_{k=1}^{N_n}g_{k,n}\otimes e_{k,n}
    \right)\in \nH^{s,q}(\sO;\ell^2)$ for every $n\in \N$. Then
\begin{align*}
    T_{f_n}h
    =
    \sum_{k=1}^{N_n}
    B_q^{-s/2}g_{k,n}
    \langle h,e_{k,n}\rangle_{\ell^2}
    =
    B_q^{-s/2}S_nh,
\end{align*}
and hence $T_{f_n}
    =
    B_q^{-s/2}S_n$.
Finally, another application of the ideal property yields
\begin{align*}
    T_{f_n}
    =
    B_q^{-s/2}S_n
    \to
    B_q^{-s/2}S
    =
    B_q^{-s/2}B_q^{s/2}T
    =
    T \quad \text{ in } \gamma(\ell^2,\nH^{s,q}(\sO)).
\end{align*}
Since we have already established closedness of the range, the linear mapping in \eqref{linear_mapping} is indeed surjective.
\end{proof}

\section{Local well-posedness via \texorpdfstring{$L^2$}{\textit{L2}}-theory}\label{sec:variational:lwp}
In this section we provide a local well-posedness theory by means of the critical variational setting established in~\cite{agrestiCriticalVariationalSetting2024a}, which was later extended in~\cite{BechtelGermVeraar2026}. 
To make this framework applicable we formulate the stochastic Cahn--Hilliard equation \eqref{eq:intro:SPDE} as an abstract variational problem of the form
\begin{equation}\tag{AVP}\label{eq:AVP}
    \begin{cases}
        \dd u(t) + Au(t)\dt  &= F(u(t)) \dt + G(t,u(t))\,\dd W(t), \\
        u(0) &=u_0.
    \end{cases}
\end{equation}
Here, $A$, $F$ and $G$ represent suitable representatives of the Bi-Laplacian, the nonlinear term $\Delta(u^3-u)$ and the gradient-dependent noise in the chosen setting. The setting in turn is given by a Gelfand triple $V\seq H \seq V^\ast$ of function spaces, where the choice of the Gelfand triple determines the notion of the solution concept. Thus, we can avoid the interpolation-extrapolation scale techniques introduced in Subsection~\ref{subsec:function_spaces} and do not rely on the concept of the $H^\infty$-calculus.

In the first part of this section, Subsection~\ref{weak_setting}, we quickly recall global well-posedness for \eqref{eq:AVP} in the weak setting $V\coloneq \nH^2(\sO)$ $ H\coloneq L^2(\sO)$ as an application of~\cite[Thm~5.2]{agrestiCriticalVariationalSetting2024a} to the choice $f(y) = y^3-y$. The growth exponent $\rho = 2$ and the condition $\rho \leq 4/d$ identify $d=2$ as the critical dimension for this setting. Therefore, we pass to the weakened setting $V\coloneq\nH^3(\sO)$, $ H\coloneq H^1(\sO)$ in Subsection~\ref{weakened_setting} and show local well-posedness in dimensions $3$ and $4$ and a relaxed blow-up criterion there, which is then used in Section~\ref{sec:globalWP} to show global well-posedness in dimensions $3$ and $4$.

Since the drift part is fixed by our choice of $f(x) = x^3-x$, the two dimension-dependent sets of assumptions, i.e.\ Assumptions~\ref{assumption_weak_setting} and ~\ref{assumption_weakened_setting}, only concern the noise coefficient $\psi$. 

\subsection{Weak setting: \texorpdfstring{$V=\nH^2(\sO), H=L^2(\sO)$}{\textit{V= HN2(O), H=L2(O)}}}\label{weak_setting}

A more general account of this (with more general drift term $f$) can be found in~\cite[Subsection~5.1]{agrestiCriticalVariationalSetting2024a}. We recall it here for the sake of completeness. The assumptions on the noise coefficient are as follows:

\begin{assumption}\label{assumption_weak_setting}
Let $d\in \{1,2 \}$. Suppose that $\psi=(\psi_n)_{n\geq 1}:[0,\infty)\times \Omega\times\sO\times \R^{1+d}\to \ell^2$ is $\sP \otimes \sB(\sO)\otimes \sB(\R^{1+d})$-measurable and there is a constant $L\geq 0$ such that a.e.\ on $[0,\infty)\times \Omega\times \sO$ and for all $y,y'\in \R$ and $z,z'\in \R^d$ we have
\begin{align*}
    \| \psi(\cdot,y,z)-\psi(\cdot,y',z') \|_{\ell^2} &\leq L(|y-y'|+|z-z'|),\\
   \| \psi(\cdot,y,z)\|_{\ell^2} &\leq  L (1+|y|+|z|).
\end{align*}
\end{assumption}

Then $A\in \sL(V,V^\ast)$ and $F:V\to V^\ast$ are defined by
\[ \langle Au,v \rangle=(\Delta u,\Delta v)_{L^2} \text{ and } 
\langle F(u),v \rangle=-(- u^3+u, \Delta v)_{L^2}, \quad u,v\in V,\]
and further, $G:[0,\infty)\times \Omega \times V \to \sL_2(\ell^2,H)$ is defined by 
\[  (G_n(t,u))(x)=\psi_n(t,x,u(x),\nabla u(x)),\quad n\geq 1. \]

\begin{theorem}[Global well-posedness in the weak setting]\label{thm:GWP:weak:setting}
Suppose that Assumption~\ref{assumption_weak_setting} holds. Let $u_0\in L^0_{\F_0}(\Omega;L^2(\sO))$, then \eqref{eq:AVP} has a unique global solution with

\begin{equation}
    u\in C([0,\infty);L^2(\sO))\cap L^2_{\loc}([0,\infty);\nH^2(\sO)) \quad \text{a.s.}
\end{equation}
If, in addition, $u_0\in  L_{\F_0}^2(\Omega;L^2(\sO))$, then for every $T>0$  there exists a constant $C_T > 0$ independent of $u_0$ such that 
\begin{equation}
    \E \Vert u \Vert_{C([0,T]; L^2(\sO))}^2 + \E \Vert u \Vert_{L^2(0, T; H^2(\sO))}^2 \leq C_T \big(1 + \E \Vert u_0 \Vert_{L^2(\sO)}^2\big).
\end{equation}
\end{theorem}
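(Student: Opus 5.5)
The plan is to obtain Theorem~\ref{thm:GWP:weak:setting} as a direct application of the critical variational framework of~\cite[Thm.~5.2]{agrestiCriticalVariationalSetting2024a} (see also~\cite[Subsection~5.1]{agrestiCriticalVariationalSetting2024a}) to the Gelfand triple $V=\nH^2(\sO)\hookrightarrow H=L^2(\sO)\hookrightarrow V^\ast$ with $f(y)=y^3-y$. Concretely, I would verify the three groups of hypotheses there: (i) coercivity of the linear part, (ii) the critical local Lipschitz estimate for $(F,G)$, and (iii) the coercivity condition for the full nonlinear problem, which yields both global existence and the energy estimate.

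For (i), $\langle Au,u\rangle=\|\Delta u\|_{L^2}^2$, and elliptic regularity for the Neumann Laplacian gives $\|u\|_{H^2}^2\lesssim \|\Delta u\|_{L^2}^2+\|u\|_{L^2}^2$ on $\nH^2(\sO)$. The linear-in-$(u,\nabla u)$ noise contributes at most $\|G(u)\|_{\sL_2(\ell^2,H)}^2\le 2L^2(1+\|u\|_{L^2}^2+\|\nabla u\|_{L^2}^2)$, and $\|\nabla u\|_{L^2}^2\le \varepsilon\|\Delta u\|_{L^2}^2+C_\varepsilon\|u\|_{L^2}^2$ by interpolation, so $A$ dominates $G$ after a shift. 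For (ii), I would write
\[
\|F(u)-F(v)\|_{V^\ast}\lesssim \|u^3-v^3\|_{L^2}+\|u-v\|_{L^2}\lesssim (1+\|u\|_{L^6}^2+\|v\|_{L^6}^2)\|u-v\|_{L^6},
\]
and then use Sobolev embedding $H^{\beta\cdot 4-2}=[V^\ast,V]_\beta\hookrightarrow L^6$ in $d\le 2$ with $\beta$ chosen so that the criticality condition $\rho(\beta-\tfrac12)+\beta\le 1$ with $\rho=2$ holds. In $d=2$ one needs $H^{2/3}\hookrightarrow L^6$, i.e.\ $4\beta-2=2/3$, $\beta=2/3$, giving $2\cdot\tfrac16+\tfrac23=1$: exactly critical. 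That is admissible and is where the restriction $\rho\le 4/d$ enters. Lipschitz continuity of $G$ into $\sL_2(\ell^2,L^2)$ from $V$ is immediate from Assumption~\ref{assumption_weak_setting}, and it only needs $H^1$, i.e.\ $\beta=3/4$ with $\rho=0$.

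For (iii), the main step, I need a coercivity estimate of the form
\[
\langle Au,u\rangle-\langle F(u),u\rangle-\tfrac12\|G(u)\|_{\sL_2}^2\ge \theta\|u\|_V^2-M(1+\|u\|_H^2).
\]
The key computation is $-\langle F(u),u\rangle=(u^3-u,\Delta u)=-3\int u^2|\nabla u|^2+\int|\nabla u|^2$. This term is only favorable when the sign is arranged correctly. In the weak setting with $A$-form $(\Delta u,\Delta v)$, the correct pairing gives $+3\int u^2|\nabla u|^2\ge 0$ on the left after integration by parts using $\partial_\nu u=0$, since $f'\ge -1$. The remaining $\|\nabla u\|^2$ terms are absorbed by interpolation as in (i). This dissipativity of the cubic term is the point I expect to need care with, namely tracking signs and boundary terms. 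Once it is in place, \cite[Thm.~3.5, Thm.~3.8]{agrestiCriticalVariationalSetting2024a} give global existence and uniqueness for $u_0\in L^0_{\F_0}(\Omega;L^2)$, with paths in $C([0,\infty);L^2)\cap L^2_\loc([0,\infty);\nH^2)$. The moment bound follows from It\^o's formula for $\|u\|_{L^2}^2$, the coercivity estimate, BDG for the martingale term, and Gronwall, first by stopping and then passing to the limit. The $L^0$ case follows by localising on $\{\|u_0\|\le n\}\in\F_0$.
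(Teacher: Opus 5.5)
This is essentially the paper's proof. The paper simply invokes \cite[Subsection~5.1]{agrestiCriticalVariationalSetting2024a} with $f(y)=y(y^2-1)$ and $\rho=2$, and your checks are an unpacking of that citation's hypotheses: coercivity of $A$ up to the lower-order noise, the exactly critical pair $(\beta,\rho)=(2/3,2)$ in $d=2$, and dissipativity of the cubic term. There is one sign slip in your display: by definition $\langle F(u),u\rangle=(u^3-u,\Delta u)$, so $-\langle F(u),u\rangle=\int_{\sO}(3u^2-1)|\nabla u|^2\dx\ge -\|\nabla u\|_{L^2}^2$, which is the sign you end up using in the next sentence anyway.
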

\begin{proof}
This is a consequence of~\cite[Subsection~5.1]{agrestiCriticalVariationalSetting2024a}. In the notation of the paper, we consider the case $f(y) = y(y^2-1)$ and $\rho = 2$. 
\end{proof}

\subsection{Weakened setting: \texorpdfstring{$V= \nH^3(\sO), H=H^1(\sO)$}{\textit{V=HN3(O), H=H1(O)}}}\label{weakened_setting}

Next we study \eqref{eq:AVP} by means of the critical variational setting with $H\coloneq H^1(\sO)$ and $V\coloneq \nH^3(\sO)$. This case corresponds to a variational formulation of the $(H^1(\sO))^{\ast_2}$-realisation of the Bi-Laplacian, as we can identify $V^\ast $ with $(H^1(\sO))^{\ast_2}$ where the superscript $\ast_2$ denotes the duality with respect to the $L^2$-pairing. To verify this, we first observe
\begin{align}\label{dense_embedding}
 H^1(\sO)\overset{d}{\hookrightarrow} V^\ast, \quad  H^1(\sO)\overset{d}{\hookrightarrow} (H^1(\sO))^{\ast_2}.   
\end{align}   
Now let $h\in H^1(\sO) \seq V^\ast$ and $v\in \nH^3(\sO)$, we have
\begin{align*}
    \langle h , v\rangle_{V^\ast \times V}=( h , v)_{H^1}=( \nabla h , \nabla v )_{L^2} +( h ,v )_{L^2} =( h , (-\Delta+1) v )_{L^2} 
\end{align*}
and thus 
\begin{align*}
   | \langle h , v\rangle_{V^\ast \times V} | \leq \|h \|_{(H^1)^{\ast_2}} \cdot  \|  (-\Delta+1)v \|_{H^1} \leq \|h \|_{(H^1)^{\ast_2}} \cdot  \| v \|_{V}.
\end{align*}
Conversely, let $h\in H^1(\sO) \seq (H^1(\sO))^{\ast_2}$, $v\in H^1(\sO)$ and note that by elliptic regularity, $-\Delta+1$ yields an isomorphism from $\nH^3(\sO)$ to $ H^1(\sO)$. Therefore, there exists $\tilde{v}\in V$ with $(-\Delta+1)\tilde{v}=v$ and we get
\begin{align*}
    \langle h , v\rangle_{(H^1)^{\ast_2} \times  H^1}=( h,v )_{L^2}=( h, (-\Delta+1)\tilde{v}  )_{L^2}=( \nabla h , \nabla \tilde{v} )_{L^2} +( h ,\tilde{v} )_{L^2} = ( h, \tilde{v} )_{H^1}.
\end{align*}
Finally, employing the isomorphism once again gives 
\[ | \langle h , v\rangle_{(H^1)^{\ast_2} \times  H^1} | \leq \|h \|_{V^\ast}\cdot \| \tilde{v} \|_{V} \lesssim \|h \|_{V^\ast}\cdot \| v \|_{H^1},\]
and we can conclude $\| h\|_{V^\ast}\eqsim\|h \|_{(H^1)^{\ast_2}}$ for all $h\in H^1(\sO)$ so that by virtue of \eqref{dense_embedding} we can identify $V^\ast$
with $(H^1(\sO))^{\ast_2}$.
Moreover, we have $V^\ast \simeq (H^1(\sO))^{\ast_2}$ by means of
\begin{align}
\label{eq:dualIsomorphism}
   V^\ast \ni \Lambda &\mapsto \Lambda\left(\,(-\Delta+1)^{-1} \cdot\,\right) \in (H^1(\sO))^{\ast_2},\nonumber\\  
  (H^1(\sO))^{\ast_2} \ni \Lambda  &\mapsto \Lambda\left(\,(-\Delta+1) \cdot\,\right) \in V^\ast.
\end{align}

We define $A \in \sL(V,V^\ast)$ and $F:V\to V^\ast$ by
\begin{equation}\label{def_weakened_Bilaplacian}
    \langle Au,v \rangle=(\Delta u, \Delta v)_{H^1} \text{ and } 
\langle F(u),v \rangle=-(- u^3+u, \Delta v)_{H^1}, \quad \forall u, v \in V.
\end{equation}

In particular, it follows that for all $u, v \in V$

\begin{equation}
    \langle Au, v\rangle_{(H^1)^{\ast_2}, H^1} = -(\grad \Delta u,\grad v)_{L^2} \quad \text{and} \quad \langle F(u),v\rangle_{(H^1)^{\ast_2},H^1} = (-\grad(u^3 - u) ,\grad v)_{L^2}.
\end{equation}

The definition of $G$ can essentially be done in the same way as in the weak setting, but the weakened setting necessitates
higher regularity of the noise. It is therefore that we adapt the assumptions on the $(\psi_n)_{n\geq 1}$; compare with Assumption~\ref{assumption_weak_setting}.

\begin{assumption}\label{assumption_weakened_setting}
  Let $d\in \{3,4 \}$. Suppose that 
  \begin{equation*}
      \psi=(\psi_n)_{n\geq 1}:[0,\infty)\times \Omega\times\sO\times \R^{1+d}\to \ell^2,\quad \psi(t,\omega,x,y,z) \ce \psi^0(t,\omega,x,y)+\psi^1(t,\omega,x) \cdot z,
  \end{equation*}
  where $\cdot$ denotes the usual scalar product in $\R^d$ and the functions
 \begin{align*}
     &\psi^0=(\psi^0_n)_{n\geq 1}:[0,\infty)\times \Omega\times\sO\times \R\to \ell^2,\quad
     \psi^1=(\psi^1_n)_{n\geq 1}:[0,\infty)\times \Omega\times\sO\to \ell^2(\R^d)
 \end{align*}
 are $\sP \otimes \sB(\sO)\otimes \sB(\R)$-measurable and $\sP \otimes \sB(\sO)$-measurable, respectively.
 Moreover, we assume that 
\begin{enumerate}
    \item the mappings  $(x,y)\mapsto \psi^0(\cdot,x,y)$ and $x\mapsto \psi^1(\cdot,x)$  are in $C^1(\sO\times \R;\ell^2)$ and $C^1(\sO;\ell^2(\R^d))$  a.e.\ on $[0,\infty)\times \Omega$, respectively;
    \item  there exists a constant $L\geq0$ such that a.e.\ on
    $[0, \infty)\times\Omega\times\sO$, for all $y,y'\in\R$, we have
    \begin{align}
        \|\nabla_x[
            \psi^0(\cdot,y)-\psi^0(\cdot,y')
        ]\|_{\ell^2(\R^d)}
        &\leq L(1+|y|+|y'|)|y-y'|, \label{eq:psi0-gradx-local-lipschitz}
        \\
                \|\partial_y[
            \psi^0(\cdot,y)-\psi^0(\cdot,y')
        ]\|_{\ell^2}
        &\leq L|y-y'|, \label{eq:psi0-dy-lipschitz}
        \\
        \|\psi^0(\cdot,y)\|_{\ell^2}+ \|\nabla_x\psi^0(\cdot,y)\|_{\ell^2(\R^d)}
        &\leq L(1+|y|), \label{eq:psi0-linear-growth}
        \\
        \|\psi^1(\cdot)\|_{\ell^2(\R^d)}
        +
        \|\nabla_x\psi^1(\cdot)\|_{\ell^2(\R^{d\times d})}
        &\leq L. \label{eq:psi1-uniform-bound}
    \end{align}
\end{enumerate}
\end{assumption}

The following estimates follow immediately from Assumption~\ref{assumption_weakened_setting} and the fundamental theorem of calculus.

\begin{corollary} \label{cor:weakened_noise_consequences}
    Under Assumption~\ref{assumption_weakened_setting}, the following holds. There exists a constant $C_L > 0$ such that almost everywhere on $[0, \infty) \times \Omega \times \sO$ and for all $y, y' \in \R$ and $z, z' \in \R^d$ 

    \begin{align}
        \|\partial_y\psi^0(\cdot,y)\|_{\ell^2} &\leq C_L(1+|y|), \label{eq:corNoiseDerivativeGrowth}\\
        \|\psi^0(\cdot,y)-\psi^0(\cdot,y')\|_{\ell^2} & \leq C_L(1+|y|+|y'|)|y-y'|, \label{eq:corNoiseLocalLipschitz}\\
        \|\psi(\cdot,y,z)-\psi(\cdot,y',z')\|_{\ell^2} & \leq C_L\Big((1+|y|+|y'|)|y-y'|+|z-z'|\Big). \label{eq:corFullNoiseLocalLipschitz}
    \end{align}
\end{corollary}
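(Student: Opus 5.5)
The three estimates follow pointwise from Assumption~\ref{assumption_weakened_setting} by the fundamental theorem of calculus in the $y$-variable. We fix a point of $[0,\infty)\times\Omega\times\sO$ outside the exceptional null set where items (1)--(2) hold. There $y\mapsto\psi^0(\cdot,y)$ is $C^1$ with values in $\ell^2$, so Bochner-valued calculus is available. Throughout, $C_L$ denotes a constant depending only on $L$.

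For \eqref{eq:corNoiseDerivativeGrowth}, apply \eqref{eq:psi0-dy-lipschitz} with $y'=0$ to get $\|\partial_y\psi^0(\cdot,y)\|_{\ell^2}\le \|\partial_y\psi^0(\cdot,0)\|_{\ell^2}+L|y|$. It remains to bound $\|\partial_y\psi^0(\cdot,0)\|_{\ell^2}$ by a constant depending only on $L$. This is the only point needing care, since no direct bound on $\partial_y\psi^0$ at a fixed point is assumed. We use the linear growth \eqref{eq:psi0-linear-growth}. By the fundamental theorem of calculus and \eqref{eq:psi0-dy-lipschitz},
\[
\psi^0(\cdot,1)-\psi^0(\cdot,0)=\int_0^1\partial_y\psi^0(\cdot,r)\,\rmd r = \partial_y\psi^0(\cdot,0)+\int_0^1\big(\partial_y\psi^0(\cdot,r)-\partial_y\psi^0(\cdot,0)\big)\,\rmd r .
\]
The last integral has $\ell^2$-norm at most $L/2$. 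The left-hand side has norm at most $L(1+1)+L=3L$. Hence $\|\partial_y\psi^0(\cdot,0)\|_{\ell^2}\le 4L$, which gives \eqref{eq:corNoiseDerivativeGrowth} with $C_L=4L$, say.

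For \eqref{eq:corNoiseLocalLipschitz}, write
\[
\psi^0(\cdot,y)-\psi^0(\cdot,y')=\int_0^1\partial_y\psi^0\big(\cdot,y'+r(y-y')\big)\,\rmd r\,(y-y').
\]
By \eqref{eq:corNoiseDerivativeGrowth}, the integrand is bounded by $C_L(1+|y'+r(y-y')|)\le C_L(1+|y|+|y'|)$.

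Finally, \eqref{eq:corFullNoiseLocalLipschitz} follows from the triangle inequality:
\[
\|\psi(\cdot,y,z)-\psi(\cdot,y',z')\|_{\ell^2}\le \|\psi^0(\cdot,y)-\psi^0(\cdot,y')\|_{\ell^2}+\|\psi^1\cdot(z-z')\|_{\ell^2}.
\]
The first term is controlled by \eqref{eq:corNoiseLocalLipschitz}. For the second, use Cauchy--Schwarz in $\R^d$ componentwise: $\sum_n|\psi^1_n\cdot w|^2\le \|\psi^1\|_{\ell^2(\R^d)}^2|w|^2\le L^2|w|^2$ by \eqref{eq:psi1-uniform-bound}. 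Enlarging $C_L$ to be at least $L$ concludes the argument.

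The only mildly nontrivial step is the bound on $\partial_y\psi^0(\cdot,0)$, which combines the growth and Lipschitz assumptions; everything else is routine.
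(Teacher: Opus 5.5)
Your proof is correct and follows the route the paper indicates: the paper only remarks that these estimates follow from Assumption~\ref{assumption_weakened_setting} and the fundamental theorem of calculus. Your bound on $\|\partial_y\psi^0(\cdot,0)\|_{\ell^2}$, obtained by combining the linear growth of $\psi^0$ with the Lipschitz bound on $\partial_y\psi^0$, is exactly the detail the paper leaves implicit.
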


Now, $G:[0,\infty)\times \Omega \times V \to \sL_2(\ell^2,H)$ is defined by 
\begin{equation}\label{eq:def_G}
    (G_n(t,u))(x)=\psi_n(t,x,u(x),\nabla u(x)),\quad n\ge 1. 
\end{equation}

Recall at this point the notation for the complex interpolation spaces $V_{\beta} \ce  [V^{\ast}, V]_{\beta}$ for $\beta \in [0,1]$ and the shorthand $\Vert \cdot \Vert_{\beta} \ce  \Vert \cdot \Vert_{V_{\beta}}$.

\begin{lemma} \label{lem:weakened_estimatesAFG}
Under Assumption~\ref{assumption_weakened_setting}, the following hold:
\begin{enumerate}[label=(\alph*)]
\item\label{lemItem:weakenedEstimatesA} There exists a constant $C > 0$ such that for all $u\in V$ we have
\begin{align}
     \langle Au,u \rangle= \| \Delta u\|^2_{H^1} \geq \frac{1}{C} \|u \|^2_{V}-\|u \|^2_H.
\end{align}
\item\label{lemItem:weakenedEstimatesF} For all $u,v\in V$ we have 
\begin{align}\label{eq:FestimatesLocWP}
    \|F(u)-F(v) \|_{V^\ast} &\lesssim \sum_{j=1}^3 \big(1+\|u \|_{\beta_j}^{\rho_j}+\|v \|^{\rho_j} _{\beta_j}\big)\| u-v\|_{\beta_j},\\
     \|F(u) \|_{V^\ast} &\lesssim \sum_{j=1}^3 \big(1+\|u \|_{\beta_j}^{\rho_j+1}\big),
\end{align}

where $(\beta_1, \rho_1) \coloneq (\frac{7}{12}, 0)$, $(\beta_2, \rho_2) \coloneq  ( \frac{7}{12}, 2 )$ and $(\beta_3, \rho_3) = ( \frac{1}{2}+\frac{d-2}{12}, 2 )$.

\item\label{lemItem:weakenedEstimatesG} 
The mapping $G:[0,\infty)\times \Omega \times V \to \sL_2(\ell^2,H)$ is $\sP\otimes \sB(V)$-measurable and almost everywhere on $[0, \infty) \times \Omega$ and for all $u, v \in V$ we have

\begin{align}\label{eq:GestimatesLocWP}
    \| G(t,u)-G(t,v) \|_{\sL_2(\ell^2,H)} &\lesssim \sum_{j=4}^6 \big(1+\|u \|_{\beta_j}^{\rho_j}+\|v \|^{\rho_j} _{\beta_j}\big)\| u-v\|_{\beta_j},\\
     \| G(t,u) \|_{\sL_2(\ell^2,H)} &\lesssim \sum_{j=4}^6 \big(1+\|u \|_{\beta_j}^{\rho_j+1}\big),
\end{align}

where $(\beta_4,\rho_4) \coloneq  (\frac{7}{12},1)$, $(\beta_5,\rho_5) = (\frac{d+6}{16},1)$, and $(\beta_6,\rho_6) = (\frac{3}{4},0)$.
\end{enumerate}
\end{lemma}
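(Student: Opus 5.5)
The plan is to first identify the interpolation scale $V_\beta$ with concrete Sobolev spaces, and then reduce each of the three estimates to Hölder's inequality combined with Sobolev embeddings. Recall $V^\ast\simeq (H^1(\sO))^{\ast_2}$ via \eqref{eq:dualIsomorphism}, $H=H^1(\sO)$ and $V=\nH^3(\sO)$. The isomorphism $-\Delta+1$ maps the scale $\bpsn{s}$ onto $\bpsn{s-2}$, and the family $(\bpsn{s})_s$ is closed under complex interpolation. Hence I expect
\[
V_\beta=[V^\ast,V]_\beta=\bpsn{4\beta-1},\qquad \|u\|_\beta\eqsim\|u\|_{H^{4\beta-1}(\sO)}\ \text{ for }4\beta-1\ge 0 .
\]
The relevant exponents are $\beta=\tfrac7{12}\mapsto H^{4/3}$, $\beta_3\mapsto H^{(d+1)/3}$, $\beta_5\mapsto H^{(d+2)/4}$ and $\beta_6\mapsto H^2$. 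This identification, in particular checking that the Neumann boundary conditions match across the scale (including the duality side $V^\ast\simeq \bpsn{-1}$), is where I expect the only real technical care to be needed. I would argue it by writing $V_\beta=(1-\Delta)^{-1}[\,\bpsn{-3},\bpsn{1}]_\beta$, using that $1-\Delta$ is an isomorphism between the corresponding members of the scale.

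\textbf{Part (a).} I compute $\langle Au,u\rangle=\|\Delta u\|_{H^1}^2$ directly from \eqref{def_weakened_Bilaplacian}. Elliptic regularity for the Neumann Laplacian gives
\[
\|u\|_{H^3}\lesssim \|\Delta u\|_{H^1}+\|u\|_{L^2}\quad\text{for }u\in\nH^3(\sO).
\]
Squaring this and using $\|u\|_{L^2}\le\|u\|_{H}$ yields the claim.

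\textbf{Part (b).} By the identification $\langle F(u),v\rangle=(-\nabla(u^3-u),\nabla v)_{L^2}$, we have $\|F(u)-F(v)\|_{V^\ast}\lesssim\|u^3-v^3\|_{H^1}+\|u-v\|_{H^1}$.
\begin{itemize}
\item The linear part is bounded by $\|u-v\|_{H^{4/3}}$, which gives the pair $(\beta_1,\rho_1)$.
\item The $L^2$-part of $u^3-v^3$ is bounded by $(\|u\|_{L^6}^2+\|v\|_{L^6}^2)\|u-v\|_{L^6}$. For $d\le4$ we have $H^{4/3}\hookrightarrow L^6$, which gives $(\beta_2,\rho_2)$.
\item For the gradient I write $\nabla(u^3-v^3)=3(u^2-v^2)\nabla u+3v^2\nabla(u-v)$. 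Each term has the form $\|fg\nabla h\|_{L^2}\le\|f\|_{L^a}\|g\|_{L^a}\|\nabla h\|_{L^b}$ with $2/a+1/b=1/2$. Taking $f,g,h\in H^\sigma$, Sobolev gives $1/a=1/2-\sigma/d$ and $1/b=1/2-(\sigma-1)/d$. The condition becomes $3/2-3\sigma/d+1/d\le 1/2$, i.e.\ $\sigma\ge (d+1)/3=4\beta_3-1$. This explains $(\beta_3,\rho_3)$.
\end{itemize}
One checks that all exponents $a,b$ lie in $[2,\infty)$ for $d\in\{3,4\}$, replacing $L^\infty$ by a large finite exponent where necessary. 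The growth bound for $\|F(u)\|_{V^\ast}$ follows by taking $v=0$.

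\textbf{Part (c).} By Lemma~\ref{handle_noise}, $\|G(t,u)\|_{\sL_2(\ell^2,H)}\eqsim\|\psi(\cdot,u,\nabla u)\|_{H^1(\sO;\ell^2)}$. Measurability follows from the Carathéodory structure of $\psi$ together with continuity of $u\mapsto(u,\nabla u)$ from $V$ into $L^2$. I differentiate using the chain rule,
\[
\nabla[G(u)-G(v)]=\big[\nabla_x\psi^0(u)-\nabla_x\psi^0(v)\big]+\big(\partial_y\psi^0(u)-\partial_y\psi^0(v)\big)\nabla u+\partial_y\psi^0(v)\nabla(u-v)+\nabla_x\psi^1\nabla(u-v)+\psi^1\nabla^2(u-v),
\]
and bound each term using Assumption~\ref{assumption_weakened_setting} and Corollary~\ref{cor:weakened_noise_consequences}.
\begin{itemize}
\item The terms $(1+|u|+|v|)|u-v|$, together with the zeroth-order part, are estimated in $L^2$ through $H^{4/3}\hookrightarrow L^4$. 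This gives $(\beta_4,\rho_4)$.
\item The products $|u-v|\,|\nabla u|$ and $|v|\,|\nabla(u-v)|$ need $1/a+1/b\le1/2$ with all factors in $H^\sigma$. This amounts to $1-2\sigma/d+1/d\le 1/2$, i.e.\ $\sigma\ge(d+2)/4=4\beta_5-1$, which gives $(\beta_5,\rho_5)$.
\item The transport term $\psi^1\nabla^2(u-v)$ is bounded by $L\|u-v\|_{H^2}$, which gives $(\beta_6,\rho_6)$.
\end{itemize}
As in (b), the growth bound follows by taking $v=0$, using the linear growth of $\psi^0$ and $\nabla_x\psi^0$.
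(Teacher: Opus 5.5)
Your proposal is correct and follows essentially the same route as the paper: elliptic regularity for (a), and for (b) and (c) H\"older's inequality plus Sobolev embeddings with exactly the paper's exponents. Those are $L^6$ for the $L^2$-part of $u^3-v^3$, $r_2=\frac{6d}{d+4}$ and $r_1=\frac{dr_2}{d-r_2}$ for $\nabla(u^3-v^3)$, $L^4$ for $(\beta_4,\rho_4)$, and $r_1=\frac{4d}{d-2}$, $r_2=\frac{4d}{d+2}$ for the $\partial_y\psi^0$-terms. The one claim to correct is the norm equivalence $\|u\|_\beta\eqsim\|u\|_{H^{4\beta-1}(\sO)}$, which fails at the exceptional index $4\beta_5-1=\frac32$ when $d=4$; you only ever use the embedding $V_\beta\hookrightarrow H^{4\beta-1}(\sO)$, which does hold there (the paper gets it from $V_{(1+\theta)/2}=[H,V]_\theta\hookrightarrow H^{1+2\theta}(\sO)$), so the argument is unaffected.
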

\begin{proof}
\textit{ (a)} Let $u,v\in V$, we start with the estimate for $A$. As mentioned above, $-\Delta+1$ yields an isomorphism from $\nH^3(\sO)$ to $ H^1(\sO)$, therefore we have
    \begin{align*}
        \|u \|^2_{\nH^3}\lesssim \| (-\Delta+1)u\|^2_{H^1}\lesssim \| \Delta u\|^2_{H^1}+\| u\|^2_{H^1},
    \end{align*}
    and we can conclude that there exists a constant $C > 0$ such that 
    \begin{align*}
       \langle Au,u \rangle=(\Delta u,\Delta u)_{H^1}= \| \Delta u\|^2_{H^1} \geq \frac{1}{C} \|u \|^2_{V}-\|u \|^2_H.
    \end{align*}
    
    \textit{(b)} Let $\theta \in (0,1)$, we have $V_{\frac{1+\theta}{2}}=[H^1(\sO),\nH^3(\sO)]_{\theta}\hookrightarrow H^{(1-\theta)+3\theta}(\sO)$ and hence
\begin{align}\label{tau}
   V_{\tau} \hookrightarrow H^{4\tau-1}(\sO)
\end{align}
for $\tau \in (1/2,1)$, which we will employ frequently in the sequel. To estimate $F$ we first observe
\begin{align}
    \|F(u)-F(v) \|_{V^\ast}   &\lesssim \|-u^3+u+v^3-v\|_{H^1} 
 \lesssim \| u-v \|_{H^1} +\|u^3-v^3 \|_{H^1} \nonumber\\ 
  &\lesssim \| u-v \|_{H^1} + \| u^3-v^3 \|_{L^2}+  \|\nabla ( u^3-v^3 )\|_{L^2}.\label{eq:F_to_be_bound}
\end{align}    
By \eqref{tau}, the first term can be upper bounded by $\| u-v \|_{\frac{7}{12}}$ and thus we set $\beta_1 \ce \frac{7}{12}$ and $\rho_1 \ce 0$. The second term is treatable by the difference-of-powers identity $u^3-v^3=(u-v)(u^2+uv+v^2)$ and Hölder's inequality, giving
\begin{align*}
    \|u^3-v^3 \|_{L^2}& \lesssim (1+\|u \|^2_{L^6}+\|v \|^2_{L^6} ) \|u-v \|_{L^6}
    \lesssim (1+\|u \|^2_{\beta_2}+\|v \|^2_{\beta_2} ) \|u-v \|_{\beta_2},
\end{align*}
where by \eqref{tau} the condition on the Sobolev embedding is given by $4\beta_2-1-\frac{d}{2} \geq -\frac{d}{6}$ and thus $\frac{d+3}{12} \leq \beta_2$. The dimension-independent choice $\beta_2 \ce \frac{7}{12}$ is consistent with $\beta_2 \in (\frac{1}{2}, 1)$.
In order to estimate the final term in \eqref{eq:F_to_be_bound}, applying the gradient to the difference-of-powers identity gives
\[ \nabla (u^3-v^3)=\nabla (u-v) \cdot (u^2+uv+v^2)+ (u-v)\cdot \nabla (u^2+uv+v^2)\]
and we consider both terms separately. We will start with the second one. Observe that
\[\nabla (u^2+uv+v^2)=2u\nabla u+u\nabla v +v\nabla u+2v\nabla v,\]
and employ the generalized Hölder inequality on each of the four resulting summands to obtain
\begin{align}\label{generalized_hölder}
\|  (u-v)\cdot \nabla (u^2+uv+v^2) \|_{L^2}
&\leq \|u-v \|_{L^{r_1}} \cdot \|2u \|_{L^{r_1}}  \cdot \| \nabla u \|_{L^{r_2}}  + \|u-v \|_{L^{r_1}} \cdot \|u \|_{L^{r_1}}  \cdot \| \nabla v\|_{L^{r_2}}  \nonumber\\
  &\phantom{\le }+ \|u-v \|_{L^{r_1}} \cdot \|v \|_{L^{r_1}}  \cdot \| \nabla u\|_{L^{r_2}} + \|u-v \|_{L^{r_1}} \cdot \|2v \|_{L^{r_1}}  \cdot \| \nabla v \|_{L^{r_2}},
\end{align}
where $\frac{1}{2}=\frac{2}{r_1}+\frac{1}{r_2}$ and $r_1,r_2\in [2,\infty)$. Since we want the seminorms $\|\cdot \|_{L^{r_1}}, \| \nabla \cdot \|_{L^{r_2}}$  to admit the same scaling we further get the condition
\[ \frac{-d}{r_1}=1-\frac{d}{r_2} \quad \implies \quad r_1=\frac{dr_2}{d-r_2}.\]
Inserting this in the first condition gives
\[ \frac{1}{2}=2\left( \frac{d-r_2}{dr_2}\right)+\frac{1}{r_2} \quad \implies \quad  r_2=\frac{6d}{d+4}.\]
Next, to verify $V_{\beta_3}\hookrightarrow H^{1,r_2}(\sO)$ 
and  $V_{\beta_3}\hookrightarrow L^{r_1}(\sO)$ for some $\beta_3\in (1/2,1)$ we use \eqref{tau} to get
\[ 4\beta_3-1-\frac{d}{2}=1-\frac{d}{r_2}=1-\frac{d+4}{6} \quad  \implies \quad \beta_3=\frac{1}{2}+\frac{d-2}{12}. \]
Hence, applying this to \eqref{generalized_hölder} we have
\begin{align*}
 \begin{aligned}
\|  (u-v)\cdot \nabla (u^2+uv+v^2) \|_{L^2} &\lesssim \|u-v \|_{\beta_3} \cdot \|u \|_{\beta_3}^2 + \|u-v \|_{\beta_3} \cdot \|u \|_{\beta_3}  \cdot \| v\|_{\beta_3}  \\
  &\phantom{\lesssim }+ \|u-v \|_{\beta_3} \cdot \|v \|_{\beta_3}  \cdot \|  u\|_{\beta_3}  + \|u-v \|_{\beta_3} \cdot \|v \|_{\beta_3}^2\\
  &\lesssim  (\|u \|_{\beta_3}^2+\|v \|_{\beta_3}^2) \|u-v \|_{\beta_3},
\end{aligned}   
\end{align*}
where in the last step we used Young's inequality. The first term can be dealt with in a similar manner. Let $r_1,r_2,\beta_3$ be as above. Then we obtain
\begin{align*}
 \begin{aligned}
\|  \nabla (u-v) \cdot (u^2+uv+v^2) \|_{L^2} &\leq \|\nabla (u-v) \|_{L^{r_2}} \cdot \|u \|_{L^{r_1}}^2 + 2 \|\nabla (u-v) \|_{L^{r_2}}  \cdot \|u \|_{L^{r_1}} \cdot \| v\|_{L^{r_1}} \\
  &\phantom{\le }+ \|\nabla (u-v) \|_{L^{r_2}}  \cdot \|v \|_{L^{r_1}}^2 \\
  &\lesssim (\|u \|_{\beta_3}^2+\|v \|_{\beta_3}^2) \|u-v \|_{\beta_3}.
\end{aligned}   
\end{align*}
The growth estimate is now an immediate consequence of the locally Lipschitz estimate since $F(0)=0$.

\textit{(c)} We continue with the estimate for $G$ and illustrate how to obtain the Lipschitz estimates. The growth estimates can be checked as in part (b),
since $\psi(t,\cdot,0,0)=\psi^0(t,\cdot,0)$ and Assumption~\ref{assumption_weakened_setting} yields
\[
\|\psi^0(t,\cdot,0)\|_{H^1(\sO;\ell^2)}\lesssim 1.
\]
Regarding the Lipschitz estimate, for $t\geq 0$ and $u,v\in V$ we have
\begin{align*}
    \|G(t,u)-G(t,v) \|_{H^1(\sO;\ell^2)} \eqsim \| G(t,u)-G(t,v)\|_{L^2(\sO;\ell^2)}+ \sum_{j=1}^d\|\partial_j [ G(t,u)-G(t,v)]\|_{L^2(\sO;\ell^2)}.
\end{align*}
By \eqref{eq:corFullNoiseLocalLipschitz} in Corollary~\ref{cor:weakened_noise_consequences} we have
\begin{equation}
    \|\psi(\cdot,u,\nabla u)-\psi(\cdot,v,\nabla v)\|_{\ell^2} \lesssim
    (1+|u|+|v|)|u-v|+|\nabla(u-v)|.
\end{equation}
Taking $L^2(\sO)$-norms and applying Hölder's inequality yields
\begin{align*}
    \|\psi(\cdot,u,\nabla u)-\psi(\cdot,v,\nabla v)\|_{L^2(\sO;\ell^2)}
    & \lesssim \|u-v\|_{H^1} +(\|u\|_{L^4}+\|v\|_{L^4})\|u-v\|_{L^4}\\
    &\lesssim
    (1+\|u\|_{H^1}+\|v\|_{H^1})
    \|u-v\|_{H^1}, \\
    &\lesssim
    (1+\|u\|_{\beta_4}+\|v\|_{\beta_4})
    \|u-v\|_{\beta_4},
\end{align*}
where we have used that $V_{\frac{7}{12}} \hookrightarrow H^1(\sO)\hookrightarrow L^4(\sO)$ for $d\in\{3,4\}$ such that $\beta_4\coloneqq\frac{7}{12}$.

Let $1\leq j\leq d$, for the derivative term we can use the chain rule to write 
\begin{align*}
    \partial_j [&\psi(\cdot,u,\nabla u)-\psi(\cdot,v,\nabla v)]=\, \partial_{x_j}\psi(\cdot,u,\nabla u)-\partial_{x_j}\psi(\cdot,v,\nabla v)\\
    & + \partial_y \psi(\cdot,u,\nabla u)\partial_{x_j}u-\partial_y \psi(\cdot,v,\nabla v)\partial_{x_j}v
    + \nabla_z \psi(\cdot,u,\nabla u)\cdot \partial_{x_j}\nabla u-\nabla_z\psi(\cdot,v,\nabla v)\cdot\partial_{x_j} \nabla v,
\end{align*}
and by means of the affine form of $\psi$ from Assumption~\ref{assumption_weakened_setting} this can be further simplified to
\begin{align*}
    \partial_j [&\psi(\cdot,u,\nabla u)-\psi(\cdot,v,\nabla v)]=\, 
 \partial_{x_j}\psi^0(\cdot,u)-\partial_{x_j}\psi^0(\cdot,v)
     + \partial_{x_j}\psi^1(\cdot)\cdot \nabla(u-v)\\
    &\ + \partial_y\psi^0(\cdot,u)\partial_{x_j}u
    -\partial_y\psi^0(\cdot,v)\partial_{x_j}v
     + \psi^1(\cdot)\cdot \partial_{x_j}\nabla(u-v).
\end{align*}
We begin with the first term. By \eqref{eq:psi0-gradx-local-lipschitz} in Assumption~\ref{assumption_weakened_setting} we get the pointwise estimate
\begin{align*}
    \|\partial_{x_j}\psi^0(\cdot,u)
    -\partial_{x_j}\psi^0(\cdot,v)\|_{\ell^2}
    \lesssim (1+|u|+|v|)|u-v|,
\end{align*}
and taking $L^2(\sO)$-norms for fixed $t$ and applying Hölder's inequality yields again
\begin{align*}
    \|\partial_{x_j}\psi^0(\cdot,u)
    -\partial_{x_j}\psi^0(\cdot,v)\|_{L^2(\sO;\ell^2)}
    &\lesssim
    \|u-v\|_{L^2}
    +\bigl(\|u\|_{L^4}+\|v\|_{L^4}\bigr)
    \|u-v\|_{L^4}
    \\
    &\lesssim
    \bigl(1+\|u\|_{\beta_4}+\|v\|_{\beta_4}\bigr)
    \|u-v\|_{\beta_4}.
\end{align*}
We proceed in a similar manner with the second term involving $\psi^0$. By \eqref{eq:psi0-dy-lipschitz} in Assumption~\ref{assumption_weakened_setting} and \eqref{eq:corNoiseDerivativeGrowth} in Corollary~\ref{cor:weakened_noise_consequences}, we have pointwise 
\begin{align*}
     \|\partial_{y}\psi^0(\cdot,u)\partial_{x_j}u
     -\partial_{y}\psi^0(\cdot,v)\partial_{x_j}v \|_{\ell^2}
   &\leq  \| [\partial_{y}\psi^0(\cdot,u)-\partial_{y}\psi^0(\cdot,v)]
   \partial_{x_j}u \|_{\ell^2}
   + \|\partial_{y}\psi^0(\cdot,v)
   \partial_{x_j}(u-v) \|_{\ell^2}\\
   &\lesssim |u-v|\,|\partial_{x_j}u|
   +(1+|v|)|\partial_{x_j}(u-v)|,
\end{align*}
and further obtain
\begin{align*}
   \|\partial_{y}\psi^0(\cdot,u)\partial_{x_j}u
   -\partial_{y}\psi^0(\cdot,v)\partial_{x_j}v
   \|_{L^2(\sO;\ell^2)}
  &\lesssim
  \|u-v\|_{L^{r_1}}\|u\|_{H^{1,r_2}}
  +(1+\|v\|_{L^{r_1}})
  \|(u-v)\|_{H^{1,r_2}}\\
  &\lesssim
  (1+\|u\|_{\beta_5}+\|v\|_{\beta_5})
  \|u-v\|_{\beta_5}.
\end{align*}
Here $r_1,r_2\in [2,\infty)$ are determined by $\frac{1}{r_1}+\frac{1}{r_2}
    =\frac{1}{2}$ and $-\frac{d}{r_1}
    =1-\frac{d}{r_2}$ which yields 
$r_1=\frac{4d}{d-2}$, $r_2=\frac{4d}{d+2}$, and hence, by \eqref{tau} we choose
$\beta_5$ with
$ 
    4\beta_5-1-\frac{d}{2}
    =-\frac{d}{r_1}$ such that $ 
\beta_5=\frac{d+6}{16}$.
It remains to consider the terms corresponding to $\psi^1$. By \eqref{eq:psi1-uniform-bound} in Assumption~\ref{assumption_weakened_setting} we get
\begin{align*}
    \|\partial_{x_j}\psi^1(\cdot)\cdot\nabla(u-v)
    +\psi^1(\cdot)\cdot\partial_{x_j}\nabla(u-v)\|_{\ell^2} \lesssim |\nabla(u-v)|+|\partial_{x_j}\nabla(u-v)|,
\end{align*}
and finally by choosing $\beta_6=\frac34$ it follows
\begin{align*}
    \|\partial_{x_j}\psi^1(\cdot)\cdot\nabla(u-v)
    +\psi^1(\cdot)\cdot\partial_{x_j}\nabla(u-v)\|_{L^2(\sO;\ell^2)}\lesssim \|u-v\|_{H^2}
    \lesssim \|u-v\|_{\beta_6}.
\end{align*}
This proves the desired estimate for $G$, in particular we have verified that $G$ is an $\sL_2(\ell^2,H)$-valued mapping and we continue with the measurability statement. By~\cite[Example~9.1.16]{hytonenAnalysisBanachSpaces2017} and a limit argument this is equivalent to the strong $\sP\otimes \sB(V)$-measurability of the mapping 
\[ (t,\omega,u)\mapsto (G_n(t,u))=\psi_n(t,\omega,\cdot,u(\cdot),\nabla u(\cdot)) \in H^1(\sO) \quad (n\in \N). \]
As we have seen above, this mapping is continuous with respect to the third variable. Thus, if we can show that the mapping
\begin{align}\label{eq:mappingPsi}
    (t,\omega) \mapsto \psi_n(t,\omega,\cdot,u(\cdot),\nabla u(\cdot)) \in H^1(\sO) 
\end{align}
is strongly $\sP$-measurable for all $u\in V$, the claim follows by~\cite[Lemma~III.14]{castaingConvexAnalysisMeasurable1977}. Let $u\in V$ and $f\in L^2(\sO)$, by Assumption~\ref{assumption_weakened_setting} the real-valued function $(t,\omega,x)\mapsto \psi_n(t,\omega, x,u(x),\nabla u(x))$ is $\sP\otimes \sB(\sO)$-measurable and by Fubini's theorem 
\[ (t,\omega)\mapsto \int_\sO \psi_n(t,\omega,x,u(x),\nabla u(x))f(x) \dx\]
is $\sP$-measurable. Now, since $L^2(\sO) $ is dense in $(H^1(\sO))^{\ast_2}$ we can infer from the Pettis measurability theorem~\cite[Theorem~1.1.20]{hytonenAnalysisBanachSpaces2016} that the mapping in \eqref{eq:mappingPsi} is indeed strongly $\sP$-measurable.
\end{proof}

\begin{theorem}[Local well-posedness in the weakened setting] \label{thm:localWP_weakened}
Suppose that Assumption~\ref{assumption_weakened_setting} holds. Then for all $u_0\in L^0_{\F_0}(\Omega;H^1(\sO))$, there exists a (unique) maximal solution $(u,\sigma)$ to \eqref{eq:AVP} such that a.s.\ $u\in C([0,\sigma);H^1(\sO))\cap L^2_{\loc}([0,\sigma);\nH^3(\sO))$. Moreover, the following blow-up criterion holds
\begin{equation}
    \label{eq:blowUp_bare}
    \P \bigg(\sigma < \infty ,\sup\limits_{t\in [0,\sigma)} \| u(t) \|_{H^1(\sO)} + \int_0^\sigma \|u(t) \|^2_{H^3(\sO)} \dt<\infty  \bigg) =0.
\end{equation}

In the noncritical case $d=3$ the blow-up criterion can be relaxed to 
\begin{equation}
    \label{eq:blowUpSubcritd3}
    \P \bigg(\sigma < \infty ,\sup\limits_{t\in [0,\sigma)} \| u(t) \|_{H^1(\sO)} <\infty  \bigg) =0.
\end{equation}
\end{theorem}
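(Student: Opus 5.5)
The plan is to apply the abstract local well-posedness and blow-up results of the critical variational setting, \cite[Theorem~3.3 and Theorem~3.5]{agrestiCriticalVariationalSetting2024a}, to the Gelfand triple $(V,H,V^\ast)=(\nH^3(\sO),H^1(\sO),(H^1(\sO))^{\ast_2})$ with $A,F,G$ as in \eqref{def_weakened_Bilaplacian} and \eqref{eq:def_G}. Lemma~\ref{lem:weakened_estimatesAFG} already provides almost all hypotheses. Part~\ref{lemItem:weakenedEstimatesA} gives the coercivity $\langle Au,u\rangle\ge C^{-1}\|u\|_V^2-\|u\|_H^2$. For the stochastic parabolicity condition I would check that the leading part of $G$, namely $u\mapsto \psi^1\cdot\nabla u$, can be absorbed after shifting by lower-order terms. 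In the abstract framework $B$ and $A$ are both treated as linear with small perturbations. Here $\|\psi^1\cdot\nabla u\|_{\sL_2(\ell^2,H^1)}^2\approx \sum_n\|\psi^1_n\cdot\nabla u\|_{H^1}^2$. This is a second-order quantity, so it is of order $\beta_6=3/4<1$. It is therefore a lower-order perturbation in the variational sense: by interpolation, $\|u\|_{3/4}^2\le\varepsilon\|u\|_V^2+C_\varepsilon\|u\|_H^2$. Consequently no smallness of $\psi^1$ is needed, and we can take $B=0$ and put the whole of $G$ into the nonlinear part.

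Part~\ref{lemItem:weakenedEstimatesF}--\ref{lemItem:weakenedEstimatesG} give the local Lipschitz estimates with pairs $(\beta_j,\rho_j)$. It remains to verify the criticality condition of the critical variational setting, $\frac{1+\rho_j}{2}\le \frac{\rho_j+1}{\rho_j}(1-\beta_j)$, i.e.\ $2\beta_j\le 1+1/\rho_j$ for $\rho_j>0$, and $\beta_j<1$ for $\rho_j=0$. The cases work out as follows.
\begin{itemize}
\item $(7/12,2)$: $7/6\le 3/2$, fine.
\item $(1/2+(d-2)/12,2)$: $1+(d-2)/6\le 3/2$ iff $d\le5$, so it holds with strict inequality for $d=3$ and equality for $d=4$.
\item $(7/12,1)$: $7/6\le2$, fine.
\item $((d+6)/16,1)$: $(d+6)/8\le2$, fine.
\item $(3/4,0)$ and $(7/12,0)$: fine.
\end{itemize}
The abstract theorem then yields a unique maximal solution with $u\in C([0,\sigma);H)\cap L^2_\loc([0,\sigma);V)$ a.s., for every $u_0\in L^0_{\F_0}(\Omega;H)$ by the usual localisation in $\Omega$, together with the blow-up criterion \eqref{eq:blowUp_bare}.

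For $d=3$ every pair satisfies the criticality condition strictly, i.e.\ the setting is subcritical. The refined blow-up criterion of \cite[Theorem~3.5(3)]{agrestiCriticalVariationalSetting2024a} for the noncritical case then states that $\sup_t\|u(t)\|_H<\infty$ alone forces $\sigma=\infty$ a.s., which is exactly \eqref{eq:blowUpSubcritd3}. If that refinement is phrased only for $\rho_j(2\beta_j-1)<1$ with a strict inequality for all $j$, we already have it.

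The main obstacle I anticipate is matching the hypotheses of the abstract theorem precisely. First, the coercivity there must hold jointly with the noise, in the form $\langle Au,u\rangle-\frac12\|Bu\|^2_{\sL_2}\ge\theta\|u\|_V^2-M\|u\|_H^2$. Second, $F$ and $G$ are allowed to include a term growing like $\|u\|_{\beta}$ with $\beta$ close to $1$ only through a small constant, or as part of $B$. If absorbing $\psi^1\cdot\nabla u$ into the nonlinearity is not allowed because of the $\beta_j<1$ conventions, I would instead treat it as the linear operator $B$. This requires checking $\|Bu\|_{\sL_2(\ell^2,H^1)}^2\le \|\psi^1\|^2\|u\|_{H^2}^2+\ldots$, followed by interpolation between $H^1$ and $H^3$. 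Since $H^2$ is strictly intermediate, the resulting Young's-inequality loss is lower order, so parabolicity still holds for any bounded $\psi^1$. Measurability of $G$ is covered by Lemma~\ref{lem:weakened_estimatesAFG}.
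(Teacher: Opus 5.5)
Your overall route matches the paper's. You apply \cite[Theorem~3.3]{agrestiCriticalVariationalSetting2024a} with $B=0$ and put all of $G$, including $\psi^1\cdot\nabla u$ with $(\beta_6,\rho_6)=(3/4,0)$, into the nonlinearity, using the estimates of Lemma~\ref{lem:weakened_estimatesAFG}.

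\textbf{The gap: wrong criticality inequality.} The one nontrivial verification, the criticality check, is done with the wrong inequality. The condition in the critical variational setting is $(2\beta_j-1)(\rho_j+1)\le 1$, i.e.\ $2\beta_j\le 1+\frac{1}{\rho_j+1}$. This is what $\frac12\le\frac{\rho_j+1}{\rho_j}(1-\beta_j)$ rearranges to, with $\frac{1+\kappa}{p}=\frac12$ on the left, not $\frac{1+\rho_j}{2}$. Your rewriting $2\beta_j\le 1+1/\rho_j$ is strictly weaker, so checking it does not verify the hypothesis of the abstract theorem.

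This is not cosmetic. With your inequality, the pair $(\frac12+\frac{d-2}{12},2)$ gives $d\le 5$. By your own computation $d=4$ would then be strictly subcritical, and your argument for \eqref{eq:blowUpSubcritd3} would seem to apply in $d=4$ as well. Your (correct) claim ``equality for $d=4$'' does not follow from what you wrote.

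With the correct condition, the values of $(2\beta_j-1)(\rho_j+1)$ for $j=1,\dots,6$ are $\frac16,\ \frac12,\ \frac{d-2}{2},\ \frac13,\ \frac{d-2}{4},\ \frac12$. All are $<1$ for $d=3$, and for $d=4$ the maximum is exactly $1$, attained by $(\beta_3,\rho_3)$. This is precisely what the paper checks, and it is what separates the two blow-up criteria.

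\textbf{Two smaller points.} First, your worry that a term of order $\beta$ close to $1$ needs a small constant or must be moved into $B$ is unfounded. For $\rho_j=0$ the condition holds for every $\beta_j\in(1/2,1)$. The paper indeed takes $B=0$ and uses only the coercivity of $A$ from Lemma~\ref{lem:weakened_estimatesAFG}\ref{lemItem:weakenedEstimatesA}.

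Second, for the relaxed criterion \eqref{eq:blowUpSubcritd3} in $d=3$, the paper invokes \cite[Thm.~6.2]{agrestiNonlinearSPDEsMaximal2025} rather than the variational paper. There, subcriticality is formulated with real interpolation spaces $V_{\beta,1}$. To use it, you must note $V_{\beta,1}\hookrightarrow V_\beta$, so that the estimates of Lemma~\ref{lem:weakened_estimatesAFG}, stated in complex interpolation norms, imply the required ones.
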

\begin{proof}
     To obtain local well-posedness we apply~\cite[Theorem~3.3]{agrestiCriticalVariationalSetting2024a}, which amounts to checking~\cite[Assumption~3.1]{agrestiCriticalVariationalSetting2024a}, with $U = \ell^2$, drift equal to $A - F$ as in \eqref{def_weakened_Bilaplacian}, and noise coefficient equal to $G$ as in \eqref{eq:def_G}, and $f = g = 0$. The measurability of these maps as well as the coercivity condition for $A$ are shown in Lemma~\ref{lem:weakened_estimatesAFG}. Boundedness of $A$ follows by construction, while the local Lipschitz conditions for $F$ and $G$ are satisfied. Finally, Lemma~\ref{lem:weakened_estimatesAFG} shows that for $d=3$ each of the coefficients $(2\beta_j-1)(\rho_j+1)$ is strictly smaller than $1$ and for $d=4$ at most $1$. Therefore the cited theorem applies and supplies maximal existence, and the blow-up criterion \eqref{eq:blowUp_bare}. Due to strict subcriticality of the coefficients, the relaxed blow-up criterion \eqref{eq:blowUpSubcritd3} is provided by \cite[Thm.~6.2]{agrestiNonlinearSPDEsMaximal2025}, since the real interpolation spaces $V_{\beta,1}$ used for the subcriticality condition there are embedded in the complex interpolation spaces $V_{\beta}$ from Lemma~\ref{lem:weakened_estimatesAFG}.
\end{proof}    

\begin{remark}
    As far as local well-posedness is concerned, the term $\psi^1$ can be upgraded to depend on $u$ in an affine way; i.e.\ one can replace $\psi^1$ by $\psi^{1} + u \psi^{2}$. We do not pursue this further here, but refer to Section~\ref{section_rough_initial_data}.
\end{remark}

\section{Global well-posedness in the weakened setting}
\label{sec:globalWP}

\subsection{Overview and proof strategy}
\label{subsec:overviewglobalWP} We show global well-posedness in the weakened setting of Subsection
\ref{weakened_setting}, whose notation we adopt. More precisely, we consider the stochastic Cahn--Hilliard equation for $d\in\{3,4\}$
\begin{equation}\tag{$\text{SCH}_2$}
\label{eq:SCHtorus}
    \rmd u = [-\Delta^2u+\Delta(u^3-u)
    ]\dt + \sum_{n \geq 1} \psi_n(u,\grad u)\,\rmd W^n,\quad u(0)=u_0,
\end{equation}
in the variational setting $p=2$, $\kappa=0$, $X_1 = V = \nH^3(\sO)$, $H=H^1(\sO)$, and $X_0 = V^*$. We thus regard \eqref{eq:SCHtorus} as an instance of \eqref{eq:AVP}, with coefficients $A$, $F$, and $G$ defined in \eqref{def_weakened_Bilaplacian}, Assumption ~\ref{assumption_weakened_setting} and \eqref{eq:def_G}. Throughout this section, we assume that Assumption~\ref{assumption_weakened_setting} holds. By Theorem~\ref{thm:localWP_weakened}, this ensures that, for any $u_0\in L^0_{\F_0}(\Omega;H^1(\sO))$, there exists a (unique) maximal solution $(u,\sigma)$ to \eqref{eq:SCHtorus} such that a.s.\ $u\in C([0,\sigma);H^1(\sO))\cap L^2_{\loc}([0,\sigma);\nH^3(\sO))$.

\subsubsection{The subcritical dimension \texorpdfstring{$d=3$}{d=3}}
\label{subsec:overviewglobalWPd3}

 Global well-posedness of the deterministic
Cahn--Hilliard equation is a well-studied topic. In the subcritical case, i.e.\
dimensions $d\le 3$, global well-posedness is well-known, see e.g.
\cite{ElliottSongmu86}. Moreover, it is well-known that the Cahn--Hilliard
equation has an $H^{-1}$-gradient flow structure, which can be derived from the
energy functional
    \begin{equation*}
        E(u)= \int_{\sO}\frac{1}{2}|\grad u|^2 + \Phi(u) \dx,
    \end{equation*}
    whereby $\Phi$ denotes the potential. In our setting, the potential is $\Phi(u) =
\frac{1}{4}(u^2-1)^2$ since $\Phi'(u) = u^3-u$. Since by the gradient flow
structure $E(u(t))$ is non-increasing over time, it can be used to derive an
$H^1(\sO)$-energy estimate. In the
following we define the chemical potential $\mu$, which is the first variational
derivative of $E$.
    \begin{definition}\label{def:chemPotential}
    The \emph{chemical potential} $\mu$ associated with the solution $u$ of
\eqref{eq:SCHtorus} is given by $\mu\ce -\Delta u+u^3-u$.
    \end{definition}

Let us recall some central global well-posedness results in dimension $3$ from Subsections \ref{subsec:introStochasticLiterature} and \ref{subsec:contributions} and further elaborate how our result compares to them.
For the global well-posedness obtained in~\cite{Scarpa2018} on smooth domains in $\R^3$ with homogeneous Neumann boundary conditions, working in a weaker setting and with gradient-independent noise allowed to study a larger class of potentially singular potentials. For multiplicative Fourier noise, $H^1$- and, under higher regularity assumptions, $H^2$-moment estimates were established in~\cite{Antonopoulou23}. In our setting, the Fourier noise corresponds to the special case of $\psi_n(x,u,\grad u)=\sigma(u)a_n e_n(x)$ for an orthonormal basis $(e_n)_{n\ge 1}$ of $L^2(\sO)$ and eigenvalues $a_n^2$ of the covariance operator. Hence, our results in $d=3$, cf.\ Theorem~\ref{thm:global_well_posedness_dim_3}, generalise the a priori estimates of~\cite[Thm.~2.2]{Antonopoulou23} for $\varepsilon=p=1$ to more general noise that may depend on $\grad u$ and more freely on $u$, and weaker summability assumptions on $(e_n)_{n\geq 1}$ are required. Closely related transport noise was studied very recently in~\cite{diPrimioPapiniScarpa26} including potentials with a singular part, for which $L_t^\infty H_x^1 \cap L_t^2H_x^2$-energy estimates are proven. Transport noise is included in Assumption \ref{Assumption:noise}, corresponding to $\psi^0=0$, and we show energy estimates in the stronger $C_tH^1\cap L_t^2H_x^3$-norm for the regular Allen--Cahn-type potential.\\

    We first give a heuristic proof of global well-posedness in $d=3$ before
we explain how to make the argument rigorous. To prove global well-posedness, it
suffices to show, due to subcriticality by Theorem~\ref{thm:localWP_weakened},
that
    \begin{equation}\label{eq:sec3-intro-heuristicproof1}
     \sup_{t\in[0,\sigma\wedge T)} \|u(t)\|_{H^1(\sO)} \text{ is } \P \text{-almost
surely finite for any }T>0.
     \end{equation}
    We use the energy functional from above to derive an $H^1(\sO)$-energy estimate.
We note here that it would make the rigorous Itô calculus considerably simpler if one
could directly close the estimates of the $H^1(\sO)$-norm, but this does not seem to be
possible. We rewrite our SPDE as $\dd u = \Delta \mu \dt + \sum_{n\geq
1}\psi_n(u,\grad u) \dd W^n$, heuristically apply Itô's formula and integrate by
parts, which for some stopping time $\sigma_m$ gives
    \begin{equation*}
    \begin{aligned}
     \dd E(u(t\wedge \sigma_m)) & = \dd \int_\sO  \frac{1}{2}|\grad u(t\wedge \sigma_m)|^2
        + \Phi(u(t\wedge \sigma_m)) \dx \\
    & =  \int_{\sO} \grad u  \cdot \grad \Delta \mu  + \Phi'(u) \Delta \mu  \dx \dt
    + \text{noise terms}\ \\
    & = - \int_{\sO} |\grad  \mu|^2 \dx \dt + \text{noise terms}.
    \end{aligned}
    \end{equation*}
    Combining suitable growth estimates to bound some of the terms induced by the noise and
    a stochastic version of Gronwall's inequality yields $\sup_{t\in[0,\sigma\wedge
T)}E(u(t))< \infty$ almost surely. Noting that  $\|u(t\wedge
\sigma_m)\|^2_{H^1(\sO)} \lesssim \|u(t\wedge \sigma_m)\|^2_{L^2(\sO)} + E(u(t\wedge
\sigma_m))$ and deriving an $L^2(\sO)$-energy estimate implies
\eqref{eq:sec3-intro-heuristicproof1} and hence global well-posedness.

The key step in making this proof rigorous is to establish an Itô formula for 
computing $E(u(t\wedge\sigma_m))$. More precisely, for computational convenience, 
we modify $E(u)$ up to lower-order terms and instead work with
\[
\mathcal{E}(u(t))\ce\frac{1}{4}\|u(t)\|_{L^4(\sO)}^4 + \frac{1}{2}\|\grad u(t)\|_{L^2(\sO)}^2,\quad (t \in [0,\sigma)).
\]
While the $\frac{1}{2}\|\grad u(t)\|_{L^2(\sO)}^2$-term can be computed using the standard Itô formula in the variational setting, the $\|u(t)\|^4_{L^4(\sO)}$-term requires additional justification. Indeed, the corresponding Itô formula does not follow directly from existing general Itô formulas, since sufficient regularity of coefficients of the SPDE is not ensured by local well-posedness. As we work on a bounded domain, we establish the required Itô formula in Proposition~\ref{prop:itoformulaL4} via a regularisation argument based on Yosida approximations.

\subsubsection{The critical dimension \texorpdfstring{$d=4$}{d=4}}
\label{subsec:overviewglobalWPd4}

Dimension four is the critical dimension in the weakened setting with the Allen--Cahn-type nonlinearity we consider and multiplicative transport-type noise. Due to the criticality, the blow-up criterion additionally contains the $L_t^2H_x^3$-norm, thus requiring an additional energy estimate. By elliptic regularity of the Neumann Laplacian, it suffices to obtain an estimate for $\int_0^{t}\|\grad \Delta u\|_{L^2}^2\ds$ up to some stopping time. As in the subcritical case, we get an $\mathcal{E}$-energy estimate, resulting in
\begin{equation*}
    \int_0^{t\land\sigma_m}\|\grad \mu\|_{L^2}^2\dt = \int_0^{t\land\sigma_m}\|-\grad \Delta u+\grad u^3-\grad u\|_{L^2}^2\dt <\infty.
\end{equation*}
At first sight, this seems very close to the estimate required. However, some additional work is needed to pass from this estimate for the gradient of the chemical potential to an estimate for $\grad \Delta u$ alone. This step can be interpreted as a \textit{nonlinear elliptic maximal regularity estimate for solutions of the parabolic SPDE}: We show that, up to some stopping time, not only the difference $\grad \mu$ of linear and nonlinear terms but also each term individually has $L^2(0,\sigma;L^2(\sO))$-regularity. That is, \begin{equation*}
    \grad \mu = -\grad \Delta u + \grad u^3-\grad u \in L^2(0,\sigma;L^2(\sO)) \quad\Rightarrow \quad\grad \Delta u, \grad u^3,\grad u \in L^2(0,\sigma;L^2(\sO)),
\end{equation*}
which can be seen as a nonlinear elliptic maximal regularity estimate. Since its proof relies on the $L_t^\infty H_x^1$-energy estimate for $u$ established via $\mathcal{E}$, it is, however, not merely an elliptic estimate but uses the underlying parabolic structure. Besides, this step makes use of the Gagliardo--Nirenberg, Poincaré--Wirtinger, and Young inequalities. It is worth pointing out that we impose no additional regularity assumptions compared to the subcritical case.

In the deterministic literature, several approaches exist to treat the critical dimension in the weakened setting. On the full space $\R^4$, an $L_t^\infty H_x^2$-energy estimate uniformly in times away from zero is established in~\cite[Thm.~3.1]{CholewaRodriguezB14} under an additional coercivity assumption by leveraging the regularizing effect of the semigroup. Also on the full space, but in a stronger setting for initial data in $H^5(\R^4)$,~\cite{Cahn-Hilliard-Brinkman-global-d4} establishes global well-posedness of the more general class of Cahn--Hilliard--Brinkman equations via testing the chemical potential with $u^5$ followed by Gagliardo--Nirenberg and Young's inequalities. Extending global well-posedness from $\R^4$ to smooth bounded domains $\sO\seq \R^4$ and Neumann boundary conditions appears accessible by slight modifications but has, to the best of the authors' knowledge, not yet been formulated in the weakened setting for the deterministic Cahn--Hilliard equation. Since the latter is recovered by setting the noise in \eqref{eq:SCHtorus} equal to zero, global well-posedness of the deterministic Cahn--Hilliard equation in $H^1(\sO)$ follows from our result in Theorem~\ref{thm:global_well_posedness_dim_4-complete} as a special case.

For the stochastic version of the Cahn--Hilliard equation with multiplicative noise, much less is known regarding global well-posedness in dimension $4$. 
For deterministic initial values $u_0\in H^1$ and gradient-independent, spatially homogeneous multiplicative noise of Nemytskii type, global existence of martingale solutions has been established in~\cite[Thm.~2.2]{Scarpa2021b}. Pathwise uniqueness or regularity $u\in L_t^2H_x^3$ have not been obtained therein, noting that~\cite[(2.4)]{Scarpa2021b} is not satisfied for the cubic nonlinearity considered here. To the the best of the authors' knowledge, global well-posedness of probabilistically strong solutions with initial data in $H^1$ in dimension $4$ has not been established before, neither for gradient-independent or transport noise.

\subsubsection*{Overview}  Subsection~\ref{subsec:L2energyEst} contains the $L^2$-energy estimate in dimensions $d\in \{3,4\}$ and the Itô formula adapted to fourth-order equation and Neumann boundary conditions used throughout. The $L^4$-energy estimate is derived in Subsection~\ref{subsec:L4energyEst} based on an $L^4$-Itô formula under lower regularity assumptions, which is proved via Yosida approximations. Together with the $H^1$-energy estimate established in Subsection~\ref{subsec:H1energyEst}, global well-posedness in dimension three is obtained via the energy functional $\mathcal{E}$ in Subsection~\ref{subsec:globalWPd3}. Global well-posedness in the critical dimension $4$ is shown in Subsection~\ref{subsec:globalWPd4}. The main results are Theorems~\ref{thm:global_well_posedness_dim_3} and~\ref{thm:global_well_posedness_dim_4-complete} in dimensions $3$ and $4$, respectively.

\subsection{\texorpdfstring{$L^2$}{\textit{L2}}-energy estimate}
\label{subsec:L2energyEst}

We start with an $L^2$-energy estimate, which will provide the estimates required for the highest-order terms remaining in $\mathcal{E}(u)$ after completing the square. Before, we recall some of the growth properties of the noise following from Assumption~\ref{assumption_weakened_setting} that will frequently be used in the following. 

\begin{remark}\label{remark:assumptionsNoiseGlobal}

Suppose that Assumption~\ref{assumption_weakened_setting} holds and recall Corollary~\ref{cor:weakened_noise_consequences}. Then there exists a constant $C_\psi\ge 0$ such that for a.e.\ $t\ge 0$, $\omega\in\Omega$, and $x\in\sO$ for all $y\in \R$ and $z\in\R^d$, the noise $\psi$ satisfies
\begin{align}\label{eq:noiseLinGrowthyz}
   \| \psi(t,\omega,x,y,z)\|_{\ell^2}^2&\leq \big(\| \psi^0(t,\omega,x,y)\|_{\ell^2} + \| \psi^1(t,\omega,x)\|_{\ell^2(\R^d)} |z|\big)^2 \le C_\psi (1+|y|^2+|z|^2),\\
   \|\partial_y \psi(t,\omega,x,y,z)\|_{\ell^2}^2 & = \|\partial_y \psi^0(t,\omega,x,y)\|_{\ell^2})^2 \le C_\psi(1+|y|^2), \\ 
  \|\grad_z \psi(t,\omega,x,y,z)\|_{\ell^2}^2 & = \|\psi^1(t,\omega,x)\|_{\ell^2(\R^d)}^2 \le C_\psi, \\
  \sum_{n\ge 1}|(\grad_x\psi_n)(t,\omega,x,y,z)|^2 &\le C_\psi (1+|y|^2+|z|^2).
\end{align}
In the following, we use $C_\psi$ to denote a generic constant depending on the constants from Assumption~\ref{assumption_weakened_setting} whose value may change from line to line.
\end{remark}

The following Itô formula for computing $\|u\|_{L^{q}(\sO)}^q$ is a slight modification of~\cite[Lemma A.6]{agrestiNonlinearSPDEsMaximal2025}, where the original result for Dirichlet boundary conditions is adapted to the boundary conditions considered here. The proof is analogous to that of~\cite[Lemma A.6]{agrestiNonlinearSPDEsMaximal2025} and is therefore omitted.

\begin{lemma}\label{lemma:Modified-Ito-lemma-a6}
 Suppose that $\sO\seq \R^d$ is an open and bounded set. Let $q\in[2,\infty)$. Let $v$, $\phi$, $\Phi_j$, $\psi_n \colon [0,T]\times\Omega \to L^q(\sO)$ be progressively measurable for $j\in\{1, \ldots, d\}$ and $n\geq 1$, and suppose that
 \begin{itemize}
  \item $v_0 \in L^0_{\F_0}(\Omega; L^q(\sO))$,
  \item $v\in L^2(0, T; H^{1,q}(\sO))\cap C([0,T];L^q(\sO))$ a.s.,
  \item $\phi \in L^1(0,T;L^q(\sO))$ a.s.,
  \item $\Phi \in L^2(0,T; L^q(\sO;\R^d))$ a.s.,
  \item $(\psi_n)_{n\geq 1}\in L^2(0,T;L^q(\sO; \ell^2))$ a.s.,
 \end{itemize}
and a.s.\ for all $t\in[0,T]$, the following identity holds true for all $\varphi\in H^1(\sO)$
\begin{equation}\label{eq:ito-lemma-assumption}
(v(t), \varphi)_{L^2} = (v_0, \varphi)_{L^2}
+ \int_0^t (\phi(s), \varphi)_{L^2} \ds - \int_0^t (\Phi(s), \grad \varphi )_{L^2} \ds  + \sum_{n \geq 1} \int_0^t (\psi_n(s), \varphi)_{L^2} \dWsn.
\end{equation}
Then, a.s.\ for all $t \in [0,T]$,
\begin{align*}
\|v(t)\|_{L^q(\sO)}^q
&= \|v_0\|_{L^q(\sO)}^q
+ q \int_0^t \int_{\sO} |v(s)|^{q-2}
\left[v(s)\phi(s) - (q-1)\nabla v(s)\cdot \Phi(s)\right] \dx\ds \\
&\phantom{= } + q \sum_{n \geq 1} \int_0^t \int_{\sO}
|v(s)|^{q-2} v(s)\psi_n(s) \dx \dWsn + \frac{q(q-1)}{2} \int_0^t \int_{\sO}
|v(s)|^{q-2} \|\psi(s)\|_{\ell^2}^2 \dx\ds.
\end{align*}
\end{lemma}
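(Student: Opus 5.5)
The plan is to follow the strategy of \cite[Lemma A.6]{agrestiNonlinearSPDEsMaximal2025}: regularise in space, apply the classical Itô formula in a Hilbert space to a smooth approximation of $\|\cdot\|_{L^q}^q$, and pass to the limit. The only genuine modification is the choice of the smoothing operator. It must be compatible with the weak formulation \eqref{eq:ito-lemma-assumption}, which is tested against all of $H^1(\sO)$ and hence encodes a natural (Neumann-type) boundary condition rather than a Dirichlet one.

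\textbf{Regularisation.} I would use the Neumann resolvent $J_\lambda\ce\lambda(\lambda-{}_\nu\Delta_q)^{-1}$ for $\lambda>0$. Its key properties are:
\begin{itemize}
\item it is self-adjoint with respect to the $L^2$ pairing;
\item it is positivity preserving and $L^r$-contractive for every $r$, by the sub-Markovian property of the Neumann heat semigroup;
\item it maps $H^1$ into $\nH^{3}$ and commutes with $\grad$ in the weak sense required below.
\end{itemize}
Testing \eqref{eq:ito-lemma-assumption} with $\varphi=J_\lambda\eta$ for $\eta\in L^2$ shows that $v_\lambda\ce J_\lambda v$ solves the strong equation
\[
\dd v_\lambda=\big(J_\lambda\phi+\operatorname{div}^{\ast}_\lambda\Phi\big)\ds+\sum_{n} J_\lambda\psi_n\,\dd W^n ,
\]
where $\operatorname{div}^\ast_\lambda\Phi$ denotes the $L^q$-function $\eta\mapsto-(\Phi,\grad J_\lambda\eta)_{L^2}$. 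This functional is bounded on $L^{q'}$ because $\grad J_\lambda$ is bounded from $L^{q'}$ to $L^{q'}$ for fixed $\lambda$.

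Consequently $v_\lambda$ is an $L^q$-valued Itô process with Bochner-integrable drift and $\gamma$-radonifying diffusion coefficient. I would also use a $C^2$ cutoff of $|x|^q$ with bounded second derivative if needed. Then the Itô formula for $\|\cdot\|_{L^q}^q$ on the UMD type-$2$ space $L^q$ (e.g.\ via \cite{vanneervenStochasticIntegrationBanach2015}) gives the claimed identity for $v_\lambda$. In that identity the term $(\operatorname{div}^\ast_\lambda\Phi,|v_\lambda|^{q-2}v_\lambda)$ equals $-(q-1)\int|v_\lambda|^{q-2}\grad v_\lambda\cdot\Phi_\lambda$ with $\Phi_\lambda=J_\lambda\Phi$. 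To see this, note that $|v_\lambda|^{q-2}v_\lambda\in H^1$, since $v_\lambda$ is smooth enough and $q\ge2$, so it may be written as $J_\lambda\eta$ up to commuting $J_\lambda$ past the $H^1$ test function. More cleanly, I would prove the identity for the test function directly from \eqref{eq:ito-lemma-assumption}.

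\textbf{Limit.} Let $\lambda\to\infty$. Strong convergence $J_\lambda\to I$ on $L^q$, $H^{1,q}$, and $L^q(\sO;\ell^2)$, combined with uniform contractivity and dominated convergence in time, gives the following:
\begin{itemize}
\item the deterministic integrals converge a.s., using H\"older with the regularities $v\in L^2H^{1,q}\cap CL^q$, $\phi\in L^1L^q$, $\Phi\in L^2L^q$, $\psi\in L^2L^q(\ell^2)$;
\item the stochastic integrals converge in probability uniformly in $t$, by localisation via stopping times and the BDG inequality.
\end{itemize}

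\textbf{Main obstacle.} I expect the hardest step to be the gradient term. It requires identifying $\grad J_\lambda v$ with a quantity converging to $\grad v$ in $L^2(0,T;L^q)$, together with justifying the integration by parts without boundary terms. This is precisely where the Neumann choice enters: in the Dirichlet case of \cite{agrestiNonlinearSPDEsMaximal2025} the test space is $H^1_0$, whereas here it is all of $H^1$. I would handle this by density of $\nH^{2,q}$ in $H^{1,q}$ together with strong convergence of $J_\lambda$ on $H^{1,q}=\nH^{1,q}$.
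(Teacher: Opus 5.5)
Your overall route is a reasonable way to carry out what the paper leaves implicit; the paper only says the proof is analogous to \cite[Lemma~A.6]{agrestiNonlinearSPDEsMaximal2025}. You test \eqref{eq:ito-lemma-assumption} with $\varphi=J_\lambda\eta$, apply the Banach-space It\^o formula to $\|J_\lambda v\|_{L^q}^q$, and let $\lambda\to\infty$. Your handling of the initial value, the $\phi$-term, the It\^o correction and the stochastic integral is fine.

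The step that fails is the rewriting of the $\Phi$-drift. By construction $(\operatorname{div}^\ast_\lambda\Phi,w)=-(\Phi,\grad J_\lambda w)_{L^2}$, and the Neumann resolvent does \emph{not} commute with $\grad$. Already for $\sO=(0,1)$ one has $\partial_x J^{\mathrm N}_\lambda=J^{\mathrm D}_\lambda\partial_x$ on $H^1(0,1)$, where $J^{\mathrm D}_\lambda$ is the Dirichlet resolvent. Hence $(\operatorname{div}^\ast_\lambda\Phi,|v_\lambda|^{q-2}v_\lambda)$ is in general not equal to $-(q-1)\int_\sO|v_\lambda|^{q-2}\grad v_\lambda\cdot J_\lambda\Phi\dx$. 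Your remark about writing $|v_\lambda|^{q-2}v_\lambda$ as some $J_\lambda\eta$ does not rescue this identity. Your ``main obstacle'' paragraph therefore looks in the wrong place: no integration by parts is needed, but a convergence statement on the dual side is.

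The repair is to keep the drift in the form $-q(\Phi,\grad J_\lambda w_\lambda)_{L^2}$ with $w_\lambda\ce|v_\lambda|^{q-2}v_\lambda$, which is exactly what the testing produces. From $v_\lambda(s)\to v(s)$ in $H^{1,q}$ and the chain rule you get $w_\lambda\to w\ce|v|^{q-2}v$ in $H^{1,q'}(\sO)$, with $\|w_\lambda\|_{H^{1,q'}}\lesssim\|v\|_{L^q}^{q-2}\|v\|_{H^{1,q}}$. What is then needed is that $J_\lambda$ is uniformly bounded on, and converges strongly to the identity on, $H^{1,q'}(\sO)=\nH^{1,q'}(\sO)$ for the \emph{dual} exponent $q'$. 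This follows by interpolation between $L^{q'}$ and $\mathsf D({}_\nu\Delta_{q'})$, analogously to Lemma~\ref{lem:resolventBoundLambda}. Then $\grad J_\lambda w_\lambda\to\grad w$ in $L^{q'}$. Dominated convergence with majorant $\|\Phi\|_{L^q}\|v\|_{L^q}^{q-2}\|v\|_{H^{1,q}}\in L^1(0,T)$ gives the term $-q(q-1)\int_0^t\int_\sO|v|^{q-2}\grad v\cdot\Phi\dx\ds$.

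This is exactly where the Neumann choice enters. A Dirichlet resolvent would also give admissible test functions $J_\lambda\eta\in H^1_0\subset H^1$, but $J_\lambda w\not\to w$ in $H^{1,q'}$ whenever $w$ has nonzero trace.

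Finally, your argument uses the smooth-boundary $L^{q'}$-theory of ${}_\nu\Delta$. It therefore covers the paper's standing setting of a smooth bounded $\sO$, not an arbitrary bounded open set as in the literal statement.
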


The following two lemmas provide auxiliary computations that establish the regularity required for the application of Itô's formula.

\begin{lemma}
\label{lem:cubicIsInH}
Let $d\in \{3,4\}$, $u_0\in L^0_{\F_0}(\Omega;H^1(\sO))$, and suppose that Assumption~\ref{assumption_weakened_setting} holds. Let $(u,\sigma)$ be the unique maximal solution to \eqref{eq:SCHtorus} and $(\sigma_m)_{m\ge 1}$ a localising sequence for $(u,\sigma)$. 
    Then $u^3\in L^2(0,\sigma_m;H^1(\sO))$ almost surely for all $m \ge 1$.
\end{lemma}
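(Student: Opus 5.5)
Since $(u|_{[0,\sigma_m]},\sigma_m)$ is an $L^2_0$-strong solution, almost surely we have $u\in C([0,\sigma_m];H^1(\sO))\cap L^2(0,\sigma_m;\nH^3(\sO))$. The approach is to bound $\|u^3\|_{H^1}$ pointwise in time by a product of a power of the $H^1$-norm, which is bounded in time, and a power of the $H^3$-norm, which is square integrable. We split
\[
\|u^3\|_{H^1(\sO)}\lesssim \|u^3\|_{L^2}+\|u^2\nabla u\|_{L^2}
\]
and treat the two terms separately. For the first term we use $\|u^3\|_{L^2}=\|u\|_{L^6}^3$. Since $d\le 4$, we have $H^3(\sO)\hookrightarrow H^2(\sO)\hookrightarrow L^6(\sO)$ (for $d=4$, $2-4/2=0\ge -4/6$), so $\|u\|_{L^6}^3\lesssim \|u\|_{H^1}^{2}\|u\|_{H^3}$ by interpolation (or simply bounded by $\|u\|_{H^2}^3$, which we handle as below).

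The second term is the more delicate one. We reuse the exponents from Lemma~\ref{lem:weakened_estimatesAFG}(b): by Hölder, $\|u^2\nabla u\|_{L^2}\le \|u\|_{L^{r_1}}^2\|\nabla u\|_{L^{r_2}}$ with $r_2=6d/(d+4)$ and $r_1=dr_2/(d-r_2)$. That lemma shows both norms are controlled by $\|u\|_{\beta_3}$ with $\beta_3=\tfrac12+\tfrac{d-2}{12}$, so that
\[
\|u^2\nabla u\|_{L^2}\lesssim \|u\|_{\beta_3}^3 .
\]
Since $V_{\beta_3}=[H,V]_{2\beta_3-1}$, complex interpolation gives
\[
\|u\|_{\beta_3}\lesssim \|u\|_{H^1}^{1-\theta}\|u\|_{\nH^3}^{\theta},\qquad \theta=2\beta_3-1=\tfrac{d-2}{6}.
\]
Hence $\|u^2\nabla u\|_{L^2}^2\lesssim \|u\|_{H^1}^{6(1-\theta)}\|u\|_{\nH^3}^{6\theta}$, with $6\theta=d-2\le 2$. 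The first term is handled identically with $\beta_2=7/12$, where $6(2\beta_2-1)=1\le 2$.

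Integrating over $(0,\sigma_m)$ then gives
\[
\int_0^{\sigma_m}\|u^3\|_{H^1}^2\ds\lesssim \sup_{t\le\sigma_m}\big(1+\|u(t)\|_{H^1}\big)^{6}\int_0^{\sigma_m}\big(1+\|u\|_{\nH^3}^2\big)\ds ,
\]
which is finite almost surely because $\sigma_m\le T$ for some finite $T$ by the definition of strong solutions. The only step requiring care is the critical case $d=4$. There the exponent equals exactly $2$, so we must keep the $L^\infty_t$ factor $\sup\|u\|_{H^1}$ rather than try to absorb it, but this still closes. Measurability of $u^3$ as an $H^1$-valued process follows from continuity of the map $u\mapsto u^3$ from $V$ to $H^1$, which is given by the local Lipschitz estimate in Lemma~\ref{lem:weakened_estimatesAFG}.
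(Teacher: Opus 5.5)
Your proof is correct and follows essentially the same route as the paper: you separate $u^3$ from $\nabla(u^3)=3u^2\nabla u$, use Hölder's inequality and Sobolev embeddings into the interpolation spaces $V_\beta$, and interpolate between $L^\infty_t H^1$ and $L^2_t H^3$, with the power of the $H^3$-norm reaching exactly $2$ in the critical case $d=4$. The difference is only in bookkeeping. The paper splits $\|u^2\nabla u\|_{L^2}\le \|u\|_{L^6}^2\|\nabla u\|_{L^6}$ (landing in $V_{7/12}$ and $V_{5/6}$), applies Hölder in time with exponents $12,12,3$, and then uses \cite[Lemma~4.3]{agrestiNonlinearSPDEsMaximal2025}; you instead reuse the balanced exponents $(r_1,r_1,r_2)$ from the estimate of $F$ with the single index $\beta_3$ and interpolate pointwise in time by hand, which gives the same bound $\|u\|_{L^\infty(0,\sigma_m;H^1)}^2\|u\|_{L^2(0,\sigma_m;H^3)}$.
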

\begin{proof}
    We omit the stopping from the notation of this proof for the sake of readability. Since $\grad(u^3)=3u^2\grad u$, it is sufficient to show $u^3\in L_t^2L_x^2$ and $u^2\grad u\in L_t^2L_x^2$. The former is equivalent to $u\in L_t^6L_x^6$ due to $\|u^3\|_{L_t^2L_x^2}= \|u\|_{L_t^6L_x^6}^3$. To estimate the latter, we apply Hölder's inequality first in space and in time and make use of the Sobolev embeddings $H^{4/3}(\sO)\hra L^6(\sO)$ and $\nH^{7/3}(\sO)\hra H^{1,6}(\sO)$ valid in dimensions $d\in \{3,4\}$. This results in
    \begin{equation*}
        \|u^2\grad u\|_{L_t^2L_x^2} \le \big\| \|u\|_{L_x^6}^2 \|\grad u\|_{L_x^6}\big\|_{L_t^2} \le \|u\|_{L_t^{12}L_x^6}^2 \|u\|_{L_t^3H_x^{1,6}} \le \|u\|_{L_t^{12}H_x^{4/3}}^2 \|u\|_{L_t^3H_x^{7/3}} \le \|u\|_{L_t^{12}(X_{7/12})}^2 \|u\|_{L_t^3(X_{5/6})},
    \end{equation*}
    where we have embedded into the complex interpolation spaces $X_\beta$, $\beta \in (0,1)$, between $V^\ast$ and $V=\nH^3(\sO)$ in the last step, using that $[H,V]_\theta=[V^\ast,V]_{(1+\theta)/2}=\nH^{1+2\theta}$ for $\theta\in(0,1)$. An application of~\cite[Lemma~4.3]{agrestiNonlinearSPDEsMaximal2025}, once with $\rho=5$, $p=2$, $\beta=\frac{7}{12}$, $\theta=\frac{1}{6}$, and once with $\rho=\frac{1}{2}$, $p=2$, $\beta=\frac{5}{6}$, $\theta=\frac{2}{3}$ is possible due to $\frac{1+\rho}{\rho}(1-\beta)=\frac{1}{2}$ for both sets of parameters. It yields the existence of $C \ge 0$ independent of $T$, $n$ and $u$ such that for all $m\geq 1$
    \begin{equation*}
        \|u^2\grad u\|_{L^2(0,\sigma_m;L^2(\sO))} \le \|u\|_{L_t^{12}(X_{7/12})}^2 \|u\|_{L_t^3(X_{5/6})} 
        \le C \|u\|_{L^\infty(0,\sigma_m;H^1(\sO))}^{2} \|u\|_{L^2(0,\sigma_m;\nH^3(\sO))}.
    \end{equation*}
    By Theorem~\ref{thm:localWP_weakened}, a.s.\ $u\in C([0,\sigma_m];H^1(\sO))\cap L^2(0,\sigma_m;\nH^3(\sO))$, whence the right-hand side is finite. This concludes the proof, since the bound for the norm of $u$ in $L_t^{12}L_x^6$ implies that $u\in L_t^6L_x^6$, as the time interval considered is finite. 
\end{proof}

\begin{lemma}
\label{lem:noiseIsInH}
    Let $d\in \{3,4\}$, $u_0\in L^0_{\F_0}(\Omega;H^1(\sO))$, and suppose that Assumption~\ref{assumption_weakened_setting} holds. Let $(u,\sigma)$ be the unique maximal solution to \eqref{eq:SCHtorus} and $(\sigma_m)_{m\ge 1}$ a localising sequence for $(u,\sigma)$. Then $\psi(u,\grad u)\in L^2(0,\sigma_m;H^1(\sO;\ell^2))$ almost surely for all $m\ge 1$.
\end{lemma}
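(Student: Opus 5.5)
The statement is essentially already contained in Lemma~\ref{lem:weakened_estimatesAFG}\ref{lemItem:weakenedEstimatesG}. By Lemma~\ref{handle_noise} (or directly, since $H=H^1(\sO)$ is a Hilbert space), $\sL_2(\ell^2,H^1(\sO))\eqsim H^1(\sO;\ell^2)$. Hence it suffices to show that $t\mapsto G(t,u(t))$ belongs to $L^2(0,\sigma_m;\sL_2(\ell^2,H))$ almost surely.

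We use the growth bound from Lemma~\ref{lem:weakened_estimatesAFG}\ref{lemItem:weakenedEstimatesG}:
\[
\|G(t,u)\|_{\sL_2(\ell^2,H)}\lesssim \sum_{j=4}^6\big(1+\|u\|_{\beta_j}^{\rho_j+1}\big).
\]
The exponents are $(\beta_4,\rho_4)=(\tfrac7{12},1)$, $(\beta_5,\rho_5)=(\tfrac{d+6}{16},1)$ and $(\beta_6,\rho_6)=(\tfrac34,0)$. Squaring and integrating over $(0,\sigma_m)$ leaves three terms to control:
\[
\int_0^{\sigma_m}\|u\|_{\beta_4}^4\dt,\qquad \int_0^{\sigma_m}\|u\|_{\beta_5}^4\dt,\qquad \int_0^{\sigma_m}\|u\|_{3/4}^2\dt .
\]

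The third term is harmless, since $V_{3/4}\hookrightarrow \nH^2(\sO)$ and $V=\nH^3(\sO)\hookrightarrow V_{3/4}$. It is therefore bounded by $\|u\|_{L^2(0,\sigma_m;\nH^3)}^2$.

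For the first two terms, interpolate $V_\beta=[H,V]_{2\beta-1}$ between $H$ and $V$:
\[
\|u\|_{\beta}\lesssim \|u\|_{H}^{2-2\beta}\,\|u\|_{V}^{2\beta-1}.
\]
This gives
\[
\int_0^{\sigma_m}\|u\|_\beta^4\dt\lesssim \|u\|_{L^\infty(0,\sigma_m;H^1)}^{8-8\beta}\int_0^{\sigma_m}\|u\|_{V}^{8\beta-4}\dt .
\]
For $\beta_4=\tfrac7{12}$ the exponent $8\beta-4=\tfrac23\le 2$, so Hölder on the finite interval $(0,\sigma_m)$ bounds the integral by a power of $\|u\|_{L^2(0,\sigma_m;V)}$. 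For $\beta_5=\tfrac{d+6}{16}$ we have $8\beta_5-4=\tfrac{d-2}{2}$, which is $\tfrac12$ for $d=3$ and exactly $1\le 2$ for $d=4$. Hölder again suffices.

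Alternatively, one can quote \cite[Lemma~4.3]{agrestiNonlinearSPDEsMaximal2025}, exactly as in the proof of Lemma~\ref{lem:cubicIsInH}.

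Both bounds are finite almost surely, because Theorem~\ref{thm:localWP_weakened} gives $u\in C([0,\sigma_m];H^1(\sO))\cap L^2(0,\sigma_m;\nH^3(\sO))$ a.s. It remains to check strong measurability of $t\mapsto \psi(u(t),\nabla u(t))$ in $H^1(\sO;\ell^2)$. This follows from the $\sP\otimes\sB(V)$-measurability of $G$ established in Lemma~\ref{lem:weakened_estimatesAFG}\ref{lemItem:weakenedEstimatesG}, composed with the progressively measurable process $u$.

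The only point needing care is the critical exponent for $\beta_5$ when $d=4$. There $8\beta_5-4=1$, so we still stay within the $L^2_tV$ budget; no real obstacle arises.
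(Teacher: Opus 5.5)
Your proof is correct and is essentially the paper's argument. The paper likewise starts from the growth bound in Lemma~\ref{lem:weakened_estimatesAFG}\ref{lemItem:weakenedEstimatesG} and concludes from $u\in C([0,\sigma_m];H^1(\sO))\cap L^2(0,\sigma_m;\nH^3(\sO))$; it simply invokes \cite[Remark~4.2]{agrestiNonlinearSPDEsMaximal2025}, using subcriticality of each pair $(\beta_j,\rho_j)$, where you carry out the interpolation $V_\beta=[H,V]_{2\beta-1}$ and the H\"older step by hand. Your exponents are right. The only quibble is that no term here is actually critical: after interpolation every term produces at most $\|u\|_V^{1}$, strictly below the threshold $\|u\|_V^{2}$, and this includes $\beta_5$ for $d=4$.
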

\begin{proof}
    Lemma~\ref{lem:weakened_estimatesAFG}\ref{lemItem:weakenedEstimatesG} implies
    that there is a constant $C\ge 0$ such that for all $u\in V$, we have
    \begin{align*}
         \| G(\cdot,u) \|_{\sL_2(\ell^2,H)}=\|\psi(u,\grad u)\|_{\sL_2(\ell^2,H_x^1)} &\lesssim C\sum_{j=4}^6 \big(1+\|u \|_{\beta_j}^{\rho_j+1}\big),
    \end{align*}
    where all $(\beta_j,\rho_j)$ satisfy the (sub-)criticality condition $\frac{1}{2}\le \frac{1+\rho_j}{\rho_j}(1-\beta_j)$. Hence, by~\cite[Remark~4.2]{agrestiNonlinearSPDEsMaximal2025} we can conclude that $\|\psi(u,\grad u)\|_{\sL_2(\ell^2,H^1)}$ is in $L^2(0,\sigma_m)$ since $u\in L^2(0,\sigma_m;V)$ by local well-posedness.   
\end{proof}

 \begin{lemma}[$L^2$-energy estimate]
    \label{lem:L2energyEstimate}
        Let $d\in\{3,4\}$, $u_0\in L^0_{\F_0}(\Omega; H^1(\sO))$  and let Assumption~\ref{assumption_weakened_setting} hold. Let $(u,\sigma)$ be the unique 			maximal solution to \eqref{eq:SCHtorus}. Let $T>0$ and let $(\sigma_m)_{m\geq 1}$ be a localising sequence for $(u,\sigma)$. Then, almost surely, for all $t\geq 0$
        \begin{align}\label{eq:lemL2energyEstimate-martingaleEq}
         \|u(t\wedge\sigma_m)\|_{L^2}^2 &+ \inttm \|\Delta u\|^2_{L^2}\ds + 6  \inttm \|u \grad u\|_{L^2}^2 \ds \nonumber\\
         &\le \|u_0\|_{L^2}^2 + C_{\psi} \inttm \|u\|_{L^2}^2\ds +C_\psi|\sO|t
        +M_{t \land \sigma_m}
        \end{align}
        for some continuous local martingale $(M_t)_{t\geq 0}$ with $M_0=0$. Furthermore, 
        \begin{equation}
        \label{eq:L2energyEstimateAS}
         \sup_{t\in[0,T]\cap[0,\sigma)} \|u(t)\|_{L^2}^2  +  \int_0^{T\wedge \sigma} \|\Delta u\|_{L^2}^2\ds +  \int_0^{T\wedge \sigma}  \|u\grad u\|_{L^2}^2\ds  < \infty  \qquad \mathbb{P}\text{-a.s.}
    \end{equation}
    If, in addition, $u_0 \in L_{\F_0}^2(\Omega; L^2(\sO))$, then there exists a constant $C_T>0$, depending only on $T$, $d$ and the constants in Assumption~\ref{assumption_weakened_setting}, such that for all $m \geq 1$, we have
    \begin{align}
    \label{eq:L2energyEstimateMoment}
        \E\Big[ \|u(T\wedge\sigma_m)\|^2_{L^2} + \int_0^{T\wedge\sigma_m} \|\Delta u\|_{L^2}^2 \ds+ \int_0^{T\wedge\sigma_m} \|u\grad u\|_{L^2}^2\ds\Big]
        \leq C_T\big(\E\big[\|u_0\|_{L^2}^2\big]+1\big).
    \end{align}
    \end{lemma}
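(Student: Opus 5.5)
We apply Lemma~\ref{lemma:Modified-Ito-lemma-a6} with $q=2$ to the stopped process $v(t)\ce u(t\wedge\sigma_m)$, viewing \eqref{eq:SCHtorus} in divergence form.

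\textbf{Step 1: the It\^o identity.} For $\varphi\in H^1(\sO)$, the equation tested against $\varphi$ reads
\[
(u(t),\varphi)_{L^2}=(u_0,\varphi)_{L^2}-\int_0^t(\grad(-\Delta u+u^3-u),\grad\varphi)_{L^2}\ds+\sum_{n\ge1}\int_0^t(\psi_n(u,\grad u),\varphi)_{L^2}\dWsn,
\]
up to stopping at $\sigma_m$. This follows from the variational formulation: pass from the $V^*$-pairing to the $(H^1)^{\ast_2}$-$H^1$ pairing via \eqref{eq:dualIsomorphism}, integrate by parts, use $\partial_\nu u=0$, and extend by density from $V$ to $H^1$. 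So we take $\phi=0$ and $\Phi=\grad\mu=\grad(-\Delta u+u^3-u)$, with $\Phi$ multiplied by $\1_{[0,\sigma_m]}$.

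The integrability hypotheses hold on $[0,\sigma_m]$:
\begin{itemize}
\item $u\in L^2(0,\sigma_m;\nH^3)$ gives $\grad\Delta u\in L^2_tL^2_x$;
\item Lemma~\ref{lem:cubicIsInH} gives $\grad u^3\in L^2_tL^2_x$;
\item Lemma~\ref{lem:noiseIsInH} gives $\psi\in L^2_tL^2_x(\ell^2)$;
\item $u\in C([0,\sigma_m];H^1)$ gives the remaining continuity and integrability.
\end{itemize}
The lemma then yields
\[
\|v(t)\|_{L^2}^2=\|u_0\|_{L^2}^2-2\inttm(\grad u,\grad\mu)_{L^2}\ds+M_{t\wedge\sigma_m}+\inttm\|\psi(u,\grad u)\|_{L^2(\ell^2)}^2\ds,
\]
with $M_t=2\sum_n\int_0^t(u,\psi_n)\dWsn$.

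\textbf{Step 2: the drift terms.} We compute $-2(\grad u,\grad\mu)$ term by term.
\begin{itemize}
\item $(\grad u,-\grad\Delta u)=\|\Delta u\|^2_{L^2}$, by integrating by parts with $\partial_\nu u=0$.
\item $(\grad u,\grad u^3)=3\|u\grad u\|^2_{L^2}$.
\item $-(\grad u,\grad u)=-\|\grad u\|_{L^2}^2$.
\end{itemize}
Hence the drift equals $-2\|\Delta u\|^2-6\|u\grad u\|^2+2\|\grad u\|^2$.

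For the gradient term we use $\|\grad u\|^2=-(u,\Delta u)\le\frac12\|\Delta u\|^2+\frac12\|u\|^2$. This leaves $-\|\Delta u\|^2-6\|u\grad u\|^2+\|u\|^2$.

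\textbf{Step 3: the It\^o correction.} This is the main delicate point. By \eqref{eq:noiseLinGrowthyz},
\[
\|\psi\|^2_{L^2(\ell^2)}\le C_\psi(|\sO|+\|u\|^2+\|\grad u\|^2).
\]
The factor $C_\psi$ in front of $\|\grad u\|^2$ need not be small, so it cannot be absorbed directly. We control it by interpolation: $C_\psi\|\grad u\|^2\le \frac12\|\Delta u\|^2+\frac{C_\psi^2}{2}\|u\|^2$, again via $\|\grad u\|^2=-(u,\Delta u)$. The coefficient of $\|\Delta u\|^2$ in Step 2 then drops from $1$ to $\frac12$.

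To obtain exactly the constant $1$ in \eqref{eq:lemL2energyEstimate-martingaleEq}, we instead spend only $\frac14\|\Delta u\|^2$ on each of the two gradient terms, in Steps 2 and 3. That leaves $\frac12\|\Delta u\|^2$ on the left. Since constants are generic, we then multiply through by 2, or equivalently rescale the martingale, to reach the stated form; $C_\psi$ absorbs all constants. This proves \eqref{eq:lemL2energyEstimate-martingaleEq}.

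\textbf{Step 4: consequences.} For \eqref{eq:L2energyEstimateAS}, apply a stochastic Gronwall lemma for inequalities with continuous local martingale terms, as used in \cite{agrestiNonlinearSPDEsMaximal2025,agrestiCriticalVariationalSetting2024a}. It gives a.s.\ boundedness of the left side uniformly in $m$; then let $m\to\infty$ by monotone convergence. Only $u_0\in L^0$ is needed, via the localisation $\{\|u_0\|\le k\}\in\F_0$.

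For \eqref{eq:L2energyEstimateMoment}, take expectations after additional localisation so that $M$ is a true martingale, using Fatou's lemma. Then apply the deterministic Gronwall inequality to $\E\|u(t\wedge\sigma_m)\|^2$ and reinsert the result into the integrated terms.
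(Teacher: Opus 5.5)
Your proof follows the paper's argument essentially step by step: the same application of Lemma~\ref{lemma:Modified-Ito-lemma-a6} with $\phi=0$ and $\Phi=\1_{[0,\sigma_m]}\nabla\mu$, justified by Lemmas~\ref{lem:cubicIsInH} and~\ref{lem:noiseIsInH}; the same absorption of the gradient terms via $\|\nabla u\|_{L^2}^2=-(u,\Delta u)_{L^2}$ and Young's inequality; then stochastic Gronwall for \eqref{eq:L2energyEstimateAS}, and Fatou plus deterministic Gronwall for \eqref{eq:L2energyEstimateMoment}.

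There is one arithmetic slip in Step 3. Spending $\frac14\|\Delta u\|_{L^2}^2$ on each of the two gradient terms leaves $\frac32\|\Delta u\|_{L^2}^2$ of the available $2\|\Delta u\|_{L^2}^2$, not $\frac12$, so the stated coefficient $1$ follows directly. The paper gets it in one step via $(C_\psi+2)\|\nabla u\|_{L^2}^2\le\|\Delta u\|_{L^2}^2+C_\psi\|u\|_{L^2}^2$. Your proposed ``multiply through by 2'' would not give the stated inequality, since it doubles $\|u_0\|_{L^2}^2$ and $\|u(t\wedge\sigma_m)\|_{L^2}^2$.
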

   
    \begin{proof}   
    
 Let $m \geq 1$ be arbitrary. We will apply the Itô formula from
    Lemma~\ref{lemma:Modified-Ito-lemma-a6} to prove the $L^2$-energy estimate. To this
    end, we first verify the assumptions for $v \ce u(\cdot \wedge \sigma_m)$, $\Phi \ce \1_{[0,\sigma_m]}(-\grad \Delta u + \grad (u^3-u))$, and $\phi\equiv 0$. By local well-posedness we have $u\in C([0,\sigma_m];H^1(\sO))$ and hence $v\in L^2(0, T; H^1(\sO))\cap C([0,T];L^2(\sO))$ a.s. Again by local well-posedness we have $u\in L^2(0,\sigma_m; V)$, and by Lemma~\ref{lem:cubicIsInH}  we have $u^3 \in L^2(0,\sigma_m; H^1(\sO))$, which implies $\Phi\in L^2(0,T;L^2(\sO;\R^d))$. By Lemma~\ref{lem:noiseIsInH} we have $(\1_{[0,\cdot\wedge\sigma_m]}\psi_n)_{n\geq 1} \in L^2(0,T; L^2(\sO; \ell^2))$ a.s., ensuring that the assumptions are satisfied.
    
    Let $t\in[0,T]$, and write $\psi_n(u,\grad u)$ for $(t,\omega,x) \mapsto \psi_n(t,\omega,x,u(\omega,t,x), \grad u(\omega,t,x))$. An application of Itô's formula,  the identity $\grad(u^3-u)=3u^2\grad u-\grad u$, the linear growth assumption \eqref{eq:noiseLinGrowthyz} from Remark~\ref{remark:assumptionsNoiseGlobal}, 
    and integration by parts yields a.s.\

        \begin{equation}\label{eq:L2energy-calc1}
      \begin{aligned}
        \|u(t\wedge\sigma_m)\|_{L^2}^2 &= \|u_0\|_{L^2}^2  - 2 \inttm \intsO \nabla u \cdot (-\grad \Delta u + 3u^2 \grad u- \grad u) \dx\ds  \\
         & \phantom{\le } + 2 \sum_{n \geq 1}  \inttm \intsO  u \psi_n(u,\grad u) \dx \dWsn  +  \inttm \intsO \|\psi(u,\grad u)\|_{\ell^2}^2 \dx \ds \\
        & \leq \|u_0\|_{L^2}^2 - 2 \inttm \intsO |\Delta u|^2 \dx\ds - 6 \inttm \intsO|\nabla u|^2 u^2  \dx\ds   \\
        & \phantom{\le }+ 2 \inttm \intsO |\nabla u|^2  \dx\ds + M_{t \land \sigma_m}
        +  C_\psi \inttm \intsO (1+|u|^2+|\grad u|^2)\dx\ds \\
        &\overset{(*)}{\le} \|u_0\|_{L^2}^2-(2-1) \inttm \intsO |\Delta u|^2\dx\ds - 6  \inttm \intsO |\grad u|^2u^2\dx\ds  \\
        & \phantom{\le } + C_\psi \inttm \intsO |u|^2\dx\ds +C_\psi|\sO|T
        +M_{t \land \sigma_m},
    \end{aligned}
    \end{equation}
    where $(M_t)_{t\ge 0}$ denotes the local martingale $(2 \sum_{n \geq 1}  \intt\intsO   u \psi_n(u,\grad u) \dx \dWsn)_{t\ge 0}$ and $|\sO|$ denotes the volume of $\sO$.
    In $(*)$, we have integrated by parts and applied Young's inequality to estimate
    \begin{equation*}
       (C_\psi+2) \intsO |\grad u|^2\dx = -(C_\psi+2) \intsO u \Delta u \dx \le  \intsO |\Delta u|^2\dx + C_\psi\intsO u^2\dx
    \end{equation*}
    after increasing the generic constant $C_\psi$ in the last inequality. This shows \eqref{eq:lemL2energyEstimate-martingaleEq}.
    
    We fix some $T>0$. The stochastic Gronwall inequality~\cite[Corollary 5.4]{Geiss24} applied to \eqref{eq:lemL2energyEstimate-martingaleEq} yields that for all $a,b>0$
    \begin{equation*}
    \begin{aligned}
     \P\bigg(\sup_{t\in[0,T]} \|u(t\wedge\sigma_m)\|_{L^2}^2 + \inttm \|\Delta u\|_{L^2}^2\ds + 6  \inttm \|u\grad u\|_{L^2}^2\ds > a \bigg) \\
     \leq \frac{1}{a}\exp(C_\psi T)
     \E\left[ (\|u_0\|_{L^2}^2 +C_\psi|\sO|T) \wedge b\right] + \P\left( \|u_0\|_{L^2}^2 +C_\psi|\sO|T  >b\right).
    \end{aligned}
    \end{equation*}
    Letting $m\to\infty$ and choosing first $b$ then $a$ sufficiently large, \eqref{eq:L2energyEstimateAS} follows.
    
Let $(\tilde{\sigma}_k)_{k \ge 1}$ be a localising sequence of $M$. We see by rearranging terms in \eqref{eq:L2energy-calc1} that
\begin{equation*}
\|u_0\|_{L^2}^2 + C_\psi \inttm  \|u(s)\|_{L^2}^2 \ds +C_\psi|\sO|T + M_{t\wedge\sigma_m} \geq 0 \qquad (t\in[0,T]),
\end{equation*}
and thus by Fatou's lemma we have
\begin{align*}
     \E&\Big[\|u_0\|_{L^2}^2 + C_\psi \inttm \|u(s)\|_{L^2}^2 \ds +C_\psi|\sO|T + M_{t\wedge\sigma_m}\Big] \\
      &\leq \liminf_{k\to\infty} \E\Big[\|u_0\|_{L^2}^2 + C_\psi \int_0^{t\wedge\sigma_m\wedge\tilde{\sigma}_k} \|u(s)\|_{L^2}^2\ds +C_\psi|\sO|T + M_{t\wedge\sigma_m\wedge \tilde{\sigma}_k}\Big] \\
      &\leq \E\Big[\|u_0\|_{L^2}^2 + C_\psi \inttm \|u(s)\|_{L^2}^2\ds+C_\psi|\sO|T\Big]
\end{align*}
Thus, by rearranging terms in \eqref{eq:L2energy-calc1}, adding the missing positive terms on the right-hand side, and applying the deterministic Gronwall inequality, we obtain for any deterministic $T>0$ that 
    \begin{align*}
        & \E\Big[\|u(T\wedge \sigma_m)\|_{L^2}^2 + \int_0^{T\land \sigma_m} \intsO|\Delta u(s,x)|^2 \dx\ds + 6 \int_0^{T\land \sigma_m} \intsO |\grad u(s,x)|^2u(s,x)^2\dx\ds\Big]  \\
        & \qquad \leq \E\big[ \|u_0\|_{L^2}^2\big] + C_\psi \E\Big[ \int_0^{T\wedge\sigma_m} \|u(s)\|_{L^2}^2\ds \Big] +C_\psi|\sO|T \\
         & \qquad \leq  \E\big[ \|u_0\|_{L^2}^2\big] + C_\psi \int_0^T \E\big[ \|u(s\wedge \sigma_m)\|_{L^2}^2\big]\ds +C_\psi|\sO|T \\
         & \qquad \leq   \big(\E\big[ \|u_0\|_{L^2}^2\big]+C_\psi|\sO|T\big) e^{C_\psi T} <\infty. \qedhere
    \end{align*}
    \end{proof}

\subsection{\texorpdfstring{$L^4$}{\textit{L4}}-energy estimate}\label{subsec:L4energyEst}

\begin{remark}
    To obtain an equation for $\|u(t)\|^4_{L^4(\sO)}$, we would like to apply~\cite[Lemma~A.6]{agrestiNonlinearSPDEsMaximal2025} or a version thereof tailored towards our boundary conditions. However, the regularity $u\in C([0,\sigma);H^1(\sO))\cap L^2_{\loc}([0,\sigma);\nH^3(\sO))$ a.s.\ provided by Theorem~\ref{thm:localWP_weakened} is not sufficient, as it implies neither the required $\Delta^2 u \in L^1_\loc ([0,\sigma); L^4(\sO))$ nor $\grad \Delta u \in L^2_\loc([0,\sigma); L^4(\sO; \R^d))$. There are two possible remedies. In Proposition~~\ref{prop:itoformulaL4} below, we deduce the $L^4$-energy estimate via a regularisation argument based on the Yosida approximations $\lambda(\lambda+A)^{-1}u$ of $u$, as we believe this method is of independent interest. Alternatively, via $L^p(L^q)$-theory we can deduce the regularity
    \[ 
        u\in H_\loc^{\theta,r}(0,\sigma; \nH^{3-4\theta,\zeta}(\sO)), \quad \forall \theta\in\Big[0,\frac{1}{2}\Big),\quad r,\zeta\in [2,\infty),
    \]
    without relying on Section~\ref{sec:globalWP}, cf.\ Lemma~\ref{thm:regularization:rough:initial:data:L2L2}, which ensures by taking $\theta = 0$, $r=2$, and $\zeta = 4$ that $\grad \Delta u \in L^2_{\loc}([0,\sigma); L^4(\sO; \R^d))$.
\end{remark}

\begin{lemma}
\label{lem:resolventBoundLambda}
    For $\lambda \in \rho(-A)\cap \R$, 
    denote by $R_\lambda \ce (\lambda+A)^{-1}:V^\ast\to V$ the resolvent of the Bi-Laplacian $A$ as defined in \eqref{def_weakened_Bilaplacian}. Then for all $\varepsilon>0$, it holds that $\sup_{\lambda>\varepsilon}\|\lambda R_\lambda\|_{\sL(X_\beta)}\le C_{\varepsilon,\beta}$ and $\lambda R_\lambda \to I$ as $\lambda \to \infty$ in the strong operator topology on $X_\beta$ for all $\beta\in [0,1]$, i.e.\ $\|\lambda R_\lambda x-x\|_{X_\beta} \to 0$ as $\lambda \to \infty$ for all $x \in X_\beta$. 
\end{lemma}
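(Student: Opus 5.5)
We prove the bound on the endpoint spaces $X_0=V^\ast$ and $X_1=V$ and then interpolate. The key structural input is Lemma~\ref{lem:weakened_estimatesAFG}\ref{lemItem:weakenedEstimatesA}, together with the symmetry of $A$ with respect to the $H^1$-inner product. By \eqref{def_weakened_Bilaplacian}, $\langle Au,v\rangle=(\Delta u,\Delta v)_{H^1}$ is a bounded, symmetric bilinear form on $V\times V$. The coercivity estimate shows that $a_\mu(u,v)\ce\langle Au,v\rangle+\mu(u,v)_H$ is coercive on $V$ for $\mu\ge1$. Since $\langle Au,u\rangle\ge0$, the Lax--Milgram lemma gives that $\lambda+A\colon V\to V^\ast$ is an isomorphism for every $\lambda>0$, so $(0,\infty)\subseteq\rho(-A)$.

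Moreover, the part $A_H$ of $A$ in $H$ is a nonnegative self-adjoint operator on $H$, associated with a closed symmetric form. Hence $\|\lambda R_\lambda\|_{\sL(H)}\le 1$ for all $\lambda>0$, and $\lambda R_\lambda x\to x$ in $H$ for all $x\in H$, by the spectral theorem or the standard Hille--Yosida argument.

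We then transfer these facts to the Gelfand scale. The triple $V\seq H\seq V^\ast$ coincides with the scale $D((1+A_H)^{1/2})\seq H\seq D((1+A_H)^{1/2})^\ast$, with equivalent norms. This follows from coercivity: $\|u\|_V^2\eqsim\langle (1+A)u,u\rangle$. Consequently $X_\beta=[V^\ast,V]_\beta$ is, with equivalent norms, the domain of the fractional power $(1+A_H)^{\beta-1/2}$ in the extrapolated sense, since self-adjoint positive operators have bounded imaginary powers. The resolvent $\lambda R_\lambda$ commutes with $(1+A_H)^{s}$ for every $s\in\R$. Therefore
\[
\|\lambda R_\lambda\|_{\sL(X_\beta)}\lesssim_\beta \|\lambda R_\lambda\|_{\sL(H)}\le 1
\]
for all $\lambda>0$, which even gives a bound uniform in $\varepsilon$.

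A more elementary route, if one prefers to avoid fractional powers, treats the two endpoints directly and then applies complex interpolation to get $\beta\in(0,1)$.
\begin{itemize}
\item \textbf{On $V$.} Use $\|u\|_V\eqsim\|(1+A_H)^{1/2}u\|_H$ together with commutativity.
\item \textbf{On $V^\ast$.} Use duality, since $R_\lambda$ on $V^\ast$ is the adjoint of $R_\lambda$ on $V$ by symmetry of the form.
\end{itemize}
On this route the restriction $\lambda>\varepsilon$ is natural: for general forms that are only coercive up to a shift, one would use the bound $\|\lambda R_\lambda\|\lesssim \lambda/(\lambda-\lambda_0)$.

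For strong convergence we use density plus uniform boundedness. For $x\in X_1=V$ we write $\lambda R_\lambda x-x=-R_\lambda Ax$. Since $Ax\in V^\ast$, we get
\[
\|R_\lambda Ax\|_{X_\beta}\lesssim \|R_\lambda\|_{\sL(V^\ast,X_\beta)}\|Ax\|_{V^\ast}.
\]
By interpolating $\|R_\lambda\|_{\sL(V^\ast)}\lesssim\lambda^{-1}$ with $\|R_\lambda\|_{\sL(V^\ast,V)}\lesssim1$, we obtain $\|R_\lambda\|_{\sL(V^\ast,X_\beta)}\lesssim\lambda^{\beta-1}\to0$ for $\beta<1$. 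The case $\beta=1$ is handled instead with $x\in D(A_V)$, which is dense in $V$. Since $V$ is dense in every $X_\beta$ and $\|\lambda R_\lambda\|_{\sL(X_\beta)}$ is uniformly bounded, an $\varepsilon/3$-argument gives convergence for all $x\in X_\beta$.

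The main obstacle is the careful identification of the complex interpolation spaces $X_\beta$ with the fractional domain scale of $A_H$, including the extrapolated part $\beta<1/2$. If that identification proves delicate, we fall back on the elementary endpoint-plus-interpolation route, which needs only the two endpoint bounds and the fact that $R_\lambda$ is consistent across $V$, $H$ and $V^\ast$.
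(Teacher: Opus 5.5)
Your proof is correct, but it takes a different route from the paper. The paper argues abstractly. It uses \cite[Lemma~4.1]{agrestiCriticalVariationalSetting2024a} to get stochastic maximal $L^2$-regularity of $A$, hence that $-A$ generates an analytic semigroup on $X_0=V^\ast$, and then cites the standard convergence of Yosida approximations for such generators. It transfers this to $X_\beta$ via Amann's result: the restriction of the semigroup to $X_\beta$ is again analytic, its generator is the $X_\beta$-realisation of $-A$, and the resolvents are consistent. The uniform bound for $\lambda>\varepsilon$ is left implicit there. It follows from sectoriality of these realisations together with $(0,\infty)\subseteq\rho(-A)$, neither of which the paper spells out.

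You instead use that the form $(\Delta u,\Delta v)_{H^1}$ is symmetric and nonnegative. This makes the part $A_H$ of $A$ in $H$ self-adjoint, so the whole scale $X_\beta$ is controlled by the functional calculus of $A_H$, or alternatively by the endpoint bounds plus duality and complex interpolation. This buys you a self-contained argument with several concrete gains:
\begin{itemize}
\item an explicit Lax--Milgram proof that $(0,\infty)\subseteq\rho(-A)$;
\item the sharper bound $\sup_{\lambda>0}\|\lambda R_\lambda\|_{\sL(X_\beta)}<\infty$, which is independent of $\varepsilon$;
\item the explicit rate $\|\lambda R_\lambda x-x\|_{X_\beta}\lesssim\lambda^{\beta-1}\|Ax\|_{V^\ast}$ on the dense subspace $V$ for $\beta<1$.
\end{itemize}
The price is the reliance on symmetry. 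The paper's semigroup/interpolation argument would apply unchanged to non-symmetric coercive operators, for instance first-order perturbations of $A$.
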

\begin{proof}
    As shown in Subsection~\ref{weakened_setting}, the operator $A\in \sL(V, V^\ast)$ satisfies the boundedness and variational conditions for the local well-posedness. Thus, by~\cite[Lemma~4.1]{agrestiCriticalVariationalSetting2024a}, $A$ has stochastic maximal $L^2$-regularity and in particular $-A$ generates an analytic semigroup on $V^\ast=X_0$. This implies convergence of the Yosida approximations $\lambda R_\lambda$ (see e.g.\ \cite[Prop.~10.1.7]{hytonenAnalysisBanachSpaces2017}). Moreover, by~\cite[Theorem~2.1.3]{amannLinearQuasilinearParabolic1995}, the restriction of this analytic semigroup to the space $X_\beta$, $\beta\in (0,1]$, yields again an analytic semigroup and its generator is given by the $X_\beta$-realisation of $-A$. Denoting this operator by $-A_{\beta}$, we have $\rho(-A)=\rho(-A_{\beta})$ and the corresponding resolvents are consistent. Therefore, for $x\in X_\beta$, the Yosida approximation $\lambda R_\lambda x$ also converges to $x$ in the norm of $X_\beta$. 
\end{proof}

\begin{proposition}[Itô's formula on $L^4$]
\label{prop:itoformulaL4}
    Let $d\in \{3,4\}$, $u_0\in L^0_{\F_0}(\Omega;H^1(\sO))$, and suppose that Assumption~\ref{assumption_weakened_setting} holds. Let $(u,\sigma)$ be the unique maximal solution to \eqref{eq:SCHtorus} and $(\sigma_m)_{m\ge 1}$ a localising sequence for $(u,\sigma)$. Then a.s.\ for all $t\in [0,T]$
    \begin{align}
    \label{eq:ItoFormulaL4}
        \frac{1}{4}\|u(t\wedge\sigma_m)\|_{L^4}^4 &=  \frac{1}{4}\|u_0\|_{L^4}^4 
        +3\inttm\int_{\sO} |u|^2 \grad u \cdot [\grad\Delta u- \grad(u^3-u)]\dx\ds\\
        &\phantom{= } + \sum_{n \geq 1} \inttm \int_{\sO}|u|^2 u \,\psi_n(u,\grad u) \dx \dWsn + \frac{3}{2} \inttm\int_{\sO} |u|^2 \|\psi(u,\grad u)\|_{\ell^2}^2 \dx \ds. \nonumber
\end{align}
\end{proposition}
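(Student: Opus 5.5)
The plan is to regularise with the Yosida approximations from Lemma~\ref{lem:resolventBoundLambda}, apply an Itô formula to the regularised process, and then let $\lambda\to\infty$. Fix $m$ and write $u^\lambda\ce \yosi u(\cdot\wedge\sigma_m)$ for $\lambda$ large. Applying the bounded operator $\yosi$ to the variational identity of the strong solution, tested with $H^1$-functions in the $L^2$ pairing (recall $V^\ast\simeq (H^1)^{\ast_2}$), gives
\[
u^\lambda(t)=\yosi u_0-\inttm \yosi\big(Au-F(u)\big)\ds+\sum_{n\ge1}\inttm \yosi \psi_n(u,\grad u)\dWsn .
\]
Since $\yosi=\lambda-\lambda A R_\lambda$ maps $V^\ast$ boundedly into $V=\nH^3(\sO)$, the drift $\yosi(Au-F(u))$ lies in $L^2(0,\sigma_m;V)$. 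Here we use $u\in L^2(0,\sigma_m;V)$, and $F(u)\in L^2(0,\sigma_m;V^\ast)$ by Lemma~\ref{lem:cubicIsInH}. Likewise, the noise $\yosi\psi(u,\grad u)\in L^2(0,\sigma_m;\nH^3(\sO;\ell^2))$ by Lemma~\ref{lem:noiseIsInH}. We take $\yosi$ componentwise, consistently with Lemma~\ref{handle_noise}.

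With this extra regularity, $u^\lambda$ satisfies the hypotheses of Lemma~\ref{lemma:Modified-Ito-lemma-a6} with $q=4$, $\phi^\lambda\ce 0$ and $\Phi^\lambda\ce \grad\big(\yosi(-\Delta u+u^3-u)\big)$. To check this, we identify $\langle \yosi(Au-F(u)),\varphi\rangle$ with $(\grad \yosi(-\Delta u + u^3-u),\grad\varphi)_{L^2}$, using that $\yosi$ commutes with $\Delta$ on the Neumann scale. The resulting quantities are in $L^2_tL^4_x$ because $\nH^3\hra H^{1,4}$ for $d\le4$. Lemma~\ref{lemma:Modified-Ito-lemma-a6} then yields the identity \eqref{eq:ItoFormulaL4} with $u$ replaced by $u^\lambda$ inside $|\cdot|^2\grad(\cdot)$ and with the regularised fluxes and noises.

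Next we pass to the limit $\lambda\to\infty$ term by term. By Lemma~\ref{lem:resolventBoundLambda} with $\beta=1/2$ and $\beta=1$, together with dominated convergence in time (the operators $\yosi$ are uniformly bounded), we get $u^\lambda\to u$ in $C([0,T];H^1)$ and in $L^2(0,T;V)$. The convergence in $C([0,T];H^1)$ follows from compactness of $u([0,\sigma_m])$ and uniform boundedness of the operators. Since $H^1\hra L^4$, this gives the convergence of $\|u^\lambda(t)\|_{L^4}^4$ uniformly in $t$. The noise integrand converges in $L^2(0,\sigma_m;H^1(\sO;\ell^2))$. Combined with $|u^\lambda|^2u^\lambda\to |u|^2u$ in $C_tL^{4/3}_x$ and $H^1\hra L^4$, this gives convergence in probability of the stochastic integrals, and the Itô correction term converges by the same bounds.

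The main obstacle is the drift term
\[
\inttm\int_\sO |u^\lambda|^2\grad u^\lambda\cdot \grad\yosi\big(\Delta u-(u^3-u)\big)\dx\ds .
\]
The factor $\grad \yosi\Delta u$ converges to $\grad\Delta u$ only in $L^2_tL^2_x$, while $|u^\lambda|^2\grad u^\lambda$ has to be controlled in $L^2_tL^2_x$. I would bound the latter via Hölder by $\|u^\lambda\|_{L^{12}_tL^6_x}^2\|u^\lambda\|_{L^3_tH^{1,6}_x}$, exactly as in the proof of Lemma~\ref{lem:cubicIsInH}, using the interpolation estimate \cite[Lemma~4.3]{agrestiNonlinearSPDEsMaximal2025}. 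The bound is $C\|u^\lambda\|^2_{L^\infty H^1}\|u^\lambda\|_{L^2 V}$, which is uniform in $\lambda$, and the same estimate applied to differences gives $|u^\lambda|^2\grad u^\lambda\to|u|^2\grad u$ in $L^2_tL^2_x$. Similarly, $\grad\yosi(u^3-u)\to\grad(u^3-u)$ in $L^2_tL^2_x$ because $u^3\in L^2(0,\sigma_m;H^1)$ and $\yosi\to I$ strongly on $H^1=X_{1/2}$. The products then converge in $L^1$, and the claimed identity follows a.s.\ for all $t$. Along a subsequence the stochastic integrals converge uniformly in $t$, and both sides are continuous in $t$.
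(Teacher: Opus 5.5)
Your proposal is correct and follows essentially the same route as the paper. It uses the Yosida regularisation $u_\lambda=\lambda R_\lambda u$, applies Lemma~\ref{lemma:Modified-Ito-lemma-a6} with $q=4$, and passes to the limit $\lambda\to\infty$ term by term via Lemma~\ref{lem:resolventBoundLambda}, dominated convergence, and the interpolation estimate of Lemma~\ref{lem:cubicIsInH} applied to differences for the cubic drift term. The differences are cosmetic:
\begin{enumerate}
\item You feed the drift in flux form $\Phi^\lambda=\grad\lambda R_\lambda(-\Delta u+u^3-u)$, using that $R_\lambda$ commutes with the Neumann Laplacian. The paper instead takes $\phi=\lambda R_\lambda[-Au+F(u)]$ and works with the $H^1$--$(H^1)^{\ast_2}$ duality.
\item You get convergence of the stochastic integrals directly from convergence of the integrands, whereas the paper goes through Lenglart's inequality.
\item You write $\lambda R_\lambda=\lambda-\lambda AR_\lambda$ where it should be $I-AR_\lambda$; this slip does not affect the argument.
\end{enumerate}
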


Before proving this Itô formula, we derive the $L^4$-energy estimate required for the energy functional $\mathcal{E}$.

\begin{proposition}[$L^4$-energy estimate] \label{prop:L4energyEstimate}
Under the assumptions and with the notation of Proposition~\ref{prop:itoformulaL4}, 
 a.s.\ for all $t\in[0,T]$
    \begin{align}\label{eq:L4energyEst}
        \frac{1}{4}\|u(t\land \sigma_m)\|_{L^4}^4 &\le \frac{1}{4}\|u_0\|_{L^4}^4 +
         3 \inttm\intsO |u|^2 \grad u \cdot \grad \Delta u \dx\ds + 3 \inttm\intsO |u|^2 \grad u \cdot [- 3u^2 \grad u + \grad u] \dx\ds \nonumber\\
        &\phantom{\le } + M_{t\land \sigma_m} + \frac{3C_\psi}{2} \inttm\intsO |u|^2 (1+|u|^2 + |\grad u|^2)  \dx \ds,
    \end{align}
    for $M_t \ce \sum_{n \geq 1} \intt\intsO |u|^2 u \psi_n(u, \grad u) \dx \dWsn$, which is a local martingale with quadratic variation satisfying for all $\varepsilon>0$ and $t\in [0,T]$
    \begin{equation}
    \label{eq:M1quadrVarEstimate}
     \E\big[[M]_{t\wedge \sigma_m}^{1/2}\big]\le  \varepsilon\E\bigg[\sup_{s\in[0,t\land \sigma_m]} \|u(s)\|^4_{L^4}\bigg]
    +\frac{L}{4\varepsilon}\E\Big[\inttm  \intsO \big(|u|^2+|u|^4+|u|^2|\grad u|^2\big)\dx\ds\Big].
    \end{equation}
\end{proposition}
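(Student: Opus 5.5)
The plan is to start from the identity \eqref{eq:ItoFormulaL4} of Proposition~\ref{prop:itoformulaL4}, which we take as given, and only estimate its last term. Expanding $\grad(u^3-u)=3u^2\grad u-\grad u$ turns the drift term of \eqref{eq:ItoFormulaL4} into exactly
\[
3\inttm\intsO |u|^2\grad u\cdot\grad\Delta u\dx\ds+3\inttm\intsO|u|^2\grad u\cdot[-3u^2\grad u+\grad u]\dx\ds .
\]
Hence these two integrals appear unchanged in \eqref{eq:L4energyEst}. For the Itô correction we use the pointwise linear growth bound \eqref{eq:noiseLinGrowthyz} of Remark~\ref{remark:assumptionsNoiseGlobal}, namely $\|\psi(u,\grad u)\|_{\ell^2}^2\le C_\psi(1+|u|^2+|\grad u|^2)$. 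This gives the last term of \eqref{eq:L4energyEst} with constant $\tfrac{3C_\psi}{2}$. The stochastic integral is $M_{t\wedge\sigma_m}$ by definition, so \eqref{eq:L4energyEst} actually holds with equality up to this one bound.

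Next we check that $M$ is a well-defined continuous local martingale. Its integrand is $n\mapsto\intsO |u|^2u\,\psi_n\dx$. By Cauchy--Schwarz in $x$ and Minkowski in $\ell^2$,
\[
\Big\|\Big(\intsO|u|^2u\psi_n\dx\Big)_n\Big\|_{\ell^2}\le\intsO|u|^3\|\psi(u,\grad u)\|_{\ell^2}\dx .
\]
On $[0,\sigma_m]$ the right-hand side is square integrable in time, since $u\in C([0,\sigma_m];H^1)\hra C([0,\sigma_m];L^4)$ for $d\le4$ and $\psi(u,\grad u)\in L^2(0,\sigma_m;H^1(\sO;\ell^2))\hra L^2(0,\sigma_m;L^4(\sO;\ell^2))$ by Lemma~\ref{lem:noiseIsInH}. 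This is also part of what the Itô formula itself provides.

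For \eqref{eq:M1quadrVarEstimate} we write $[M]_{t\wedge\sigma_m}=\inttm\sum_n\big(\intsO|u|^2u\psi_n\dx\big)^2\ds$. Splitting $|u|^3\|\psi\|_{\ell^2}=|u|^2\cdot|u|\,\|\psi\|_{\ell^2}$ and applying Cauchy--Schwarz in $x$ gives
\[
\sum_n\Big(\intsO|u|^2u\psi_n\dx\Big)^2\le\|u\|_{L^4}^4\intsO|u|^2\|\psi\|_{\ell^2}^2\dx .
\]
Taking square roots, pulling $\sup_{s\le t\wedge\sigma_m}\|u(s)\|_{L^4}^2$ out of the time integral, and using Young's inequality $ab\le\varepsilon a^2+\frac{1}{4\varepsilon}b^2$ yields
\[
[M]^{1/2}_{t\wedge\sigma_m}\le\varepsilon\sup_{s\le t\wedge\sigma_m}\|u(s)\|_{L^4}^4+\frac{1}{4\varepsilon}\inttm\intsO|u|^2\|\psi\|_{\ell^2}^2\dx\ds .
\]
Inserting \eqref{eq:noiseLinGrowthyz} once more and taking expectations gives \eqref{eq:M1quadrVarEstimate}, with the constant written as $L$, meaning the growth constant $C_\psi$ from Assumption~\ref{assumption_weakened_setting}.

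None of these steps is a real obstacle, because all the analytical difficulty sits in the Itô formula, which is assumed. The only points needing care are the integrability check for $M$ and keeping the constant in the quadratic-variation bound consistent.
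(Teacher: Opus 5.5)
Your proposal is correct and follows the paper's proof essentially step for step. The pathwise inequality is read off from the Itô formula of Proposition~\ref{prop:itoformulaL4} using $\grad(u^3-u)=3u^2\grad u-\grad u$ and the linear growth bound \eqref{eq:noiseLinGrowthyz}. The quadratic variation is handled by Cauchy--Schwarz/Hölder in $x$, pulling out $\sup_s\|u(s)\|_{L^4}^4$, and Young's inequality. In the paper this also produces the constant $C_\psi/(4\varepsilon)$, so your reading of $L$ as $C_\psi$ matches. Your explicit integrability check for $M$, via $H^1\hookrightarrow L^4$ and Lemma~\ref{lem:noiseIsInH}, is a harmless addition that the paper leaves implicit.
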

\begin{proof}
     The estimate of the $L^4$-norm follows from Proposition~\ref{prop:itoformulaL4}, $\grad(u^3)=3u^2\grad u$, and the linear growth estimate \eqref{eq:noiseLinGrowthyz} of $\psi$ ensured by Assumption~\ref{assumption_weakened_setting}.
     We estimate the quadratic variation using Tonelli's theorem, linear growth of $\psi$, and Hölder's as well as Young's inequalities, which results in
\begin{align*}
    \E\big[[M]_{t\wedge \sigma_m}^{1/2}\big]
    &  =\E\bigg[\Big(\inttm \sum_{n \geq 1} \Big|\intsO |u|^2 u \psi_n(u,\grad u) \dx \Big|^2 \ds\Big)^{1/2} \bigg]\\
    & \leq \E\bigg[\Big(\inttm \|u(s)\|^4_{L^4} \intsO u^2 \|\psi(u,\grad u)\|_{\ell^2}^2\dx \ds\Big)^{1/2} \bigg] \\
    &\le \sqrt{C_\psi} \E\Big[\Big(\inttm \|u(s)\|^4_{L^4} \intsO u^2 (1+|u|^2+|\grad u|^2)\dx \ds\Big)^{1/2} \Big] \\
    &\le \sqrt{C_\psi}\E\bigg[\bigg(\sup_{s\in[0,t\land \sigma_m]} \|u(s)\|^4_{L^4}\bigg)^{1/2}\Big(\inttm \intsO \big(|u|^2+|u|^4+|u|^2|\grad u|^2\big)\dx \ds\Big)^{1/2} \bigg] \\
    &\le \varepsilon\E\bigg[\sup_{s\in[0,t\land \sigma_m]} \|u(s)\|^4_{L^4}\bigg]
    +\frac{C_\psi}{4\varepsilon}\E\Big[\inttm  \intsO \big(|u|^2+|u|^4+|u|^2|\grad u|^2\big)\dx\ds\Big]. \qedhere
\end{align*}
\end{proof}

\begin{proof}[Proof of Proposition~\ref{prop:itoformulaL4}]
    We regularize the equation \eqref{CH} using the resolvent
    \begin{equation*}
        R_{\lambda} \ce R(\lambda, -A)\ce (\lambda + A)^{-1}\colon V^\ast \to V \seq V^\ast
    \end{equation*}
    for $\lambda \in \rho(-A)$ in order to apply Itô's formula in the version of Lemma~\ref{lemma:Modified-Ito-lemma-a6} to the solution of the regularised equation. 
    This results in
    \begin{equation*}
        \rmd R_\lambda u = R_\lambda[-Au+F(u)]\dt + \sum_{n \geq 1} R_\lambda \psi_n(u,\grad u)\,\rmd W^n,\quad R_\lambda u(0)= R_\lambda u_0
    \end{equation*}
    in $V^\ast$. By the identification $\langle R_\lambda v^*, v\rangle_{V^\ast \times  V} = (R_\lambda v^*, v)_{H}$ for $v^\ast\in V^\ast, v\in V$, this corresponds to
    \begin{equation*}
        \rmd (R_\lambda u, v)_H  =  (R_\lambda[-Au], v)_H  \dt + (R_\lambda F(u), v)_H \dt + \sum_{n \geq 1} (R_\lambda \psi_n(u,\grad u), v)_H \,\rmd W^n,\quad \forall v\in V.
    \end{equation*}
    Using that $R_\lambda$ maps into $V \seq H$, we have $(r, v)_H = (r, (1-\Delta) v)_{L^2}$ for all $r\in H$, $v\in V$, and since $(1-\Delta):V \to H$ is an isomorphism, we obtain for any $\varphi \in H$ via $v\ce (1-\Delta)^{-1}\varphi$ that
    \begin{equation*}
        \rmd (R_\lambda u, \varphi)_{L^2} =  (R_\lambda[-Au], \varphi)_{L^2}\dt     + (R_\lambda F(u), \varphi)_{L^2} \dt + \sum_{n \geq 1} (R_\lambda \psi_n(u,\grad u), \varphi)_{L^2} \,\rmd W^n.
    \end{equation*}
    Define $u_\lambda \ce \lambda R_\lambda u$. Using the Sobolev embeddings $V = \nH^3(\sO) \hookrightarrow H^{1,4}(\sO) \hra L^4(\sO)$, we immediately see that the requirements of Lemma~\ref{lemma:Modified-Ito-lemma-a6} are met,
    resulting in
    \begin{align}
    \label{eq:ItoFormulaLambda}
      \frac{1}{4}&\|u_\lambda(t\wedge\sigma_m)\|_{L^4}^4 =  \frac{1}{4}\|\lambda R_\lambda u_0\|_{L^4}^4 +  \inttm\int_{\sO} |u_\lambda|^2 u_\lambda
    \lambda R_\lambda[-Au+ F(u)]\dx\ds\nonumber\\
      &\phantom{\ec }+ \sum_{n \geq 1} \inttm \int_{\sO}|u_\lambda|^2 u_\lambda 
      \lambda R_\lambda \psi_n(u,\grad u) \dx \dWsn  + \frac{3}{2} \inttm\int_{\sO} |u_\lambda|^2 \|\lambda R_\lambda \psi(u,\grad u)\|_{\ell^2}^2 \dx \ds.\nonumber \\
      &\ec I_1+I_2+I_3+I_4.
    \end{align}
    We investigate the behaviour of each term as $\lambda \to \infty$. Henceforth, denote by $C$ a generic constant not depending on $\lambda$.
    Without loss of generality, assume that $\lambda >1$. By Lemma~\ref{lem:resolventBoundLambda}, it then holds for all $\beta \in [0,1]$ that $\|\lambda R_\lambda\|_{\sL(X_\beta)}\le C_{1,\beta}$.
    Using the Sobolev embedding $H=H^1(\sO) \hra L^4(\sO)$ and $u_0\in H^1$ a.s.,
\begin{equation*}
   \|(\lambda R_\lambda - I) u_0\|_{L^4} \le  C \|(\lambda R_\lambda - I) u_0\|_{H^1}  \to 0 \quad \text{as } \lambda \to\infty
\end{equation*}
follows by strong convergence of $\lambda R_\lambda \to I$ in $X_{1/2}=H^1$ (cf.\ Lemma~\ref{lem:resolventBoundLambda}) and thus in $L^4$. Hence, $I_1 \to \frac{1}{4}\|\lambda R_\lambda u_0\|_{L^4}^4$. Analogously, convergence of the left-hand side of \eqref{eq:ItoFormulaLambda} to $\frac{1}{4}\|u(t\land \sigma_m)\|_{L^4}^4$ can be deduced because $u\in C([0,\sigma_m];H^1(\sO))$ a.s.\ by local well-posedness (cf.\ Theorem~\ref{thm:localWP_weakened}).

We next estimate the term $I_2$. Here and in the following, $L_t^p$-norms are taken over the time interval $(0,t\land\sigma_m)$.  Since $A\in \sL(V, V^\ast)$, it holds that $\|Au\|_{L^2(0,t\land \sigma_m;V^\ast)}\le C \|u\|_{L^2(0,t\land \sigma_m;V)}$. Further invoking~\cite[Remark~4.2]{agrestiNonlinearSPDEsMaximal2025} and the growth estimate from Lemma~\ref{lem:weakened_estimatesAFG}\ref{lemItem:weakenedEstimatesF}, we deduce
    \begin{equation}
    \label{eq:globalWellposedness-verifyIto1}
        \inttm \|Au(s)\|^2_{V^\ast} + \|F(u(s))\|^2_{V^\ast} \ds <\infty\quad \mathbb{P}\text{-a.s.}
    \end{equation}
    from the local well-posedness $u\in C([0,\sigma_m];H)\cap L^2(0,\sigma_m;V)$ a.s.\ for all $m\ge 1$. We fix some $\omega \in \Omega$ in the full-probability event on which $u(\omega)\in C([0,\sigma);H^1(\sO))\cap L^2_{\loc}([0,\sigma);\nH^3(\sO))$. Denote by $\ast_2$ the dual space w.r.t.\ $L^2$ and by $\ast$ the dual space in our variational framework, that is, w.r.t.\ $H=H^1$. For a.e.\ $s\in [0,\sigma_m]$, which we omit from the notation together with $\omega$ in the following, we have
\begin{align}
    \int_{\sO} |u_\lambda|^2 u_\lambda
    \lambda R_\lambda[-Au+ F(u)]\dx&= \big( u_\lambda^3, \lambda R_\lambda[-Au+ F(u)] \big)_{L^2}\nonumber \\
    &= \big\langle u_\lambda^3, \lambda R_\lambda[-Au+ F(u)] \big\rangle_{H^1,(H^1)^{\ast_2}},\label{eq:prop-7-8-eq1}
\end{align}
where the terms are well-defined because $R_\lambda$ maps to $V\seq L^2$ and we show $u_\lambda^3(s)\in H^1$ a.s.\ below. Moreover, due to $\langle v,-Au+F(u)\rangle_{H^1,(H^1)^{\ast_2}}= (\grad v, \grad\Delta u-\grad(u^3-u))_{L^2}$ for all $v\in H^1$ and $\grad(u^3)=3u^2\grad u$, we have
\begin{align}\label{eq:prop-7-8-eq2}
    3\int_{\sO} |u|^2 \grad u \cdot [\grad\Delta u- \grad(u^3-u)]\dx
    =\langle u^3, -Au+ F(u)\rangle_{H^1,(H^1)^{\ast_2}}.
\end{align}
Hence, by \eqref{eq:prop-7-8-eq1} and \eqref{eq:prop-7-8-eq2}, it suffices to show that, as $\lambda \to \infty$, the quantity
\begin{equation}\label{eq:Lemma311-I3}
\begin{aligned}
	&\Big| \inttm \int_{\sO} |u_\lambda|^2 u_\lambda 
    \lambda R_\lambda[-Au+ F(u)]-3|u|^2 \grad u\cdot[\grad\Delta u- \grad(u^3-u)]\dx\ds \Big|\\
    &= \Big|\inttm  \big\langle u_\lambda^3-u^3, \lambda R_\lambda[-Au+ F(u)] \big\rangle_{H^1,(H^1)^{\ast_2}}
    + \big\langle u^3, (\lambda R_\lambda-I)[-Au+ F(u)] \big\rangle_{H^1,(H^1)^{\ast_2}} \ds\Big|
\end{aligned}
\end{equation}
converges to $0$ in order to conclude that $I_2$ converges to the desired limit.

    To this end, we apply the isomorphism $(H^1)^{\ast_2}\simeq V^\ast$ established in \eqref{eq:dualIsomorphism}, Lemma~\ref{lem:resolventBoundLambda} with $\beta=0$, and Hölder's inequality to deduce the following upper bound of \eqref{eq:Lemma311-I3}:
\begin{align*}
 C \|u_\lambda^3-u^3\|_{L_t^2 H_x^1} \|-Au+ F(u)\|_{L^2_t V^\ast}
   + C \|u^3\|_{L_t^2H_x^1} \|(\lambda R_\lambda-I)[-Au+ F(u)]\|_{L^2_tV^\ast}.
\end{align*}
Lemma~\ref{lem:cubicIsInH} ensures that $\|u^3\|_{L_t^2H_x^1}$ is finite and \eqref{eq:globalWellposedness-verifyIto1} implies that $\|-Au+ F(u)\|_{L^2_t V^\ast}$ is finite. An application of Lemma~\ref{lem:resolventBoundLambda} with $\beta=0$ yields that $\|(\lambda R_\lambda-I)[-Au+ F(u)]\|_{V^*} \leq C_{1,0} \|-Au+ F(u)\|_{V^*}$ for every $\lambda\geq 1$ and for a.e.\ $s\in[0,\sigma_m]$ as well as $\lim_{\lambda \to\infty}\|(\lambda R_\lambda-I)[-Au+ F(u)]\|_{V^*} =0$ for a.e.\ $s\in[0,\sigma_m]$. Hence, we may conclude by dominated convergence that $\|(\lambda R_\lambda-I)[-Au+ F(u)]\|_{L^2_tV^\ast}$ converges to $0$. It remains to verify that $\|u_\lambda^3-u^3\|_{L_t^2 H_x^1} $ converges to $0$. To this end, we observe that
\begin{align*}
    \grad (u_\lambda^3)-\grad (u^3) = (u_\lambda-u)(u+u_\lambda)\grad u_\lambda + u^2\grad(u_\lambda-u)\quad\text{and}\quad
    u_\lambda^3-u^3 = (u_\lambda-u)(u^2+uu_\lambda+u^2).
\end{align*}
This enables us to repeat the arguments from the proof of Lemma~\ref{lem:cubicIsInH} for the difference $u_\lambda^3-u^3$ rather than $u^3$. For $a,b,c\in \{u,u_\lambda,u-u_\lambda\}$ chosen according to this difference representation, we estimate 
\begin{equation}\label{eq:Lemma311-I3-2}
\begin{aligned}
    \|ab\grad c\|_{L_t^2L_x^2} & \le C \|a\|_{L_t^{12}(X_{7/12})}\|b\|_{L_t^{12}(X_{7/12})}\|c\|_{L_t^{3}(X_{5/6})}  \\
    & \leq C \|a\|^{5/6}_{L_t^{\infty}H^1_x}\|a\|^{1/6}_{L_t^{2}H_x^3}
    \|b\|^{5/6}_{L_t^{\infty}H^1_x}\|b\|^{1/6}_{L_t^{2}H_x^3}
    \|c\|^{1/3}_{L_t^{\infty}H^1_x}\|c\|^{2/3}_{L_t^{2}H_x^3}.
    \end{aligned}
 \end{equation}
Repeating the analogous arguments as for $\|(\lambda R_\lambda-I)[-Au+ F(u)]\|_{L_t^2 V^*}$, i.e.\ applying Lemma~\ref{lem:resolventBoundLambda} and using dominated convergence, we can deduce that $\lim_{\lambda \to 0}\|u_\lambda - u\|_{L^2_t H_x^3}=0$. Noting that either $a$ or $c$ is $u-u_\lambda$ in the difference representation, we note that in \eqref{eq:Lemma311-I3-2} one factor will converge to $0$. At the same time, the remaining terms are bounded by Lemma~\ref{lem:resolventBoundLambda} and the regularity of $u$ ensured by local well-posedness. Thus, $\|u_\lambda^3-u^3\|_{L_t^2 H_x^1} $ converges to $0$. 

Next, we show convergence of $I_4$. As before,  we fix some $\omega \in \Omega$ in the full-probability event on which $u(\omega)\in C([0,\sigma);H^1(\sO))\cap L^2_{\loc}([0,\sigma);\nH^3(\sO))$. We split the difference with the expected limit according to
\begin{equation}\label{eq:Lemma311-eqI4-difference}
\begin{aligned}
& \Big|\inttm\intsO |u_\lambda|^2 \|\lambda R_\lambda \psi(u,\grad u)\|_{\ell^2}^2 
- |u|^2 \|\psi(u,\grad u)\|_{\ell^2}^2 \dx \ds \Big| \\
& \leq \inttm\intsO |u_\lambda-u| \big(|u_\lambda|+|u|\big) \|\lambda R_\lambda \psi(u,\grad u)\|_{\ell^2}^2 \dx \ds \\
&\quad + \inttm\intsO |u|^2 \|(\lambda R_\lambda-I) \psi(u,\grad u)\|_{\ell^2}\big( \|\lambda R_\lambda \psi(u,\grad u)\|_{\ell^2} + \|\psi(u,\grad u)\|_{\ell^2}\big) \dx \ds. 
\end{aligned}
\end{equation}
Each of the terms thus arising can be estimated by Hölder's inequality in space as well as the embedding $H^1\hra L^4$ and the isomorphism $H^1(\sO;\ell^2)\cong \sL_2(\ell^2,H^1)$ via
\begin{equation}\label{eq:Lemma311-eqI4-difference-2}
\inttm\intsO |a| |b| \|c\|_{\ell^2} \|d\|_{\ell^2} \dx \ds \leq C \inttm \|a\|_{H^1}\|b\|_{H^1} \|c\|_{\sL_2(\ell^2,H^1)} \|d\|_{\sL_2(\ell^2,H^1)}
\ds, 
\end{equation}
where 
\begin{equation*}
    a\in \{u_\lambda-u,u\},~ b \in \{u_\lambda, u\},~ c \in \{\lambda R_\lambda\psi(u,\grad u), (\lambda R_\lambda-I)\psi(u,\grad u)\},~ d\in \{\lambda R_\lambda\psi(u,\grad u), \psi(u,\grad u)\}
\end{equation*}
are chosen according to the splitting above. 
To bound the noise term, we apply Lemma~\ref{lem:resolventBoundLambda} on  $H^1(\sO)=X_{1/2}$ for each $n$, resulting in
\begin{equation*}
   \|\lambda R_\lambda \psi(u,\grad u)\|_{\sL_2(\ell^2,H^1)}^2 =  \sum_{n \geq 1} \|
 \lambda R_\lambda \psi_n(u,\grad u)\|_{H^1}^2 \\
    \le C \|\psi(u,\grad u)\|_{\sL_2(\ell^2,H^1)}^2 
\end{equation*}
 and likewise for $(\lambda R_\lambda-I) \psi(u,\grad u)$. By Lemma~\ref{lem:noiseIsInH}, $\psi(u,\grad u) \in L^2([0,\sigma_m]; \sL_2(\ell^2,H^1))$. So, by dominated convergence, $(\lambda R_\lambda-I) \psi(u,\grad u)$ converges in $\sL_2(\ell^2,H^1)$ to $0$ for a.e.\ $s$. 
We bound the integrands which appear after applying \eqref{eq:Lemma311-eqI4-difference-2} to \eqref{eq:Lemma311-eqI4-difference}:
\begin{equation*}
\begin{aligned}
\max\big\{&\|u\|^2_{H^1}\big(\|\lambda R_\lambda \psi(u,\grad u)\|_{\sL_2(\ell^2,H^1)}  +  \|\psi(u,\grad u)\|_{\sL_2(\ell^2,H^1)}\big) \|\lambda R_\lambda \psi(u,\grad u)-\psi(u,\grad u)\|_{\sL_2(\ell^2,H^1)},\\
&\|u_\lambda - u\|_{H^1}\big(\|u\|_{H^1} + \|u_\lambda\|_{H^1}\big) \|\lambda R_\lambda \psi(u,\grad u)\|^2_{\sL_2(\ell^2,H^1)} \big\}
 \leq C \|u\|^2_{H^1} \|\psi(u,\grad u)\|^2_{\sL_2(\ell^2,H^1)}.
\end{aligned}
\end{equation*}
By the observations above both integrands converge for a.e.\ $s\in[0,\sigma_m]$ to $0$ and  
\begin{equation*}
\inttm  \|u\|^2_{H^1} \|\psi(u,\grad u)\|^2_{\sL_2(\ell^2,H^1)} \ds \leq \|u\|^2_{L_t^\infty H_x^1}\|\psi(u,\grad u)\|^2_{L_t^2\sL_2(\ell^2,H_x^1)} <\infty,
\end{equation*}
so the convergence of $I_4$ to the correct limit follows by dominated convergence.

Lastly, we show convergence of the martingale term $I_3=M^{(\lambda)}_{t \land \sigma_m}$ with the martingale $M^{(\lambda)}$ defined accordingly.
Fix only $t\in [0,T]$, not $\omega\in\Omega$, and let  $M_t$ denote the local martingale defined in Proposition~\ref{prop:L4energyEstimate}.
We show that $M^{(\lambda)}_{t \land \sigma_m} \to M_{t \land \sigma_m}$ in probability as $\lambda \to \infty$, which is sufficient, since it guarantees the existence of a subsequence converging almost surely.
By Lenglart's inequality convergence in probability is ensured by convergence of the quadratic variation $[M^{(\lambda)}_{\cdot \wedge\sigma_m} - M_{\cdot \wedge\sigma_m}]_{T}\to 0$ in probability for $\lambda \to \infty$. Indeed, for any bounded stopping time $\tau$, by Itô's formula and Fatou's lemma, 
\begin{align*}
    \E\big[(M^{(\lambda)}_{\tau \wedge\sigma_m} - M_{\tau\wedge\sigma_m})^2 \big| \F_0 \big]
    &= \E\Big[ 2\int_0^\tau (M^{(\lambda)}_{s\wedge\sigma_m} - M_{s \wedge\sigma_m})\,\dd \big(M^{(\lambda)}_{s\wedge\sigma_m} - M_{s \wedge\sigma_m}\big)  + \big[M^{(\lambda)}_{\cdot \wedge\sigma_m} - M_{\cdot \wedge\sigma_m}\big]_{\tau} \Big| \F_0\Big] \\
    &\le  \E\big[ [M^{(\lambda)}_{\cdot \wedge\sigma_m} - M_{\cdot \wedge\sigma_m}]_{\tau} \big| \F_0\big].
\end{align*}
Lenglart's inequality (see~\cite[Th\'eor\`eme I]{Lenglart} or~\cite[Lemma 2.2.]{MehriScheutzow})   applied to $\sup_{t\le T} |M^{(\lambda)}_{t \land \sigma_m}-M_{t \land \sigma_m}|^2$ yields the claim after taking square roots. Hence, we show that the quadratic variation
\begin{align}
\label{eq:quadraticVariationM1InProof}
    [M^{(\lambda)} - M]_{T
    \wedge \sigma_m} & = \sum_{n \geq 1} \int_0^{T
    \wedge \sigma_m} \Big| \int_{\sO} \left(|u_\lambda|^2 u_\lambda  \lambda R_\lambda \psi_n(u,\grad u)   - |u|^2 u \psi_n(u,\grad u) \right) \dx \Big|^2 \dt   \nonumber\\
    &\le\sum_{n \geq 1} \int_0^{T
    \wedge \sigma_m}    \|u_\lambda^3\lambda R_\lambda\psi_n(u,\grad u) - u^3 \psi_n(u,\grad u) \|_{L^1}^2\dt
\end{align}
converges to $0$ in probability. Hölder's inequality, the embedding $H^1\hra L^4$, and Lemma~\ref{lem:resolventBoundLambda} with $\beta=\frac{1}{2}$ imply
\begin{align*}
    &\|u_\lambda^3\yosi\psi_n(u,\grad u)-u^3\psi_n(u,\grad u)\|_{L^1}\\
    &\le 
    \|(u_\lambda-u)(u_\lambda^2+u_\lambda u+u^2)\yosi\psi_n(u,\grad u)\|_{L^1}+
    \|u^3
    (\yosi-I)\psi_n(u,\grad u)\|_{L^1}\\
    &\le \|u_\lambda-u\|_{L^4}\|u_\lambda^2+u_\lambda u+u^2\|_{L^2}\|\yosi\psi_n(u,\grad u)\|_{L^4} + \|u\|_{L^4}^3
    \|(\yosi-I)\psi_n(u,\grad u)\|_{L^4}\\
    &\lesssim \|(\yosi-I)u\|_{H^1}\big(\|u_\lambda\|_{H^1}^2+\|u_\lambda\|_{H^1} \|u\|_{H^1}+\|u\|_{H^1}^2\big)\|\yosi\psi_n(u,\grad u)\|_{H^1} + \|u\|_{H^1}^3
    \|(\yosi-I)\psi_n(u,\grad u)\|_{H^1}\\
    &\lesssim \|(\yosi-I)u\|_{H^1}\|u\|_{H^1}^2\|\psi_n(u,\grad u)\|_{H^1} + \|u\|_{H^1}^3\|(\yosi-I)\psi_n(u,\grad u)\|_{H^1}.
\end{align*}
By the same arguments as before, local well-posedness, Lemma~\ref{lem:resolventBoundLambda} and Lemma~\ref{lem:cubicIsInH} imply convergence of the latter expression to $0$ for a.e.\ $s\in[0,\sigma_m]$, for all $n\in N$ and $\mathbb{P}$-almost surely.
Moreover, the previous estimate also implies
\begin{align*}
\|u_\lambda^3\yosi\psi_n(u,\grad u)-u^3\psi_n(u,\grad u)\|_{L^1} \lesssim C \|u\|^3_{H^1}\|\psi_n(u,\grad u)\|_{H^1}.
\end{align*}
Hölder's inequality and Lemma~\ref{lem:cubicIsInH} then yield
\begin{equation*}
\int_0^{T\wedge\sigma_m} \sum_{n \geq 1} \|u\|^6_{H^1}\|\psi_n(u,\grad u)\|^2_{H^1} \dt \leq  \|u\|^6_{L_t^\infty H^1_x}\|\psi_n(u,\grad u)\|^2_{L_t^2 \sL_2(\ell^2,H_x^1)} <\infty.
\end{equation*}
Thus, by dominated convergence, we may conclude that $[M^{(\lambda)}_{\cdot \wedge\sigma_m} - M_{\cdot \wedge\sigma_m}]_{T}\to 0$ in probability for $\lambda \to \infty$.
\end{proof}

\color{black}

\subsection{\texorpdfstring{$H^1$}{\textit{H1}}-energy estimate}
\label{subsec:H1energyEst}
\begin{proposition}[$H^1$-energy estimate]
\label{prop:H1energyEstimate}
    Let $d\in \{3,4\}$, $u_0\in L^0_{\F_0}(\Omega;H^1(\sO))$, and suppose that Assumption~\ref{assumption_weakened_setting} holds. Let $(u,\sigma)$ be the unique maximal solution to \eqref{eq:SCHtorus} and $(\sigma_m)_{m\ge 1}$ a localising sequence for $(u,\sigma)$. Let $C_\psi$ denote a constant that depends on the constants in Assumption~\ref{assumption_weakened_setting}. Then we have for all $t\in[0,T]$  $\mathbb{P}$-a.s.
    \begin{equation}\label{eq:H1energyEst}
    \begin{aligned}
    \frac{1}{2}|u(t \land \sigma_m)|^2_{H^1}
    &\le \frac{1}{2}|u_0|^2_{H^1}
    - \inttm \intsO |\grad \Delta u|^2 \dx \ds
    + \inttm \intsO  (3u^2 \grad u- \grad u) \cdot \grad \Delta u \dx \ds  \\
    &\phantom{\le } + M_{t\land\sigma_m}
    +  C_\psi\inttm  \intsO  (1+|u|^2+|\grad u|^2 + |u|^2|\grad u|^2) \dx \ds  + C_\psi \inttm   |u|_{H^2}^2 \ds
    \end{aligned}
    \end{equation}
    for a continuous local martingale  $M$ with quadratic variation $[M]$ which satisfies for all $t\in[0,T]$  and $\varepsilon>0$
    \begin{equation}
    \label{eq:M2quadrVarEstimated3}
    \begin{aligned}
    \E\big[ [M]^{1/2}_{t\wedge \sigma_m}\big] & \leq \varepsilon \E\bigg[\sup_{s\in[0,t\wedge \sigma_m]}  \|\grad u(s) \|^2_{L^2} \bigg]  +  \frac{C_\psi}{4\varepsilon}\E\Big[ \inttm  \intsO(1+|u|^2+|\grad u|^2 + |u|^2|\grad u|^2)\dx\ds\Big] \\
    &  \quad + \frac{C_\psi}{4\varepsilon}\E\Big[ \inttm  |u|_{H^2}^2 \ds\Big].
    \end{aligned}
    \end{equation}

\end{proposition}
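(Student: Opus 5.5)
We apply the variational Itô formula for the squared norm in $H=H^1(\sO)$, or more precisely for the seminorm $\frac12\|\grad u\|_{L^2}^2$. Set $v\ce u(\cdot\wedge\sigma_m)$. By Theorem~\ref{thm:localWP_weakened} we have $u\in L^2(0,\sigma_m;V)\cap C([0,\sigma_m];H)$. By \eqref{eq:globalWellposedness-verifyIto1} we have $-Au+F(u)\in L^2(0,\sigma_m;V^\ast)$, and Lemma~\ref{lem:noiseIsInH} gives $G(u)\in L^2(0,\sigma_m;\sL_2(\ell^2,H))$. The standard Itô formula in the Gelfand triple $(V,H,V^\ast)$ then applies to $\|u\|_H^2$. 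Likewise, the Itô formula in the triple $(\nH^2,L^2,(\nH^2)^\ast)$ applies to $\|u\|_{L^2}^2$, as in the proof of Lemma~\ref{lem:L2energyEstimate}. Subtracting the two identities gives an exact formula for $\frac12\|\grad u\|_{L^2}^2$.

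Using the identification $\langle -Au+F(u),v\rangle_{(H^1)^{\ast_2},H^1}=(\grad\Delta u-\grad(u^3-u),\grad v)_{L^2}$, the drift becomes the following, after integrating by parts once more:
\[
\int_\sO \grad u\cdot\grad\big(\Delta u-(u^3-u)\big)\,\mathrm dx=-\int_\sO|\grad\Delta u|^2\,\mathrm dx+\int_\sO(3u^2\grad u-\grad u)\cdot\grad\Delta u\,\mathrm dx .
\]
The boundary conditions $\partial_\nu u=\partial_\nu\Delta u=0$ encoded in $V=\nH^3$ make these identities rigorous, since $\Delta u\in \nH^1$. This gives the first three terms of \eqref{eq:H1energyEst}.

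The Itô correction is $\frac12\sum_n\|\grad\psi_n(u,\grad u)\|_{L^2}^2$. By the chain rule,
\[
\grad\psi_n=(\grad_x\psi_n^0)(u)+\partial_y\psi_n^0(u)\grad u+\grad_x\psi_n^1\cdot\grad u+\psi_n^1\cdot\grad^2u .
\]
Remark~\ref{remark:assumptionsNoiseGlobal} bounds the $\ell^2$-norm of this pointwise by
\[
C_\psi\big(1+|u|+|\grad u|+|u||\grad u|+|\grad^2 u|\big).
\]
Squaring and integrating produces exactly the lower-order integrand $C_\psi(1+|u|^2+|\grad u|^2+|u|^2|\grad u|^2)$ together with $C_\psi|u|_{H^2}^2$. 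The martingale is
\[
M_t=\sum_n\int_0^t(\grad u,\grad\psi_n(u,\grad u))_{L^2}\,\mathrm dW^n .
\]

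For the quadratic variation, Cauchy--Schwarz in $x$ and in $n$ gives
\[
[M]_{t\wedge\sigma_m}\le\int_0^{t\wedge\sigma_m}\|\grad u\|_{L^2}^2\sum_n\|\grad\psi_n\|_{L^2}^2\,\mathrm ds .
\]
Pulling out $\sup_s\|\grad u(s)\|_{L^2}^2$, applying the pointwise bound above, and then using Young's inequality $\sqrt{ab}\le\varepsilon a+b/(4\varepsilon)$ yields \eqref{eq:M2quadrVarEstimated3}, exactly as in the proof of Proposition~\ref{prop:L4energyEstimate}.

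The main obstacle is the transport part $\psi^1\cdot\grad u$. Its gradient contains the second-order term $\psi^1\cdot\grad^2u$, so the Itô correction is only controlled by $|u|_{H^2}^2$. I would simply keep that term explicitly, as the statement does, and absorb it later via interpolation between $\grad u$ and $\grad\Delta u$. A second point to check is that the variational Itô formula gives the $H^1$-inner-product form. The subtraction of the $L^2$ identity has to be justified in the weaker triple, which is legitimate because $u\in L^2(0,\sigma_m;\nH^2)$ and $G(u)\in L^2(0,\sigma_m;\sL_2(\ell^2,L^2))$.
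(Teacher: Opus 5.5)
Your route is the paper's: Itô's formula for $\frac12\|u\|_{H^1}^2$ in the triple $(\nH^3(\sO),H^1(\sO),V^\ast)$, minus the already verified $L^2$-identity. After that, the paper also uses the chain-rule bound on $\frac12\sum_n\|\grad\psi_n(u,\grad u)\|_{L^2}^2$ via Remark~\ref{remark:assumptionsNoiseGlobal}, and Cauchy--Schwarz plus Young for $[M]$. Your treatment of the It\^o correction, the martingale and the quadratic-variation estimate is correct.

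\textbf{The drift step is wrong as displayed.} The expression $\intsO \grad u\cdot\grad(\Delta u-(u^3-u))\dx$ is the pairing of $-Au+F(u)$ with $u$, i.e.\ the drift of $\frac12\|u\|_{L^2}^2$. It equals $-\|\Delta u\|_{L^2}^2-3\|u\grad u\|_{L^2}^2+\|\grad u\|_{L^2}^2$, and no integration by parts turns it into the right-hand side you wrote. For a Neumann eigenfunction $-\Delta u=\lambda u$, the quadratic parts of the two sides are $\lambda(1-\lambda)\|u\|_{L^2}^2$ and $\lambda^2(1-\lambda)\|u\|_{L^2}^2$, so they differ.

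\textbf{The correct drift.} The drift of $\frac12|u|_{H^1}^2$ is the pairing with $-\Delta u$. By \eqref{eq:dualIsomorphism}, $\langle \Lambda,u\rangle_{V^\ast\times V}=\langle\Lambda,(1-\Delta)u\rangle_{(H^1)^{\ast_2}\times H^1}$. Removing the $L^2$-part therefore leaves
\[
\langle -Au+F(u),-\Delta u\rangle_{(H^1)^{\ast_2}\times H^1}=-\|\grad\Delta u\|_{L^2}^2+\intsO (3u^2\grad u-\grad u)\cdot\grad\Delta u\dx .
\]
You can also read this off directly from $\langle Au,u\rangle=\|\Delta u\|_{H^1}^2$ and $\langle F(u),u\rangle=(u^3-u,\Delta u)_{H^1}$, minus their $L^2$-counterparts, with no integration by parts at all.

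\textbf{Boundary conditions.} $\nH^3(\sO)$ only encodes $\partial_\nu u=0$. For $u\in H^3$, $\partial_\nu\Delta u$ has no trace, and the condition $\partial_\nu\Delta u=0$ enters only weakly through the variational definition of $A$. It should not be invoked as a boundary condition carried by $V$.

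With these corrections your argument coincides with the paper's proof.
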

\begin{proof} 
     We apply~\cite[Lemma A.5]{agrestiNonlinearSPDEsMaximal2025} to
    \begin{equation*}
     u(t\wedge\sigma_m) = u_0 - \inttm Au(s)\ds + \inttm F(u(s))\ds +\inttm G(u(s))\dWs
    \end{equation*}
    noting that we have sufficient regularity due to \eqref{eq:globalWellposedness-verifyIto1} and Lemma~\ref{lem:noiseIsInH}. We may directly compute the square of the seminorm $|u|_{H^1}=\|\grad u\|_{L^2}$ instead of $\|u\|_{H^1}^2$ by subtracting the Itô formula already verified for $\|u\|_{L^2}^2$ and integrating by parts. This gives
    \begin{equation}\label{eq:section7-H1}
    \begin{aligned}
    \frac{1}{2}|u(t \land \sigma_m)|^2_{H^1} = &\frac{1}{2}|u_0|^2_{H^1} - \inttm \intsO |\grad \Delta u|^2
    \dx \ds
    + \inttm \intsO  (3u^2 \grad u- \grad u) \cdot \grad \Delta u \dx \ds  \\
   & + M_{t\land\sigma_m} +  \frac{1}{2} \sum_{n \geq 1}  \inttm  \intsO |\grad (\psi_n(u, \grad u))|^2 \dx \ds
    \end{aligned}
    \end{equation}
    for $M_t \ce  \sum_{n \geq 1} \intt  \intsO \grad u \cdot \grad(\psi_n(u, \grad u))\dx \dWsn$,
    which is a local martingale with quadratic variation
    \begin{equation*}
     [M]_t =  \sum_{n \geq 1}  \intt \left|\intsO \grad u \cdot \grad (\psi_n(u,\grad u)) \dx \right|^2 \ds.
    \end{equation*}
    We bound the last term of \eqref{eq:section7-H1} using Tonelli's theorem for the first equality, and the consequences of Assumption~\ref{assumption_weakened_setting}  
    from Remark~\ref{remark:assumptionsNoiseGlobal} in the second inequality:
     \begin{equation}\label{eq:section7-h1quadvar}
     \begin{aligned}
        &\inttm \sum_{n \geq 1} \|\grad (\psi_n(u,\grad u))\|_{L^2}^2\ds
         = \inttm  \intsO \sum_{n \geq 1} |\grad (\psi_n(u,\grad u))|^2\dx\ds\\
        & \leq 3 \inttm  \intsO \sum_{n \geq 1} |(\grad_x\psi_n)(u,\grad u)|^2 + \sum_{n \geq 1} |\partial_y
     \psi_n(u,\grad u)|^2|\grad u|^2 + \sum_{n \geq 1} |\grad_z\psi_n(u,\grad u))|^2|D^2u|^2 \dx\ds\\
        &\le \inttm  \intsO \big(C_\psi (1+|u|^2+|\grad u|^2)+ C_\psi (1+|u|^2)|\grad
     u|^2 \big)\dx \ds + \inttm C_\psi|u|_{H^2}^2 \ds \\
      &\le C_\psi \inttm  \intsO  (1+|u|^2+|\grad u|^2 + |u|^2|\grad u|^2)
       \dx \ds + C_\psi\inttm |u|_{H^2}^2 \ds.
     \end{aligned}
     \end{equation} We have by Young's inequality and \eqref{eq:section7-h1quadvar} for all $\varepsilon >0$
    \begin{align}
    \label{eq:H1estimateQuadrVar}
    \E\big[ [M]^{1/2}_{t\wedge \sigma_m}\big] &  =\E\Big[\Big(\inttm \sum_{n \geq 1} \Big|\intsO \grad u \cdot \grad (\psi_n(u,\grad u)) \dx \Big|^2 \ds\Big)^{1/2} \Big] \nonumber\\
    & \leq \E\Big[\Big(\inttm \|\grad u \|^2_{L^2} \sum_{n \geq 1} \|\grad (\psi_n(u,\grad u)) \|^2_{L^2} \ds\Big)^{1/2} \Big]\nonumber \\
    & \leq \varepsilon  \E\bigg[\sup_{s\in[0,t\wedge \sigma_m]}  \|\grad u \|^2_{L^2} \bigg] + \frac{1}{4\varepsilon} \E\Big[\inttm \sum_{n \geq 1} \|\grad (\psi_n(u,\grad u)) \|^2_{L^2} \ds \Big] \nonumber\\
    & \leq \varepsilon  \E\bigg[\sup_{s\in[0,t\wedge \sigma_m]}  \|\grad u \|^2_{L^2} \bigg]  + 
    \frac{C_\psi}{4\varepsilon}\E\Big[ \inttm  \intsO\big(1+|u|^2+|\grad u|^2 + |u|^2|\grad u|^2\big)\dx\ds\Big] \nonumber\\
    &  \phantom{\le } + \frac{C_\psi}{4\varepsilon}\E\Big[ \inttm   |u|_{H^2}^2 \ds\Big],
    \end{align}
    which implies the assertion. 
\end{proof}

\subsection{Global well-posedness in dimension 3}
\label{subsec:globalWPd3}

Using the energy estimates from the previous subsections, the local well-posedness from Theorem~\ref{thm:localWP_weakened} can be improved to global well-posedness.

\begin{theorem}[Global well-posedness in $d=3$] \label{thm:global_well_posedness_dim_3}
	Let $d=3$ and suppose that the assumptions of Theorem~\ref{thm:localWP_weakened} hold. Moreover, let $u_0\in L^0_{\F_0}(\Omega;H^1(\sO))$. Then there exists a unique global solution $u$ to \eqref{eq:SCHtorus} such that $u\in C([0,\infty);H^1(\sO))\cap L_{\loc}^2([0,\infty);\nH^3(\sO))$ almost surely. If, in addition, $u_0\in L_{\F_0}^4(\Omega;L^4(\sO))\cap L_{\F_0}^2(\Omega;H^1(\sO))$, then for all $T>0$
    \begin{equation}\label{eq:thm:global_well_posedness_dim_3:EnergyEstimate}
        \E\bigg[\sup_{t\in[0,T]}  \|u(t)\|^2_{H^1(\sO)}\bigg] \le C_{\psi,T,\sO}\big(1+\|u_0\|^4_{L^4(\Omega;L^4(\sO))}+\|u_0\|^2_{L^2(\Omega;H^1(\sO))}\big) < \infty.
    \end{equation}
\end{theorem}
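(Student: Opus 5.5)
The plan is to combine the $L^2$-, $L^4$- and $H^1$-energy estimates into one estimate for the modified energy $\mathcal{E}(u)=\frac14\|u\|_{L^4}^4+\frac12\|\grad u\|_{L^2}^2$. Then the relaxed blow-up criterion \eqref{eq:blowUpSubcritd3} of Theorem~\ref{thm:localWP_weakened} gives $\sigma=\infty$.

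\textbf{Combining the estimates.} Fix $T>0$ and a localising sequence $(\sigma_m)$. Adding \eqref{eq:L4energyEst} and \eqref{eq:H1energyEst}, the drift terms involving $\grad\Delta u$ combine to
\begin{equation*}
-\inttm\intsO |\grad\Delta u|^2\dx\ds + \inttm\intsO (3u^2\grad u-\grad u)\cdot\grad\Delta u\dx\ds + 3\inttm\intsO u^2\grad u\cdot\grad\Delta u\dx\ds - 9\inttm\intsO u^4|\grad u|^2\dx\ds.
\end{equation*}
The terms $+3\int u^2|\grad u|^2$ and $-\int\grad u\cdot\grad\Delta u=\int|\Delta u|^2$ appear as well. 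Completing the square gives $-\int|\grad\Delta u-3u^2\grad u|^2\le0$. More precisely, since $\mu=-\Delta u+u^3-u$, the combination equals $-\|\grad\mu\|_{L^2}^2$ up to lower-order terms. These are $\int u^2|\grad u|^2$, $\int|\Delta u|^2$ and $\int|\grad u|^2$, and Lemma~\ref{lem:L2energyEstimate} controls them. I would retain a fraction $-\frac12\int|\grad\Delta u-3u^2\grad u+\grad u|^2$ as a good term.

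\textbf{Noise terms.} The remaining noise-induced terms are $\int u^2(1+u^2+|\grad u|^2)$, $\int(1+u^2+|\grad u|^2+u^2|\grad u|^2)$ and $|u|_{H^2}^2$.
\begin{itemize}
\item The terms $\int u^4$ and $\int u^2$ are bounded by $C(1+\mathcal{E}(u))$ and enter Gronwall.
\item The terms $\int u^2|\grad u|^2$ and $\|\Delta u\|^2_{L^2}$ are time-integrable with moments by \eqref{eq:L2energyEstimateMoment} and a.s.\ by \eqref{eq:L2energyEstimateAS}.
\item By elliptic regularity for the Neumann Laplacian, $|u|_{H^2}^2\lesssim\|\Delta u\|_{L^2}^2+\|u\|_{L^2}^2$, so this term is handled the same way.
\end{itemize}
Altogether
\begin{equation*}
\mathcal{E}(u(t\wedge\sigma_m))\le \mathcal{E}(u_0)+C_\psi\inttm \mathcal{E}(u)\ds + Z_{t\wedge\sigma_m} + M_{t\wedge\sigma_m},
\end{equation*}
where $Z$ is an increasing adapted process that is a.s.\ finite on $[0,T\wedge\sigma)$ by Lemma~\ref{lem:L2energyEstimate}, and $M$ is a local martingale. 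The stochastic Gronwall inequality \cite[Corollary 5.4]{Geiss24}, used as in the proof of Lemma~\ref{lem:L2energyEstimate}, gives $\sup_{t<\sigma\wedge T}\mathcal{E}(u(t))<\infty$ a.s. Together with \eqref{eq:L2energyEstimateAS} this yields $\sup_{t<\sigma\wedge T}\|u(t)\|_{H^1}<\infty$ a.s. Then \eqref{eq:blowUpSubcritd3} gives $\P(\sigma<T)=0$ for every $T$, so $\sigma=\infty$. Regularity and uniqueness follow from Theorem~\ref{thm:localWP_weakened}.

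\textbf{Moment estimate.} If $u_0\in L^4(\Omega;L^4)\cap L^2(\Omega;H^1)$, take the supremum over $[0,T]$ before taking expectations. Apply the Burkholder--Davis--Gundy inequality to the martingale parts and bound them with \eqref{eq:M1quadrVarEstimate} and \eqref{eq:M2quadrVarEstimated3}. With small $\varepsilon$, the terms $\varepsilon\E\sup\|u\|_{L^4}^4$ and $\varepsilon\E\sup\|\grad u\|_{L^2}^2$ are absorbed into the left-hand side; this is legitimate because both are finite up to $\sigma_m$, after possibly also localising so that $\sup_{[0,\sigma_m]}\mathcal{E}\le m$. The remaining expectations are controlled by \eqref{eq:L2energyEstimateMoment} and by $\int\E\mathcal{E}$. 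A deterministic Gronwall argument then gives $\E\sup_{t\le T\wedge\sigma_m}\mathcal{E}(u)\le C(1+\E\|u_0\|_{L^4}^4+\E\|u_0\|_{H^1}^2)$. Letting $m\to\infty$ by monotone convergence gives \eqref{eq:thm:global_well_posedness_dim_3:EnergyEstimate}.

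The main obstacle is the cancellation bookkeeping: every cross term with $\grad\Delta u$ must either be absorbed into the negative square $-\|\grad\mu\|^2$ or reduced via Young's inequality to quantities already controlled by the $L^2$-estimate. The coefficients of the $L^4$ and $H^1$ parts must be matched so that the $u^2\grad u\cdot\grad\Delta u$ terms combine correctly. If the exact square does not close, I would fall back on Young's inequality with $\frac12\|\grad\Delta u\|^2$ and bound $\|u^2\grad u\|_{L^2}^2$. In $d=3$ this is possible via Gagliardo--Nirenberg, using $\sup\|u\|_{H^1}$ to a subcritical power, at the price of a superlinear Gronwall term handled by localisation.
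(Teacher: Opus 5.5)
Your proposal is correct and takes essentially the same route as the paper: you sum \eqref{eq:L4energyEst} and \eqref{eq:H1energyEst}, complete the square to $-\|\grad\Delta u-3u^2\grad u\|_{L^2}^2$, control the remainders $\|\Delta u\|_{L^2}^2$, $\|u\grad u\|_{L^2}^2$, $|u|_{H^2}^2$ by the $L^2$-energy estimate and elliptic regularity, conclude with the stochastic Gronwall inequality and \eqref{eq:blowUpSubcritd3}, and obtain the moment bound via BDG, absorption, \eqref{eq:L2energyEstimateMoment} and a deterministic Gronwall argument. The square closes exactly, so your fallback is never needed (it would also fail, since it yields a superlinear Gronwall inequality and hence only local-in-time bounds), and your extra localisation ensuring that the absorbed supremum has finite expectation is a detail the paper leaves implicit.
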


\begin{proof}
    \textit{Step 1:} Suppose that $\P$-a.s.\ $\sup_{t\in[0,\sigma)\cap[0,T]} \|u(t)\|_{H^1}<\infty$ for any $T>0$. We can then apply the subcritical blow-up criterion \eqref{eq:blowUpSubcritd3} of Theorem~\ref{thm:localWP_weakened} to deduce $\sigma=\infty$ a.s.\ from
    \begin{align*}
	  \P(\sigma<T) = \P\Big(\sigma<T, \sup_{t\in [0,\sigma)} \|u(t)\|_{H^1}=\infty\Big)
	   \le \P\Big(\sup_{t\in [0,\sigma)\cap[0,T]} \|u(t)\|_{H^1}=\infty\Big)=0.
    \end{align*}
    We conclude by using continuity from below of probability measures, i.e.\  $\P(\sigma<\infty) = \lim_{T\to\infty} \P(\sigma < T) = 0$.
  
    \textit{Step 2:} We show that a.s.\ for all $T\ge 0$, it holds that $\sup_{t\in[0,T]\cap[0,\sigma)}  \|u(t)\|_{H^1}<\infty$. By the $L^2$-energy estimate \eqref{eq:L2energyEstimateAS} from Lemma~\ref{lem:L2energyEstimate}, it suffices to show this for the $H^1$-seminorm instead of the $H^1$-norm. To prove it, we derive an energy estimate for $\mathcal{E}$ from the $L^4$- and $H^1$-energy estimates in \eqref{eq:L4energyEst} from Proposition~\ref{prop:L4energyEstimate} and \eqref{eq:H1energyEst} from Proposition~\ref{prop:H1energyEstimate}, respectively. After summing and rearranging them, we can complete the square and integrate 
    by parts to obtain a negative sign in front of the terms of highest order. Indeed,
    \begin{align*}
        \frac{1}{4}&\|u(t\land \sigma_m)\|_{L^4}^4 + \frac{1}{2}|u(t \land \sigma_m)|^2_{H^1} \le \frac{1}{4}\|u_0\|_{L^4}^4 + \frac{1}{2}|u_0|^2_{H^1} - \inttm \intsO |\grad \Delta u|^2 \dx \ds\\
        &\phantom{\le }+ 6 \inttm\intsO |u|^2 \grad u \cdot \grad \Delta u \dx\ds - 9 \inttm\intsO |u^2 \grad u|^2\dx\ds- \inttm \intsO \grad u \cdot \grad \Delta u \dx \ds\\
         &\phantom{\le }+ 3 \inttm\intsO |u|^2 \grad u \cdot \grad u \dx\ds + M^{(1)}_{t\land \sigma_m} + M^{(2)}_{t\land\sigma_m}+ \frac{3C_\psi}{2} \inttm\intsO |u|^2 (1+|u|^2 + |\grad u|^2)  \dx \ds\\
         &\phantom{\le } +  C_\psi\inttm  \intsO  (1+|u|^2+|\grad u|^2 + |u|^2|\grad u|^2) \dx \ds + C_\psi \inttm  |u|_{H^2}^2 \ds, \\
     &\le\frac{1}{4}\|u_0\|_{L^4}^4 + \frac{1}{2}|u_0|^2_{H^1} - \|\grad \Delta u-3u^2 \grad u\|_{L_t^2L_x^2}^2+ \|\Delta u\|_{L_t^2L_x^2}^2+ (3+C_\psi)\|u\grad u\|_{L_t^2L_x^2}^2 + M^{(1)}_{t\land \sigma_m} + M^{(2)}_{t\land\sigma_m}\\
         &\phantom{\le }+C_\psi|\sO|(t\land \sigma_m)+ C_\psi \|u\|_{L_t^2L_x^2}^2+ C_\psi \inttm \Big(\frac{1}{4}\|u\|_{L^4}^4+\frac{1}{2}|u|_{H^1}^2\Big)  \ds  + C_\psi \inttm    |u|_{H^2}^2 \ds,
    \end{align*}
    where $M^{(1)}$ and $M^{(2)}$ denote the local martingales from \eqref{eq:L4energyEst} and \eqref{eq:H1energyEst}, respectively, and $L_t^2$-norms are taken over the interval $[0,t\land \sigma_m]$. By elliptic regularity of the Neumann-Laplacian and smoothness of the domain~\cite[Cor.~2.2.2.6]{grisvardEllipticProblemsNonsmooth2011}, $|u|_{H^2}^2\le C_\sO(\|\Delta u\|_{L^2}^2+\|u\|_{L^2}^2)$ for some constant $C_\sO\ge 0$ because $u(s)\in V\seq \nH^2(\sO)$ for a.e.\ $s\in [0,\sigma_m]$.  Hence, we have for $ X_t \ce \frac{1}{4} \|u(t\land \sigma_m)\|_{L^4}^4 + \frac{1}{2}|u(t \land \sigma_m)|^2_{H^1}$ and all $t\geq 0$
    \begin{equation}\label{eq:Proof_global_well_posedness_dim_3-gronwall}
    \begin{aligned}
       X_t  & \le  C_\psi \inttm X_s  \ds  + M^{(1)}_{t\land \sigma_m} + M^{(2)}_{t\land\sigma_m}  +  \frac{1}{4}\|u_0\|_{L^4}^4 + \frac{1}{2}|u_0|^2_{H^1} \\
       &\phantom{\le } + C_{\psi, \sO}\int_0^{t\wedge\sigma_m} \big( \|\Delta u\|^2_{L^2} +\|u\grad u\|^2_{L^2} + \|u\|^2_{L^2}\big)\ds + C_\psi|\sO|(t\land \sigma_m),
    \end{aligned}
    \end{equation}
    By the $L^2$-energy estimate \eqref{eq:L2energyEstimateAS}, the second integral is a.s.\ finite, so that the stochastic Gronwall inequality is applicable to \eqref{eq:Proof_global_well_posedness_dim_3-gronwall} and yields the claim of Step 2.

    \textit{Step 3:} We show \eqref{eq:thm:global_well_posedness_dim_3:EnergyEstimate}. To this end, we take the expectation of the supremum over time up to some $T_0\in[0,T]$, drop the non-positive term, and apply the BDG inequality with constant $\CBDG>0$ to obtain
    \begin{align}
    \label{eq:calEenergyProof}
     \varphi(&T_0)\ce \E\bigg[\sup_{t\in[0,T_0\wedge \sigma_m]}  \Big(\frac{1}{4}\|u(t)\|_{L^4}^4 +\frac{1}{2}|u(t)|^2_{H^1}\Big) \bigg] \le \frac{1}{4} \E\big[\|u_0\|^{4}_{L^4}\big] + \frac{1}{2}\E\big[|u_0|^2_{H^1}\big]+ \E\big[\|\Delta u\|_{L_t^2L_x^2}^2\big]\nonumber\\
          &+ (3+C_\psi)\E\big[\|u\grad u\|_{L_t^2L_x^2}^2\big]+ \CBDG\E\big[[M^{(1)}]^{1/2}_{T_0\wedge \sigma_m}\big]  + \CBDG\E\big[[M^{(2)}]^{1/2}_{T_0\wedge \sigma_m}\big] +C_\psi|\sO|T_0\nonumber\\
          &+ C_\psi \E\big[\|u\|_{L_t^2L_x^2}^2\big]+ C_\psi \int_0^{T_0} \E\Big[\frac{1}{4}\|u(\cdot\land\sigma_m)\|_{L^4}^4+\frac{1}{2}|u(\cdot\land\sigma_m)|_{H^1}^2\Big]  \ds  + C_\psi \E\Big[\int_0^{T_0\land\sigma_m}  |u|_{H^2}^2 \ds\Big]
    \end{align}
    with $L_t^2$-norms taken over the interval $[0,T_0\land \sigma_m]$.  Again, by elliptic regularity and smoothness of the domain, the last term in \eqref{eq:calEenergyProof} can be estimated by a multiple of the terms $\E[\|\Delta u\|_{L_t^2L_x^2}^2+\|u\|_{L_t^2L_x^2}^2]$ already present in the estimate.
    Combining the quadratic variation estimates \eqref{eq:M1quadrVarEstimate} from Proposition~\ref{prop:L4energyEstimate} and \eqref{eq:M2quadrVarEstimated3} from Proposition~\ref{prop:H1energyEstimate}, rearranging and elliptic regularity give
    \begin{align*}
        &\E\big[[M^{(1)}]_{T_0\wedge \sigma_m}^{1/2}\big]+\E\big[ [M^{(2)}]^{1/2}_{T_0\wedge \sigma_m}\big] 
        \le 6\varepsilon \E\bigg[\sup_{s\leq {T_0\wedge \sigma_m}}  \Big(\frac{1}{4}\|u(s)\|_{L^4}^4+\frac{1}{2}\|\grad u(s)\|^2_{L^2}\Big) \bigg] + \frac{C_\psi}{\varepsilon}|\sO|(T_0\land\sigma_m)\\
        &\phantom{\le} + \frac{C_\psi}{\varepsilon}\big(\E\big[\|u\|_{L_t^2L_x^2}^2\big]+ \E\big[\|\Delta u\|_{L_t^2L_x^2}^2\big]+ \E\big[\|u\grad u\|_{L_t^2L_x^2}^2\big]\big)+ \frac{C_\psi}{\varepsilon}\int_0^{T_0} \E\Big[\frac{1}{4}\|u(\cdot\land\sigma_m)\|_{L^4}^4+\frac{1}{2}|u(\cdot\land\sigma_m)|_{H^1}^2\Big]\ds.
    \end{align*}
    Inserting these two estimates in \eqref{eq:calEenergyProof}, subtracting the first term in the above from both sides, and choosing $\varepsilon>0$ such that $1-6\CBDG\varepsilon=\frac{1}{2}$ results in
    \begin{align*}
        \frac{1}{2}\varphi(T_0)&\le \frac{1}{4} \E\big[\|u_0\|^{4}_{L^4}\big] + \frac{1}{2}\E\big[|u_0|^2_{H^1}\big]+ C_\psi\big(\E\big[\|u\|_{L_t^2 L_x^2}^2\big]+ \E\big[\|\Delta u\|_{L_t^2L_x^2}^2\big]+ \E\big[\|u\grad u\|_{L_t^2L_x^2}^2\big]\big)\\
        &\phantom{\le } +C_\psi|\sO|T+ C_\psi \int_0^{T_0} \varphi(s) \ds\\
        &\le \frac{1}{4} \E\big[\|u_0\|^{4}_{L^4}\big] + \frac{1}{2}\E\big[|u_0|^2_{H^1}\big]+ C_{\psi,T,\sO}\big(\E\big[\|u_0\|_{L^2}^2\big]+1\big)+ C_\psi \int_0^{T_0} \varphi(s) \ds
    \end{align*}
    for all $T_0\in[0,T]$, where we made use of the $L^2$-energy estimate \eqref{eq:L2energyEstimateMoment}  of Lemma~\ref{lem:L2energyEstimate} and $L_t^2$-norms are taken over the interval $[0,T\land \sigma_m]$. Multiplying by $2$ and a deterministic Gronwall argument yield
    \begin{align*}
        \E\bigg[\sup_{t\in[0,T\wedge \sigma_m]}  \|u(t)\|^2_{H^1}\bigg] \le C_{\psi,T,\sO}\big(1+\|u_0\|^4_{L^4(\Omega;L^4(\sO))}+\|u_0\|^2_{L^2(\Omega;H^1(\sO))}\big) < \infty.
    \end{align*}
    Fixing $T>0$, and letting $m\to\infty$ in the energy estimate above yields the same statement with the supremum taken over $[0,T]\cap [0,\sigma)$.
\end{proof}

\subsection{Global well-posedness in dimension 4}
\label{subsec:globalWPd4}

As outlined in Subsection~\ref{subsec:overviewglobalWPd4}, the critical blow-up criterion for $d=4$ contains the $L_t^2H_x^3$-norm of the solution in addition to the $L_t^\infty H_x^1$-norm compared to the subcritical dimension three. Hence, we present two energy estimates in Propositions~\ref{prop:global_well_posedness_dim_4-part1} and~\ref{prop:global_well_posedness_dim_4-part2} before concluding global well-posedness in Theorem~\ref{thm:global_well_posedness_dim_4-complete}. 

\begin{proposition} \label{prop:global_well_posedness_dim_4-part1}
	Let $d=4$ and suppose that the assumptions of Theorem~\ref{thm:localWP_weakened} hold. Moreover, let $u_0\in L^0_{\F_0}(\Omega;H^1(\sO))$ and $T>0$. Let $(u,\sigma)$ be the unique maximal solution to \eqref{eq:SCHtorus} and $\mu$ the chemical potential of Definition~\ref{def:chemPotential}. Then, we have
	\begin{equation}\label{eq:prop-global_well_posedness_dim_4-claim1}
	       \sup_{t\in[0,T]\cap [0,\sigma)}\|u(t)\|_{H^1}+\int_0^{\sigma \land T} \|\grad \mu(t)\|_{L^2}^2 \dt < \infty  \qquad \mathbb{P}\text{-a.s.}
	\end{equation}
	If, in addition, $u_0\in L_{\F_0}^4(\Omega;L^4(\sO))\cap L_{\F_0}^2(\Omega;H^1(\sO))$, then
    \begin{equation}\label{eq:prop-global_well_posedness_dim_4-claim2}
    \E\bigg[\sup_{t\in[0,T]\cap[0,\sigma)}  \|u(t)\|^2_{H^1(\sO)} + \int_0^{\sigma \land T} \|\grad \mu(t)\|_{L^2}^2 \dt \bigg] \le C_{\psi,T,\sO}\big(1+\|u_0\|^4_{L^4(\Omega;L^4(\sO))}+\|u_0\|^2_{L^2(\Omega;H^1(\sO))}\big) < \infty.
    \end{equation}
\end{proposition}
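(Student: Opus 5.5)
The plan is to repeat Steps 2 and 3 of the proof of Theorem~\ref{thm:global_well_posedness_dim_3}. The only changes are that we may no longer invoke the subcritical blow-up criterion, and that we keep the dissipative term instead of dropping it. The $L^4$- and $H^1$-energy estimates (Propositions~\ref{prop:L4energyEstimate} and~\ref{prop:H1energyEstimate}), the $L^2$-estimate (Lemma~\ref{lem:L2energyEstimate}) and the Itô formula (Proposition~\ref{prop:itoformulaL4}) were all proved for $d\in\{3,4\}$, so they are available here.

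First, we sum \eqref{eq:L4energyEst} and \eqref{eq:H1energyEst} exactly as in Step~2 of that proof. Before bounding from above, we rewrite the highest-order terms. Completing the square gives
\[
-\|\grad\Delta u\|_{L^2}^2+6(u^2\grad u,\grad\Delta u)_{L^2}-9\|u^2\grad u\|_{L^2}^2=-\|\grad\Delta u-3u^2\grad u\|_{L^2}^2 .
\]
Since $\grad\mu=-\grad\Delta u+3u^2\grad u-\grad u$, this equals $-\|\grad\mu+\grad u\|_{L^2}^2\le -\tfrac12\|\grad\mu\|_{L^2}^2+\|\grad u\|_{L^2}^2$. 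We keep the term $\tfrac12\int_0^{t\wedge\sigma_m}\|\grad\mu\|_{L^2}^2\,\mathrm{d}s$ on the left-hand side. The lower-order terms are treated as in $d=3$. The term $(\grad u,\grad\Delta u)$ becomes $\|\Delta u\|^2$ after integration by parts, which is admissible because $\partial_\nu u=0$. The noise contributions $|u|^2|\grad u|^2$ are bounded by $\|u\grad u\|^2$. We bound $|u|_{H^2}^2$ by elliptic regularity in terms of $\|\Delta u\|^2+\|u\|^2$. The term $|u|^2(1+|u|^2)$ is absorbed by $1+\|u\|_{L^4}^4$. All remaining integrals are a.s.\ finite by \eqref{eq:L2energyEstimateAS}.

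This yields the analogue of \eqref{eq:Proof_global_well_posedness_dim_3-gronwall} for
\[
X_t=\tfrac14\|u(t\wedge\sigma_m)\|_{L^4}^4+\tfrac12|u(t\wedge\sigma_m)|_{H^1}^2 ,
\]
now with the additional nondecreasing term $\tfrac12\int_0^{t\wedge\sigma_m}\|\grad\mu\|^2$ on the left. The stochastic Gronwall inequality \cite[Corollary 5.4]{Geiss24} applies to processes of the form $X_t+A_t$ with $A$ nondecreasing, in the same way as in the proof of Lemma~\ref{lem:L2energyEstimate}. It therefore gives an a.s.\ bound, uniform in $m$, for $\sup_{t\le T}X_t+\int_0^{T\wedge\sigma_m}\|\grad\mu\|^2$. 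Letting $m\to\infty$ and combining with \eqref{eq:L2energyEstimateAS} to control the full $H^1$-norm gives \eqref{eq:prop-global_well_posedness_dim_4-claim1}.

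For the moment bound \eqref{eq:prop-global_well_posedness_dim_4-claim2}, we follow Step~3 of the $d=3$ proof. We set $\varphi(T_0)$ to be $\E\big[\sup_{t\le T_0\wedge\sigma_m}X_t\big]$ plus $\tfrac12\E\int_0^{T_0\wedge\sigma_m}\|\grad\mu\|^2$. We apply the BDG inequality, bound the quadratic variations by \eqref{eq:M1quadrVarEstimate} and \eqref{eq:M2quadrVarEstimated3}, and absorb the $\varepsilon$-terms. The lower-order expectations are controlled by the moment estimate \eqref{eq:L2energyEstimateMoment}. A deterministic Gronwall argument, followed by letting $m\to\infty$ via monotone convergence, then gives the claim.

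The main point to check is bookkeeping rather than a new estimate. In $d=3$ the proof discarded the square $-\|\grad\Delta u-3u^2\grad u\|^2$; here we must retain it and relate it to $\grad\mu$ with a cost that involves only $\|\grad u\|^2$, which is a lower-order term. We must also ensure that none of the absorptions uses more than half of the dissipation. Apart from this, no dimension-specific input is needed: all the Sobolev embeddings used in the Itô formulas were already verified for $d=4$.
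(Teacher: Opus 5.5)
Your proposal is correct and follows the paper's proof: both rerun Steps 2 and 3 of the $d=3$ argument while keeping the completed square $\|\grad\Delta u-3u^2\grad u\|_{L^2}^2$ on the left-hand side instead of discarding it. The only difference is bookkeeping. You trade this square for $\tfrac12\|\grad\mu\|_{L^2}^2-\|\grad u\|_{L^2}^2$ before applying the stochastic and deterministic Gronwall inequalities, whereas the paper applies Gronwall to the square itself and recovers $\grad\mu$ afterwards via $\grad\mu=-\grad\Delta u+3u^2\grad u-\grad u$ and the triangle inequality.
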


\begin{proof}
This proof closely follows the proof of Theorem~\ref{thm:global_well_posedness_dim_3}. 
By identical calculations as in Step 2 of the case $d=3$, and by moving the non-positive term to the left-hand side,  we obtain for all $t\in[0,T]$
    \begin{align}\label{eq:proof-thm-global-well-posedness-dim4-eq1}
        &\frac{1}{4}\|u(t\land \sigma_m)\|_{L^4}^4 + \frac{1}{2}|u(t \land \sigma_m)|^2_{H^1} + \|\grad \Delta u-3u^2 \grad u\|_{L_t^2L_x^2}^2 \nonumber \\
        & \quad \le\frac{1}{4}\|u_0\|_{L^4}^4 + \frac{1}{2}|u_0|^2_{H^1}+ (1+C_{\psi,\theta}) \|\Delta u\|_{L_t^2L_x^2}^2 
         + (3+C_\psi)\|u\grad u\|_{L_t^2L_x^2}^2 + M^{(1)}_{t\land \sigma_m} + M^{(2)}_{t\land\sigma_m}\nonumber \\
         &\qquad +C_\psi|\sO|(t\land \sigma_m)+ C_{\psi.\theta} \|u\|_{L_t^2L_x^2}^2+ C_\psi \inttm \Big(\frac{1}{4}\|u\|_{L^4}^4+\frac{1}{2}|u|_{H^1}^2\Big)  \ds.
    \end{align}
Applying the stochastic Gronwall inequality~\cite[Corollary 5.4]{Geiss24} to \eqref{eq:proof-thm-global-well-posedness-dim4-eq1} as was done for \eqref{eq:lemL2energyEstimate-martingaleEq} and \eqref{eq:Proof_global_well_posedness_dim_3-gronwall}, choosing hereby $X(t) \ce \frac{1}{4}\|u(t\land \sigma_m)\|_{L^4}^4 + \frac{1}{2}|u(t \land \sigma_m)|^2_{H^1} + \|\grad \Delta u-3u^2 \grad u\|_{L_t^2L_x^2}^2$, implies that the supremum of the $H^1$-seminorm as well as $\|\grad \Delta u - 3u^2\grad u\|_{L_t^2L_x^2}$ are a.s.\ finite. Combined with the $L^2$-energy estimate \eqref{eq:L2energyEstimateAS} from Lemma~\ref{lem:L2energyEstimate}, it follows that the first term in \eqref{eq:prop-global_well_posedness_dim_4-claim1} is a.s.\ finite. Note that $\grad \mu = -\grad\Delta u +3u^2\grad u-\grad u$ and $\|\grad u\|^2_{L_t^2L_x^2}\le (t\land \sigma_m)\sup_{s\in [0,t\land \sigma_m]} |u(s)|_{H^1}^2 <\infty$ a.s.\ by Hölder's inequality. By a triangle inequality and a.s.\ finiteness of $\|\grad \Delta u - 3u^2\grad u\|_{L_t^2L_x^2}$, the gradient of the chemical potential is seen to be finite a.s., too. This yields \eqref{eq:prop-global_well_posedness_dim_4-claim1}.

Now suppose that in addition $u_0\in L_{\F_0}^4(\Omega;L^4(\sO))\cap L_{\F_0}^2(\Omega;H^1(\sO))$.  
Define as before for $T_0 \in[0,T]$
\begin{equation*}
\varphi(T_0)\ce \E\bigg[\sup_{t\in[0,T_0\wedge \sigma_m]}  \Big(\frac{1}{4}\|u(t)\|_{L^4}^4 +\frac{1}{2}|u(t)|^2_{H^1}\Big)\bigg].
\end{equation*}
By the same calculations as done in Step 3 of the case $d=3$, with the only difference that we do not discard the non-positive term but instead retain it on the left-hand side, we obtain for all $T_0 \in [0,T]$: 
   \begin{align*}
        & \E\bigg[\frac{1}{2} \sup_{t\in[0,T_0\wedge \sigma_m]}  \Big(\frac{1}{4}\|u(t)\|_{L^4}^4 +\frac{1}{2}|u(t)|^2_{H^1}\Big) + \int_0^{\sigma_m \land T_0} \|\grad \Delta u-3u^2 \grad u\|_{L^2}^2 \dt \bigg]\\
        &\quad \le \frac{1}{4} \E\big[\|u_0\|^{4}_{L^4}\big] + \frac{1}{2}\E\big[|u_0|^2_{H^1}\big]+ C_{\psi,T,\sO}\big(\E\big[\|u_0\|_{L^2}^2\big]+1\big)+ C_\psi \int_0^{T_0} \varphi(s) \ds.
    \end{align*}
Noting $\grad \mu = -\grad\Delta u +3u^2\grad u-\grad u$, adding  $\E[\int_0^{\sigma_m \land T_0} \|\grad u\|_{L^2}^2 \dt]$ on both sides, applying the triangle inequality and using $\varphi(t) \leq \tilde{\varphi}(t)$ implies for all $T_0 \in [0,T]$
   \begin{align*}
        \tilde{\varphi}(T_0) \ce & \frac{1}{2}\E\bigg[\sup_{t\in[0,T_0\wedge \sigma_m]}  \Big(\frac{1}{4}\|u(t)\|_{L^4}^4 +\frac{1}{2}|u(t)|^2_{H^1}\Big) + \int_0^{\sigma_m \land T_0} \|\grad \mu\|_{L^2}^2 \dt \bigg]\\
        &\quad \le \frac{1}{4} \E\big[\|u_0\|^{4}_{L^4}\big] + \frac{1}{2}\E\big[|u_0|^2_{H^1}\big]+ C_{\psi,T,\sO}\big(\E\big[\|u_0\|_{L^2}^2\big]+1\big)+ (C_\psi+2) \int_0^{T_0} \tilde{\varphi}(s) \ds.
    \end{align*}
The assertion \eqref{eq:prop-global_well_posedness_dim_4-claim2} follows by applying the deterministic Gronwall inequality and taking the limit $m\to\infty$.
\end{proof}

\begin{lemma} \label{lem:globalWP_d4_claims}
	Let $d=4$ and suppose that the assumptions of Theorem~\ref{thm:localWP_weakened} hold. Moreover, let $u_0\in L^0_{\F_0}(\Omega;H^1(\sO))$ and $T>0$. Let $(u,\sigma)$ be the unique maximal solution to \eqref{eq:SCHtorus} and $\mu$ the associated chemical potential from Definition~\ref{def:chemPotential}. Then the following three estimates hold a.s.\ for a.e.\ $0\le s <\sigma \land T$
    \begin{align}
        \|\Delta u(s)\|_{L^2}^2  & \le \|\mu(s)\|_{L^2}^2 + 2\| \grad u(s)\|_{L^2}^2 \label{eq:globalWP_d4_eq1}, \\
         \|u^2(s)\nabla u(s)\|_{L^2}^2 & \lesssim
        \|u(s)\|^3_{H^1}
        \left(\|\Delta u(s)\|_{L^2}^2
        +\|u(s)\|^2_{H^1}  \right)
        \left( \|\grad \Delta u(s)\|_{L^2}
        +\|u(s)\|_{H^1}\right),
        \label{eq:globalWP_d4_eq2} \\
        \|\mu(s)\|_{L^2}^4 & \lesssim \|\nabla\mu(s)\|_{L^2}^2 \left(\|u(s)\|_{H^1}^6 + 1\right) + 1
        + \|u(s)\|_{H^1}^{12}. \label{eq:globalWP_d4_eq3}
    \end{align}
\end{lemma}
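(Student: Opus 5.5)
Throughout, fix $\omega$ and a time $s$ in the full-measure set where $u(s)\in V=\nH^3(\sO)$, so that $\partial_\nu u=0$ and $\partial_\nu\Delta u=0$ on $\partial\sO$ and all integrations by parts below are justified; recall $d=4$ and $\mu=-\Delta u+u^3-u$. The three estimates are purely spatial (elliptic/interpolation) inequalities at fixed $s$, and I would prove them in the stated order.

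For \eqref{eq:globalWP_d4_eq1}, expand $\|\mu\|_{L^2}^2=\|\Delta u\|^2_{L^2}-2(\Delta u,u^3-u)_{L^2}+\|u^3-u\|^2_{L^2}$. Integrating by parts with $\partial_\nu u=0$ gives
\begin{equation*}
-(\Delta u,u^3)_{L^2}=3\|u\grad u\|_{L^2}^2\ge 0,\qquad (\Delta u,u)_{L^2}=-\|\grad u\|_{L^2}^2 .
\end{equation*}
Dropping the nonnegative terms yields $\|\mu\|_{L^2}^2\ge\|\Delta u\|_{L^2}^2-2\|\grad u\|_{L^2}^2$, which is the claim.

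For \eqref{eq:globalWP_d4_eq2}, use H\"older: $\|u^2\grad u\|_{L^2}^2\le\|u\|_{L^{r}}^4\|\grad u\|_{L^{q}}^2$ and pick exponents adapted to $d=4$. One choice is $\|u\|_{L^8}^4\|\grad u\|_{L^4}^2$. Then apply Gagliardo--Nirenberg on the smooth bounded domain, with lower-order terms absorbed via $\|u\|_{H^1}$. For $L^8$, interpolate between $H^1\hookrightarrow L^4$ and $H^3\hookrightarrow L^\infty$ (borderline) or $H^2$; for $\grad u\in L^4$, interpolate between $H^1$ and $H^3$. Elliptic regularity for the Neumann Laplacian (as already used in Theorem~\ref{thm:global_well_posedness_dim_3}) controls $\|u\|_{H^2}$ by $\|\Delta u\|_{L^2}+\|u\|_{L^2}$ and $\|u\|_{H^3}$ by $\|\grad\Delta u\|_{L^2}+\|u\|_{H^1}$. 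The target exponents are total $H^1$-power $3$, $H^2$-power $2$ and $H^3$-power $1$. This matches scaling: $u^2\grad u$ has Sobolev index $3\cdot 1-$ derivatives, and the count works in $d=4$. Concretely I would take $\|\grad u\|_{L^4}\lesssim\|u\|_{H^1}^{1/2}\|u\|_{H^3}^{1/2}$ (since $H^2\hookrightarrow H^{1,4}$ and $H^2=[H^1,H^3]_{1/2}$) and $\|u\|_{L^8}\lesssim\|u\|_{H^1}^{1/2}\|u\|_{H^2}^{1/2}$ (since $H^{3/2}\hookrightarrow L^8$ in $d=4$). This gives $\|u\|_{H^1}^{2}\|u\|_{H^2}^{2}\cdot\|u\|_{H^1}\|u\|_{H^3}$, exactly the claimed form after the elliptic replacements.

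For \eqref{eq:globalWP_d4_eq3}, which I expect to be the main obstacle, split $\mu=(\mu-\bar\mu)+\bar\mu$ with $\bar\mu$ the spatial mean. Poincar\'e--Wirtinger gives $\|\mu-\bar\mu\|_{L^2}\lesssim\|\grad\mu\|_{L^2}$. Since $\int\Delta u=0$ by the Neumann condition, $|\bar\mu|\lesssim\|u\|_{L^3}^3+\|u\|_{L^1}\lesssim\|u\|_{H^1}^3+1$. The difficulty is that this naive route gives $\|\mu\|^4\lesssim\|\grad\mu\|^4+\|u\|_{H^1}^{12}+1$, with fourth rather than second power of $\|\grad\mu\|$. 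To fix this I would instead write $\|\mu\|_{L^2}^2=(\mu,-\Delta u+u^3-u)$, integrate by parts in the first term to get $(\grad\mu,\grad u)$, and bound
\begin{equation*}
|(\grad\mu,\grad u)|\le\|\grad\mu\|_{L^2}\|u\|_{H^1},\qquad |(\mu,u^3-u)|\le\|\mu-\bar\mu\|_{L^2}\|u^3-u\|_{L^2}+|\bar\mu|\,\|u^3-u\|_{L^1}.
\end{equation*}
Here $\|u^3\|_{L^2}=\|u\|_{L^6}^3$ is not controlled by $H^1$ in $d=4$ (since $H^1\hookrightarrow L^4$ only), so I would instead use $(\mu-\bar\mu,u^3)\le\|\mu-\bar\mu\|_{L^4}\|u\|_{L^4}^3\lesssim\|\grad\mu\|_{L^2}\|u\|_{H^1}^3$, via the Sobolev--Poincar\'e inequality $H^1\hookrightarrow L^4$ for mean-zero functions. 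Altogether this gives $\|\mu\|_{L^2}^2\lesssim\|\grad\mu\|_{L^2}(\|u\|_{H^1}^3+1)+\|u\|_{H^1}^6+1$. Squaring yields $\|\mu\|^4\lesssim\|\grad\mu\|^2(\|u\|_{H^1}^6+1)+\|u\|_{H^1}^{12}+1$, which is exactly \eqref{eq:globalWP_d4_eq3}.
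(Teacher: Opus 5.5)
Your proposal is correct and essentially reproduces the paper's proof: the first estimate uses the sign of $-(\Delta u,u^3)_{L^2}=3\|u\nabla u\|_{L^2}^2$, the second the same $\|u\|_{L^8}^4\|\nabla u\|_{L^4}^2$ split with interpolation and Neumann elliptic regularity, and in the third your testing of $\mu$ against itself is the paper's bound $\|\mu\|_{L^2}^2\le\|\mu\|_{(H^1)^*}\|\mu\|_{H^1}$ written out at $v=\mu$. The one step you leave implicit, $\|u\|_{H^3}\lesssim\|\nabla\Delta u\|_{L^2}+\|u\|_{H^1}$, needs $\|\Delta u\|_{L^2}\lesssim\|\nabla\Delta u\|_{L^2}+\|\nabla u\|_{L^2}$; the paper gets this from Poincar\'e--Wirtinger using $\int_{\sO}\Delta u\dx=0$, and alternatively $\|\Delta u\|_{L^2}^2=-(\nabla u,\nabla\Delta u)_{L^2}$ suffices.
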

\begin{remark}
Lemma~\ref{prop:global_well_posedness_dim_4-part1} and inequality \eqref{eq:globalWP_d4_eq3} of Lemma~\ref{lem:globalWP_d4_claims} imply that
\begin{equation*}
\int_0^{\sigma\wedge T} \|\mu(s)\|^4_{L^2} \ds<\infty \text{ almost surely,}
\end{equation*}
which allows us to avoid bounding
$\sup_{s\in[0,\sigma)\wedge[0,T]}\|\mu(s)\|_{L^2}$ in order to bound $\int_0^{\sigma\wedge T} \|u(s)\|^2_{H^3} \ds$. See the proof of Lemma~\ref{prop:global_well_posedness_dim_4-part2} for further details.
\end{remark}
\begin{proof}
Let $(\sigma_m)_{m\ge 1}$ be a localising sequence of $(u,\sigma\wedge T)$. Rearranging Definition~\ref{def:chemPotential} of the chemical potential $\mu$ gives
\begin{equation}\label{eq:muRearranged}
\Delta u = -\mu + u^3 - u,
\end{equation}
where we dropped the $s$ in the notation here and in the following. 
Applying \eqref{eq:muRearranged}, Young's inequality, integration by parts, and $u(t)\in \nH^3(\sO)$ for a.e.\ $t\in[0,\sigma)$ yields
\begin{equation*}
    \|\Delta u\|_{L^2}^2 
    =\int_{\sO} (\Delta u)(-\mu + u^3 - u) \dx
    \leq \frac12\|\Delta u\|_{L^2}^2 + \frac12\|\mu\|_{L^2}^2 - \int_{\sO} 3u^2|\nabla u|^2 \dx 
    +\|\nabla u\|_{L^2}^2
\end{equation*}
and rearranging the terms implies \eqref{eq:globalWP_d4_eq1} via
\begin{equation*}
    \|\Delta u\|_{L^2}^2
    \le \|\mu\|_{L^2}^2 - 6\|u\nabla u\|_{L^2}^2 + 2\|\nabla u\|_{L^2}^2
    \le \|\mu\|_{L^2}^2 + 2\|\grad u\|_{L^2}^2.
\end{equation*}

We proceed to show \eqref{eq:globalWP_d4_eq2}. From the elliptic regularity estimates in~\cite[Chapter~5.3.4]{triebelInterpolationTheoryFunction1978} with $m=1$, $p=2$, and $s\in\{0,1\}$, we obtain
\begin{align} \label{eq:ellReg_H2_seminorm}
    |u|_{H^2} &\lesssim \|\Delta u\|_{L^2}+\|u\|_{L^2},\\
    |u|_{H^3} &\lesssim \|\Delta u\|_{H^1}+\|u\|_{L^2} \lesssim \|\grad \Delta u\|_{L^2}+\|\Delta u\|_{L^2}+\|u\|_{L^2},\nonumber
\end{align}
where the final inequality uses the definition of the $H^1$-norm.
Moreover, by the divergence theorem, 
\begin{equation}
\label{eq:proofDivergenceThm}
    \int_{\sO} \Delta u(s) \dx=\int_{\partial\sO} \partial_{\mathbf{n}}u(s) \,\rmd S=0\text{ for a.e.\ }0 \le s <\sigma \land T \text{ a.s.\ }
\end{equation}
because $u(s)\in \nH^3(\sO)$. Hence, the Poincaré--Wirtinger inequality yields that $\|\grad \Delta u\|_{L^2} \gtrsim \|\Delta u\|_{L^2}$. Thus, the $H^3$-seminorm estimate simplifies to
\begin{align}
    |u|_{H^3} &\lesssim \|\grad \Delta u\|_{L^2}+\|\Delta u\|_{L^2}+\|u\|_{L^2} \lesssim \|\grad \Delta u\|_{L^2}+\|u\|_{L^2}.\label{eq:ellReg_H3_seminorm}
\end{align}
By Hölder's inequality, the Gagliardo-Nirenberg interpolation
inequality~\cite{Nirenberg59}, the elliptic regularity estimates \eqref{eq:ellReg_H2_seminorm} and \eqref{eq:ellReg_H3_seminorm} and the embedding $H^1(\sO) \hookrightarrow L^4(\sO)$, we obtain \eqref{eq:globalWP_d4_eq2} via
\begin{align*}
    \|u^2\nabla u\|_{L^2}^2
    & \leq \|u\|_{L^8}^4 \|\nabla u\|_{L^4}^2  
    \lesssim \left( \|u\|_{L^4}^{1/2} |u|_{H^2}^{1/2} + \|u\|_{L^2} \right)^4 \left( \|\nabla u\|_{L^2}^{1/2} |u|_{H^3}^{1/2} + \|\grad u\|_{L^2} \right)^2 \\
    &\lesssim\left( \|u\|_{L^4}^{1/2} \big(\|\Delta u\|_{L^2}+\|u\|_{L^2}\big)^{1/2} + \|u\|_{L^2} \right)^4
    \left(\|\nabla u\|_{L^2}^{1/2}\big(\|\grad \Delta u\|_{L^2}+\|u\|_{L^2}\big)^{1/2} + \|\grad u\|_{L^2} \right)^2 \\
    & \lesssim \|u\|^3_{H^1} \left(\|\Delta u\|_{L^2}^2 +\|u\|^2_{L^2}\right) \left( \|\grad \Delta u\|_{L^2} + \|u\|_{L^2}\right).
\end{align*}
Note that this is slightly stronger than \eqref{eq:globalWP_d4_eq2} because the lower-order terms in the upper bound involve $L^2$ rather than the $H^1$-norms. However, this does not improve the global well-posedness argument.

The Poincaré--Wirtinger inequality 
combined with \eqref{eq:proofDivergenceThm} and $H^1(\sO) \hra L^3(\sO)\hra L^1(\sO)$ yields
\begin{equation*}
    \|\mu\|_{H^1}  \lesssim \|\nabla\mu\|_{L^2} + \Big| \int_{\sO }\mu \dx \Big|
    = \|\nabla\mu\|_{L^2}+\Big|\int_{\sO }u^3 - u\Big| \lesssim \|\nabla\mu\|_{L^2} +\|u\|_{L^3}^3+\|u\|_{L^1}
    \lesssim \|\nabla\mu\|_{L^2} +1+\|u\|_{H^1}^{3}.
\end{equation*}
Moreover, integration by parts, Hölder's inequality, and  $H^1(\sO) \hookrightarrow L^4(\sO) \hra L^{4/3}(\sO)$ imply
\begin{equation*}
    \|\mu\|_{(H^1)^*} \le  \sup_{v\in H^1} \frac{\int_{\sO} \nabla u \cdot \nabla v\dx }{\|v\|_{H^1}} + \sup_{v\in H^1} \frac{\int_{\sO}(u^3-u)v\dx }{\|v\|_{H^1}}
    \lesssim \|\nabla u\|_{L^2} + \|u^3\|_{L^{4/3}} + \|u\|_{L^{4/3}}
    \lesssim \|u\|_{H^1}^3 + 1,
\end{equation*}
where $(H^1)^*$ denotes the $L^2$-dual of $H^1(\sO)$. An interpolation inequality combined with the previous two inequalities gives \eqref{eq:globalWP_d4_eq3} via
\begin{equation*}
    \|\mu\|_{L^2}^4
    \leq \|\mu\|^2_{(H^1)^*}\|\mu\|^2_{H^1}
    \lesssim \left( \|u\|_{H^1}^3 + 1 \right)^2 \left( \|\nabla\mu\|_{L^2} + 1 + \|u\|_{H^1}^{3} \right)^2
    \lesssim \|\nabla\mu\|_{L^2}^2 \left(\|u\|_{H^1}^6+1\right)+1+\|u\|_{H^1}^{12}. \qedhere
\end{equation*}
\end{proof}

\begin{proposition} \label{prop:global_well_posedness_dim_4-part2}
	Let $d=4$ and suppose that the assumptions of Theorem~\ref{thm:localWP_weakened} hold. Moreover, let $u_0\in L^0_{\F_0}(\Omega;H^1(\sO))$ and $T>0$. Let $(u,\sigma)$ be the unique maximal solution to \eqref{eq:SCHtorus}. Then we have
	\begin{equation}\label{eq:thm-global_well_posedness_dim_4-claim1}
	  \int_0^{\sigma\wedge T} \|u\|^2_{H^3} \dt < \infty  \qquad \mathbb{P}\text{-a.s.}
	\end{equation}
\end{proposition}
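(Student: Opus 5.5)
We reduce \eqref{eq:thm-global_well_posedness_dim_4-claim1} to a bound on $\int_0^{\sigma\wedge T}\|\grad\Delta u\|_{L^2}^2\dt$. By the elliptic estimate \eqref{eq:ellReg_H3_seminorm}, $\|u\|_{H^3}\lesssim \|\grad\Delta u\|_{L^2}+\|u\|_{L^2}$. The lower-order part $\int_0^{\sigma\wedge T}\|u\|_{L^2}^2\dt$ is a.s.\ finite by \eqref{eq:prop-global_well_posedness_dim_4-claim1}. Since $\grad\mu=-\grad\Delta u+3u^2\grad u-\grad u$, we have pointwise in time
\begin{equation*}
\|\grad\Delta u\|_{L^2}^2\lesssim \|\grad\mu\|_{L^2}^2+\|u^2\grad u\|_{L^2}^2+\|\grad u\|_{L^2}^2 .
\end{equation*}
The first and third terms are integrable on $[0,\sigma\wedge T)$ a.s.\ by Proposition~\ref{prop:global_well_posedness_dim_4-part1}. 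It therefore remains to control $\int\|u^2\grad u\|_{L^2}^2$, and this is where the nonlinear elliptic maximal regularity mechanism enters.

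The plan is to absorb this term. We fix $\omega$ in the full-measure set where $K\ce\sup_{t<\sigma\wedge T}\|u(t)\|_{H^1}<\infty$ and $\int_0^{\sigma\wedge T}\|\grad\mu\|_{L^2}^2<\infty$. We work up to $\sigma_m$, so that all quantities are finite and absorption is legitimate. Estimate \eqref{eq:globalWP_d4_eq2} together with Young's inequality gives, for every $\varepsilon>0$,
\begin{equation*}
\|u^2\grad u\|_{L^2}^2\le \varepsilon\|\grad\Delta u\|_{L^2}^2+C_\varepsilon K^6\big(\|\Delta u\|_{L^2}^2+K^2\big)^2+CK^3(\|\Delta u\|^2_{L^2}+K^2)K .
\end{equation*}
Choosing $\varepsilon$ small, we absorb the first term into the left-hand side. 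This leaves $\int_0^{\sigma_m\wedge T}\|\Delta u\|_{L^2}^4\dt$ to be bounded.

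For that term we use \eqref{eq:globalWP_d4_eq1}, which gives $\|\Delta u\|_{L^2}^4\lesssim\|\mu\|_{L^2}^4+K^4$. Then \eqref{eq:globalWP_d4_eq3} gives $\|\mu\|_{L^2}^4\lesssim \|\grad\mu\|_{L^2}^2(K^6+1)+1+K^{12}$. Integrating in time and using Proposition~\ref{prop:global_well_posedness_dim_4-part1} again, $\int_0^{\sigma\wedge T}\|\Delta u\|_{L^2}^4\dt<\infty$ a.s.; this is exactly the point of the preceding remark. All resulting bounds are uniform in $m$, so letting $m\to\infty$ by monotone convergence yields \eqref{eq:thm-global_well_posedness_dim_4-claim1}.

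The main obstacle is the absorption step. The exponent bookkeeping in \eqref{eq:globalWP_d4_eq2} must leave $\|\grad\Delta u\|_{L^2}$ only to the first power; this is precisely the criticality in $d=4$. After squaring via Young, $\|\Delta u\|_{L^2}$ then appears to the fourth power, which cannot be controlled by the $L^2_t$ information $\int\|\Delta u\|^2$ from Lemma~\ref{lem:L2energyEstimate} and forces the use of \eqref{eq:globalWP_d4_eq3}. A secondary check is that the absorption is carried out on the stopped interval $[0,\sigma_m\wedge T]$, where $\int\|\grad\Delta u\|^2$ is a priori finite by local well-posedness.
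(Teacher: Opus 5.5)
Your proposal is correct and follows essentially the same route as the paper. You reduce to $\int\|\grad\Delta u\|_{L^2}^2$ via elliptic regularity, split $\grad\Delta u$ through $\grad\mu$, absorb the $\|u^2\grad u\|_{L^2}^2$ term using the Gagliardo--Nirenberg bound and Young's inequality, and control the resulting $\int\|\Delta u\|_{L^2}^4$ through the $\mu$-estimates of Lemma~\ref{lem:globalWP_d4_claims} together with Proposition~\ref{prop:global_well_posedness_dim_4-part1}. You also state explicitly that the absorption is legitimate because $\int_0^{\sigma_m\wedge T}\|\grad\Delta u\|_{L^2}^2$ is a priori finite on the stopped interval, a point the paper leaves implicit.
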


\begin{proof}
It suffices to show $\int_0^{\sigma\wedge T} \|\grad \Delta u\|^2_{L^2} \dt <\infty$ a.s., which can be seen as follows. By the elliptic regularity estimates \eqref{eq:ellReg_H2_seminorm} as well as \eqref{eq:ellReg_H3_seminorm} and absorbing $\|\Delta u\|_{L^2}$ into $\|\grad\Delta u\|_{L^2}$ via \eqref{eq:proofDivergenceThm} and the Poincaré--Wirtinger inequality as before, we obtain
\begin{align}\label{eq:thm:globalWP_d4_H3norm}
    \|u\|_{H^3}^2 = |u|_{H^3}^2+|u|_{H^2}^2 +\|u\|_{H^1}^2
    \lesssim \|\grad\Delta u\|_{L^2}^2+\|\Delta u\|_{L^2}^2+\|u\|_{H^1}^2 \lesssim \|\grad\Delta u\|_{L^2}^2+\|u\|_{H^1}^2.
\end{align} 
Proposition~\ref{prop:global_well_posedness_dim_4-part1} ensures that $\sup_{t\le \sigma \land T} \|u(t)\|_{H^1}<\infty$ a.s. Hence, the integral in time of the second 
term in \eqref{eq:thm:globalWP_d4_H3norm} 
is almost surely finite, so that it suffices to control the integral of $\|\grad\Delta u\|_{L^2}^2$. 

Let $\sigma_m$ be a localising sequence of $(u,\sigma\wedge T)$. We show that $\int_0^{\sigma\wedge T} \|\grad \Delta u\|^2_{L^2} \dt <\infty$ a.s.\ follows from the three estimates \eqref{eq:globalWP_d4_eq1}, \eqref{eq:globalWP_d4_eq2}, and \eqref{eq:globalWP_d4_eq3} of Lemma~\ref{lem:globalWP_d4_claims}.
We use the chemical potential via \eqref{eq:muRearranged} in the first inequality, \eqref{eq:globalWP_d4_eq2} in the second inequality and Young's inequality in the third inequality. We apply \eqref{eq:globalWP_d4_eq1} in the fourth inequality and
\eqref{eq:globalWP_d4_eq3} in the fifth inequality. This results in
\begin{align*}
    & \int_0^{\sigma_m} \|\nabla\Delta u\|_{L^2}^2\dt 
     \le 3\int_0^{\sigma_m} \|\nabla\mu\|_{L^2}^2
    + 9\|u^2\nabla u\|_{L^2}^2
    + \|\nabla u\|_{L^2}^2\dt  \\
    & \le 
    3\int_0^{\sigma_m}
    \|\nabla\mu\|_{L^2}^2
    +
    C\|u\|^3_{H^1}
    \left(\|\Delta u\|_{L^2}^2
    +\|u\|^2_{H^1}  \right)
    \left( \|\grad \Delta u\|_{L^2} +\|u\|_{H^1}\right)+
    \|\nabla u\|_{L^2}^2\dt\\
    & \le  
    3\int_0^{\sigma_m}
    \|\nabla\mu\|_{L^2}^2 \dt +
    C\int_0^{\sigma_m}(1+\|u\|_{H^1}^{10})\dt
    +
    \frac12
    \int_0^{\sigma_m}
    \|\nabla\Delta u\|_{L^2}^2\dt
    +
    C\int_0^{\sigma_m}
    \|u\|_{H^1}^6
    \|\Delta u\|_{L^2}^4\dt  \\
    & \le
    3\int_0^{\sigma_m}
    \|\nabla\mu\|_{L^2}^2 \dt +
    C\int_0^{\sigma_m}(1+\|u\|_{H^1}^{10})\dt
    +
    \frac12
    \int_0^{\sigma_m}
    \|\nabla\Delta u\|_{L^2}^2\dt
    +
    C\int_0^{\sigma_m}
    \|u\|_{H^1}^6
    \|\mu\|_{L^2}^4\dt \\
    & \le
    3\int_0^{\sigma_m}
    \|\nabla\mu\|_{L^2}^2 \dt +
    C\int_0^{\sigma_m}(1+\|u\|_{H^1}^{18})\dt
    +
    \frac12
    \int_0^{\sigma_m}
    \|\nabla\Delta u\|_{L^2}^2\dt
     + C\Big(1+ \sup_{t\leq \sigma_m} \|u\|^{12}_{H^1} \Big)
    \int_0^{\sigma_m}
    \|\nabla\mu\|_{L^2}^2\dt.
 \end{align*}
Here, the constant $C$ changes from line to line. By Proposition~\ref{prop:global_well_posedness_dim_4-part1}, both $\sup_{t\le \sigma_m} \|u(t)\|_{H^1}<\infty$ a.s.\ and $\int_0^{\sigma_m} \|\grad \mu\|_{L^2}^2\dt<\infty$ a.s. Rearranging the terms and bounding all polynomial $H^1$ terms by the pathwise supremum implies the claim.
\end{proof}

\begin{theorem}[Global well-posedness in $d=4$] \label{thm:global_well_posedness_dim_4-complete}
	Let $d=4$, suppose that the assumptions of Theorem~\ref{thm:localWP_weakened} hold, and let $u_0\in L^0_{\F_0}(\Omega;H^1(\sO))$. Then there exists a unique global solution $u$ to \eqref{eq:SCHtorus} such that $u\in C([0,\infty);H^1(\sO))\cap L_{\loc}^2(0,\infty;\nH^3(\sO))$ almost surely. If, in addition, $u_0\in L_{\F_0}^4(\Omega;L^4(\sO))\cap L_{\F_0}^2(\Omega;H^1(\sO))$, then \eqref{eq:prop-global_well_posedness_dim_4-claim2} holds for all $T>0$. 
\end{theorem}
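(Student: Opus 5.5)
The plan is to combine the local theory of Theorem~\ref{thm:localWP_weakened} with the two energy estimates already established for $d=4$, and to argue through the critical blow-up criterion \eqref{eq:blowUp_bare}. Let $(u,\sigma)$ be the unique maximal solution given by Theorem~\ref{thm:localWP_weakened}. Since $d=4$ is critical, only the full criterion \eqref{eq:blowUp_bare} is available. That criterion requires a.s.\ finiteness of both $\sup_{t<\sigma}\|u(t)\|_{H^1}$ and $\int_0^\sigma\|u\|_{H^3}^2\,\dd t$ on the event $\{\sigma<\infty\}$.

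Fix $T>0$. By Proposition~\ref{prop:global_well_posedness_dim_4-part1}, \eqref{eq:prop-global_well_posedness_dim_4-claim1} holds, so a.s.\ $\sup_{t\in[0,T]\cap[0,\sigma)}\|u(t)\|_{H^1}<\infty$. By Proposition~\ref{prop:global_well_posedness_dim_4-part2}, a.s.\ $\int_0^{\sigma\wedge T}\|u\|_{H^3}^2\,\dd t<\infty$. Consequently, on $\{\sigma<T\}$ both quantities in \eqref{eq:blowUp_bare} are a.s.\ finite, because there the suprema and integrals over $[0,\sigma)$ coincide with those over $[0,T]\cap[0,\sigma)$. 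It follows that
\[
\P(\sigma<T)=\P\Big(\sigma<T,\ \sup_{t\in[0,\sigma)}\|u(t)\|_{H^1}+\int_0^\sigma\|u\|_{H^3}^2\,\dd t<\infty\Big)\le \P\Big(\sigma<\infty,\ \sup_{t\in[0,\sigma)}\|u(t)\|_{H^1}+\int_0^\sigma\|u\|_{H^3}^2\,\dd t<\infty\Big)=0.
\]
Letting $T\to\infty$ and using continuity from below, as in Step~1 of the proof of Theorem~\ref{thm:global_well_posedness_dim_3}, gives $\sigma=\infty$ a.s.

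The regularity $u\in C([0,\infty);H^1(\sO))\cap L^2_{\loc}([0,\infty);\nH^3(\sO))$ then follows directly from the statement of Theorem~\ref{thm:localWP_weakened} with $\sigma=\infty$. Uniqueness is inherited from the uniqueness of the maximal solution. For the moment bound, assume $u_0\in L^4_{\F_0}(\Omega;L^4)\cap L^2_{\F_0}(\Omega;H^1)$. Then \eqref{eq:prop-global_well_posedness_dim_4-claim2} from Proposition~\ref{prop:global_well_posedness_dim_4-part1} holds with $[0,T]\cap[0,\sigma)=[0,T]$, which is exactly the claimed estimate.

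The real difficulty is entirely contained in the earlier propositions, namely the nonlinear elliptic control of $\grad\Delta u$ by $\grad\mu$. The only point here that needs care is to check that the finiteness statements of Propositions~\ref{prop:global_well_posedness_dim_4-part1} and \ref{prop:global_well_posedness_dim_4-part2} are indeed the full-interval quantities on $\{\sigma<T\}$. This holds because those propositions are stated up to $\sigma\wedge T$, so no further localisation argument is needed.
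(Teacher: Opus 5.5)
Your proposal is correct and follows the paper's proof. The paper likewise just combines the critical blow-up criterion \eqref{eq:blowUp_bare} of Theorem~\ref{thm:localWP_weakened} with the a.s.\ bounds of Propositions~\ref{prop:global_well_posedness_dim_4-part1} and~\ref{prop:global_well_posedness_dim_4-part2}; you have written out explicitly the routine steps the paper leaves implicit, namely $\mathbb{P}(\sigma<T)=0$ and the inheritance of regularity, uniqueness and the moment bound \eqref{eq:prop-global_well_posedness_dim_4-claim2}.
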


\begin{proof}
 The claim follows from combining the blow-up criterion from Theorem~\ref{thm:localWP_weakened} with the energy estimates from Proposition~\ref{prop:global_well_posedness_dim_4-part1} and Proposition~\ref{prop:global_well_posedness_dim_4-part2}.
\end{proof}

\renewcommand{\theenumi}{(\alph{enumi})}

\section{Rough initial data}\label{section_rough_initial_data}

In this section we go beyond the $L^2$-setting and employ a more flexible $L^p(L^q)$-theory to study the Cahn--Hilliard equation with  initial data that are rougher than those admitted by the $L^2$-theory. 
A deterministic $L^p(L^q)$-theory for the Cahn--Hilliard equation in critical spaces was developed in~\cite{prussCriticalSpacesQuasilinear2018}. In particular, for the double-well potential, the critical space for the initial data was identified as $_\nu B^{d/q-1}_{q,p}(\sO)$. The stochastic counterpart was subsequently derived in~\cite[Section 7.3]{agrestiNonlinearParabolicStochastic2022}.

The purpose of this section is to obtain a more flexible local well-posedness theory, which we will connect to the $L^2$-setting dealt with in the sections above. To be more precise, by combining our analysis in this section with Theorem~\ref{thm:GWP:weak:setting}, Theorem~\ref{thm:global_well_posedness_dim_3} and Theorem~\ref{thm:global_well_posedness_dim_4-complete} above, we obtain new global well-posedness results in various settings. A similar strategy has been carried out for reaction-diffusion equations in~\cite{agresti2023reactiondiffusion_local}.

Let $a\in [0,\infty)$. The stochastic evolution equation considered in this section is
\begin{equation}\tag{$\text{SCH}_{p, \kappa}$}\label{SCH}
    \left\{
    \begin{aligned}
        \dd u +\,_\nu\Delta_{\alpha,q}^2 u\, \dd t&= \,_\nu\Delta_{\alpha,q}(u^3-u)\,\dd t +\sum_{n\ge1}\psi_n(u,\nabla u)\, \dd W^n(t), &t\ge a\\
        u(a)&=u_a&
    \end{aligned}
    \right.
\end{equation}
for certain choices of $\alpha$ and $q$. 
Compared to Section~\ref{sec:variational:lwp} and Section~\ref{sec:globalWP} above, we allow for a slightly more general form of the noise term.
In addition, we shall work with several sets of assumptions on $\psi$, which lead to different regularity results. 
	Throughout this section, we will distinguish among several sets of assumptions.
	\begin{assumption}[General assumptions]\label{Assumption:general}
     $\sO \seq \R^d,d\in\N$, is a bounded $C^\infty$ domain.
		Assume that either
		\begin{itemize}
		\item 	$p\in (2,\infty), \kappa\in [0,p/2-1), q\in [2,\infty)$, $s\in [0,2)$
		\end{itemize}
		or 
		\begin{itemize}
			\item $p=q=2$, $s=\kappa=0$. 
		\end{itemize}
	\end{assumption}
	\begin{assumption}[Assumptions on $\psi$]\label{Assumption:noise}
Let $d\in \N$, let $\psi^0\ce (\psi^0_n)_{n\ge 1}\colon [0,\infty) \times\Omega\times\sO\times \R\to \ell^2$ be $\mathscr{P}\otimes\mathscr{B}(\sO)\otimes \sB(\R)$-measurable, and let $\psi^1\ce (\psi^1_n)_{n\ge 1}, \psi^2\ce (\psi^2_n)_{n\ge 1}\colon [0,\infty)\times\Omega\times\sO \to \ell^2(\R^d)$ be $\sP\otimes \sB(\sO)$-measurable. 
\begin{equation}
    \psi(\cdot, y,z)\ce(\psi_n(\cdot,y,z))_{n\ge 1}\ce \psi^0(\cdot, y)+\big( \psi^1+ y\psi^2\big)\cdot z, \quad y\in \R, z\in \R^d.
\end{equation}
Moreover, $\psi^0(\cdot,0)\in L^\infty([0,\infty)\times\Omega;L^q(\sO;\ell^2))$. There exists a constant $L>0$ such that a.e.\ on $[0,\infty)\times\Omega\times\sO$ for all $y,y'\in \R$ we have 
            \begin{equation}\label{eq:assumption:hg0:noise}
                \nrm{\psi^0(\cdot,y)-\psi^0(\cdot,y')}_{\ell^2}\le L(1+|y|^2+|y'|^2)|y-y'|.
            \end{equation}
            We additionally distinguish among the following sets of assumptions:
\begin{enumerate}
			\item\label{assumption:hg1} 
            It holds that $\psi^1=\psi^2=0$.
            \item\label{assumption:hg2} It holds that $\psi^1, \psi^2 \in L^\infty([0,\infty)\times\Omega\times\sO;\ell^2(\R^d)).$
            \item\label{assumption:hg3}  
            We have $d\in\{3,4\}$. It holds that $\psi^1=\psi^2=0.$
            For almost all $(t,\omega)$ we have $\psi=\psi^0\in C^1(\sO\times \R;\ell^2).$ 
            Moreover, a.e.\ on $[0,\infty)\times\Omega\times\sO$ for all $y,y'\in \R$ and all $j\in\{1,...,d\}$ we have  
             \begin{align}
             \nrm{\psi^0(\cdot,y)}_{\ell^2}+ \nrm{\partial_{x_j}\psi^0(\cdot, y)}_{\ell^2}&\le L(1+|y|^3),\label{Assumption:hg3:noise:0}\\
                \nrm{(\partial_{x_j}\psi^0)(\cdot,y)-(\partial_{x_j}\psi^0)(\cdot,y')}_{\ell^2}&\le L  (1+|y|^2+|y'|^2)|y-y'|,\label{Assumption:hg3:noise:2}\\
                \nrm{(\partial_y\psi^0)(\cdot,y)-(\partial_y\psi^0)(\cdot,y')}_{\ell^2}&\le L  (1+|y|^{1/2}+|y'|^{1/2})|y-y'|,\label{Assumption:hg3:noise:3}\\
                \nrm{\partial_y\psi^0(\cdot,y)}_{\ell^2}&\le L(1+|y|^{3/2})\label{Assumption:hg3:noise:4}.
             \end{align}
    \item\label{assumption:hg4}
    We have $d\in \{3,4\}$. 
    For almost all $(t,\omega)$ we have $\psi^0\in C^1(\sO\times \R;\ell^2)$  
            and $ \psi^1,\psi^2\in C^1(\sO;\ell^2(\R^d))$. We assume that~\eqref{Assumption:hg3:noise:0}, \eqref{Assumption:hg3:noise:2}, \eqref{Assumption:hg3:noise:3}, and~\eqref{Assumption:hg3:noise:4} hold. 
		\end{enumerate}
	\end{assumption}
    \begin{remark}
        The reason we distinguish between the various different assumptions is that we  analyse~\eqref{SCH} on different levels of Sobolev smoothness. Assumption~\ref{Assumption:noise}\ref{assumption:hg1} and~\ref{assumption:hg2} serve for the analysis on less regular levels of Sobolev smoothness than Assumption~\ref{Assumption:noise}\ref{assumption:hg3} and~\ref{assumption:hg4}, respectively. 
    \end{remark}
	\subsection{Local well-posedness}
    Our next result proves local well-posedness for~\eqref{SCH} in the different settings in Assumption~\ref{Assumption:noise}. It also singles out the subcriticality conditions under which the expected critical space is reproduced. 
	\begin{theorem}[Local well-posedness]\label{lwp:rough:initial:data}
		Let $d\in \N, s\in \R$, and assume that one of the following holds:
		\begin{enumerate}
			\item[i)] $p\in (2,\infty)$, $\kappa\in [0,p/2-1)$, and $q\in [2,\infty)$.
			\item[ii)] $p=q=2$ and $s=\kappa=0$.
		\end{enumerate} 
		Assume that one of the following settings holds:
		\begin{enumerate}
			\item\label{it:lwp:rough:initial:data:1} $X_0=\bpsn{-2-s,q}$, $X_1=\bpsn{2-s,q}, s\in [0,2\wedge d)$, and Assumption~\ref{Assumption:noise}\ref{assumption:hg1} holds.
			Moreover, assume the (sub)criticality condition
		\begin{equation}\label{eq:lwp:rough:initial:data:1}
			\frac{1+\kappa}{p}\le \frac{3}{4}-\frac{1}{4}\left(s+\frac{d}{q}\right), 
		\end{equation}
		and assume that, if $\psi^2=0$,
		\begin{equation}\label{eq:lwp:rough:initial:data:2}
			\max\left\{\frac{d}{3-s},\frac{d}{d-s}\right\}< q <2\frac{d}{s},
		\end{equation}
        otherwise assume 
        \begin{equation}
            \max\left\{\frac{d}{3-s},\frac{d}{d-s}\right\}<q<\frac{d}{s}.
        \end{equation}
        \item\label{it:lwp:rough:initial:data:2} $X_0=\bpsn{-2-s,q}, X_1=\bpsn{2-s,q}, s\in [0,1)$, \eqref{eq:lwp:rough:initial:data:1}, \eqref{eq:lwp:rough:initial:data:2}, and Assumption~\ref{Assumption:noise}\ref{assumption:hg2} holds.
			\item\label{it:lwp:rough:initial:data:3} $X_0=\bpsn{-1-s,q}, X_1=\bpsn{3-s,q}, s\in [0,2)$, and Assumption~\ref{Assumption:noise}\ref{assumption:hg3} holds. Moreover, assume the (sub)criticality condition
            \begin{equation}\label{eq:lwp:rough:initial:data:3}
                \frac{1+\kappa}{p}\le 1-\frac{1}{4}\left(s+\frac{d}{q}\right),
            \end{equation}
            and assume that 
            \begin{equation}\label{eq:lwp:rough:initial:data:4}
                \max\left\{\frac{d}{4-s},\frac{d}{d-s}\right\}<q<\frac{d}{1+s/2}.
            \end{equation}
            \item\label{it:lwp:rough:initial:data:4}
             $X_0=\bpsn{-1-s,q}$, $X_1=\bpsn{3-s,q}$,
            $s\in[0,1)$, \eqref{eq:lwp:rough:initial:data:3}, \eqref{eq:lwp:rough:initial:data:4}, and Assumption~\ref{Assumption:noise}\ref{assumption:hg4}
            holds.
		\end{enumerate}
		Then for all $u_a\in L^0_{\F_a}(\Omega; X_{1-\frac{1+\kappa}{p},p})$ there exists a maximal solution $(u,\sigma)$ to \eqref{SCH} with $\sigma>a$ a.s.\ Moreover, for each localising sequence $(\sigma_n)_{n\ge 1}$ for $(u,\sigma)$, one has 
		\begin{itemize}
			\item if $p>2$ and $\kappa\in [0,p/2-1)$, then for all $n\in \N$ and all $\theta\in [0,1/2)$
				\[u\in H^{\theta,p}((a,\sigma_n),w^a_\kappa; X_{1-\theta})\cap C([a,\sigma_n];X_{1-\frac{1+\kappa}{p},p})\quad \text{a.s.}\]
			and we have the instantaneous regularity
			\[u\in H_{\loc}^{\theta,p}((a,\sigma), w^a_\kappa; X_{1-\theta})\cap C((a,\sigma); X_{1-\frac{1}{p},p})\quad \text{a.s.}\]
			\item if $p=2$, then for all $n\in \N$
			\[u\in L^2(a,\sigma_n;X_1)\cap C([a,\sigma_n];X_{1/2})\quad \text{a.s.}\]
		\end{itemize}
		In addition, if equality holds in~\eqref{eq:lwp:rough:initial:data:1} (or in~\eqref{eq:lwp:rough:initial:data:3}) the critical space is $ X_{p,\kappa}^{\textup{Tr}}= \,_\nu B_{q,p}^{\frac{d}{q}-1}(\sO)$. We call $(u,\sigma)$ the $(p,\kappa,s,q)$-solution to \eqref{SCH}.
	\end{theorem}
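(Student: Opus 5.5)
The plan is to reduce everything to the abstract local well-posedness theorem for semilinear equations with stochastic maximal regularity, namely \cite[Theorem 4.8]{agrestiNonlinearParabolicStochastic2022a} (with the instantaneous regularisation from \cite[Section 6]{agrestiNonlinearParabolicStochastic2022a}). That theorem is applied under Assumption~\ref{Assumption:semilinear:evolution:equation} with $A={}_\nu\Delta^2_{-2-s,q}$ in settings (a), (b) and $A={}_\nu\Delta^2_{-1-s,q}$ in settings (c), (d). In Subsection~\ref{subsec:function_spaces} we saw that these operators lie in $\mathcal{SMR}^\bullet_{p,\kappa}$, since $\nH^{\alpha,q}(\sO)\cong L^q(\sO)$ and ${}_\nu\Delta^2_{\alpha,q}$ has a bounded $H^\infty$-calculus of angle $<\pi/2$. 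This gives the linear part. The complex interpolation spaces are $X_\beta=\nH^{-2-s+4\beta,q}(\sO)$, respectively $\nH^{-1-s+4\beta,q}(\sO)$, and the trace space is $X^{\mathrm{Tr}}_{p,\kappa}={}_\nu B^{2-s-4(1+\kappa)/p}_{q,p}$, respectively ${}_\nu B^{3-s-4(1+\kappa)/p}_{q,p}$. Under equality in \eqref{eq:lwp:rough:initial:data:1}, respectively \eqref{eq:lwp:rough:initial:data:3}, both trace spaces equal ${}_\nu B^{d/q-1}_{q,p}$, which proves the final assertion.

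\textbf{The drift $F(u)={}_\nu\Delta_{\alpha,q}(u^3-u)$.} Since ${}_\nu\Delta$ maps $\nH^{r+2,q}\to\nH^{r,q}$ boundedly, it suffices to bound
\[
\|u^3-v^3\|_{\nH^{-s,q}}\quad\text{in setting (a)},\qquad \|u^3-v^3\|_{\nH^{1-s,q}}\quad\text{in setting (c)}.
\]
In setting (a), with $s>0$, I would use duality and the Sobolev embedding $L^{r}\hookrightarrow \nH^{-s,q}$ where $\frac1r=\frac1q+\frac sd$. This requires $q<d/s$-type restrictions (the $2d/s$ bound arises when no $\psi^2$ term is present). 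Hölder's inequality then gives a bound by $\|u-v\|_{L^{3r}}(\|u\|^2_{L^{3r}}+\|v\|^2_{L^{3r}})$. Finally I embed $X_\beta\hookrightarrow L^{3r}$ with $\beta$ chosen at the Sobolev threshold $-2-s+4\beta-\frac dq=-\frac{d}{3r}$. The requirement $\beta<1$, together with positivity of the exponents, produces the lower bounds on $q$ in \eqref{eq:lwp:rough:initial:data:2}. With $\rho=2$ and $\varphi=\beta$, the subcriticality condition \eqref{eq:subcriticality} becomes $\frac{1+\kappa}{p}\le\frac32(1-\beta)$. Substituting $\beta$ gives exactly \eqref{eq:lwp:rough:initial:data:1}; the scaling makes this automatic. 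In setting (c) I would differentiate once, as in Lemma~\ref{lem:weakened_estimatesAFG}(b), splitting into terms $\nabla(u-v)\,u^2$ and $(u-v)\,u\nabla u$. These are estimated in $\nH^{-s,q}$ via fractional Leibniz/product estimates (or Hölder plus Sobolev embeddings), choosing exponents with equal scaling for $\|\cdot\|_{L^{r_1}}$ and $\|\nabla\cdot\|_{L^{r_2}}$. The Neumann boundary conditions cause no problem because the target spaces have regularity below $1+1/q$, where $\nH=H$. The term $-{}_\nu\Delta u$ is linear of lower order and contributes an $(\varphi,\rho)=(\beta,0)$ pair with $\beta\le 1/2+\varepsilon$, which is harmless.

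\textbf{The noise $G$, identified with $\psi(u,\nabla u)$ via Lemma~\ref{handle_noise}.} We need $G$ to take values in $\gamma(\ell^2,X_{1/2})\cong \nH^{-s,q}(\sO;\ell^2)$, respectively $\nH^{1-s,q}(\sO;\ell^2)$. The cubic Lipschitz bound \eqref{eq:assumption:hg0:noise} on $\psi^0$ is treated exactly like the drift, with one fewer derivative, so it is strictly subcritical. The term $\psi^1\cdot\nabla u$ is linear, with $\beta$ slightly above $1/2$, using multiplication by $L^\infty$ (or $C^1$) functions on $H^{-s,q}$ for $s<1$. This explains why settings (b) and (d) require $s<1$. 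The term $u\,\psi^2\cdot\nabla u$ is quadratic, with $\rho=1$. In settings (c) and (d), the $\partial_y\psi^0$ and $\partial_x\psi^0$ hypotheses \eqref{Assumption:hg3:noise:0}–\eqref{Assumption:hg3:noise:4} are used through the chain rule, as in Lemma~\ref{lem:weakened_estimatesAFG}(c).

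\textbf{Main obstacle.} I expect the hardest part to be the exponent bookkeeping. One must verify, for every term, that admissible Sobolev exponents exist with $\beta_j\in(1-\frac{1+\kappa}{p},1)$ and that \eqref{eq:subcriticality} holds. Negative-order products in $\nH^{-s,q}$ are the delicate part, handled by duality with $\nH^{s,q'}$. The $p=2$ case instead follows from the variational theory via \cite[Theorem 3.3]{agrestiCriticalVariationalSetting2024a}, giving the stated $L^2(X_1)\cap C(X_{1/2})$ regularity. Measurability is proved as in Lemma~\ref{lem:weakened_estimatesAFG}(c).
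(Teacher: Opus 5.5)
Your architecture is the same as the paper's: stochastic maximal regularity of ${}_\nu\Delta^2_{-2-s,q}$ resp.\ ${}_\nu\Delta^2_{-1-s,q}$, verification of Assumption~\ref{Assumption:semilinear:evolution:equation} by H\"older--Sobolev bookkeeping, and identification of the trace space. However, your treatment of the quadratic noise term $u\,\psi^2\cdot\nabla u$ in settings (b) and (d) has a genuine gap.

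\textbf{Why a $\rho=1$ estimate fails.} You plan to close this term as a $\rho=1$ term. At the critical endpoint $p=2$, which the statement includes, no such estimate exists. Take setting (d) with $p=q=2$, $s=\kappa=0$, $d=4$; then equality holds in \eqref{eq:lwp:rough:initial:data:3} and \eqref{eq:lwp:rough:initial:data:4} is satisfied. Here $X_\theta=\bpsn{4\theta-1,2}$ and $\gamma(\ell^2,X_{1/2})\cong H^1(\sO;\ell^2)$. The difference $G(u)-G(v)$ contains $v\,\psi^2\cdot\nabla w$ with $w=u-v$, and its $H^1$-norm involves $\|v\,D^2w\|_{L^2}$, which forces $\beta\ge\frac34$. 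With $\rho=1$, $\beta\le\varphi$ and \eqref{eq:subcriticality}, i.e.\ $\varphi+\beta\le\frac32$, you are pinned to $\beta=\varphi=\frac34$. You would therefore need $\|v\,D^2w\|_{L^2}\le L_n(1+\|v\|_{H^2})\|w\|_{H^2}$ whenever $\|v\|_{H^1}\le n$, in four dimensions. This is false. Take an unbounded $v\in\bpsn{2}$, e.g.\ a cut-off of $\log\log(1/|x-x_0|)$ around an interior point, and $w_\delta=\phi((\cdot-x_0)/\delta)$. Then $\|w_\delta\|_{H^2}$ stays bounded, because $\|D^2\cdot\|_{L^2}$ is scale invariant in $d=4$, and all $H^1$-norms stay bounded, yet $\|v\,D^2w_\delta\|_{L^2}\to\infty$. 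The same obstruction occurs in setting (b) with $p=q=2$, $d=2$, since $H^1\not\hookrightarrow L^\infty$ in two dimensions. Delegating $p=2$ to the critical variational theory does not help: its condition $(2\beta_j-1)(\rho_j+1)\le1$ imposes the same constraint.

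\textbf{The missing idea.} Because this term is linear in $v$, an admissible bound must have $\rho<1$ and take the remaining power of $v$ from the trace-norm bound $n$ built into $L_n$ (cf.\ Remark~\ref{rem:subcriticality:invariance}). The paper proceeds as follows:
\begin{enumerate}
\item Pick $\lambda\in(s,d/q)\cap(0,1)$ and write $\|v\,D^2w\|_{L^r}\le\|v\|_{L^{d/\lambda}}\|D^2w\|_{L^{r_2}}$ with $\frac{d}{r_2}=s+\frac dq-\lambda$.
\item Take $\beta=\varphi=\frac{3+\lambda}{4}$, so that $X_\varphi\hookrightarrow\bpsn{2,r_2}$.
\item Interpolate $\|v\|_{L^{d/\lambda}}\lesssim\|v\|^{1-\rho_\lambda}_{X^{\textup{Tr}}_{p,\kappa}}\|v\|^{\rho_\lambda}_{X_\varphi}$, distinguishing whether $\frac dq-\lambda$ lies below, above or at the trace smoothness $3-s-4\frac{1+\kappa}{p}$.
\end{enumerate}
This gives exactly $\rho_\lambda(\varphi-1+\frac{1+\kappa}{p})+\varphi=\frac14(s+\frac dq)+\frac{1+\kappa}{p}\le1$. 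In the example above it reads $\|v\|_{L^{4/\lambda}}\lesssim\|v\|_{H^1}^{2\lambda/(1+\lambda)}\|v\|_{H^{2+\lambda}}^{(1-\lambda)/(1+\lambda)}$. The requirement $s<\lambda<\min\{1,d/q\}$ is also where $s<1$ and, in (b), $q<d/s$ come from; your attribution of these restrictions to other terms is off.

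\textbf{Two smaller points.}
\begin{enumerate}
\item The cubic $\psi^0$-noise is not strictly subcritical. Since $\gamma(\ell^2,X_{1/2})$ sits two derivatives above $X_0$, it needs exactly the drift estimate; this is harmless, but the claim is wrong.
\item Under Assumption~\ref{Assumption:noise}\ref{assumption:hg2} the coefficients $\psi^1,\psi^2$ are only $L^\infty$, and such functions are not bounded multipliers on $\bpsn{-s,q}$ for $s>0$. Estimate these terms through $L^r\hookrightarrow\bpsn{-s,q}$ with $r=d(s+\frac dq)^{-1}$, and use $\beta=\frac34$ for $\psi^1\cdot\nabla u$.
\end{enumerate}
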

    \begin{remark} The following remarks are in order. 
        \begin{enumerate}
            \item Note that $(p,\kappa,s,q)$-solutions are, by Definition~\ref{def:solution:local} of a maximal solution, unique. Any $(p,\kappa,s,q)$-solution depends on $X_0$ and $X_1$. In order not to overburden the notation, we omit this dependence. 
            \item Let us comment on the admissible parameters in Theorem~\ref{lwp:rough:initial:data}. Note that $p=q=2$ and $s=\kappa=0$ are admissible choices in~\ref{it:lwp:rough:initial:data:1} and~\ref{it:lwp:rough:initial:data:2}, provided $d\le 2$. This corresponds to the setting in Section~\ref{weak_setting}. Similarly, the same choices are admissible in~\ref{it:lwp:rough:initial:data:3} and~\ref{it:lwp:rough:initial:data:4}, provided $d\le 4$. This corresponds to Section~\ref{weakened_setting}. 
            In Corollary~\ref{cor:GWP:rough} below we prove that these settings actually coincide, i.e.\ $_\nu\Delta_{-2,2}^2$ and $_\nu\Delta_{-1,2}^2$ coincide with the operator $A$ defined in Section~\ref{weak_setting} and Section~\ref{weakened_setting}, respectively. 

            The parameter $s$ controls the roughness of the settings. In the cases where the nonlinearity $\psi$ does not depend on $\nabla u$, the parameter $s$ can be chosen larger than in those cases where gradient dependence is present. For larger choices of $s$, the admissible range of $q$ becomes smaller. The subcriticality conditions in~\eqref{eq:lwp:rough:initial:data:1} and~\eqref{eq:lwp:rough:initial:data:3} become more restrictive for larger choices of $s$ and $d$. However, the lower bounds for $q$ in~\eqref{eq:lwp:rough:initial:data:2} and~\eqref{eq:lwp:rough:initial:data:4} ensure that for any  admissible choice of $s$ and $d$  there exists an admissible choice for $p\in (2,\infty)$ and $\kappa \in[0,p/2-1)$. 
        \end{enumerate}
    \end{remark}
    \begin{remark}
    We chose the assumptions on $\psi$ so that the abstract framework for local well-posedness employed in the proof of Theorem~\ref{lwp:rough:initial:data} reproduces the critical space $_\nu B^{d/q-1}_{q,p}(\sO)$ known from the literature, cf.~\cite[Section 7]{agrestiNonlinearParabolicStochastic2022} or~\cite{agrestiNonlinearSPDEsMaximal2025}.
    If one requires $_\nu B^{d/q-1}_{q,p}(\sO)=(\bpsn{\alpha,q},\bpsn{\alpha+4,q})_{\beta,p}$ across a range of $\alpha,\beta\in \R$, this means as $\alpha$ increases, $\beta$ needs to be chosen smaller. 
    This is why in Theorem~\ref{lwp:rough:initial:data} the (sub-)criticality conditions in~\eqref{eq:lwp:rough:initial:data:1} and~\eqref{eq:lwp:rough:initial:data:3} differ from each other. 
    This in turn means that for the estimates of the nonlinear terms, finer nonlinear estimates are required.
    
        In general, if one is only interested in local well-posedness for \emph{some} critical space, it is possible to choose less restrictive conditions for the noise term.
        However, this would imply that Theorem~\ref{lwp:rough:initial:data} does not reproduce the critical spaces one expects from a scaling analysis of the Cahn--Hilliard equation. 
    \end{remark}
	\begin{proof}
     Throughout the proof, set $r\ce d(s+\frac{d}{q})^{-1}$. The assumption that $\frac{d}{d-s}<q$ ensures that $r>1$, and the sharp Sobolev embedding $\bpsn{k,r}\hookrightarrow \bpsn{k-s,q}$,  $k\in \{0,1\}$.
     
		First, the case~\ref{it:lwp:rough:initial:data:1} follows from~\cite[Theorem 7.9]{agrestiNonlinearParabolicStochastic2022} and the lines preceding it. 

    For the cases~\ref{it:lwp:rough:initial:data:2},~\ref{it:lwp:rough:initial:data:3}, and~\ref{it:lwp:rough:initial:data:4} proven below, note that by  Remark~\ref{rem:assumption:semilinear:evolution:equation} it is enough to verify that Assumption~\ref{Assumption:semilinear:evolution:equation} holds in order to obtain the result from~\cite[Theorem~4.3]{agrestiNonlinearParabolicStochastic2022a}.

        Next, we prove case~\ref{it:lwp:rough:initial:data:3}. To this end, we let $r$ be as above, which ensures that $\bpsn{1,r}\hookrightarrow\bpsn{1-s,q} $, which hence ensures that 
        $$ \nrm{ u^3-v^3}_{1-s,q}\lesssim \nrm{u^3-v^3}_{1,r}\lesssim \nrm{u^3-v^3}_r+\nrm{\nabla(u^3-v^3)}_r=:\nrm{I_1}_r+\nrm{I_2}_{r}.$$
        We first estimate the second term. To this end, note that 
        $$\begin{aligned}
            \nabla (u^3-v^3)&= \nabla\big[(u^2+uv+v^2)(u-v)\big]\\
            &=(u-v)\nabla(u^2+uv+v^2) + (u^2+uv+v^2)\nabla(u-v)\\
            &=(u-v)\big( 2u\nabla u + u\nabla v +v\nabla u +2v \nabla v \big) + (u^2+uv+v^2)\nabla (u-v)\\
            &\eqqcolon I_{21}+I_{22}.
        \end{aligned}$$
        To estimate $I_{21}$, we employ Hölder's inequality with $\frac1r=\frac{2}{r_1}+\frac{1}{r_2}$, which gives 
        \[\begin{aligned}
            \nrm{I_{21}}_{r}&\le \nrm{u-v}_{r_1}\big(\nrm{2u}_{r_1}\nrm{\nabla u}_{r_2}+ \nrm{u}_{r_1}\nrm{\nabla v}_{r_2}+ \nrm{v}_{r_1}\nrm{\nabla u}_{r_2}+ \nrm{2v}_{r_1}\nrm{\nabla v}_{r_2}\big)\\
            &\lesssim \nrm{u-v}_{r_1}\big(\nrm{u}_{r_1}\nrm{u}_{1,r_2}+ \nrm{u}_{r_1}\nrm{ v}_{1,r_2}+ \nrm{v}_{r_1}\nrm{ u}_{1,r_2}+ \nrm{v}_{r_1}\nrm{ v}_{1,r_2}\big).
        \end{aligned}\]
        Next, we choose $r_1$ and $r_2$ such that the Sobolev indices of $L^{r_1}(\sO)$ and $\bpsn{1,r_2}$ coincide, i.e.\ that $\bpsn{1,r_2} \hookrightarrow L^{r_1}$ sharply. This means that in addition to $\frac{1}{r}=\frac{2}{r_1}+\frac{1}{r_2}$, we impose $-\frac{d}{r_1}=1-\frac{d}{r_2}$, which yields that $r_1=\frac{dr_2}{d-r_2}$, which combined with $\frac{1}{r}=\frac{s+d/q}{d}$  gives $\frac{2+s+d/q}{3d}=\frac{1}{r_2}$.
        Let $\beta\ce \frac{1}{3}+\frac16(s+\frac{d}{q})$. 
        Then condition that $\frac{d}{4-s}<q< \frac{d}{1+s/2}$ and Sobolev embeddings ensure that 
        \begin{equation*} \bpsn{3-s,q}\hookrightarrow\bpsn{-1-s+4\beta,q}\hookrightarrow \bpsn{1,r_2}\hookrightarrow L^{r_1}(\sO),
        \end{equation*}
        which combined with Young's inequality gives 
        $$\nrm{I_{21}}_{r} \lesssim \big(\nrm{u}_{-1-s+4\beta,q}^2+\nrm{v}_{-1-s+4\beta,q}^2\big)\nrm{u-v}_{-1-s+4\beta,q}.$$
        The estimate of $I_{22}$ follows in the same way, with the same choices for $r_1, r_2, $ and $\beta$. 
        To estimate $\nrm{I_1}_r$, we use the Sobolev embedding $\bpsn{-1-s+4\beta,q}\hookrightarrow L^{3r}$, which combined with Hölder's inequality give 
        \[\begin{aligned}
            \nrm{I_1}_r&\lesssim \nrm{(u^2+v^2)(u-v)}_r \le (1+\nrm{u}_{3r}^2+\nrm{v}_{3r}^2)\nrm{u-v}_{3r}\\
            &\lesssim (1+\nrm{u}^2_{\bpsn{-1-s+4\beta,q}}+\nrm{v}^2_{\bpsn{-1-s+4\beta,q}})\nrm{u-v}_{\bpsn{-1-s+4\beta,q}}.
        \end{aligned}\]

    Next, we estimate the noise term. For all $u,v\in \bpsn{3-s,q}$ the Sobolev embedding $\bpsn{1-s,q}\hookrightarrow\bpsn{1,r}$ ensures  
		 	\[\begin{aligned}
		 	\nrm{\psi(u)-\psi(v)}_{\gamma(\ell^2, \nH^{1-s,q}(\sO))}&\eqsim\nrm{\psi(u)-\psi(v)}_{\nH^{1-s,q}(\sO;\ell^2)}
            \lesssim \nrm{\psi(u)-\psi(v)}_{_\nu H^{1,r}(\sO;\ell^2)}\\
		 	&\eqsim \nrm{\psi(u)-\psi(v)}_{L^r(\sO;\ell^2)}+\nrm{\nabla\psi(u)-\nabla\psi(v)}_{L^r(\sO;\ell^2)}.
		 \end{aligned}\]
        Next, \eqref{eq:assumption:hg0:noise} ensures for the first term on the right-hand side 
		 \[\nrm{\psi(u)-\psi(v)}_{L^r(\sO;\ell^2)}\lesssim \nrm{(1+|u|^2+|v|^2)|u-v|}_r,\]
		 and therefore Hölder's inequality and the fact that $\bpsn{-1-s+4\beta,q}\hookrightarrow L^{3r}(\sO)$ as above ensure
		 \[\nrm{\psi(u)-\psi(v)}_{L^r(\sO;\ell^2)} \lesssim (1+\nrm{u}^2_{-1-s+4\beta,q}+\nrm{v}_{-1-s+4\beta,q}^2)\nrm{u-v}_{-1-s+4\beta,q}.\]
          It remains to estimate the spatial-gradient-dependent term. Note that 
		 \[\begin{aligned}
		 	\nrm{\nabla\psi(u)-\nabla\psi(v)}_{L^r(\sO;\ell^2)}&\le \sum_{j=1}^d\Big[\nrm{(\partial_{x_j}\psi)(\cdot, u)-(\partial_{x_j}\psi)(\cdot,v)}_{L^r(\sO;\ell^2)}\\
		 	&\qquad \qquad +\nrm{(\partial_y\psi)(\cdot, u)\partial_{x_j}u-(\partial_y\psi)(\cdot,v)\partial_{x_j}v}_{L^r(\sO;\ell^2)}\Big].
		 \end{aligned}\]
         For the first term on the right-hand side, \eqref{Assumption:hg3:noise:2} ensures that 
		 \[\begin{aligned}
		 	\sum_{j=1}^d\nrm{(\partial_{x_j}\psi)(\cdot, u)-(\partial_{x_j}\psi)(\cdot,v)}_{L^r(\sO;\ell^2)}\lesssim (1+\nrm{u}_{3r}^2+\nrm{v}_{3r}^2)\nrm{u-v}_{3r}.
		 \end{aligned}\]

         Next, Hölder's inequality with $\frac{1}{r}=\frac{1}{r_1}+\frac{1}{r_2}$ ensures
         \[\begin{aligned}
             &\nrm{(\partial_y \psi)(u)\partial_{x_j} u - (\partial_y\psi)(v)\partial_{x_j}v}_{L^r(\sO)}\\
		 	&\le \nrm{(\partial_y\psi)(u)\partial_{x_j}u-(\partial_y\psi)(v)\partial_{x_j}u}_{L^r(\sO;\ell^2)}+\nrm{(\partial_y\psi)(v)\partial_{x_j}(u-v)}_{L^r(\sO;\ell^2)}\\
            &\le \nrm{(\partial_y\psi)(u)-(\partial_y \psi)(v)}_{L^{r_1}} \nrm{\partial_{x_j}u}_{L^{r_2}} + \nrm{(\partial_y\psi)(v)}_{L^{r_1}}\nrm{\partial_{x_j}(u-v)}_{L^{r_2}}
         \end{aligned}\]
         We impose $-\frac{d}{r_1}=1-\frac{d}{r_2}$, which as above ensures  $\bpsn{1,r_2}$ and $L^{r_1}(\sO)$ have the same Sobolev index.  This leads to $\frac{r_1}{d}= \frac{r_2}{d-r_2}$ and $\frac{d}{r_2}=\frac{1+s+d/q}{2}$. 

         In order to apply Sobolev embeddings, note that $\bpsn{-1-s+4\beta,q}\hookrightarrow L^{\rho r_1}(\sO)$ if and only if $-1-s+4\beta-\frac{d}{q}\ge -\frac{d}{\rho r_1}=\frac{1}{\rho}(1-\frac{d}{r_2})$. This leads to the condition $(\frac{1}{2\rho}-\frac{1}{3})(s+\frac{d}{q})\ge \frac{1}{2\rho}-\frac{1}{3}$. As $s+\frac{d}{q}>1$ by~\eqref{eq:lwp:rough:initial:data:4}, the latter holds if and  only if  $\rho\le 3/2$. Therefore, \eqref{Assumption:hg3:noise:3}, \eqref{Assumption:hg3:noise:4}, the Sobolev embeddings $\bpsn{-1-s+4\beta,q}\hookrightarrow L^{\frac{3}{2}r_1}$ and $\bpsn{-1-s+4\beta,q}\hookrightarrow \bpsn{1,r_2}$, and Young's inequality ensure
         \[\begin{aligned}
             &\nrm{(\partial_y\psi)(u)-(\partial_y \psi)(v)}_{L^{r_1}} \nrm{\partial_{x_j}u}_{L^{r_2}} + \nrm{(\partial_y\psi)(v)}_{L^{r_1}}\nrm{\partial_{x_j}(u-v)}_{L^{r_2}}\\
             &\lesssim \big(1+\nrm{u}_{\frac{3}{2}r_1}^{1/2} +\nrm{v}_{\frac{3}{2}r_1}^{1/2}\big) \nrm{u-v}_{\frac{3}{2}r_1} \nrm{u}_{1,r_2} +\big(1+ \nrm{v}_{\frac{3}{2}r_1}^{3/2}\big)\nrm{u-v}_{1,r_2}\\
             &\lesssim \big(1+\nrm{u}^{1/2}_{-1-s+4\beta,q}+\nrm{v}_{-1-s+4\beta,q}^{1/2}\big)\nrm{u-v}_{-1-s+4\beta,q} \nrm{u}_{-1-s+4\beta,q}\\
             &\qquad\qquad+ \big(1+\nrm{v}_{-1-s+4\beta,q}^{3/2}\big)\nrm{u-v}_{-1-s+4\beta,q}\\
             &\lesssim \big( 1+\nrm{u}_{-1-s+4\beta,q}^2+\nrm{v}_{-1-s+4\beta,q}^2\big)\nrm{u-v}_{-1-s+4\beta,q}
         \end{aligned}\]
         This verifies Assumption~\ref{Assumption:semilinear:evolution:equation}, and hence the case~\ref{it:lwp:rough:initial:data:3} follows from~\cite[Theorem 4.3]{agrestiNonlinearParabolicStochastic2022a}.

        \medskip
        
        Next, we treat case~\ref{it:lwp:rough:initial:data:4}. As the noise estimate for $\psi^0$ has been carried out above, it suffices to find a suitable estimate for $(\psi^1+ u\psi^2)\nabla u$. 
        To this end, the triangle inequality and the fact that $\Bpsn{1,r}\hookrightarrow \Bpsn{1-s,q}$ ensure that \begin{equation}\label{eq:lwp:rough:initial:data:proof:hessian}        \begin{aligned}
            &\nrm{(\psi^1+ u\psi^2)\cdot\nabla u-(\psi^1+v\psi^2)\cdot\nabla v}_{\Bpsn{1-s,q}}\\
            &\lesssim \nrm{\psi^1\cdot\nabla(u-v)}_{\Bpsn{1-s,q}} +\nrm{(u-v)Du}_{L^r} +\nrm{(u-v) Dv}_{L^r} \\
            &\quad + \nrm{Du D(u-v)}_{L^r} +\nrm{u D^2(u-v)}_{L^r}+ \nrm{D(u-v)Dv}_{L^r} +\nrm{(u-v)D^2 v}_{L^r}.
        \end{aligned} 
        \end{equation}
        It is easy to check the estimates for the first three terms on the right-hand side of~\eqref{eq:lwp:rough:initial:data:proof:hessian}. For the two mixed terms, we apply Hölder's inequality with $\frac{1}{r}=\frac{1}{2r}+\frac{1}{2r}$, and the Sobolev embedding $X_\beta\hookrightarrow \bpsn{1,2r}$. It remains to find suitable estimates for the terms $\nrm{(u-v)D^2v}_{L^r}$ and $\nrm{u D^2(u-v)}_{L^r}$.
         We will find suitable $r_1,r_2 \in (r,\infty)$ with $\frac{1}{r}=\frac{1}{r_1}+\frac{1}{r_2}$, use Hölder's inequality, and try to find suitable $1-\frac{1+\kappa}{p}<\tilde\beta\le\tilde\varphi<1$, such that $X_{\tilde\beta}$ and $X_{\tilde\varphi}$ embed in a suitable space. To be precise, for the first term, we require $X_{\tilde\beta} \hookrightarrow L^{r_1}(\sO)$ and $X_{\tilde\varphi}\hookrightarrow \bpsn{2,r_2}$, while for the second term, we require (with possibly different choices for $\tilde\beta$ and $\tilde\varphi$)  $X_{\tilde\beta} \hookrightarrow \bpsn{2,r_2}$ and $X_{\tilde\varphi}\hookrightarrow L^{r_1}(\sO)$. 
         This means that for the second term we have less flexibility with the embedding. 
        To find admissible choices for $r_1$ and $r_2$, first note that 
        $$ \frac{d}{r}=s+\frac{d}{q} =\frac{d}{r_1}+\frac{d}{r_2}.$$
        Therefore, let $\lambda\in (0,1)$ and choose $r_1\ce r_{\lambda,1}$ with $r_{\lambda,1}\ce \frac{d}{\lambda}$, i.e.\ $\frac{d}{r_{\lambda,1}}=\lambda$,  and $r_2\ce r_{\lambda,2}$ with $\frac{d}{r_{\lambda,2}}= s+\frac{d}{q}-\lambda$.

        We begin with the estimate of $\nrm{|u-v| D^2u}_{L^r(\sO)}$ by  finding $\beta_\lambda$ and $\varphi_\lambda$ such that $X_{\beta_\lambda}\hookrightarrow L^{r_{\lambda,1}}(\sO)$ and $X_{\varphi_\lambda}\hookrightarrow \bpsn{2,r_{\lambda,2}}$. 
        By choosing sharp embeddings, we find that $\beta_\lambda=\frac{1}{4}(1+s+\frac{d}{q}-\lambda)$ and $\varphi_\lambda=\frac{3+\lambda}{4}$. In addition, we require that $s<\lambda<\frac{d}{q}$. Note that as $q<\frac{d}{1+s/2}$ and $s<1$, this imposes no additional assumption on $q$.

        Then as described above, Hölder's inequality and sharp Sobolev embeddings hence prove
        \[\nrm{|u-v| D^2u}_{L^r(\sO)}\lesssim\nrm{D^2 u}_{L^{r_{\lambda,2}}}\nrm{u-v}_{L^{r_{\lambda,1}}(\sO)}\lesssim \nrm{u}_{X_{\varphi_\lambda}} \nrm{u-v}_{X_{\beta_\lambda}}. \]
        It is straightforward to check that~\eqref{eq:lwp:rough:initial:data:3} implies
        \begin{equation}
        	\varphi_\lambda -1+\frac{1+\kappa}{p}+\beta_\lambda = \frac{1}{4}(s+\frac{d}{q})+\frac{1+\kappa}{p} \le 1.
        \end{equation}
        Next, we deal with  $\||v|D^2(u-v)\|_{L^{r}(\sO)}$. 
        In this case, as we require $\beta_\lambda \le \varphi_\lambda$ and at the same time $X_{\beta_{\lambda}}\hookrightarrow \bpsn{2,r_{\lambda,2}}$, we choose $\varphi_\lambda$ as above, and set $\beta_\lambda=\varphi_\lambda$. This is reasonable, as we chose $\varphi_\lambda$ precisely such that the sharp Sobolev embedding holds.  It is however not feasible to repeat above calculations, because the subcriticality condition would possibly be violated. 
        Thus, we are going to rely on the full power of Assumption~\ref{Assumption:semilinear:evolution:equation} and factor a part of the $L^{r_{\lambda,1}}$-norm into the norm of the trace space
         $X_{p,\kappa}^{\textup{Tr}}=\,_\nu B^{3-s-4\frac{1+\kappa}{p}}_{q,p}(\sO)$. 
         With the previous choice of $\lambda$, it is ensured that $_\nu B_{q,1}^{d/q-\lambda}(\sO)\hookrightarrow\bpsn{d/q-\lambda,q}\hookrightarrow L^{r_{\lambda,1}}$. 
         In the following, we will distinguish between three cases. 
            First, assume that $d/q-\lambda <  3-s-4\frac{1+\kappa}{p}$. Then $_\nu B_{q,p}^{3-s-4\frac{1+\kappa}{p}}(\sO)\hookrightarrow\,_\nu B_{q,1}^{d/q-\lambda}(\sO)$, which ensures that
            \[\||v|D^2(u-v)\|_{L^{r}(\sO)}\lesssim (1+\|u\|_{X_{p,\kappa}^{\textup{Tr}}}+\|v\|_{X_{p,\kappa}^{\textup{Tr}}})\|u-v \|_{X_{\varphi_\lambda}}.\]
            Second, assume that $d/q-\lambda> 3-s-4\frac{1+\kappa}{p}$. Then we find numbers $h$ and $\rho_\lambda$ such that 
            \begin{equation*}_\nu B_{q,1}^{d/q-\lambda}(\sO)=\big(\,_\nu B^{3-s-4\frac{1+\kappa}{p}}_{q,p}(\sO), \,_\nu B^{h}_{q,\infty}(\sO)\big)_{\rho_\lambda,1},
            \end{equation*}for which we in addition require that $X_{\varphi_\lambda}\hookrightarrow \,_\nu B_{q,\infty}^{h}(\sO)$, i.e.\ $-1-s+4\varphi_\lambda \ge h$, and such that $\rho_\lambda(\varphi_\lambda-1+\frac{1+\kappa}{p})+\varphi_\lambda\le 1$. 
            This and~\cite[Theorem 1.3.3]{triebelInterpolationTheoryFunction1978} then ensure
            \begin{equation*}\| v\|_{L^{d/\lambda}(\sO)}\lesssim \|v\|_{X_{p,\kappa}^{\textup{Tr}}}^{1-\rho_\lambda} \|v\|_{X_{\varphi_\lambda}}^{\rho_\lambda},\end{equation*}
            which in turn yields
        	\[\nrm{|v|D^2(u-v)}_{L^r(\sO)} \lesssim (1+\nrm{v}_{X_{p,\kappa}^{\textup{Tr}}}^{1-\rho_\lambda}+\nrm{u}_{X_{p,\kappa}^{\textup{Tr}}}^{1-\rho_\lambda})(1+\nrm{u}_{X_{\varphi_\lambda}}^{\rho_\lambda}+\nrm{v}_{X_{\varphi_\lambda}}^{\rho_\lambda})\nrm{u-v}_{X_{\varphi_\lambda}}.\]
            In order to find suitable choices for $\rho_\lambda$ and $h$, note that they are coupled by 
            $$\rho_\lambda=\frac{(d/q-\lambda) -(3-s-4\frac{1+\kappa}{p})}{h-(3-s-4\frac{1+\kappa}{p})}.$$
            Therefore, in order to make $\rho_\lambda$ as small as possible, we have to choose $h$ as large as possible. As $\varphi_\lambda=\frac{3+\lambda}{4}$, the required embedding $X_{\varphi_\lambda}\hookrightarrow \,_\nu B_{q,\infty}^{h}(\sO)$ gives the condition that $h\le 2-s+\lambda$.
            A direct computation shows that $h=2-s+\lambda$, and therefore~\eqref{eq:lwp:rough:initial:data:3} implies
            \[
            \begin{aligned}
                \rho_\lambda\big( \varphi_\lambda-1+\frac{1+\kappa}{p}\big) +\varphi_\lambda&=\frac{d/q+s-\lambda-3+4\frac{1+\kappa}{p}}{\lambda-1+4\frac{1+\kappa}{p}}\frac{1}{4}\big(\lambda-1+4\frac{1+\kappa}{p}\big)+\frac{3+\lambda}{4}\\
                &=\frac{1}{4}\big(s+\frac{d}{q}\big) +\frac{1+\kappa}{p} \le 1.
            \end{aligned}
            \]
            It remains to treat the case in which  $3-s-4\frac{1+\kappa}{p}= d/q-\lambda$. 
            In this case, we combine both mechanisms from the first and second case. This means: choose $\varepsilon_0,\varepsilon_1$ small, and set $\varepsilon\ce \frac{\varepsilon_0}{\varepsilon_0+\varepsilon_1}$. Let $\delta_1>0$ to be determined below, and set $\delta\ce \frac{\varepsilon_1}{\delta_1}$. Then 
            \[\begin{aligned}
                _\nu B_{q,1}^{3-s-4\frac{1+\kappa}{p}}(\sO)&=\big(\,_\nu B^{d/q-\lambda-\varepsilon_0}_{q,\infty}(\sO), \,_\nu B_{q,1}^{d/q-\lambda+\varepsilon_1}\big)_{\varepsilon,1} \hookleftarrow \big(X_{p,\kappa}^{\textup{Tr}}, \,_\nu B_{q,1}^{d/q-\lambda+\varepsilon_1}(\sO)\big)_{\varepsilon,1}\\
                &=\big( X_{p,\kappa}^{\textup{Tr}}, \big(\,_\nu B_{q,\infty}^{d/q-\lambda}(\sO), \,_\nu B_{q,\infty}^{d/q-\lambda+\delta_1}(\sO)\big)_{\delta,1}\big)_{\varepsilon,1}\\
                &\hookleftarrow \big(X_{p,\kappa}^{\textup{Tr}}, \big( X_{p,\kappa}^{\textup{Tr}}, \,_\nu B_{q,\infty}^{d/q-\lambda+\delta_1}(\sO)\big)_{\delta,1}\big)_{\varepsilon,1}.
            \end{aligned}\]
            We choose $\delta_1$ sufficiently small such that $X_{\varphi_\lambda}\hookrightarrow\,_\nu B_{q,\infty}^{d/q-\lambda+\delta_1} $. The latter is true provided $\delta_1\le  2+2\lambda-(s+\frac{d}{q})$. Choosing $\delta_1= 2+2\lambda-(s+\frac{d}{q})$ ensures 
            \[\|v\|_{\,_\nu B_{q,1}^{d/q-\lambda}(\sO)}\lesssim \| v\|_{X_{p,\kappa}^{\textup{Tr}}}^{1-\varepsilon+\varepsilon(1-\delta)}\|v\|_{X_{\varphi_\lambda}}^{\varepsilon\cdot \delta}=\|v\|_{X_{p,\kappa}^{\textup{Tr}}}^{1-\varepsilon\delta} \|v\|_{X_{\varphi_\lambda}}^{\varepsilon\delta},\]
            and thus
            \[\nrm{|v|D^2(u-v)}_{L^r(\sO)} \lesssim (1+\nrm{v}_{X_{p,\kappa}^{\textup{Tr}}}^{1-\varepsilon\delta}+\nrm{u}_{X_{p,\kappa}^{\textup{Tr}}}^{1-\varepsilon\delta})(1+\nrm{u}_{X_{\varphi_\lambda}}^{\varepsilon\delta}+\nrm{v}_{X_{\varphi_\lambda}}^{\varepsilon\delta})\nrm{u-v}_{X_{\varphi_\lambda}}.\]
            Hence choosing both $\varepsilon_0$ and $\varepsilon_1$ sufficiently small yields the required estimate. 
            An application of~\cite[Theorem 4.3]{agrestiNonlinearParabolicStochastic2022a} hence proves~\ref{it:lwp:rough:initial:data:4}.

        Finally, in order to verify the assertion regarding the critical space, set $\frac{1+\kappa}{p}=\frac{3}{2}(1-\beta)$. Then 
        $$(\bpsn{-1-s,q},\bpsn{3-s,q})_{1-\frac{1+\kappa}{p},p}= \,_\nu B_{q,p}^{3-s-4\frac{1+\kappa}{p}}(\sO) =\,_\nu B_{q,p}^{\frac{d}{q}-1}(\sO).$$

Next, we prove case~\ref{it:lwp:rough:initial:data:2}. 
		Set $\theta\ce\frac{1}{2}+\frac{1}{6}(s+\frac{d}{q})$. The condition that $\frac{d}{3-s}<q<2\frac{d}{s}$ then ensures that $0<-2-s+4\theta<2-s$, and the sharp Sobolev embedding $\bpsn{-2-s+4\theta,q}\hookrightarrow L^{3r}(\sO)$ holds. 
		This and Hölder's inequality ensure for all $u,v\in \bpsn{2-s,q}$ that 
			\[\begin{aligned}
			\nrm{\,_\nu\Delta_{-2-s,q} (u^3-v^3)}_{-2-s,q} &\lesssim  \nrm{u^3-v^3}_{-s,q} \lesssim \nrm{u^3-v^3}_r\lesssim \nrm{(u^2+v^2)(u-v)}_r\\
			&\le (1+\nrm{u}_{3r}^2+\nrm{v}_{3r}^2)\nrm{u-v}_{3r}\\
			&\lesssim (1+\nrm{u}_{-2-s+4\theta,q}^2 + \nrm{v}_{-2-s+4\theta,q}^2) \nrm{u-v}_{-2-s+4\theta,q}.
		\end{aligned}\]
		The estimate for the noise term works similarly to the proof of case~\ref{it:lwp:rough:initial:data:4} and is thus omitted. For the estimate for the term $\nrm{u\psi^2 \nabla u- v\psi^2\nabla v}_{\gamma(\ell^2;\bpsn{-s,q})}$, in order to find a suitable $\lambda \in (0,1)$ as above, it is again required that $s<\lambda<\frac{d}{q}$, which gives the additional condition that $q<\frac{d}{s}$. 
        
		This verifies Assumption~\ref{Assumption:semilinear:evolution:equation}, and hence~\ref{it:lwp:rough:initial:data:2} follows from~\cite[Theorem 4.3]{agrestiNonlinearParabolicStochastic2022a}.
        
        To verify the assertion regarding the critical space, set $\frac{1+\kappa}{p}=\frac{3}{2}(1-\theta)=\frac34-\frac14(s+\frac{d}{q})$. Then it holds that 
        $$ (\bpsn{-2-s,q},\bpsn{2-s,q})_{1-\frac{1+\kappa}{p},p}=\,_\nu B_{q,p}^{2-s-4\frac{1+\kappa}{p}}(\sO)=\,_\nu B_{q,p}^{\frac{d}{q}-1}(\sO). $$

        This finishes the proof.
         
	\end{proof}
	\subsection{Regularisation}
	In this subsection we prove instantaneous regularisation for the solutions granted by Theorem~\ref{lwp:rough:initial:data} by employing the abstract regularisation machinery from~\cite[Section 6]{agrestiNonlinearParabolicStochastic2022a}. Here the cases $i)$ and $ii)$ from Theorem~\ref{lwp:rough:initial:data} work a bit differently. We start out with the regularisation result in case $i)$. 
	\begin{lemma}[Regularisation]\label{thm:regularization:rough:initial:data:LpLq}
		Assume the setting of Theorem~\ref{lwp:rough:initial:data} $i)$.
		\begin{enumerate}
			\item\label{it::regularization:rough:initial:data:LpLq:1} Let $(u,\sigma)$ be the $(p,\kappa,s,q)$-solution granted by Theorem~\ref{lwp:rough:initial:data}\ref{it:lwp:rough:initial:data:1} or~\ref{it:lwp:rough:initial:data:2}. Then
			\begin{align}
				u&\in H^{\theta,r}_{\loc}(a,\sigma; \nH^{2-4\theta,\zeta}(\sO)), \quad \zeta\in [q,\infty),\, r\in (2,\infty),\, \theta\in [0,1/2),\\
				u &\in C_{\loc}^{\theta,4\theta}((a,\sigma)\times \sO), \quad \theta \in [0,1/2).
			\end{align}
			\item\label{it::regularization:rough:initial:data:LpLq:2} Let $(u,\sigma)$ be the $(p,\kappa,s,q)$-solution granted by Theorem~\ref{lwp:rough:initial:data}\ref{it:lwp:rough:initial:data:3} or~\ref{it:lwp:rough:initial:data:4}. Then 
			\begin{align}
				u&\in H^{\theta,r}_{\loc}(a,\sigma; \nH^{3-4\theta,\zeta}(\sO)), \quad \zeta\in [q,\infty),\, r\in (2,\infty),\, \theta\in [0,1/2),\\
				u &\in C_{\loc}^{\theta,6\theta}((a,\sigma)\times \sO), \quad \theta \in [0,1/2).
			\end{align}
		\end{enumerate}
	\end{lemma}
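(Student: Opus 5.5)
The plan is to bootstrap with the abstract regularisation results of \cite[Section 6]{agrestiNonlinearParabolicStochastic2022a}. These allow one to pass from a $(p,\kappa,s,q)$-solution to a solution in a setting with larger integrability exponents $(r,\zeta)$ and no weight, provided two things hold: the new setting satisfies Assumption~\ref{Assumption:semilinear:evolution:equation}, and the solution already lies locally in the new trace space. I treat case \ref{it::regularization:rough:initial:data:LpLq:2}; case \ref{it::regularization:rough:initial:data:LpLq:1} is identical with $X_0=\bpsn{-2-s,q}$ in place of $X_0=\bpsn{-1-s,q}$.

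\textbf{Step 1: remove the weight.} Fix $0<a<t_0<\sigma$ (localised by stopping times). Theorem~\ref{lwp:rough:initial:data} gives $u\in C((a,\sigma);X_{1-1/p,p})$. Restarting at $t_0$ with the unweighted setting $\kappa=0$ and the same $(s,q)$ is admissible, since \eqref{eq:lwp:rough:initial:data:3} only improves when $\kappa$ decreases. Uniqueness in Definition~\ref{def:solution:local} identifies the restarted solution with $u$, and its lifetime is at least $\sigma$ by the compatibility result \cite[Prop.~6.?]{agrestiNonlinearParabolicStochastic2022a}.

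\textbf{Step 2: increase the space exponent, then lower $s$.} I want to reach $s=0$ and arbitrary $\zeta\ge q$. The idea is to iterate in finitely many steps $(s_k,q_k)\to(s_{k+1},q_{k+1})$, using the Sobolev embedding
\[
\bpsn{3-s_k-4\theta,q_k}\hookrightarrow\bpsn{3-s_{k+1}-4\theta',q_{k+1}}
\]
together with $u\in H^{\theta,p}_{\loc}(X_{1-\theta})$ for $\theta$ close to $1/2$. This shows that $u$ lies in $L^{p'}_{\loc}$ of the new trace space, and the transference results of \cite[Theorem 6.3]{agrestiNonlinearParabolicStochastic2022a} then upgrade $u$ to the new setting. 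At each step the new parameters must satisfy \eqref{eq:lwp:rough:initial:data:4}. Because $\zeta$ may exceed $d/(1+s/2)$, I expect to pass through $s<0$ formally. This is the point I expect to be the \textbf{main obstacle}: the nonlinear estimates of Theorem~\ref{lwp:rough:initial:data} were proven only in the stated parameter range. To get around it, I would first increase $p$ to $r$ at fixed $(s,q)$, which is harmless because \eqref{eq:lwp:rough:initial:data:3} is subcritical for large $p$ once $\kappa=0$ and $s+d/q<4$. For large $\zeta$ I would then use the weaker, strictly subcritical estimates that follow directly from $\bpsn{3-4\beta,\zeta}\hookrightarrow W^{1,\infty}$ for $\beta$ near $1$. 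Strict subcriticality means these need no criticality bookkeeping.

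\textbf{Step 3: conclude.} Once $u\in H^{\theta,r}_{\loc}(a,\sigma;\bpsn{3-4\theta,\zeta})$ holds for all large $r,\zeta$ and all $\theta<1/2$, the Hölder regularity follows from Sobolev embeddings. In time, $H^{\theta,r}\hookrightarrow C^{\theta-1/r}$. In space, $\bpsn{3-4\theta,\zeta}\hookrightarrow C^{3-4\theta-d/\zeta}$. Choosing the time exponent $\theta_1$ and the space exponent through $3-4\theta_2$, and letting $r,\zeta\to\infty$, gives exponents $(\theta,6\theta)$ by the mixed-derivative interpolation. In case \ref{it::regularization:rough:initial:data:LpLq:1} the same argument with $2-4\theta$ gives $(\theta,4\theta)$.
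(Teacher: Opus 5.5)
Your overall architecture (bootstrap through \cite[Section 6]{agrestiNonlinearParabolicStochastic2022a}, then treat large $\zeta$ separately in part (2)) matches the paper. However, the mechanism in your Step 2, where all the work happens, does not work. Sobolev embeddings only trade smoothness for integrability, so they can never move you to a setting with smaller $s$. Invoking $u\in H^{\theta,p}_{\loc}(X_{1-\theta})$ with $\theta$ close to $1/2$ goes the wrong way: then $u(t)$ is only known in $\bpsn{1-s,q}$, far below the smoothness $3-s_{k+1}-4/r$ of the new trace space. Even with $\theta=0$ you only get $u(t)\in X_1$ for a.e.\ $t$, which allows steps $s_k-s_{k+1}<4/r$ at best. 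The real difficulty, showing that the solution restarted in the smoother setting survives up to $\sigma$, is left to the cited theorem.

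That theorem does not ask that ``$u$ lies locally in the new trace space''. As used in the paper, \cite[Theorem 6.3]{agrestiNonlinearParabolicStochastic2022a} needs, among other things, $\widehat Y_i\hookrightarrow Y_i$, non-criticality of $\widehat Y^{\textup{Tr}}_{\widehat\alpha,\widehat r}$, the compatibility condition \cite[(6.1)]{agrestiNonlinearParabolicStochastic2022a}, and the trace embedding $Y^{\textup{Tr}}_r\hookrightarrow\widehat Y^{\textup{Tr}}_{\widehat\alpha,\widehat r}$. Because $\widehat Y$ is the smoother (or more integrable) scale, this last embedding holds only thanks to a positive weight $\widehat\alpha$ in the target setting. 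That weight is the missing idea.

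To reach $s=0$, the paper takes $\widehat Y_j=\bpsn{-1+4j,q}$ (resp.\ $\bpsn{-2+4j,q}$ in part (1)) with $\widehat\alpha=\frac{r}{4}s$. Then $Y^{\textup{Tr}}_r=\widehat Y^{\textup{Tr}}_{\widehat\alpha,r}$ exactly (this needs $s<2-\frac4r$), and (6.1) follows from \cite[Lemma 6.2(4)]{agrestiNonlinearParabolicStochastic2022a}. It then raises integrability $\zeta\to\zeta+\varepsilon$ with the $\zeta$-independent step $\varepsilon=\frac{4\alpha}{dr}$ coming from a small weight $\alpha$, using boundedness of $\sO$ for $\widehat Y_i\hookrightarrow Y_i$. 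Likewise, your justification for increasing $p$ to $r$ misses the point. The obstruction is never criticality of the new setting but getting $u$ into its smoother trace space. This is exactly what weighted transference supplies (or \cite[Theorems 5.6 and 5.7]{agrestiNonlinearSPDEsMaximal2025}, as in the paper's Step 1).

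You did correctly spot the second obstacle: in part (2), the restriction $q<d/(1+s/2)$ of Theorem~\ref{lwp:rough:initial:data} stops the bootstrap at $\zeta<d$. Your remedy is to verify Assumption~\ref{Assumption:semilinear:evolution:equation} directly in $(\bpsn{-1,\zeta},\bpsn{3,\zeta})$ for large $\zeta$, where the trace space embeds into $W^{1,\infty}(\sO)$ and the nonlinearities become Lipschitz on trace-bounded sets. This is a legitimate alternative; note that the relevant embedding is $X_\beta=\bpsn{-1+4\beta,\zeta}\hookrightarrow W^{1,\infty}(\sO)$, not $\bpsn{3-4\beta,\zeta}$. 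It still has to be fed through a weighted transfer step as above, with the weight small enough that $\widehat Y^{\textup{Tr}}_{\widehat\alpha,r}$ keeps embedding into $W^{1,\infty}(\sO)$.

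The paper avoids any new local theory. It regards $F(u)$ and $G(u)$ as given forcing and shows $F(u)\in L^r(t_0,\sigma_m;\bpsn{-1,\zeta+\varepsilon})$ from $\|u^3\|_{H^{1,\zeta+\varepsilon}(\sO)}\lesssim\|u\|^3_{\bpsn{3,\zeta}}$. It then concludes by linear stochastic maximal regularity of ${}_\nu\Delta^2_{-1,\zeta+\varepsilon}$ together with consistency and uniqueness. Under Assumption~\ref{Assumption:noise}\ref{assumption:hg4}, it additionally localises with stopping times on $\|u\|_{C([t_0,t];{}_\nu B^{3-4/r}_{\zeta,r}(\sO))}$ to freeze the $W^{1,\infty}$ bound in the $|u|D^2u$ term. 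Both routes rest on the same $W^{1,\infty}$ control, but the paper's is more direct.
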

	\begin{proof}
				For $r\in [2,\infty), \alpha\in [0,r/2-1)\cup \{0\}$, and an interpolation couple $(Z_0,Z_1)$, define $Z_r^\textup{Tr}\ce Z_{1-\frac{1}{r},r}$ and  $Z_{\alpha,r}^\textup{Tr}\ce Z_{1-\frac{1+\alpha}{r},r}$. 
				
				Since $\sO$ is bounded, we have $L^{q_1}(\sO)\hookrightarrow L^{q_2}(\sO)$ for $q_1\ge  q_2$ by Hölder's inequality, which extends to $H^{t,q_1}(\sO)\hookrightarrow H^{t,q_2}(\sO)$, $t\in\R, \infty >q_1\ge q_2>1$ by first proving this for $t\in \N_0$ and then using duality and interpolation.
                This furthermore ensures $B^t_{q_1,p}(\sO)\hookrightarrow B^t_{q_2,p}(\sO)$, $t\in\R, \infty> q_1\ge q_2>1, 1<p<\infty$.
				
				We start by proving~\ref{it::regularization:rough:initial:data:LpLq:1}.
				
				\emph{Step 1: Regularisation in time.} By~\cite[Theorems 5.6 and 5.7]{agrestiNonlinearSPDEsMaximal2025} we have
				\[u\in H_{\loc}^{\theta,r}(0,\sigma; \nH^{2-s-4\theta,q}(\sO))\cap C_{\loc}^{\theta-\varepsilon}(0,\sigma; \nH^{2-s-4\theta,q}(\sO)) \text{ a.s.}, \quad \theta\in [0,1/2), r\in[2,\infty), \varepsilon\in (0,\theta).\]
				
				\emph{Step 2: Sobolev smoothness in space.} Without loss of generality we may assume that $s>0$. 
				We are going to employ~\cite[Theorem 6.3]{agrestiNonlinearParabolicStochastic2022a}(applied for $j\in \{0,1\}$ with $Y_j\curvearrowleft\bpsn{-2-s+4j,q}, $ $ X_j \curvearrowleft \bpsn{-2-s+4j,q}$, $ \widehat{Y}_j\curvearrowleft \bpsn{-2+4j,q}$, $\delta\curvearrowleft0$, $p\curvearrowleft r$,  $r\curvearrowleft r$, $ \widehat{r}\curvearrowleft r $, $\alpha\curvearrowleft 0$, $\widehat{\alpha}\curvearrowleft \frac{r}{4}s$ in the notation of~\cite[Theorem 6.3]{agrestiNonlinearParabolicStochastic2022a}, where $r$ is chosen below) in order to improve the smoothness in space. 
                
                The assumptions of Theorem~\ref{lwp:rough:initial:data} are met  for the $(Y_0,Y_1, r,\alpha)$ and $(\widehat Y_0, \widehat Y_1, \widehat r, \widehat \alpha)$ settings, and the corresponding trace spaces are subcritical for $r$ large enough. Moreover, the choice of $\widehat\alpha$ ensures that $Y_r^{\textup{Tr}}= \widehat Y_{\widehat\alpha,\widehat r}^\textup{Tr}$.  
				The assumptions (1) and (2) from~\cite[Theorem 6.3]{agrestiNonlinearParabolicStochastic2022a} are easily verified. 
				For the verification of assumption (3), it only remains to prove that~\cite[(6.1)]{agrestiNonlinearParabolicStochastic2022a} holds. To this end we apply~\cite[Lemma 6.2 (4)]{agrestiNonlinearParabolicStochastic2022a} with $\varepsilon=\frac{\widehat\alpha}{r}$. We thus need to prove that $\widehat Y_{1-\frac{\widehat \alpha}{r}}\hookrightarrow Y_1$ and $\widehat Y_0 \hookrightarrow Y_{\frac{\widehat\alpha}{r}}$ for some $\widehat\alpha\in (0,\frac{r}{2}-1)$. This is the case if we can choose $4\frac{\widehat \alpha}{r}\le s$. In order to ensure $\widehat\alpha =  \frac{r}{4}s \in (0,\frac{r}{2}-1)$, we need to choose $r$  so that 
				$4\frac{\widehat\alpha}{r}\le s < 2-\frac{4}{r}$. Therefore, an application of~\cite[Theorem 6.3]{agrestiNonlinearParabolicStochastic2022a} ensures that 
				$$ u\in H^{\theta,r}_{\loc}(0,\sigma; \nH^{2-4\theta,q}(\sO)), \quad r\in [2,\infty),\, \theta\in [0,\frac{1}{2}).$$
				
				\emph{Step 3: Integrability in space.} Let $r$ be so large that $\frac{1}{r}+\frac{1}{4}(s+\frac{d}{q})<\frac{3}{4}$. Note that such a choice is always possible due to \eqref{eq:lwp:rough:initial:data:1}. Let $\alpha>0$ be so small that also $\frac{1+\alpha }{r}+\frac{1}{4}(s+\frac{d}{q})< \frac{3}{4}$. In addition, we assume that $\frac{1+\alpha}{r}<\frac{1}{8}$.  We will prove for all $\zeta \ge q$ that 
				\begin{equation}
					u\in \bigcap_{\theta\in [0,\frac{1}{2})}H_{\loc}^{\theta,r}(0,\sigma; \nH^{2-4\theta,\zeta}(\sO))\text{ a.s.} \quad \Longrightarrow \quad u\in \bigcap_{\theta\in [0,\frac{1}{2})} H_{\loc}^{\theta,r}(0,\sigma; \nH^{2-4\theta,\zeta+\varepsilon}(\sO)) \text{ a.s.,}
				\end{equation}
				where $\varepsilon$ is to be determined below. 
				Note that the left-hand side holds for $\zeta=q$ due to step 2. 
				For $j\in \{0,1\}$ set $X_j\ce \nH^{-2-s+4j,q}(\sO)$, $Y_j\ce \nH^{-2+4j,\zeta}(\sO)$ and $\widehat Y_j \ce \nH^{-2+4j,\zeta+\varepsilon}(\sO)$, $\widehat r\ce r, \widehat \alpha \ce \alpha$, where $\varepsilon $ only depends on $\alpha, d$, and $r$. 
				
				We will employ~\cite[Theorem 6.3]{agrestiNonlinearParabolicStochastic2022a}. Assumptions (1) and (2) in~\cite[Theorem 6.3]{agrestiNonlinearParabolicStochastic2022a} are easy to verify. First, note that 
				$Y_j\hookrightarrow X_j, Y_r^{\textup{Tr}}\hookrightarrow X_p^{\textup{Tr}}$ by Sobolev embeddings, and $u$ is suitably progressively measurable by Theorem~\ref{lwp:rough:initial:data}. This implies (1). For (2), we constructed $\widehat Y_{\widehat \alpha,\widehat r}^{\textup{Tr}}$ in such a way that it is \emph{not} critical. 
				
				To verify Assumption (3) in~\cite[Theorem 6.3]{agrestiNonlinearParabolicStochastic2022a}, first note that since $\sO $ is bounded, it holds that $\widehat{Y}_i \hookrightarrow Y_i, i\in \{0,1\}$, see the remark at the beginning of this proof. In addition, we need to make sure that $Y_r^{\textup{Tr}}\hookrightarrow \widehat Y^{\textup{Tr}}_{\widehat\alpha,\widehat r}$, i.e.\ $_\nu B_{\zeta,r}^{2-\frac{4}{r}}(\sO)\hookrightarrow \,_\nu B_{\zeta+\varepsilon,r}^{2-4\frac{1+\alpha}{r}}(\sO)$, which holds by Sobolev embeddings, provided that 
				\[2-\frac{4}{r}-\frac{d}{\zeta}\ge 2-4\frac{1+\alpha}{r}-\frac{d}{\zeta+\varepsilon} \quad \Longleftrightarrow \quad \frac{4\alpha}{dr}\ge \frac{1}{\zeta}-\frac{1}{\zeta+\varepsilon}= \frac{\varepsilon}{\zeta(\zeta+\varepsilon)}.\]
				This can be achieved by setting $\varepsilon\ce \frac{4\alpha}{dr}$. 
                Moreover,~\cite[(6.1)]{agrestiNonlinearParabolicStochastic2022a} holds by~\cite[Lemma 6.2 (1)]{agrestiNonlinearParabolicStochastic2022a}. Therefore,~\cite[Theorem 6.3]{agrestiNonlinearParabolicStochastic2022a} is applicable (uniformly in $\zeta$), which ensures that 
				\begin{equation}
					u\in \bigcap_{\theta\in [0,\frac{1}{2})} H_{\loc}^{\theta,r}(0,\sigma; \nH^{2-4\theta,\zeta+\varepsilon}(\sO)) \seq C(0,\sigma; \, _\nu B_{\zeta+\varepsilon,r}^{2-\frac{4}{r}}(\sO)).
				\end{equation}
				This finishes the proof of~\ref{it::regularization:rough:initial:data:LpLq:1}. 
                
                For~\ref{it::regularization:rough:initial:data:LpLq:2},   set $Y_j\ce X_j$ and $\widehat{Y}_j\ce \bpsn{-1+4j,q}$ in Step 2 and $Y_j\ce\bpsn{-1+4j,\zeta}, \widehat{Y}_j\ce\bpsn{-1+4j,\zeta+\varepsilon}$ in Step 3.
                However, note that in this case, the local well-posedness result Theorem~\ref{lwp:rough:initial:data} does not allow us to iterate the integrability in space up to infinity, but only up to $\zeta<d$, i.e.\ we have 
                \[u\in H_\loc^{\theta,r}(0,\sigma; \bpsn{3-4\theta, \zeta}), \quad \zeta\in [q,d), r\in (2,\infty), \theta\in [0,1/2).\]
                To obtain further integrability in space, we prove this `by hand' with a maximal regularity technique. 
                Let $\zeta\ge q$ be given and let $t_0>0$. In addition, set 
                \[F(u)\ce  \,_\nu\Delta_{-1,\zeta+\varepsilon}(u^3-u), \quad G(u)\ce  \psi(u,\nabla u).\]
                Let $(\sigma_m)_{m\ge1}$ be a localising sequence for $\sigma$. 
                Then on the set $\{\sigma_m>t_0\}\times(t_0,\sigma_m)$ we prove that 
                \begin{equation*}F(u)\in L^r(t_0,\sigma_m;\bpsn{-1,\zeta+\varepsilon}),\quad  G(u)\in L^r(t_0,\sigma_m;\gamma(\ell^2,\bpsn{1,\zeta+\varepsilon})),
                \end{equation*}where $\varepsilon$ can be chosen independently of $\zeta$. Moreover, we prove that $\1_{\sigma_m>t_0}u(t_0) \in \,_\nu B_{\zeta,r}^{3-\frac{4}{r}}(\sO) \hookrightarrow \,_\nu B_{\zeta+\varepsilon,r}^{3-4\frac{1+\alpha}{r}}(\sO)$. Since $_\nu\Delta_{-1,\zeta+\varepsilon}^2 \in \mathcal{SMR}_{r,\alpha}^\bullet$, the implication follows from stochastic maximal $L^r_{\alpha}$-regularity, combined with~\cite[Proposition 3.11] {agrestiNonlinearSPDEsMaximal2025} and the forcing terms $F(u), G(u)$. 
                We start out with the initial value. It follows from~\cite[Proposition 2.1]{agrestiNonlinearSPDEsMaximal2025} and the already proven regularity that it holds a.s.\ 
                \[ \1_{\sigma_m>t_0} u(t_0) \in \,_\nu B_{\zeta,r}^{3-4/r}(\sO) \hookrightarrow \,_\nu B^{3-4\frac{1+\alpha}{r}}_{\zeta+\varepsilon,r}(\sO),\]
                where the embedding holds with the same $\varepsilon$ as above. 
                Next, we deal with the deterministic forcing term.  Using the fact that we can start our procedure for $\zeta$ arbitrarily close to $d$, one verifies that we always have $\bpsn{3,\zeta}\hookrightarrow \bpsn{1,\zeta+\varepsilon}$, proving 
                \[\nrm{\Delta(u^3-u)}_{-1,\zeta+\varepsilon}\lesssim \nrm{u^3}_{1,\zeta+\varepsilon}+\nrm{u}_{1,\zeta+\varepsilon}\lesssim \nrm{u^3}_{1,\zeta+\varepsilon} + \nrm{u}_{3,\zeta}.\]
                It remains to estimate the first term on the right-hand side. Here we use a similar argument as in the proof of Theorem~\ref{lwp:rough:initial:data}. That is, after using the chain rule, we apply Hölder's inequality with $\frac{1}{\zeta+\varepsilon}=\frac{2}{r_1}+\frac{1}{r_2}$ and $ -\frac{d}{r_1}=1-\frac{d}{r_2}$, which yields $r_1=\frac{dr_2}{d-r_2}$ and $\frac{1}{r_2}=\frac{1}{3}(\frac{2}{d}+\frac{1}{\zeta+\varepsilon})$. This ensures that $L^{r_1}(\sO)$ and $\bpsn{1,r_2}$ have the same Sobolev index. In order to ensure that $\bpsn{3,\zeta}\hookrightarrow \bpsn{1,r_2}$, by Sobolev embeddings, it is sufficient that $\frac{8}{3d}\ge \frac{1}{\zeta}-\frac{1}{3(\zeta+\varepsilon)}$. As the right-hand side is monotonically decreasing in $\zeta$, one easily verifies that one can always choose $\varepsilon$ as above. Analogous reasoning also ensures that $\bpsn{3,\zeta}\hookrightarrow L^{3(\zeta+\varepsilon)}(\sO)$. This finally gives 
                \[\begin{aligned}
                    \nrm{ u^3}_{1,\zeta+\varepsilon}&\lesssim \nrm{u^3}_{\zeta+\varepsilon}+\nrm{\nabla (u^3)}_{\zeta+\varepsilon}\le \nrm{u}_{3(\zeta+\varepsilon)}^3 +\nrm{u^2 \nabla u}_{\zeta+\varepsilon}\\
                    &\lesssim \nrm{u}_{3,\zeta}^3 + \nrm{u}^2_{r_1}\nrm{\nabla u}_{r_2}\lesssim \nrm{u}_{3,\zeta}^3.
                \end{aligned}\]
                In particular, this entails
                \[\|u^3\|_{L^r(t_0,\sigma_m;\bpsn{1,\zeta+\varepsilon})} \lesssim \|u\|_{L^{3r}(t_0,\sigma_m;\bpsn{3,\zeta})}^3.\]
                The corresponding estimates for the noise term can be derived by the same calculations by using the growth assumptions arising from~\eqref{eq:assumption:hg0:noise}, \eqref{Assumption:hg3:noise:0}, and \eqref{Assumption:hg3:noise:4}. Letting $m\to\infty$ finishes the proof of~\ref{it::regularization:rough:initial:data:LpLq:2} in case of Assumption~\ref{assumption:hg3}. 
                
                For the setting from Assumption~\ref{assumption:hg4}, the situation is more delicate: all estimates but one carry over verbatim. The problematic term is the estimate for $\nrm{|u|D^2u}_{L^{\zeta+\varepsilon}(\sO)}$. In the proof of Theorem~\ref{lwp:rough:initial:data}, we partially absorbed $|u|$ into the trace space. Here, we will combine this with a stopping time argument.
				 
				Fix $r$ large enough, let $\zeta\in [2,d)$, and let $0<t_0<T<\infty$ be arbitrary. 
				Choose a localising sequence $(\sigma_m)_{m\in \N} $ for $\sigma$. On $\{\sigma_m>t_0\}$ define 
                $$N_m(t)\ce  \|u\|_{C([t_0,t\wedge\sigma_m];\,_\nu B_{\zeta,r}^{3-4/r})}+ \|u\|_{L^r(t_0,t\wedge\sigma_m;\bpsn{3,\zeta})}, \quad m\in \N, t\ge t_0.$$
				For $n\ge 1$ let $\tau_{m,n}\ce \sigma_m\wedge T \wedge \inf\{ t\ge t_0 \colon N_m(t)\ge n\}$ on $\{\sigma_m>t_0\}$ and $\tau_{m,n}\ce t_0$ on $\{\sigma_m\le t_0\}$. Note that $N_m$ is the sum of a stochastic process with continuous adapted paths and an adapted continuous increasing process. Therefore, $\tau_{m,n}, m,n\in \N$, are stopping times. Moreover, $\tau_{m,n}\nearrow \sigma_m\wedge T$ on $\{\sigma_m >t_0\}$. 
				
				Next, choose $\xi>\zeta$ such that $\frac{1}{\zeta}-\frac{1}{\xi}<\frac{1}{d}$. Then, if $r>8$ and $\zeta$ is chosen sufficiently close to $4$, it holds that $3-\frac{4}{r}-\frac{d}{\zeta}>1$, which ensures that $\,_\nu B^{3-4/r}_{\zeta,r}(\sO)\hookrightarrow W^{1,\infty}(\sO)$. This in turn ensures that on $(t_0,\tau_{m,n})$ we have  
				$$\nrm{u}_{W^{1,\infty}(\sO)}\lesssim \nrm{u}_{\,_\nu B^{3-4/r}_{\zeta,r}(\sO)} \le n, \quad \nrm{u}_{L^r(t_0,\tau_{m,n};\bpsn{3,\zeta})}\le n.$$ 
				Due to our choice of $\xi$, it is ensured that $\bpsn{3,\zeta}\hookrightarrow \bpsn{2,\xi}$. 
				With that, one can check that 
				\[\nrm{|u(t)|D^2u(t)}_{L^{\xi}(\sO)} \lesssim_n \nrm{u(t)}_{\bpsn{3,\zeta}}.\]
				Since $_\nu\Delta_{-1,\xi}^2 \in \mathcal{SMR}_{r,\alpha}^\bullet$, it follows from stochastic maximal $L^r_{\alpha}$-regularity, combined with~\cite[Proposition 3.11] {agrestiNonlinearSPDEsMaximal2025} that  there exists a $v$ solving the equation with
				$v\in H^{\theta,r}(t_0,\tau_{m,n}, w_\alpha^{t_0}; \bpsn{3-4\theta, \xi})$. Therefore,  $v$ and $\1_{\{\tau_{m,n}>t_0\}} u$ both belong to $L^r(t_0,\tau_{m,n},w_\alpha^{t_0};\bpsn{3,\zeta})$. Moreover, since   $(\,_\nu\Delta_{\alpha,\zeta}^2)_{\alpha\in(-4,0],\zeta\in(1,\infty)}$ is a consistent family of operators, it holds that $_\nu\Delta_{-1,\xi}^2 u = \,_\nu\Delta_{-1,\zeta}^2u$. Thus, by uniqueness, it holds that 
				$$v=u \quad \text{on}\quad \{\tau_{m,n}>t_0\}\times(t_0,\tau_{m,n}) \quad \text{a.s.}.$$
				Now, for any $t_1>t_0 $ it holds for $t\ge t_1$ that $w_\alpha^{t_0}\eqsim 1$, which proves that 
				$$u\in \bigcap_{\theta\in [0,1/2)} H^{\theta,r}(t_1,\tau_{m,n}; \bpsn{3-4\theta, \xi}).$$
				Since $0<t_0<t_1$ were arbitrary, letting $m,n\to\infty$ thus yields the claim. 
                This finishes the proof. 
	\end{proof}
    \begin{remark} 
The proof above is in part formulated in terms of the abstract bootstrapping machinery developed in~\cite{agrestiNonlinearParabolicStochastic2022a}. Roughly speaking, one has to ensure sufficient flexibility for local well-posedness along a scale of function spaces with suitable embeddings. In the present setting, these embeddings are provided by Sobolev embeddings, whose scaling is linear. This makes it possible to treat several settings simultaneously in a unified way.

We emphasise that the boundedness of $\sO$ is used explicitly. Indeed, in order to bootstrap spatial integrability by means of the techniques from~\cite{agrestiNonlinearParabolicStochastic2022a}, this assumption is required to ensure $\widehat{Y}_i\hookrightarrow Y_i$. One possible way to circumvent this restriction is to use more explicit arguments based on stochastic maximal regularity estimates and term-by-term calculations, as in the last part of the proof above.
Another restriction for these techniques is --- as mentioned above --- that they are only applicable in settings where local well-posedness is available. It is, however, often possible to obtain further regularity by means of maximal regularity techniques. The proof above showcases such a scenario.

One additional advantage of the approach in~\cite{agrestiNonlinearParabolicStochastic2022a} is that it yields regularisation in the critical case $p=2,\kappa=0$. This is noteworthy, since temporal regularisation is not available in this setting. However, the proof of Lemma~\ref{thm:regularization:rough:initial:data:LpLq} relies crucially on the availability of flexible temporal regularity with weights. The key idea is to show that the solution also exists in a less regular, but subcritical, setting. This then gives temporal regularity, which ultimately makes it possible to apply the techniques described above.
   \end{remark}
	Next, we prove a corresponding result for the case $p=q=2$ and $s=\kappa=0$. 
	\begin{lemma}[Regularity for $p=q=2, s=\kappa=0$]\label{thm:regularization:rough:initial:data:L2L2}
		Let either $d\in \{1,2\}$ and $X_0=\bpsn{-2}, X_1=\bpsn{2}$ or  $d\in \{3,4\}$ and $X_0=\bpsn{-1}, X_1=\bpsn{3}$.
 Let $u_0\in L^0_{\F_0}(\Omega; X_{1/2})$, and let $(u,\sigma)$ be the $(2,0,0,2)$-solution provided by Theorem~\ref{lwp:rough:initial:data}. Then, 
		\begin{itemize}
			\item if $d\in \{1,2\}$ and $X_0=\bpsn{-2}, X_1=\bpsn{2}$, it holds that 
				\begin{align}
				u&\in H_{\loc}^{\theta,r}(a,\sigma; \nH^{2-4\theta,\zeta}(\sO))\quad \text{a.s.\ for all }\theta\in[0,1/2),\quad r,\zeta\in(2,\infty),\\
				u&\in C_{\loc}^{\theta_1,\theta_2}((a,\sigma)\times\sO) \quad \text{ a.s.\ for all }\theta_1\in (0,1/2),\quad  \theta_2\in (0,2).
			\end{align}
			\item if $d\in \{3,4\}$ and $X_0=\bpsn{-1}, X_1=\bpsn{3}$, it holds that 
			\begin{align}
				u&\in H_{\loc}^{\theta,r}(a,\sigma; \nH^{3-4\theta,\zeta}(\sO))\quad \text{a.s.\ for all }\theta\in[0,1/2),\quad r,\zeta\in(2,\infty),\\
				u&\in C_{\loc}^{\theta_1,\theta_2}((a,\sigma)\times\sO) \quad \text{ a.s.\ for all }\theta_1\in (0,1/2),\quad \theta_2\in (0,3).
			\end{align}
		\end{itemize}
	\end{lemma}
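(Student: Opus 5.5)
The plan is to reduce the critical case $p=q=2$, $s=\kappa=0$ to the already established regularisation for $p>2$ in Lemma~\ref{thm:regularization:rough:initial:data:LpLq}. The key point is that in the Hilbert setting there is no temporal smoothing with weights. So I first show that the $(2,0,0,2)$-solution is, after any positive time, also a solution in a less regular but subcritical $L^p$-setting with $p>2$. Concretely, I fix $t_0\in(a,\sigma)$ and a localising sequence $(\sigma_m)$. Since $u\in C([a,\sigma_m];X_{1/2})$, we have $\1_{\{\sigma_m>t_0\}}u(t_0)\in X_{1/2}$, which is $\bpsn{0}$ for $d\le 2$ and $\bpsn{1}$ for $d\in\{3,4\}$.

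\emph{Step 1 (embedding the initial datum into a subcritical trace space).} I choose $p>2$, $\kappa\in[0,p/2-1)$, $s$ and $q$ admissible in case~\ref{it:lwp:rough:initial:data:1} (respectively~\ref{it:lwp:rough:initial:data:3}) of Theorem~\ref{lwp:rough:initial:data} with $q=2$. I take $s$ small and $\kappa$ close to $p/2-1$, so that the strict inequality holds in~\eqref{eq:lwp:rough:initial:data:1}, respectively~\eqref{eq:lwp:rough:initial:data:3}. I also arrange $\bpsn{k}\hookrightarrow {}_\nu B^{k+2-s-4(1+\kappa)/p}_{2,p}$ for $k=0,1$; this holds as soon as $2-s-4(1+\kappa)/p<0$, since then $H^k\hookrightarrow B^{k-\delta}_{2,p}$ for $\delta>0$. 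Note that $4(1+\kappa)/p$ can be made close to $2$ while $(1+\kappa)/p\le \frac34-\frac14(s+\frac d2)$ (resp.\ $1-\frac14(s+\frac d2)$) still holds. This works for $d\le 2$ in the weak setting and for $d\le 4$ in the weakened setting, with $s>0$ tiny. The noise assumptions of Section~\ref{sec:variational:lwp} imply Assumption~\ref{Assumption:noise}\ref{assumption:hg2} (resp.\ \ref{assumption:hg4}), after checking the growth bounds (linear growth implies the cubic ones). Theorem~\ref{lwp:rough:initial:data} then gives a $(p,\kappa,s,2)$-solution $(v,\tau)$ starting at $t_0$ from $u(t_0)$.

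\emph{Step 2 (identification).} I show $u=v$ on $[t_0,\tau\wedge\sigma)$. Both solve the same equation in the common larger space $X_0'=\bpsn{-2-s}$ (resp.\ $\bpsn{-1-s}$), by consistency of the operators ${}_\nu\Delta^2_{s,q}$. The Hilbert solution $u$ lies in $L^2(t_0,\sigma_m;X_1)\cap C(X_{1/2})$. By the interpolation/Sobolev embedding used in Lemma~\ref{lem:cubicIsInH}, this yields $u\in L^p(t_0,\sigma_m,w^{t_0}_\kappa;\bpsn{2-s})$ locally after further stopping, because $X_{1/2}$ embeds into the subcritical trace space. Thus $u$ is an $L^p_\kappa$-local solution, and maximality/uniqueness of $(v,\tau)$ gives $u=v$ and $\sigma\le\tau$. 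The reverse inequality comes from the blow-up criteria in~\cite{agrestiNonlinearParabolicStochastic2022a}, or alternatively from the abstract transference result~\cite[Theorem 6.3]{agrestiNonlinearParabolicStochastic2022a} with $\widehat Y$ the Hilbert setting, whose assumption~(6.1) is handled by~\cite[Lemma 6.2]{agrestiNonlinearParabolicStochastic2022a}. \textbf{This identification step is the main obstacle.} The Hilbert solution has no weighted $L^p$ time integrability a priori, so I would first try to obtain it from the embedding $L^\infty(X_{1/2})\cap L^2(X_1)\hookrightarrow L^p(w_\kappa;X_{1-\theta})$ via~\cite[Lemma 4.3]{agrestiNonlinearSPDEsMaximal2025}. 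If that is insufficient, I would invoke~\cite[Theorem 6.3]{agrestiNonlinearParabolicStochastic2022a} directly.

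\emph{Step 3 (conclusion).} Apply Lemma~\ref{thm:regularization:rough:initial:data:LpLq}\ref{it::regularization:rough:initial:data:LpLq:1} (resp.\ \ref{it::regularization:rough:initial:data:LpLq:2}) to $v$ on $(t_0,\tau)$. Since $t_0$ is arbitrary, this gives the $H^{\theta,r}_\loc(\nH^{2-4\theta,\zeta})$ (resp.\ $\nH^{3-4\theta,\zeta}$) regularity for all $\zeta\in[2,\infty)$ and $r>2$. The Hölder statements follow by Sobolev embedding: from $H^{\theta,r}(\nH^{k-4\theta,\zeta})\hookrightarrow C^{\theta-1/r}(C^{k-4\theta-d/\zeta})$, letting $r,\zeta\to\infty$ and varying $\theta$ gives every $\theta_1<1/2$ and $\theta_2<k$. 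For $\theta_2\ge1$, one uses derivatives, combining small $\theta$ for space with time regularity from other $\theta$ through the mixed-derivative/interpolation embedding.
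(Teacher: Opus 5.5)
\textbf{There is a genuine gap: Step~1 fails in the critical cases $d=2$ (with $X_1=\bpsn{2}$) and $d=4$ (with $X_1=\bpsn{3}$), which are exactly where the lemma has content.} For $q=2$ and $d=2$, condition~\eqref{eq:lwp:rough:initial:data:1} reads $\frac{1+\kappa}{p}\le\frac12-\frac s4$. Equivalently, the trace smoothness satisfies $2-s-4\frac{1+\kappa}{p}\ge 0$, with equality precisely in the critical case. For $d=4$, \eqref{eq:lwp:rough:initial:data:3} likewise gives $3-s-4\frac{1+\kappa}{p}\ge 1$.

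So strict subcriticality and your requirement $2-s-4\frac{1+\kappa}{p}<0$ cannot hold together. The reason is scaling: $L^2$ in $d=2$ and $H^1$ in $d=4$ are themselves critical, so no strictly subcritical trace space contains them.

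Your identification step fails for the same reason. Interpolation, $\nrm{u}_{\bpsn{2-s}}\le\nrm{u}_{L^2(\sO)}^{s/2}\nrm{u}_{\bpsn{2}}^{1-s/2}$ (resp.\ with $H^1$ and $\bpsn{3}$), turns $L^\infty(X_{1/2})\cap L^2(X_1)$ into $L^p(\bpsn{2-s})$ only for $p\le\frac{4}{2-s}$. The weight $w^{t_0}_\kappa$ helps only near $t_0$. Hence with $\kappa$ close to $p/2-1$ you cannot show that $u$ is an $L^p_\kappa$-local solution.

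Falling back on \cite[Theorem 6.3]{agrestiNonlinearParabolicStochastic2022a} with the Hilbert setting does not close this either. That bootstrap relies on temporal regularity which, as the paper stresses, is not available for $p=2$ in the critical case. This is why the paper goes through \cite[Proposition 6.8]{agrestiNonlinearParabolicStochastic2022a} and then \cite[Corollary 6.5]{agrestiNonlinearParabolicStochastic2022a}.

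For $d\in\{1,3\}$ your route does work, e.g.\ with $\kappa=0$ and $2<p<\frac{4}{2-s}$. There the paper simply uses subcritical temporal regularisation from \cite[Proposition 5.9]{agrestiNonlinearSPDEsMaximal2025}.

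\textbf{How to repair the critical cases.} Give up subcriticality rather than the embedding. Take $q=2$, $s\in(0,1)$, $\kappa=0$ and $p=\frac{4}{2-s}>2$. Then:
\begin{itemize}
\item equality holds in \eqref{eq:lwp:rough:initial:data:1} (resp.\ \eqref{eq:lwp:rough:initial:data:3});
\item the trace space is ${}_\nu B^{0}_{2,p}(\sO)\supseteq L^2(\sO)$ (resp.\ ${}_\nu B^{1}_{2,p}(\sO)\supseteq H^1(\sO)$), because $p>2$;
\item since $p(1-\frac s2)=2$, the interpolation above gives exactly $u\in L^p(a,\sigma_m;\bpsn{2-s})\cap C([a,\sigma_m];{}_\nu B^{0}_{2,p}(\sO))$ (resp.\ with $\bpsn{3-s}$ and ${}_\nu B^{1}_{2,p}(\sO)$).
\end{itemize}
By consistency of the operators, $u$ is then an $L^p_0$-local solution of the shifted problem. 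It therefore coincides on $[a,\sigma)$ with the maximal $(p,0,s,2)$-solution $(v,\tau)$, and $\sigma\le\tau$. Lemma~\ref{thm:regularization:rough:initial:data:LpLq} applies to $(v,\tau)$, since it allows equality in the criticality condition for $p>2$, and this yields the claim. Neither the restart at $t_0$ nor the reverse inequality $\tau\le\sigma$ is needed.

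This is the same mechanism as in the paper: its relation $\frac12=\frac{1+\alpha}{r}+\frac{\varepsilon}{4}$ in \cite[Proposition 6.8]{agrestiNonlinearParabolicStochastic2022a} is exactly the critical relation for the $\varepsilon$-shifted setting. The repaired version of your argument replaces that proposition, and the subsequent temporal bootstrap via \cite[Corollary 6.5]{agrestiNonlinearParabolicStochastic2022a}, by an explicit interpolation identity, using Lemma~\ref{thm:regularization:rough:initial:data:LpLq} as a black box.
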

	\begin{proof}
		If $d=1$, $X_0=\bpsn{-2}$, $ X_1=\bpsn{2}$ or $d=3$, $ X_0=\bpsn{-1}$, $ X_1=\bpsn{3}$, we are in the subcritical regime and a reiteration of Steps 1 and 3 in the proof of Lemma~\ref{thm:regularization:rough:initial:data:LpLq} is applicable, where the temporal regularisation follows from~\cite[Proposition 5.9]{agrestiNonlinearSPDEsMaximal2025}. 
		
		Next, let $d=2 $ and  $X_0=\bpsn{-2}, X_1=\bpsn{2}$. We will find $\varepsilon>0$ such that 
		\begin{equation}\label{eq:regularization:rough:initial:data:L2L2:proof:1}
			u\in H_{\loc}^{\theta,r}(a,\sigma; \bpsn{2-\varepsilon-4\theta}) \,\text{ a.s.}, \quad \theta\in [0,1/2), \quad r\in (2,\infty).
		\end{equation}
		To this end, we employ~\cite[Proposition 6.8]{agrestiNonlinearParabolicStochastic2022a}(applied with $Y_i \curvearrowleft \bpsn{-2+4i-\varepsilon}$, $X_i \curvearrowleft \bpsn{-2+4i}$, $p\curvearrowleft2$, $\rho_j\curvearrowleft 2, \varphi_j\curvearrowleft \frac12+\frac16(s+\frac{d}{q})=\frac{1}{2}+\frac{d}{12}$, $\delta\curvearrowleft\frac{\varepsilon}{4}$ in the notation of~\cite[Proposition 6.8]{agrestiNonlinearParabolicStochastic2022a}). 
		Upon choosing $\varepsilon>0$ and $r>2$ sufficiently close to $2$ (to be precise,~\cite[Proposition 6.8]{agrestiNonlinearParabolicStochastic2022a} requires $\delta\in (0,1-\max_j \varphi_j)=(0,\frac12-\frac{d}{12})$, i.e.\ $0<\varepsilon< 2-\frac{d}{3}$; moreover, it is required that $\frac1r\ge \frac{d}{12}$, i.e.\ $r\le 6$ if $d=2$), 
        so that $\frac{1}{2}=\frac{1+\alpha}{r}+\frac{\varepsilon}{4}$ with  some $0<\alpha<r/2-1$, 
		\cite[Proposition 6.8]{agrestiNonlinearParabolicStochastic2022a} proves 
        \begin{equation}
            u\in H_{\loc}^{\theta,r}(a,\sigma; \bpsn{2-\varepsilon-4\theta})\,\text{ a.s.}, \quad \theta\in [0,1/2). 
        \end{equation}
        Next, we may further bootstrap temporal regularity by employing~\cite[Corollary 6.5]{agrestiNonlinearParabolicStochastic2022a}(applied with the same choices for $Y_i, X_i, p, \rho_j, \varphi_j, \delta$ in~\cite[Corollary 6.5]{agrestiNonlinearParabolicStochastic2022a} as above). Note that, in the notation of~\cite[Corollary 6.5]{agrestiNonlinearParabolicStochastic2022a}, in this case we have  $X_{1-\delta-\theta}=Y_{1-\theta}$ and that $$\begin{aligned}
            Y_r^{\textup{Tr}} &=(Y_0,Y_1)_{1-\frac1r,r}\hookrightarrow (X_0,X_{1-\delta})_{1-\frac1r,r}=(X_0,X_1)_{(1-\delta)(1-\frac{1}{r}),r}\\&\hookrightarrow (X_0,X_1)_{1-\delta-\frac1r,r}=(X_0,X_1)_{1-\frac1p+\frac{\alpha}{r},r}\hookrightarrow X_p^{\textup{Tr}}.
        \end{aligned}$$
        This shows that we meet the first assumption of~\cite[Corollary 6.5]{agrestiNonlinearParabolicStochastic2022a}. 
        Next, we verify that the second assumption is met. Since the  operator $1+\,_\nu\Delta^2_{-2-\varepsilon,2}$ has a bounded $H^\infty$-calculus of angle $<\frac{\pi}{2}$ and is in particular  time-independent, it is ensured that~\cite[Assumption 4.5]{agrestiNonlinearParabolicStochastic2022a} holds for all $\widehat{r}\in [r,\infty), \widehat{\alpha}\in [0,\frac{\widehat{r}}{2}-1)$. Further, the estimates in the proof of Theorem~\ref{lwp:rough:initial:data} ensure that~\cite[Assumption 4.7]{agrestiNonlinearParabolicStochastic2022a} holds for all $\widehat{r}\in [r,\infty), \widehat{\alpha}\in [0,\frac{\widehat{r}}{2}-1)$.
        This proves the applicability of~\cite[Corollary 6.5]{agrestiNonlinearParabolicStochastic2022a}, which now ensures that
		\begin{equation}
			u\in H_{\loc}^{\theta,\widehat r}(a,\sigma; \bpsn{2-\varepsilon-4\theta})\,\text{ a.s.},\quad \theta\in [0,1/2), \quad \widehat{r}\in [2,\infty). 
		\end{equation}
		After this, we may bootstrap further regularity with the techniques employed in the proof of Lemma~\ref{thm:regularization:rough:initial:data:LpLq}. This proves 
		\begin{equation}
			u\in \bigcap_{\theta\in [0,\frac{1}{2})} H_{\loc}^{\theta,r}(a,\sigma; \bpsn{2-4\theta,\zeta}) \quad \text{a.s.\ for all } r,\zeta\in (2,\infty). 
		\end{equation}
        For the case $d=4$ and $X_0=\bpsn{-1}$, $X_1=\bpsn{3}$ we argue similarly.
        Remarkably, even though the criticality condition~\eqref{eq:lwp:rough:initial:data:3} differs, the choice of $\rho_j$ does not. 
        For the application of~\cite[Theorem 6.8]{agrestiNonlinearParabolicStochastic2022a}, we choose $\varphi_j\curvearrowleft \frac{1}{3}+\frac{1}{6}(s+\frac{d}{q})=\frac{2}{3}$, and thus the choice of $\delta$ is still $\delta\in (0,1-\max_j \varphi_j)=(0,1/3)$. Moreover, it is again required that $r\le 6$. 
	\end{proof}
    
	\subsection{Global well-posedness: transference of blow-up criteria}
    The next result transfers the natural blow-up criteria associated to the $(p,\kappa,s,q)$-solutions $(u,\sigma)$ into `better' spaces. To be more precise, we prove that as a consequence of our regularisation results Theorem~\ref{thm:regularization:rough:initial:data:LpLq} and Theorem~\ref{thm:regularization:rough:initial:data:L2L2} above, the $(p,\kappa,s,q)$-solution inherits the blow-up criterion from the $(2,0,0,2)$-solution. Blow-up criteria for the latter are often easier to verify. 
    As a consequence of this and Theorem~\ref{thm:GWP:weak:setting}, Theorem~\ref{thm:global_well_posedness_dim_3}, and Theorem~\ref{thm:global_well_posedness_dim_4-complete}, we obtain global existence in Corollary~\ref{cor:GWP:rough} in several settings. The technique employed in Theorem~\ref{gwp:rough:initial:data} is similar to~\cite[Theorem 2.10]{agresti2023reactiondiffusion_local}, where reaction--diffusion systems are treated. 
	\begin{theorem}[Transference of blow-up criteria]\label{gwp:rough:initial:data}
		Let $p\in (2,\infty),\kappa\in [0,p/2-1),q\in [2,\infty)$. Assume that one of the following settings holds:
		\begin{enumerate}
			\item\label{it:gwp:rough:initial:data:1} $d\in \{1,2\}, X_0=\bpsn{-2-s,q}$, $X_1=\bpsn{2-s,q}$, $s\in [0,2)$,~\eqref{eq:lwp:rough:initial:data:1},~\eqref{eq:lwp:rough:initial:data:2}, and Assumption~\ref{Assumption:noise}\ref{assumption:hg1} hold.
			\item\label{it:gwp:rough:initial:data:2} $d\in \{1,2\}$, $ X_0=\bpsn{-2-s,q}$, $ X_1=\bpsn{2-s,q}$, $ s\in [0,1)$,~\eqref{eq:lwp:rough:initial:data:1},~\eqref{eq:lwp:rough:initial:data:2}, and Assumption~\ref{Assumption:noise}\ref{assumption:hg2} hold.
			\item\label{it:gwp:rough:initial:data:3} $d\in \{3,4\}$, $ X_0=\bpsn{-1-s,q}$, $ X_1=\bpsn{3-s,q}$, $ s\in [0,2)$,~\eqref{eq:lwp:rough:initial:data:3},~\eqref{eq:lwp:rough:initial:data:4}, and Assumption~\ref{Assumption:noise}\ref{assumption:hg3} hold.
            \item\label{it:gwp:rough:intial:data:4} $d\in \{3,4\}$,  $X_0=\bpsn{-1-s,q}$, $ X_1=\bpsn{3-s,q}$, $ s\in [0,1)$,~\eqref{eq:lwp:rough:initial:data:3},~\eqref{eq:lwp:rough:initial:data:4},
            and Assumption~\ref{Assumption:noise}\ref{assumption:hg4} hold. 
		\end{enumerate}
		
		\begin{itemize}
				\item 	Assume~\ref{it:gwp:rough:initial:data:1} or~\ref{it:gwp:rough:initial:data:2} with $d=1$ holds, let  $u_0\in L^0_{\F_0}(\Omega;\,_\nu B_{q,p}^{2-s-4\frac{1+\kappa}{p}}(\sO))$, and let $(u,\sigma)$ be the $(p,\kappa,s,q)$-solution granted by Theorem~\ref{lwp:rough:initial:data}. Then for all  $0<t_0<T<\infty$ it holds that
				\begin{equation}
				\P\bigg(t_0<\sigma<T, \sup_{t\in [t_0,\sigma)}\nrm{u(t)}_{L^2(\sO)}<\infty\bigg)=0.
				\end{equation}
                \item 	Assume~\ref{it:gwp:rough:initial:data:1} or~\ref{it:gwp:rough:initial:data:2} with $d=2$ holds, let  $u_0\in L^0_{\F_0}(\Omega; \,_\nu B_{q,p}^{2-s-4\frac{1+\kappa}{p}}(\sO))$, and let $(u,\sigma)$ be the $(p,\kappa,s,q)$-solution granted by Theorem~\ref{lwp:rough:initial:data}. Then for all  $0<t_0<T<\infty$ it holds that
				\begin{equation}
				\P\bigg(t_0<\sigma<T, \sup_{t\in [t_0,\sigma)}\nrm{u(t)}_{L^2(\sO)}+\nrm{u}_{L^2(t_0,\sigma; H^2(\sO))}<\infty\bigg)=0.
				\end{equation}
				\item Assume~\ref{it:gwp:rough:initial:data:3} or~\ref{it:gwp:rough:intial:data:4} with $d=3$  holds, let $u_0\in L^0_{\F_0}(\Omega; \,_\nu B_{q,p}^{3-s-4\frac{1+\kappa}{p}}(\sO))$, and let $(u,\sigma)$ be the $(p,\kappa,s,q)$-solution to \eqref{SCH} granted by Theorem~\ref{lwp:rough:initial:data}. Then for all $0<t_0<T<\infty$ it holds that 
				\begin{equation}
					\P\bigg(t_0<\sigma<T, \sup_{t\in [t_0,\sigma)}\nrm{u(t)}_{\bpsn{1}}<\infty\bigg)=0.
				\end{equation}
                \item Assume~\ref{it:gwp:rough:initial:data:3} or~\ref{it:gwp:rough:intial:data:4} with $d=4$ holds, let $u_0\in L^0_{\F_0}(\Omega;\,_\nu B_{q,p}^{3-s-4\frac{1+\kappa}{p}}(\sO))$, and let $(u,\sigma)$ be the $(p,\kappa,s,q)$-solution to \eqref{SCH} granted by Theorem~\ref{lwp:rough:initial:data}. Then for all $0<t_0<T<\infty$ it holds that 
				\begin{equation}
					\P\bigg(t_0<\sigma<T, \sup_{t\in [t_0,\sigma)}\nrm{u(t)}_{\bpsn{1}}+\nrm{u}_{L^2(t_0,\sigma; \bpsn{3})}<\infty\bigg)=0.
				\end{equation}
		\end{itemize}
	\end{theorem}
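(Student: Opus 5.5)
The plan is to compare the $(p,\kappa,s,q)$-solution on $[t_0,\sigma)$ with the $(2,0,0,2)$-solution started at time $t_0$ from $u(t_0)$, and to transfer the $L^2$-blow-up criterion to it. I describe the case $d\in\{3,4\}$, settings \ref{it:gwp:rough:initial:data:3}/\ref{it:gwp:rough:intial:data:4}; the case $d\in\{1,2\}$ is identical with $(\bpsn{-2},\bpsn{2})$ and Theorem~\ref{thm:GWP:weak:setting} in place of $(\bpsn{-1},\bpsn{3})$ and Theorem~\ref{thm:localWP_weakened}.

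\textbf{Step 1 (regular restart).} Fix $0<t_0<T$. By Lemma~\ref{thm:regularization:rough:initial:data:LpLq}\ref{it::regularization:rough:initial:data:LpLq:2},
\[
u\in H^{\theta,r}_\loc(0,\sigma;\nH^{3-4\theta,\zeta}(\sO))
\]
for all $\zeta\ge q$, $r>2$ and $\theta\in[0,1/2)$. Hence $u\in C((0,\sigma);\bpsn{1})\cap L^2_\loc((0,\sigma);\bpsn{3})$ a.s. In particular $v_0:=\1_{\{\sigma>t_0\}}u(t_0)\in L^0_{\F_{t_0}}(\Omega;\bpsn{1})$, which is exactly the trace space $X_{1/2}$ of the $(2,0,0,2)$-setting. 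One must also check that Assumption~\ref{Assumption:noise}\ref{assumption:hg3}/\ref{assumption:hg4} puts us in the framework of Theorem~\ref{lwp:rough:initial:data} ii) with $a=t_0$. I expect this to hold up to the growth exponents, since the local well-posedness proof covers $p=q=2$, $s=\kappa=0$.

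\textbf{Step 2 (identification).} Let $(v,\tau)$ be the $(2,0,0,2)$-solution on $[t_0,\infty)$ with $v(t_0)=v_0$. I claim $\tau\ge\sigma$ and $v=u$ on $[t_0,\sigma)\cap\{\sigma>t_0\}$.
\begin{itemize}
\item First, $u|_{[t_0,\sigma)}$ is an $L^2_0$-local solution in the $(\bpsn{-1},\bpsn{3})$-setting. The localising sequence $\sigma_n\wedge\inf\{t\ge t_0:\|u\|_{L^2(t_0,t;\bpsn3)}+\sup_{[t_0,t]}\|u\|_{\bpsn1}\ge n\}$ makes the regularity of Step 1 sufficient.
\item The operators coincide on $\bpsn{3,q}\cap\bpsn{3}$ by consistency of the family ${}_\nu\Delta^2_{s,q}$, so $u$ solves the same integral identity in the larger space $\bpsn{-1}\supseteq\bpsn{-1-s,q}$.
\item Maximality of $(v,\tau)$ then gives $\sigma\le\tau$ and $u=v$ on $[t_0,\sigma)$.
\item Conversely, $v$ regularises into the $(p,\kappa,s,q)$-scale by Lemma~\ref{thm:regularization:rough:initial:data:L2L2}. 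Since $q\ge2$ and $\sO$ is bounded, it is therefore a local solution in the $X$-setting started at $t_0+\varepsilon$. Maximality of $u$ restarted at positive times (the restarted solution agrees with $u$ by uniqueness) gives $\tau\le\sigma$.
\end{itemize}
Thus $\sigma=\tau$ on $\{\sigma>t_0\}$.

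\textbf{Step 3 (conclusion).} On the event $\{t_0<\sigma<T,\ \sup_{[t_0,\sigma)}\|u\|_{\bpsn1}<\infty\}$ we have $\tau=\sigma<\infty$ and $\sup_{[t_0,\tau)}\|v\|_{H^1}<\infty$. For $d=3$, the blow-up criterion \eqref{eq:blowUpSubcritd3} of Theorem~\ref{thm:localWP_weakened}, applied with starting time $t_0$ (it is time-shift invariant, since the noise assumptions hold uniformly in time), gives probability zero. For $d=4$ one instead uses \eqref{eq:blowUp_bare} with the additional $L^2(t_0,\sigma;\bpsn3)$-term.

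\textbf{Main obstacle.} I expect the hardest part to be the reverse inequality $\tau\le\sigma$ in Step 2. Transporting $v$ back into the rougher $(p,\kappa,s,q)$-framework needs that $v$ enjoys the full $H^{\theta,p}(w_\kappa;X_{1-\theta})$ regularity near $\tau$. My first attempt would sidestep this: I would not prove equality at all, and instead show directly that on $\{\sigma<\tau\}$ the maximal solution $u$ could be extended. Since $v$ is bounded in $C([t_0,\sigma];\nH^{3-4/r,\zeta})$ there by Lemma~\ref{thm:regularization:rough:initial:data:L2L2} on compacts of $[t_0,\tau)$, this contradicts the blow-up criterion for $(p,\kappa,s,q)$-solutions from \cite[Theorem 4.10]{agrestiNonlinearParabolicStochastic2022a}.
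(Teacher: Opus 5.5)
Your architecture matches the paper's. You regularise $u$ with Lemma~\ref{thm:regularization:rough:initial:data:LpLq} and restart at $t_0$ with the $(2,0,0,2)$-solution $(v,\tau)$ of Theorem~\ref{lwp:rough:initial:data}. You then get $\sigma\le\tau$ and $u=v$ on $[t_0,\sigma)$ from maximality of $(v,\tau)$, show $\tau=\sigma$ on $\{\sigma>t_0\}$, and read the criterion off $v$. Three points need repair.

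\textbf{The reverse inequality $\tau\le\sigma$.} Your ``sidestep'' is exactly how the paper proves it, and it does establish equality rather than avoid it. On $\{t_0<\sigma<\tau\}$, the regularity of $v$ on compacts of $(t_0,\tau)$ passes to $u=v$ up to $\sigma$. Combined with the regularity of $u$ itself on $[0,t_0+\varepsilon]$, this makes $\sup_{[0,\sigma)}\|u\|_{X_{1-\frac{1+\kappa}{p},p}}+\|u\|_{L^p(0,\sigma;X_{1-\kappa/p})}$ finite, contradicting \cite[Theorem~4.10]{agrestiNonlinearParabolicStochastic2022a}. The split is needed because Lemma~\ref{thm:regularization:rough:initial:data:L2L2} says nothing at $t_0$, so you cannot bound $v$ on all of $[t_0,\sigma]$ as you wrote.

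Your primary route also works, and the obstacle you anticipate is not real. To be a local solution on $[t_0+\varepsilon,\tau)$ in the $X$-setting, $v$ needs $L^p(X_1)\cap C(X_{1-\frac{1+\kappa}{p},p})$-regularity only up to the times of a localising sequence. Lemma~\ref{thm:regularization:rough:initial:data:L2L2} supplies this with $\zeta\ge q$, $r\ge p$ and $\sO$ bounded; nothing near $\tau$ is required.

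What is missing is a comparison of lifetimes. Uniqueness of the solution restarted at $t_0+\varepsilon$ only identifies paths on common intervals; it does not show that this restarted maximal solution dies at $\sigma$. Glue instead:
\begin{itemize}
\item On $\{\sigma>t_0+\varepsilon\}\in\F_{t_0+\varepsilon}$, concatenate $u|_{[0,t_0+\varepsilon]}$ with $v|_{[t_0+\varepsilon,\tau)}$, taking lifetime $\tau$ there and $\sigma$ on the complement. This is a stopping time.
\item The result is an $L^p_\kappa$-local solution of the original problem, so maximality of $(u,\sigma)$ gives $\tau\le\sigma$ on that event.
\item Let $\varepsilon\downarrow0$.
\end{itemize}
This uses only the definition of maximality, not the critical-case blow-up criterion for $u$ that the paper invokes.

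\textbf{The inclusion $\bpsn{-1}\supseteq\bpsn{-1-s,q}$.} This is false for $s>0$. Argue instead that the integral identity holds in a common ambient space. By the regularity of $u$ on $[t_0,\sigma_n]$, every term lies in $\bpsn{-1}$, respectively $\gamma(\ell^2,\bpsn{1})$.

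\textbf{Step~3 cites results whose hypotheses you do not have.} Theorem~\ref{thm:localWP_weakened} is proved under Assumption~\ref{assumption_weakened_setting}: linear growth of $\psi^0$, globally Lipschitz $\partial_y\psi^0$, and no $u\psi^2\cdot\nabla u$ term. This does not follow from Assumption~\ref{Assumption:noise}\ref{assumption:hg3} or~\ref{assumption:hg4}, which allow cubic growth of $\psi^0$, growth $|y|^{3/2}$ of $\partial_y\psi^0$, and the term $u\psi^2\cdot\nabla u$. Theorem~\ref{thm:GWP:weak:setting} contains no blow-up criterion at all. It also needs the globally Lipschitz noise of Assumption~\ref{assumption_weak_setting}, which Assumption~\ref{Assumption:noise}\ref{assumption:hg1}--\ref{assumption:hg2} do not give.

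As in the paper, take the criterion for $(v,\tau)$ directly from the abstract theory, applied to the $(2,0,0,2)$-solution of Theorem~\ref{lwp:rough:initial:data}, whose nonlinear estimates cover $p=q=2$, $s=\kappa=0$. For $d\in\{2,4\}$ use \cite[Theorem~4.10]{agrestiNonlinearParabolicStochastic2022a} in its critical form. For $d\in\{1,3\}$ use the relaxed form, which applies because the subcriticality condition is strict there ($\frac12<\frac58$).
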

    \begin{proof}
        Assume that~\ref{it:gwp:rough:initial:data:1} or~\ref{it:gwp:rough:initial:data:2} holds, that $u_0\in \,_\nu B^{2-s-4\frac{1+\kappa}{p}}_{q,p}$, and let $(u,\sigma)$ be the $(p,\kappa,s,q)$-solution to \eqref{SCH} granted by Theorem~\ref{lwp:rough:initial:data}. Then,  Lemma~\ref{thm:regularization:rough:initial:data:LpLq} yields 
        \begin{equation}\label{proof:transference:almost:very:weak:0}
            u\in H_{\loc}^{\theta,r}(0,\sigma; \bpsn{2-4\theta,\zeta}) \text{ a.s.}, \quad r\in [2,\infty),\, \zeta \in [2,\infty)\, \theta\in[0,1/2).
        \end{equation}
        In particular, $u(t_0)\1_{\sigma>t_0}\in L^2(\sO)$ a.s. Therefore, Theorem~\ref{lwp:rough:initial:data}  ensures the existence of a unique maximal $(2,0,0,2)$-solution $(v,\tau)$ with $\tau>t_0$ a.s.\ to the shifted problem 
        \begin{equation}\label{SCH:shifted}
        \begin{cases}
         \dd v + \,_\nu\Delta_{-2,2}^2 v \dt &= \,_\nu\Delta_{-2,2}(v^3-v)\dt + \psi(v,\nabla v) \,\dd W(t), \quad t>t_0\\
         v(t_0)&= \1_{\sigma>t_0}u(t_0).
\end{cases}
    \end{equation}
    Moreover, it holds by~\cite[Theorem 4.10]{agrestiNonlinearParabolicStochastic2022a} that 
    \[\mathbb{P}\big( \tau<T , \sup_{t\in[t_0,\tau) }\|v\|_{L^2(\sO)}+ \|v\|_{L^2(t_0,\tau;\bpsn{2}) }<\infty \big)=0.\]
In addition, Lemma~\ref{thm:regularization:rough:initial:data:L2L2} gives
\begin{equation}\label{proof:transference:almost:very:weak:2}
    v\in H_{\loc}^{\theta,r}(t_0,\tau;\nH^{2-4\theta,\zeta}(\sO)) \,\text{ a.s.\ for all } \theta\in [0,1/2), \, r,\zeta\in (2,\infty). 
\end{equation}

    To conclude, we will prove 
\begin{equation}\label{proof:transference:almost:very:weak:1}
    \tau=\sigma\,  \text{ a.s.\ on }\{\sigma>t_0\} \quad \text{and}\quad u=v \, \text{ a.e.\ on }[t_0,\sigma)\times\{\sigma>t_0\}. 
\end{equation}
Then the claim follows, as ~\cite[Theorem 4.10]{agrestiNonlinearParabolicStochastic2022a} yields the correct blow-up criteria.
In order to prove~\eqref{proof:transference:almost:very:weak:1}, note that due to Lemma~\ref{thm:regularization:rough:initial:data:LpLq} it holds that  $(u|_{[t_0,\sigma)\times\{\sigma>t_0\}}, \1_{\sigma>t_0}\sigma+\1_{\Omega\setminus\{\sigma>t_0\}}t_0)$ is a \emph{local} $(2,0,0,2)$-solution  to \eqref{SCH:shifted} with initial datum $\1_{\sigma>t_0}u(t_0)$. 
As $(v,\tau)$ is the \emph{maximal} $(2,0,0,2)$-solution to \eqref{SCH:shifted} with initial datum $\1_{\{\sigma>t_0\}}u(t_0)$, it holds $\sigma\le \tau$ a.s.\ on $\{\sigma>t_0\}$ and $u=v$ a.e.\ on $[t_0,\sigma)$. 

Thus, in order to prove that indeed $\tau =\sigma$ on $\{\sigma>t_0\}$, i.e.\ that~\eqref{proof:transference:almost:very:weak:1} holds, we will prove that $$\P(\{t_0<\sigma\}\cap\{ \sigma<\tau\})=\P(t_0<\sigma<\tau)=0.$$ 
To this end, we will use the blow-up criterion~\cite[Theorem 4.10]{agrestiNonlinearParabolicStochastic2022a} for $(u,\sigma)$, for which we need specific regularity estimates.
The key observation is that, on the event $\{t_0<\sigma<\tau\}$, the stopping time $\sigma$ lies strictly inside the lifetime $(t_0,\tau)$ of $v$. Consequently, the regularity of $v$ in~\eqref{proof:transference:almost:very:weak:2} yields the required estimates in a neighbourhood of $\sigma$. Since $u=v$ on $[t_0,\sigma)\times\{\sigma>t_0\}$, these estimates transfer to $u$ near $\sigma$. Combined with the previously established regularity of $u$ in~\eqref{proof:transference:almost:very:weak:0} near the initial time, they allow us to verify the following form of a blow-up criterion, which remains valid in the critical case, see~\cite[Theorem 4.10]{agrestiNonlinearParabolicStochastic2022a}:
\[\P(\sigma<T, \sup_{t\in [0,\sigma)} \|u(t)\|_{X_{1-\frac{1+\kappa}{p},p}}+ \|u\|_{L^p(0,\sigma; X_{1-\frac{\kappa}{p}})}<\infty)=0.\]
Set $\beta\ce  2-s-4\frac{1+\kappa}{p}$  and $\gamma\ce  2-s-4\frac{\kappa}{p}$. Then $X_{1-\frac{1+\kappa}{p},p}=\,_\nu B^\beta_{q,p}(\sO)$  and $X_{1-\frac{\kappa}{p}}=\bpsn{\gamma,q}$. 

Note that~\eqref{proof:transference:almost:very:weak:2} in particular ensures $v\in L^p_{\loc}((t_0, \sigma]; \nH^{\gamma,q}(\sO))$ a.s.\ on $\{t_0<\sigma<\tau\}$, and, since $u=v$ a.s.\ on $[t_0,\sigma)\times\{\sigma>t_0\}$,~\eqref{proof:transference:almost:very:weak:0} gives $u\in L^p(0,\sigma;\bpsn{\gamma,q})$ a.s. on $\{t_0<\sigma<\tau\}.$
Next, we check that $\sup_{t\in [0,\sigma)} \nrm{u(t)}_{B_{q,p}^{\beta}(\sO)}<\infty$ a.s.\ on $\{t_0<\sigma<\tau\}$. 
Indeed,~\cite[Proposition 2.1]{agrestiNonlinearSPDEsMaximal2025} ensures that $u\in C\big((0,\sigma];\,_\nu B^{2-\frac{4}{r}}_{\zeta,r}(\sO)\big)$ for all $\zeta,r\in (2,\infty)$, and hence one can always find $\zeta,r \in (2,\infty)$ such that the latter embeds into $C\big((0,\sigma]; \,_\nu B^{\beta}_{q,p}(\sO)\big)$. Now using the fact that $u_0\in \,_\nu B^{\beta}_{q,p}(\sO)$ a.s., it follows that $\sup_{t\in [0,\sigma)}\nrm{u}_{\,_\nu B^{\beta}_{q,p}(\sO)} <\infty$ a.s.\ on $\{t_0<\sigma<\tau\}$.
Hence, from~\cite[Theorem 4.10 (3)]{agrestiNonlinearParabolicStochastic2022a} it follows that 
\begin{equation}
    \begin{aligned}
        \P(t_0<\sigma<\tau)&=\P(t_0<\sigma<\tau, \sup_{t\in[0,\sigma)}\nrm{u(t)}_{_\nu B_{q,p}^\beta(\sO)}+\nrm{u}_{L^p(0,\sigma;\nH^{\gamma,q}(\sO))}<\infty )\\
        &\le \P(\sigma<T, \sup_{t\in[0,\sigma)}\nrm{u(t)}_{_\nu B_{q,p}^\beta(\sO)}+\nrm{u}_{L^p(0,\sigma;\nH^{\gamma,q}(\sO))}<\infty )\\
        &=\P(\sigma<T, \sup_{t\in[0,\sigma)}\nrm{u(t)}_{X_{1-\frac{1+\kappa}{p},p}}+\nrm{u}_{L^p(0,\sigma;X_{1-\frac{\kappa}{p}})}<\infty)=0.
    \end{aligned}
\end{equation}
This proves~\eqref{proof:transference:almost:very:weak:1} , and thus the proof is finished. 
The blow-up criteria in cases~\ref{it:gwp:rough:initial:data:3} and~\ref{it:gwp:rough:intial:data:4} follow similarly. 
    \end{proof}
    By combining Theorem~\ref{gwp:rough:initial:data}
with Theorem~\ref{thm:GWP:weak:setting},  Theorem~\ref{thm:global_well_posedness_dim_3}, and Theorem~\ref{thm:global_well_posedness_dim_4-complete} we arrive at the following result stating global well-posedness for several settings. This in particular entails that we need to set $\psi^2=0$. 
    \begin{corollary}\label{cor:GWP:rough}
    The following assertions hold:
    \begin{enumerate}
        \item Let $d\in\{1,2\}$, suppose that Assumption~\ref{assumption_weak_setting} and Assumption~\ref{Assumption:noise}\ref{assumption:hg2} holds, assume that $(p,\kappa,s,q)$ are as in Theorem~\ref{lwp:rough:initial:data}\ref{it:lwp:rough:initial:data:2}. Then for all $u_0\in L^0_{\F_0}(\Omega;\,_\nu B_{q,p}^{2-s-4\frac{1+\kappa}{p}}(\sO))$ there exists a global solution to~\eqref{SCH}, which is unique in the class $L^p_\loc([0,\infty),w_\kappa;\bpsn{2-s,q}).$
        \item Let $d\in\{3,4\}$, suppose that Assumption~\ref{assumption_weakened_setting} holds,  assume that $(p,\kappa,s,q)$ are as in Theorem~\ref{lwp:rough:initial:data}\ref{it:lwp:rough:initial:data:4}. Then for all $u_0\in L^0_{\F_0}(\Omega;\,_\nu B_{q,p}^{3-s-4\frac{1+\kappa}{p}}(\sO))$ there exists a global solution to~\eqref{SCH}, which is unique in the class $L^p_\loc([0,\infty),w_\kappa;\bpsn{3-s,q}).$
    \end{enumerate}
    \end{corollary}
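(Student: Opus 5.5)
The plan is to combine the local theory of Theorem~\ref{lwp:rough:initial:data} with the transference of blow-up criteria in Theorem~\ref{gwp:rough:initial:data}, and then rule out blow-up using the global results of the variational settings. We treat part (2) in detail; part (1) is analogous, with Theorem~\ref{thm:GWP:weak:setting} in place of Theorems~\ref{thm:global_well_posedness_dim_3} and~\ref{thm:global_well_posedness_dim_4-complete}.

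\emph{Step 1: the local solution.} Assumption~\ref{assumption_weakened_setting} with $\psi^2=0$ implies Assumption~\ref{Assumption:noise}\ref{assumption:hg4}. Indeed, the growth bounds there, which allow cubic and $3/2$-power growth, are weaker than the linear growth, Lipschitz and local Lipschitz bounds of Assumption~\ref{assumption_weakened_setting} and Corollary~\ref{cor:weakened_noise_consequences}, and $\psi^1$ is bounded and $C^1$. The one caveat is the integrability $\psi^0(\cdot,0)\in L^\infty(L^q)$, which follows from $\|\psi^0(\cdot,0)\|_{\ell^2}\le L$ on the bounded domain $\sO$. Theorem~\ref{lwp:rough:initial:data}\ref{it:lwp:rough:initial:data:4} therefore gives a maximal $(p,\kappa,s,q)$-solution $(u,\sigma)$ with $\sigma>0$ a.s. Uniqueness in $L^p_\loc([0,\infty),w_\kappa;\bpsn{3-s,q})$ is the maximality of Definition~\ref{def:solution:local}. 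It remains to show $\sigma=\infty$ a.s.

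\emph{Step 2: restart in the variational setting.} Fix $0<t_0<T$. By Lemma~\ref{thm:regularization:rough:initial:data:LpLq}, $\1_{\sigma>t_0}u(t_0)\in H^1(\sO)$ a.s., and this random variable is $\F_{t_0}$-measurable. The theorems of Section~\ref{sec:globalWP} are stated from time $0$, so we first shift time by $t_0$; this preserves Assumption~\ref{assumption_weakened_setting}. With the shift, Theorems~\ref{thm:global_well_posedness_dim_3} and~\ref{thm:global_well_posedness_dim_4-complete} yield a global solution $w$ on $[t_0,\infty)$ with $w\in C([t_0,T];H^1)\cap L^2(t_0,T;\nH^3)$ a.s. for every $T$.

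We also need to identify the variational operator $A$ of Subsection~\ref{weakened_setting} with ${}_\nu\Delta^2_{-1,2}$ on $\nH^3$, so that $w$ is the $(2,0,0,2)$-solution from the proof of Theorem~\ref{gwp:rough:initial:data}. I would argue this by density of smooth functions satisfying the Neumann conditions: on such $u$ both operators act as $\Delta^2$, tested in the $L^2$-pairing via $\langle Au,v\rangle=-(\nabla\Delta u,\nabla v)_{L^2}$. Since $\mathsf D$ is the same space $\nH^3$ in both cases and $V^*\simeq(H^1)^{*_2}=\nH^{-1}$ by \eqref{eq:dualIsomorphism}, the identification extends by continuity. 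The same density argument identifies $F$ with ${}_\nu\Delta_{-1,2}(u^3-u)$.

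\emph{Step 3: conclude.} In the proof of Theorem~\ref{gwp:rough:initial:data} we showed $u=w$ on $[t_0,\sigma)$ on $\{\sigma>t_0\}$. Because $w$ is global and continuous in $H^1$ and lies in $L^2_\loc\nH^3$, on $\{t_0<\sigma<T\}$ both $\sup_{[t_0,\sigma)}\|u\|_{H^1}$ and $\|u\|_{L^2(t_0,\sigma;\nH^3)}$ are finite. By Theorem~\ref{gwp:rough:initial:data}, this gives $\P(t_0<\sigma<T)=0$. Letting $t_0\downarrow0$, and using $\sigma>0$ a.s. together with continuity of measure, we get $\P(\sigma<T)=0$ for every $T$, hence $\sigma=\infty$ a.s.

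The main obstacle is the consistency in Step 2: the variational realisation $(A,F,G)$ must coincide with the extrapolated realisation used by Theorem~\ref{gwp:rough:initial:data}, including the noise, where we must check that $\gamma(\ell^2,\nH^{1})$ equals $\sL_2(\ell^2,H^1)$. The noise part follows from Lemma~\ref{handle_noise} and the fact that $\nH^{1,2}=H^1$ by \eqref{Bilaplace-concrete}. The remaining Banach-space details are routine.
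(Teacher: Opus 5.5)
Your proposal is correct and follows essentially the paper's argument. You restart at $t_0$ and identify the $(2,0,0,2)$-solution with the global variational solution, by checking that $A$ agrees with ${}_\nu\Delta^2_{-1,2}$ (resp.\ ${}_\nu\Delta^2_{-2,2}$) on a dense set of smooth Neumann functions and extending by continuity. You then feed the global bounds of Section~\ref{sec:globalWP} (resp.\ Theorem~\ref{thm:GWP:weak:setting}) into the transferred blow-up criterion of Theorem~\ref{gwp:rough:initial:data}. Your extra details are all sound, and the paper leaves them implicit: the check that Assumption~\ref{assumption_weakened_setting} implies Assumption~\ref{Assumption:noise}\ref{assumption:hg4}, the identification of $F$ and $G$, the time shift, and the limit $t_0\downarrow 0$.
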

    \begin{proof}
    Let $(u,\sigma)$ be the $(p,\kappa,s,q)$-solution provided by Theorem~\ref{lwp:rough:initial:data}. In order to prove $\sigma=\infty$, by  Theorem~\ref{gwp:rough:initial:data} it suffices to verify the corresponding blow-up criterion. In addition, in the proof it is described that the restriction of $u$ to $[t_0,\sigma)$ coincides for every $t_0>0$ with the $(2,0,0,2)$-solution of~\eqref{SCH} with initial datum $\1_{\{\sigma>t_0\}} u(t_0)$. Hence, this implies that it suffices to prove that the $(2,0,0,2)$-solution coincides with the variational solution obtained in Theorem~\ref{thm:GWP:weak:setting}, Theorem~\ref{thm:global_well_posedness_dim_3}, or Theorem~\ref{thm:global_well_posedness_dim_4-complete}. 
        This in turn is ensured if  $_\nu\Delta_{-2,2}^2$ coincides with the variational operator $A$ from Section~\ref{weak_setting}, and  $_\nu\Delta_{-1,2}^2$ coincides with the operator $A$ from Section~\ref{weakened_setting}.
        We first identify $_\nu\Delta^2_{-2,2}$. To this end, we note that by construction
        $ A,\,_\nu\Delta_{-2,2}^2\in \sL(\bpsn{2},\bpsn{-2}).$
        Moreover, for all $u\in \bpsn{4}$ and all $v\in \bpsn{2}$ it holds that 
        $$\langle Au,v\rangle=(\Delta u, \Delta v)_{L^2(\sO)}= (\Delta^2 u,v)_{L^2(\sO)}=(\,_\nu\Delta_{-2,2}^2 u,v)_{L^2(\sO)}.$$
        This proves $A u = \,_\nu\Delta^2_{-2,2}u$ for all $u\in \bpsn{4}$. Since both operators are bounded and $\bpsn{4}$ is dense in $\bpsn{2}$, it follows that the identity holds on $\bpsn{2}$.

        Next, let $A$ denote the operator from Section~\ref{weakened_setting}. There it is shown that the dual of $\bpsn{3}$ with respect to $\bpsn{1}$ can be identified with $\bpsn{-1}$. With this identification, we obtain for all $u\in\bpsn{4}$, $v\in \bpsn{2}$  that            $\langle A u,v\rangle =(\Delta^2 u,v)_{L^2}$. It again follows that $A$ and $_\nu\Delta_{-1,2}^2$ agree on $\bpsn{4}$.
In addition, it holds by construction that        $$A, \,_\nu\Delta_{-1,2}^2 \in \sL(\bpsn{3},\bpsn{-1}).$$
Since $\bpsn{4}$ is dense in $\bpsn{3}$ and both $A$ and $_\nu\Delta_{-1,2}^2$ are continuous, it follows that they agree on $\bpsn{3}$.
    \end{proof}
\begin{remark}\label{rem:GWP:rough:additional:regularity}
        Beyond global existence and uniqueness, the solutions constructed above inherit, for every $t_0>0$, the full regularity and the estimates of the corresponding variational solution, since $u|_{[t_0,\infty)}$ coincides a.e.\ with the global variational solution of \eqref{SCH} restarted at $t_0$ with initial datum $u(t_0)$. Precisely:
        \begin{itemize}
            \item In case (a), $u\in C([t_0,\infty);L^2(\sO))\cap L^2_{\loc}([t_0,\infty);\bpsn{2})$ a.s.\ for every $t_0>0$, and, whenever $u(t_0)\in L^2(\Omega;L^2(\sO))$, for every $T>t_0$
            \[\E\big[\nrm{u}^2_{C([t_0,T];L^2(\sO))}\big]+\E\big[\nrm{u}^2_{L^2(t_0,T;H^2(\sO))}\big]\le C_T\big(1+\E\nrm{u(t_0)}^2_{L^2(\sO)}\big),\]
            by Theorem~\ref{thm:GWP:weak:setting}.
            \item In case (b), $u\in C([t_0,\infty);\bpsn{1})\cap L^2_{\loc}([t_0,\infty); \bpsn{3})$ a.s.\ for every $t_0>0$, and, whenever $u(t_0)\in L^4(\Omega;L^4(\sO))\cap L^2(\Omega;H^1(\sO))$, for every $T>t_0$
            \[\E\Big[\sup_{t\in[t_0,T]}\nrm{u(t)}^2_{H^1(\sO)}\Big]\le C_T\big(1+\nrm{u(t_0)}^4_{L^4(\Omega;L^4(\sO))}+\nrm{u(t_0)}^2_{L^2(\Omega;H^1(\sO))}\big),\]
            by Theorem~\ref{thm:global_well_posedness_dim_3} and Theorem~\ref{thm:global_well_posedness_dim_4-complete}.
        \end{itemize}
        In particular, even though $u_0$ may be rough, the solution instantaneously enters, and remains in, the significantly better variational regularity class for \emph{every} positive time due to Lemma~\ref{thm:regularization:rough:initial:data:LpLq}.
    \end{remark}

\printbibliography

\end{document}